\numberwithin{equation}{section}
\newtheorem{theorem}{Theorem}[section]
\newtheorem{definition}{Definition}[section]
\newtheorem{lemma}{Lemma}[section]
\newtheorem{remark}{Remark}[section]
\newtheorem{proposition}{Proposition}[section]
\makeatletter \@addtoreset{figure}{section} \makeatother
\newcommand{\dd}{\mathrm{d}}
\newcommand{\R}{{\mathbb R}}
\journal{Advances in Mathematics}
\begin{document}

\begin{frontmatter}
\title {Hypersonic Similarity for Steady Compressible Full Euler Flows over Two-Dimensional Lipschitz Wedges}

\author[Ox,Am,Fu]{Gui-Qiang G. Chen\corref{Cor1}}
\ead{chengq@maths.ox.ac.uk}

\author[In]{Jie Kuang\corref{Cor1}}
\ead{jkuang@wipm.ac.cn}

\author[Ci]{Wei Xiang\corref{Cor1}}
\ead{weixiang@cityu.edu.hk}

\author[Fu]{Yongqian Zhang\corref{Cor1}}
\ead{yongqianz@fudan.edu.cn}

\cortext[Cor1]{The corresponding author.}

\affiliation[Ox]{organization={Mathematical Institute, University of Oxford},
 	           city={Oxford},
             postcode={OX2 6GG},
%             state={},
             country={UK}}

             \affiliation[Am]{organization={Academy of Mathematics and Systems Science, Chinese Academy of Sciences},
             	city={Beijing},
             	postcode={100190},
             	%             state={},
             	country={China}}

 \affiliation[In]{organization={Innovation Academy for Precision Measurement Science and Technology, and Wuhan Institute of Physics and Mathematics, Chinese Academy of Sciences},
%             addressline={},
             city={Wuhan},
             postcode={430071},
%             state={},
             country={China}}

 \affiliation[Ci]{
organization={Department of Mathematics,
             		City University of Hong Kong},
             	addressline={Kowloon},
             	city={Hong Kong},
%             	postcode={},
%             	state={},
             	country={China}
             }

              \affiliation[Fu]{
              	organization={School of Mathematical Sciences, Fudan University},
%             	addressline={},
             	city={Shanghai},
             	postcode={200433},
%             	state={},
             	country={China}
             	}

\begin{keyword}
Hypersonic similarity law, hypersonic small-disturbance equations, compressible Euler equations,
full Euler flows, characteristic boundary,
wave-front tracking algorithm, $L^1$--stability, BV entropy solutions, Lipschitz wedge.
\MSC[2010]{35B20, 35D30, 35Q31, 35L65, 76J20, 76L05, 76N10}
\end{keyword}

\begin{abstract}
We establish the optimal convergence rate to the hypersonic similarity law,
which is also called the Mach number independence principle,
for steady compressible full Euler flows over two-dimensional slender Lipschitz wedges.
Mathematically, it can be formulated as the comparison of the entropy solutions in
$BV\cap L^{1}$
between the two initial-boundary value problems for
the compressible full Euler equations with parameter $\tau>0$ and
the hypersonic small-disturbance equations (the scaled compressible full Euler equations with parameter $\tau=0$)
with curved characteristic boundaries.
We establish the $L^1$--convergence estimate of these two solutions
with the optimal convergence rate,
which justifies the Van Dyke's similarity theory rigorously for the compressible full Euler flows.
This is the first mathematical result on the comparison of two solutions
of the compressible Euler equations with characteristic boundary conditions.
To achieve this, we first employ the special structures of the two systems
and establish the global existence and the $L^1$--stability of the entropy solutions via the wave-front tracking scheme
under the smallness assumption on the total variation of both the initial data and the tangential slope function of the wedge boundary.
Based on the $L^1$--stability properties of the approximate solutions
to the scaled  equations
with parameter $\tau>0$, a uniform Lipschtiz continuous map
with respect to the initial data
and the wedge boundary is obtained,
which is the first time for the characteristic boundary conditions.
Next, we compare the solutions given by the Riemann solvers of the two systems by taking the boundary perturbations
into account case by case.
Then, for a given fixed hypersonic similarity parameter,
as the Mach number tends to infinity,
by employing the Lipschitz continuous properties of the map,
we establish the desired $L^1$--convergence estimate with the optimal convergence rate.
Finally, we show the optimality of the convergence rate by investigating a special solution.
\end{abstract}

\end{frontmatter}

\date{\today}

%\tableofcontents

\section{Introduction and Main Theorems}\setcounter{equation}{0}
We are concerned with the mathematical validation of the hypersonic similarity to
the problem of stationary hypersonic Euler flows
over a two-dimensional Lipschitz wedge (see Fig. \ref{fig1.1'}).
In gas dynamics, we call the supersonic flow is hypersonic when its Mach number is greater than five.
The study of the hypersonic flow can date back to the 1940s due to many important applications in aerodynamics and
engineering (see {\it e.g.}, \cite{dyke,tsien}), which is different from the study of the two-dimensional supersonic
problems (see {\it e.g.},
\cite{ChenFeldman, ChenFeldmanXiang, chen-kuang-zhang, chen-li, chen-zhang-zhu-1,chen-zhang-zhu-2, huang-kuang-wang-xiang, xiang-zhang-zhao, zhang-1, zhang-2}).
It bears important properties of the solution structures,
such as the \emph{hypersonic similarity law}, which is our main focus of this paper.

One of the main reasons for verifying the \emph{hypersonic similarity law} rigorously is to overcome the difficulty
that the density is very small, compared to the flow speed for the hypersonic flow.
Thus, like the fluid behavior near the vacuum, all the characteristics are close to each other,
and the shock layer is very thin.
The validation of the hypersonic similarity law allows us to study a scaled problem in which such a difficulty does not {occur}
(see \cite{anderson} for more details).
In general, we say that two or more different flows satisfy a similarity law if the solution structures
have the similarity with respect to certain nondimensional parameters.

As a paradigm,
the hypersonic similarity can be expressed as follows:
Let $\theta$ be the half opening angle of the wedge, and let $M_{\infty}$ be the Mach number of the uniform
upstream flow (see Fig.~\ref{fig1.1'}).
Define the nondimensional similarity parameter
as
\begin{equation}\label{eq:1.6}
K=\theta M_{\infty}
\end{equation}
(see also (127.3) in Landau-Lifschitz \cite[page 482]{landau-lifschitz} for more details).
Then the hypersonic similarity is that, for a fixed similarity parameter $K$,
the flow structures are similar under scaling if the Mach number $M_{\infty}$ is sufficiently large.
Actually, after scaling, the flows with the same similarity parameter $K$ are governed approximately by the same equations,
which are called the hypersonic small-disturbance equations (see Tsien \cite{tsien} and Van Dyke \cite{dyke}).
Recently, Tsien's hypersonic similarity theory was rigorously justified for the two-dimensional potential flow past over a straight wedge
with large data in \cite{kuang-xiang-zhang-1} and the optimal convergence rate was obtained in \cite{chen-kuang-xiang-zhang}, as well as
over a Lipschitz curved wedge with small data in \cite{kuang-xiang-zhang-2}.
The main purpose of this paper is to verify the Van Dyke's similarity law \cite{dyke} for the two-dimensional steady compressible full Euler equations
for ideal polytropic {gas with the optimal convergence rate}.

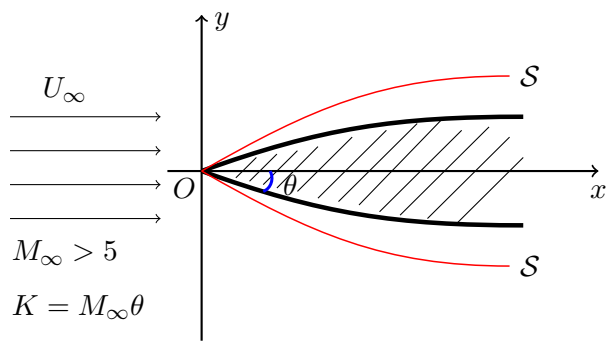
\begin{figure}[ht]
\begin{center}
\begin{tikzpicture}[scale=0.9]
\draw [thick][->](-2.5,1)--(3.8,1);
\draw [thick][->](-2,-1.5)--(-2,3.3);
\draw [line width=0.06cm] (-2,1)to[out=20, in=180](2.7,1.8);
\draw [line width=0.06cm] (-2,1)to[out=-20, in=-180](2.7,0.2);
\draw [line width=0.02cm][red] (-2,1)to [out=30, in=180](2.5,2.4);
\draw [line width=0.02cm][red] (-2,1)to [out=-30, in=-180](2.5,-0.4);
\draw [thin][->](-4.8,1.8)--(-2.6,1.8);
\draw [thin][->](-4.8,1.3)--(-2.6,1.3);
\draw [thin][->](-4.8,0.8)--(-2.6,0.8);
\draw [thin][->](-4.8,0.3)--(-2.6,0.3);

\draw [thin](-1.5,0.9)--(-1.2,1.2);
\draw [thin](-1.3,0.85)--(-0.9,1.25);
\draw [thin](-1.1,0.80)--(-0.6,1.3);
\draw [thin](-0.9,0.75)--(-0.3,1.36);
\draw [thin](-0.6,0.7)--(0.2,1.5);
\draw [thin](-0.3,0.6)--(0.7,1.6);
\draw [thin](0,0.55)--(1.15,1.70);
\draw [thin](0.3,0.5)--(1.5,1.75);
\draw [thin](0.6,0.40)--(1.8,1.6);
\draw [thin](0.9,0.35)--(2.2,1.65);
\draw [thin](1.3,0.3)--(2.5,1.5);
\draw [thin](1.7,0.20)--(2.7,1.2);
\draw [line width=0.04cm][blue] (-1,1)to [out=-60, in=20](-1.1,0.7);
\node at (1.5, 1.5) {$$};
\node at (3.8,0.7) {$x$};
\node at (-1.7,3.2) {$y$};
\node at (2.8,2.4) {$\mathcal{S}$};
\node at (2.8,-0.4) {$\mathcal{S}$};
\node at (-4.0,2.2) {$U_{\infty}$};
\node at (-4.0,-0.2) {$M_{\infty}>5$};
\node at (-0.7,0.8) {$\theta$};
\node at (-2.25,0.75) {$O$};
\node at (-3.8, -1.0) {$K=M_{\infty}\theta $};
\end{tikzpicture}
\end{center}
\caption{Hypersonic similarity for the Euler flows over a two-dimensional slender wedge}\label{fig1.1'}
\end{figure}

We remark that a related problem on the hypersonic limit as the Mach number $M_{\infty}$ of
the upstream flow tends to infinity
with a fixed wedge was studied successfully in \cite{jin-qu-yuan-1, qu-wang-yuan, qu-yuan-zhao}.
That problem is different from the problem under our consideration in this paper
because, for a fixed wedge, the similarity parameter is no longer a fixed number.
In fact, $K\rightarrow\infty$ as $M_{\infty}\rightarrow\infty$.
For that problem, the limit solution as {$M_{\infty}\rightarrow\infty$} was constructed,
which is a Radon measure-valued solution over a straight wedge or a ramp,
and the Newton theory or the New-Busemann pressure law of the hypersonic flow was
justified in \cite{jin-qu-yuan-1, qu-wang-yuan, qu-yuan-zhao}.
See also \cite{jin-qu-yuan-2,qu-yuan} for further results for the three-dimensional case.
We remark that there are also some related analyses on the steady supersonic flow past a curved cone with its surface {being} smooth
or Lipschitz continuous {when} the upstream flow is sufficiently
large; see \cite{chen-kuang-zhang-2, chen-xin-yin, hu-zhang,li-witt-yin, lien-liu, wang-zhang} and the references cited therein.

\smallskip
The steady compressible full Euler equations for the ideal polytropic {gas} in $\mathbb{R}^2$ take the form:
\begin{equation}\label{eq:1.1}
\begin{cases}
\mathbf{div}(\rho \mathbf{u})=0, \\[2pt]
\mathbf{div}(\rho \mathbf{u}\otimes\mathbf{u}+p\mathbb{I})=0, \\[2pt]
\mathbf{div}\big(\rho \mathbf{u}(E+\frac{p}{\rho})\big)=0,
\end{cases}
\end{equation}
where $\mathbf{div}$ is the divergence operator with respect to $\mathbf{x}=(x,y)\in \mathbb{R}^2$,
$\mathbb{I}$ is the identity matrix, $\mathbf{u}=(u,v)$ is the velocity with the horizontal and vertical velocity {components} $u$ and $v$ respectively,
$p$ and $\rho$ represent the pressure and the density respectively, and $E$ is the total energy given by
\begin{eqnarray}\label{eq:1.2}
E=\frac{|\mathbf{u}|^2}{2}+e(\rho, p)  \qquad\,\, \mbox{with $|\mathbf{u}|=\sqrt{u^2+v^2}$}.
\end{eqnarray}
The internal energy $e$,
temperature $T$, and
entropy $S$ are given by
\begin{eqnarray}\label{eq:1.4}
T=\frac{p}{\textsc{R}\rho},\quad e=\frac{p}{(\gamma-1)\rho},\quad p=\textsc{A}(S)\rho^{\gamma} \qquad\,\,\,\, \mbox{for $\gamma>1$},
\end{eqnarray}
where $\textsc{R}>0$ is a constant and $\textsc{A}(S)$ is a positive function of $S$. See also \cite{ChenFeldman,courant-friedrich,Dafermos}.

Let $c=\sqrt{\frac{\gamma p}{\rho}}$ be the local sonic speed of the flow. Then the Mach number of the flow is
defined by
\begin{eqnarray}\label{eq:1.5}
M=\frac{|\mathbf{u}|}{c}.
\end{eqnarray}

We now introduce the hypersonic similarity law and derive the corresponding hypersonic small-disturbance equations
corresponding to the full Euler equations \eqref{eq:1.1} mathematically.
Let $\tau>0$ be a constant. For a given two-dimensional symmetric Lipschtiz slender wedge with surface $y=\tau g(x)$,
without loss of generality, we consider only the lower half-space domain, \emph{i.e.},
in the region that $x\geq0$ and $y\leq\tau g(x)$
with $g(x)<0$ as shown in Fig. \ref{fig1.1'}.
The flow satisfies the following impermeable slip boundary condition along the surface of the wedge:
\begin{eqnarray}\label{eq:1.8}
(u, v)\cdot(\tau g'(x),-1)=0.
\end{eqnarray}
For a given uniformly hypersonic
upstream Euler flow $U_{\infty}=(u_{\infty}, 0, p_{\infty}, \rho_{\infty})^{\top}$,
define
\begin{equation}\label{eq:1.9}
a_{\infty}\doteq\tau M_{\infty}=\tau \frac{u_{\infty}}{c_{\infty}}\qquad\,\, \mbox{for $c_\infty=\sqrt{\frac{\gamma p_{\infty}}{\rho_{\infty}}}$}.
\end{equation}

Mathematically, it follows from \eqref{eq:1.6} that parameter $a_\infty$ is equivalent to $K$ when $M_\infty\to \infty$.
Thus, $a_{\infty}$ is now called the \emph{hypersonic similarity parameter} due to its naturality in terms of scaling; see \cite{anderson, dyke}.
Following \cite{anderson, dyke}, we can now define the following scaling:
\begin{equation}\label{eq:1.10}
x=\bar{x},\ \ y=\tau\bar{y}, \ \ u=u_{\infty}(1+\tau^2\bar{u}) ,\ \ v=u_{\infty}\tau \bar{v},\ \
p=\gamma M^{2}_{\infty}p_{\infty}\tau^{2}\bar{p},\ \ \rho=\rho_{\infty}\bar{\rho}.
\end{equation}
See \cite[\S 4.4]{anderson} for more details.

Substituting \eqref{eq:1.10} into \eqref{eq:1.1} by virtue of
\eqref{eq:1.2}--\eqref{eq:1.4}, we obtain
\begin{equation}\label{eq:1.11}
\begin{cases}
\partial_{\bar{x}}\big(\bar{\rho}(1+\tau^2\bar{u}) \big)+\partial_{\bar{y}}(\bar{\rho} \bar{v})=0,\\[5pt]
\partial_{\bar{x}}\big(\bar{\rho}\bar{u}(1+\tau^{2}\bar{u})+\bar{p}\big)+\partial_{\bar{y}}(\bar{\rho}\bar{u}\bar{v})=0,\\[5pt]
\partial_{\bar{x}}\big(\bar{\rho}\bar{v}(1+\tau^{2}\bar{u})\big)+\partial_{\bar{y}}\big(\bar{\rho}\bar{v}^{2}+\bar{p}\big)=0,\\[5pt]
\partial_{\bar{x}}\Big(\bar{\rho}(1+\tau^2\bar{u})
\big(\bar{u}+\frac{1}{2}\bar{v}^2+\frac{\gamma \bar{p}}{(\gamma-1)\bar{\rho}}+\frac{1}{2}\tau^2\bar{u}^2\big)\Big)\\[5pt]
\qquad+\partial_{\bar{y}}\Big(\bar{\rho}\bar{v}
\big(\bar{u}+\frac{1}{2}\bar{v}^2+\frac{\gamma \bar{p}}{(\gamma-1)\bar{\rho}}+\frac{1}{2}\tau^2\bar{u}^2\big)\Big)=0.
\end{cases}
\end{equation}

\vspace{5pt}
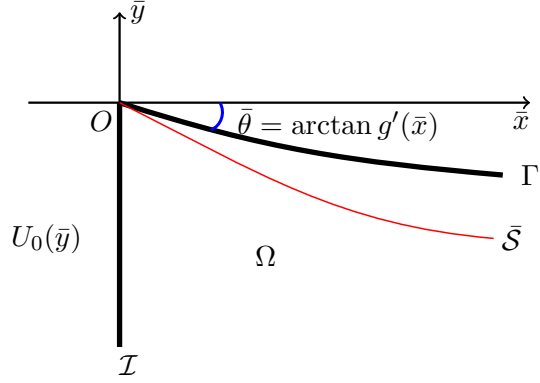
\begin{figure}[ht]
\begin{center}
\begin{tikzpicture}[scale=1.2]
\draw [line width=0.03cm][->] (-2,0)--(3.5,0);
\draw [line width=0.03cm][->] (-1,-2.7) --(-1,1);
\draw [line width=0.07cm](-1,0)to[out=-18, in=-185](3.2,-0.8);
\draw [line width=0.07cm](-1,0)--(-1,-2.7);
\draw [line width=0.02cm][red](-1,0)to[out=-27, in=-185](3.1,-1.5);
\draw [line width=0.04cm][blue](0.1,0)to [out=-60, in=20](0,-0.3);
\node at (3.4,-0.2) {$\bar{x}$};
\node at (-0.8,1) {$\bar{y}$};
\node at (-1.2,-0.2) {$O$};
\node at (3.5,-0.8) {$\Gamma$};
\node at (3.3,-1.5) {$\bar{\mathcal{S}}$};
\node at (-0.9,-2.9) {$\mathcal{I}$};
\node at (1.4,-0.25) {$\bar{\theta}=\arctan g'(\bar{x})$};
\node at (0.6,-1.7) {$\Omega$};
\node at (-1.8,-1.5) {$U_{0}(\bar{y})$};
\end{tikzpicture}
\end{center}
\caption{Initial-boundary value problem \eqref{eq:1.11}--\eqref{eq:1.12}}\label{fig1.2}
\end{figure}

In the $(\bar{x},\bar{y})$--coordinates, the corresponding fluid domain and its boundary are given by
$$
\Omega=\{(\bar{x}, \bar{y})\in \mathbb{R}^{2}\,:\, \bar{x}>0,\ \bar{y}<g(\bar{x}) \},\quad
 \Gamma=\{(\bar{x}, \bar{y})\in \mathbb{R}^{2}\,:\, \bar{x}>0,\ \bar{y}=g(\bar{x}) \},
$$
respectively  (see Fig. \ref{fig1.2}).
Then the boundary condition \eqref{eq:1.9} now becomes
\begin{eqnarray}\label{eq:1.13}
\big((1+\tau^{2}\bar{u}), \bar{v}\big)\cdot \mathbf{n}=0 \qquad \mbox{on $\Gamma$},
\end{eqnarray}
where $\mathbf{n}=\mathbf{n}(\bar{x},g(\bar{x}))=\frac{(g'(\bar{x}),-1)}{\sqrt{1+g'^{2}(\bar{x})}}$ is the interior unit normal vector to $\Gamma$.

Finally, the initial condition at $\bar{x}=0$ is given by
\begin{eqnarray}\label{eq:1.12}
(\bar{\rho}, \bar{u}, \bar{v},\bar{p})=\big(\bar{\rho}_{0}, \bar{u}_{0},\bar{ v}_{0}, \bar{p}\big)(\bar{y})
\qquad\mbox{on $\mathcal{I} \doteq\{\bar{x}=0,\ \bar{y}\leq0\}$}.
\end{eqnarray}

Mathematically, the \emph{hypersonic similarity law} is that, for a fixed similarity parameter $a_{\infty}$,
the structure of solutions of \eqref{eq:1.11}--\eqref{eq:1.12} is persistent if $\tau$ is small (\emph{i.e.}, $M_{\infty}$ is large).
Moreover, {it is conjectured that} the solutions of \eqref{eq:1.11}--\eqref{eq:1.12}
should converge to the solution of the following hypersonic small-disturbance equations
(obtained by neglecting the terms involving $\tau^{2}$) in $\Omega$:
\begin{equation}\label{eq:1.14}
\begin{cases}
\partial_{\bar{x}}\bar{\rho}+\partial_{\bar{y}}(\bar{\rho} \bar{v})=0, \\[4pt]
\partial_{\bar{x}}(\bar{\rho}\bar{u}+\bar{p})+\partial_{\bar{y}}(\bar{\rho}\bar{u}\bar{v})=0,\qquad\qquad\qquad\qquad \qquad\qquad\qquad\qquad\\[5pt]
\partial_{\bar{x}}\big(\bar{\rho}\bar{v}\big)+\partial_{\bar{y}}\big(\bar{\rho}\bar{v}^{2}+\bar{p}\big)=0,\\[4pt]
\partial_{\bar{x}}\Big(\bar{\rho}
\big(\bar{u}+\frac{1}{2}\bar{v}^2+\frac{\gamma \bar{p}}{(\gamma-1)\bar{\rho}}\big)\Big)
+\partial_{\bar{y}}\Big(\bar{\rho}\bar{v}
\big(\bar{u}+\frac{1}{2}\bar{v}^2+\frac{\gamma \bar{p}}{(\gamma-1)\bar{\rho}}\big)\Big)=0,
\end{cases}
\end{equation}
with the initial data \eqref{eq:1.12} and the boundary condition:
\begin{eqnarray}\label{eq:1.15}
\bar{v}=g'(\bar{x}) \qquad\,\, \mbox{on $\Gamma$}.
\end{eqnarray}

If the \emph{hypersonic similarity law} can be justified rigorously, then the study of the steady hypersonic Euler flow over
a two-dimensional Lipschitz slender wedge can be much simplified, because we do not face the difficulty that the density is small
and the characteristics are so close to each other.
In this paper, we justify this rigorously and establish the optimal convergence rate of the solutions
of problem \eqref{eq:1.11}--\eqref{eq:1.12}
to the solutions of problem \eqref{eq:1.12}--\eqref{eq:1.15} in $BV\cap L^1$, as {$\tau\rightarrow 0$}.
This provides more helpful information and insights for the applications.

\smallskip
Denote
$U^{(\tau)}\doteq (\bar{\rho}^{(\tau)},\bar{u}^{(\tau)},\bar{v}^{(\tau)},\bar{p}^{(\tau)})$
as the solution of \eqref{eq:1.11}--\eqref{eq:1.12}, and use $U\doteq (\bar{\rho},\bar{u},\bar{v},\bar{p})$
as the solution corresponding to the case that $\tau=0$ in \eqref{eq:1.12}--\eqref{eq:1.15}.
The definition of the global entropy solutions
$$
U^{(\tau)}=(\bar{\rho}^{(\tau)},\bar{u}^{(\tau)},\bar{v}^{(\tau)},\bar{p}^{(\tau)})^{\top}\in BV \cap L^{1}
$$
of the initial-boundary value problem \eqref{eq:1.11}--\eqref{eq:1.12} is given as follows:

\begin{definition}[Entropy solutions]\label{def:1.1}
A function $U^{(\tau)}\in (BV_{\rm loc}\cap L^{1}_{\rm loc})(\Omega, \mathbb{R}^{4})$ is called an entropy solution
of the initial-boundary value problem \eqref{eq:1.11}--\eqref{eq:1.12}
in $\Omega\subset \R^2_+$ if
$U^{(\tau)}$ satisfies the following{\rm :}

\begin{enumerate}
\item[\rm (i)] For any $\boldsymbol{\varphi}=(\varphi_1, \varphi_2,\varphi_3,\varphi_4) \in C^{\infty}(\bar{\Omega})$ with  $\varphi_2=\varphi_3=0$ on $\Gamma$,
\begin{align}
&\iint_{\Omega}\Big(\bar{\rho}^{(\tau)}(1+\tau^2\bar{u}^{(\tau)})\partial_{\bar{x}}\varphi_1
+\bar{\rho}^{(\tau)}\bar{v}^{(\tau)}\partial_{\bar{y}}\varphi_1\Big)\,\dd\bar{x}\dd\bar{y}\nonumber\\[2pt]
&\qquad +\int_{\mathcal{I}}\bar{\rho}_{0}(\bar{y})\big(1+\tau^2\bar{u}_0(\bar{y})\big)\varphi_1(0,\bar{y})\,\dd\bar{y}=0,\label{eq:1.17a}
\\[4pt]
&\iint_{\Omega}\Big(\big(\bar{\rho}^{(\tau)}\bar{u}^{(\tau)}(1+\tau^2\bar{u}^{(\tau)})+\bar{p}^{(\tau)}\big)\partial_{\bar{x}}\varphi_2
+\bar{\rho}^{(\tau)}\bar{u}^{(\tau)}\bar{v}^{(\tau)}\partial_{\bar{y}}\varphi_2\Big)\,\dd\bar{x}\dd\bar{y}\nonumber\\[2pt]
&\qquad+\int_{\mathcal{I}}\big(\bar{\rho}_{0}u_{0}(1+\tau^2\bar{u}_0)+p_{0}\big)\varphi_2(0,\bar{y})\,\dd\bar{y}=0,\label{eq:1.17b}
\\[5pt]
&\iint_{\Omega}\Big(\bar{\rho}^{(\tau)}\bar{v}^{(\tau)}(1+\tau^2\bar{u}^{(\tau)})\partial_{\bar{x}}\varphi_3
+\big(\bar{\rho}^{(\tau)}(\bar{v}^{(\tau)})^2+\bar{p}^{(\tau)}\big)\partial_{\bar{y}}\varphi_3\Big)\,\dd\bar{x}\dd\bar{y}\nonumber\\[2pt]
&\qquad +\int_{\mathcal{I}}\bar{\rho}_{0}(\bar{y})\bar{v}_{0}(\bar{y})\big(1+\tau^2\bar{u}_0(\bar{y})\big)\varphi_3(0,\bar{y})\, \dd\bar{y}=0,
   \label{eq:1.17c}  \\[4pt]
&\iint_{\Omega}\Big(\bar{\rho}^{(\tau)}\bar{\mathcal{B}}^{(\tau)}\big(1+\tau^2\bar{u}^{(\tau)}\big)\partial_{\bar{x}}\varphi_4
+\bar{\rho}^{(\tau)}\bar{v}^{(\tau)}\bar{\mathcal{B}}^{(\tau)}\partial_{\bar{y}}\varphi_4\Big)\,\dd\bar{x}\dd\bar{y}\nonumber\\[2pt]
&\qquad
+\int_{\mathcal{I}}\bar{\rho}_{0}(\bar{y})\bar{\mathcal{B}}_0(\bar{y})\big(1+\tau^2\bar{u}_0(\bar{y})\big)
\varphi_4(0,\bar{y})\,\dd\bar{y}=0, \label{eq:1.17d}
\end{align}
and $U^{(\tau)}$ satisfies the boundary condition \eqref{eq:1.13} in the trace sense, where $\bar{\mathcal{B}}^{(\tau)}\doteq \bar{u}^{(\tau)}+\frac{1}{2}(\bar{v}^{(\tau)})^2
+\frac{\gamma}{\gamma-1}\frac{\bar{p}^{(\tau)}}{\bar{\rho}^{(\tau)}}+\frac{1}{2}\tau^2(\bar{u}^{(\tau)})^2$
and $\bar{\mathcal{B}}_0\doteq \bar{u}_0+\frac{1}{2}\bar{v}^2_0+\frac{\gamma}{\gamma-1}\frac{\bar{p}_0}{\bar{\rho}_0}
+\frac{1}{2}\tau^2\bar{u}^2_0$.

\smallskip
\item[\rm (ii)] For any test function $\psi\in C^{\infty}(\bar{\Omega})$ with $\psi\geq0$, the following entropy inequality holds{\rm :}
\begin{align}\label{eq:1.18}
&\iint_{\Omega}\Big((\bar{\rho}^{(\tau)})^{1-\gamma}\bar{p}^{(\tau)}\big(1+\tau^2\bar{u}^{(\tau)}\big)\partial_{\bar{x}}\psi+
\bar{v}^{(\tau)}(\bar{\rho}^{(\tau)})^{1-\gamma}\bar{p}^{(\tau)}\partial_{\bar{y}}\psi\Big)\,\dd\bar{x}\dd\bar{y}\nonumber\\[2pt]
&\qquad+\int_{\mathcal{I}}(\bar{\rho}_0(\bar{y}))^{1-\gamma}\bar{p}_{0}(\bar{y})\big(1+\tau^2\bar{u}_0(\bar{y})\big)\psi(0,\bar{y})\,\dd\bar{y}\leq0.
\end{align}
\end{enumerate}
\end{definition}

We first consider the following problem in this paper.

\bigskip
\noindent
{\bf Problem I.}
\emph{For a given hypersonic similarity parameter $a_{\infty}$ and a sufficiently small fixed parameter $\tau>0$,
seek a global entropy { solution} $U^{(\tau)}=(\bar{\rho}^{(\tau)},\bar{u}^{(\tau)},\bar{v}^{(\tau)},\bar{p}^{(\tau)})^{\top}\in BV \cap L^{1}$
of the initial-boundary value problem \eqref{eq:1.11}--\eqref{eq:1.12} in the sense of {\rm Definition \ref{def:1.1}},
and then
show the entropy solution $U^{(\tau)}$ is $L^1$--stable with respect to the initial data $U_{0}$ and the boundary data $g$.
}

\medskip
Denote
\begin{eqnarray}\label{eq:1.16}
\underline{U}\doteq
(1, 0, 0, \frac{1}{\gamma a^{2}_{\infty}})^{\top}, \qquad
\underline{\Omega}\doteq
\{(\bar{x}, \bar{y})\,:\,\bar{x}>0, {\bar{y}<0}\}.
\end{eqnarray}
It is direct to see that, if $g\equiv 0$ and
$U_{0}(\bar{y})=(1, 0, 0, \frac{1}{\gamma a^{2}_{\infty}})^{\top}$,
then the constant state $\underline{U}$ is an entropy solution of Problem I.
We call $\underline{U}$ as a \emph{background\ solution} of Problem I.

Once Problem I
is solved, a natural question is whether we can establish the optimal convergence rate for the entropy
solution $U^{(\tau)}$ as $\tau\rightarrow 0$,
which can be formulated as follows:

\bigskip
\noindent
{\bf Problem II}.
\emph{Let $U^{(\tau)}$ be the entropy solutions of
{\rm Problem I},
and let $U=(\bar{\rho}, \bar{u},\bar{v}, \bar{p})^{\top}$ be the entropy solution of
the initial-boundary value problem \eqref{eq:1.12}--\eqref{eq:1.15}.
Then, for a given hypersonic similarity parameter $a_\infty$, can the optimal convergence rate of
sequence $U^{(\tau)}$ to $U$ with respect to $\tau^2$ be established for sufficiently small $\tau>0$?}

\medskip
Before presenting our main results of this paper to answer both Problems I--II,
we first make some basic assumptions.

\medskip
\noindent
{\bf Assumptions}: Both the initial data $U_0(\bar{y})$ and the boundary function $g(\bar{x})$ satisfy
the following assumptions:

\smallskip
\begin{enumerate}
\item[$\mathbf{(U_{0})}$] $U_{0}(\bar{y})=(\bar{\rho}_0, \bar{u}_{0}, \bar{v}_{0},\bar{p}_{0})^{\top}(\bar{y})$
satisfies $U_{0}-\underline{U}\in (BV\cap L^{1})(\mathcal{I}; \mathbb{R}^{4})$ and {$\bar{\rho}_{0}>0$}.

\smallskip
\item[$\mathbf{(g)}$] $g(\bar{x})\in C^{0,1}(\mathbb{R}_{+}; \mathbb{R})$ and $g'(\bar{x})\in (BV\cap L^{1})(\mathbb{R}_{+}; \mathbb{R})$
such that
\begin{equation}\label{eq:1.20}
g(0)=0, \qquad  g'(0)\leq 0, \qquad g(\bar{x})< 0 \ \ \mbox{for $\bar{x}>0$},
\end{equation}
where $g'(\bar{x})$ is the derivative of $g(\bar{x})$ at the differentiable point $\bar{x}$
on boundary $\Gamma$.
\end{enumerate}

\medskip
The first main result is the existence of solutions of Problem I.

\begin{theorem}\label{thm:1.1}
Under assumptions $\mathbf{(U_0)}$ and $\mathbf{(g)}$, for a given fixed hypersonic similarity parameter
$a_{\infty}$ as defined by \eqref{eq:1.9}, there exist constants $\varepsilon_{0}>0, \tau_{0}>0$, and $C_1>0$
depending only on $(\underline{U},a_{\infty})$ such that,
for any $\varepsilon \in(0, \varepsilon_{0})$ and $\tau\in(0,\tau_{0})$, if
\begin{eqnarray}\label{eq:1.21}
\|U_{0}(\bar{y})-\underline{U}\|_{BV(\mathcal{I})}
+|g'(0)|+\|g'\|_{BV([0,\infty))}\leq \varepsilon,
\end{eqnarray}
then there exists a uniformly Lipschtiz constant $L>0$ depending only on $(\underline{U}, a_{\infty})$
so that the initial-boundary value problem \eqref{eq:1.11}--\eqref{eq:1.12} admits domains $\mathcal{D}_{\bar{x}}$
and a unique map $\mathcal{P}^{(\tau)}$
\begin{eqnarray}\label{eq:1.22}
\mathcal{P}^{(\tau)}(\bar{x},\bar{x}_{0})\,:\,\mathcal{D}_{\bar{x}_{0}}\longmapsto \mathcal{D}_{\bar{x}} \qquad\,\,\,
\mbox{for all $\bar{x}\geq\bar{x}_{0}\geq 0$},
\end{eqnarray}
satisfying the following properties{\rm :}

\begin{enumerate}
\item[\rm (i)]\, For any $\bar{x}\geq 0$ and $U^{(\tau)}(\bar{x},\bar{y})\in \mathcal{D}_{\bar{x}}$,
$$
\mathcal{P}^{(\tau)}(\bar{x},\bar{x})(U^{(\tau)}(\bar{x},\bar{y}))=U^{(\tau)}(\bar{x},\bar{y}),
$$
and, for all $0\leq\bar{x}_{0}\leq\bar{x}' \leq \bar{x}$ and $U^{(\tau)}(\bar{x}_{0},\bar{y})\in \mathcal{D}_{\bar{x}_{0}}$,
\begin{eqnarray}\label{eq:1.23}
\mathcal{P}^{(\tau)}(\bar{x},\bar{x}_{0})(U^{(\tau)}(\bar{x}_{0},\bar{y}))
=\mathcal{P}^{(\tau)}(\bar{x},\bar{x}')\circ \mathcal{P}^{(\tau)}(\bar{x}',\bar{x}_{0})(U^{(\tau)}(\bar{x}_{0},\bar{y}));\qquad
\end{eqnarray}

\item[\rm (ii)]\, $U^{(\tau)}(\bar{x},\bar{y})=\mathcal{P}^{(\tau)}(\bar{x},0)(U_{0}(\bar{y}))$ is the entropy solution
of {\rm Problem I}
for any $U_{0}(\bar{y})\in \mathcal{D}_{0}$
and satisfies
\begin{align}
&\sup_{\bar{x}>0}\|U^{(\tau)}(\bar{x},\cdot)-\underline{U}\|_{L^{\infty}({(-\infty, g(\bar{x})))}}\nonumber\\
&\,\,+ \sup_{\bar{x}>0}\|U^{(\tau)}(\bar{x},\cdot)-\underline{U}\|_{BV((-\infty, g(\bar{x})))}
\leq C_{1}\varepsilon,\label{eq:1.23b}
\end{align}
where constant $C_{1}>0$ depends only on $(\underline{U},a_{\infty})${\rm ;}

\item[\rm (iii)]\, If $\tilde{\mathcal{P}}^{(\tau)}$ and $\tilde{\mathcal{D}}_{\bar{x}}$ are the map and domains corresponding
to another boundary $\bar{y}=\tilde{g}(\bar{x})$, then, for any
$U^{(\tau)}(\bar{x}_{0},{\bar{y}})\in \mathcal{D}_{\bar{x}_{0}}$, $V^{(\tau)}(\bar{x}_{0},{\bar{y}})\in \tilde{\mathcal{D}}_{\bar{x}_0}$,
and $\bar{x}''>\bar{x}'\geq\bar{x}_0$,
\begin{align}\label{eq:1.24}
&\big\|\mathcal{P}^{(\tau)}(\bar{x}',\bar{x}_{0})(U^{(\tau)}(\bar{x}_{0},\cdot))-\tilde{\mathcal{P}}^{(\tau)}(\bar{x}'',\bar{x}_{0})(V^{(\tau)}(\bar{x}_{0},\cdot))
\big\|_{L^{1}((-\infty,\hat{g}(\bar{x}'')))}\nonumber\\[1pt]
&\leq
L\big(|\bar{x}'-\bar{x}''|+\|U^{(\tau)}(\bar{x}_{0},\cdot)-V^{(\tau)}(\bar{x}_{0},\cdot)\|_{L^{1}((-\infty,\hat{g}(\bar{x}')))}
+\|g'-\tilde{g}'\|_{L^{1}((\bar{x}_0,\infty))}\big),\nonumber\\[1mm]
\end{align}
where $\,\hat{g}(\bar{x})=\max\{g(\bar{x}), \tilde{g}(\bar{x})\}$,
$\,\mathcal{P}^{(\tau)}(\bar{x}',\bar{x}_{0})(U^{(\tau)}(\bar{x}_{0},\cdot))\doteq \underline{U}$ on $(g(\bar{x}),\hat{g}(\bar{x}'))$,\\[2mm]
and $\tilde{\mathcal{P}}^{(\tau)}(\bar{x}'',\bar{x}_{0})(V^{(\tau)}(\bar{x}_{0},\cdot))\doteq \underline{U}$ on $(\tilde{g}(\bar{x}),\hat{g}(\bar{x}'))$.
\end{enumerate}
\end{theorem}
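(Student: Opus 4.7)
\medskip
\noindent\textbf{Proof proposal.}
The plan is to carry out a wave-front tracking construction adapted to the characteristic wedge boundary, and then upgrade the resulting approximate solutions to a Standard Riemann Semigroup (in the sense of Bressan) that depends Lipschitz continuously on both the Cauchy data and the boundary function. First, I would analyze the Riemann problem for the scaled full Euler system \eqref{eq:1.11} around the background state $\underline{U}$: check strict hyperbolicity in the $\bar{x}$--direction (the eigenvalues are perturbations of $0$ and $\pm c_\infty/u_\infty$ modulo $O(\tau^2)$), verify that the $1$-- and $4$--families are genuinely nonlinear and that the $2,3$--families are linearly degenerate (as for the unscaled full Euler system), and construct the shock/rarefaction/contact curves with bounds uniform in $\tau\in(0,\tau_0)$. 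In parallel, I would solve the \emph{boundary Riemann problem} at the wedge: given a state just above the boundary and a slope $g'(\bar x)$, the slip condition \eqref{eq:1.13} selects a unique outgoing $1$--wave (a shock or rarefaction) whose strength is Lipschitz in $g'(\bar x)$ and in the incoming state, again with $\tau$-uniform constants.

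Next, I would approximate $U_0$ by piecewise constant data and $g$ by piecewise linear functions, and run a wave-front tracking algorithm in $\Omega$ with $\bar{x}$ as the evolution variable. Genuine Riemann problems are solved exactly on each mesh, and approximate/simplified solvers are invoked to keep the number of fronts finite; at every corner of the polygonal boundary a boundary Riemann problem is solved and its outgoing $1$--wave is added to the wave pattern. To close the a priori bounds I would introduce a Glimm-type functional of the form
\begin{equation*}
\Upsilon(\bar x)=V(\bar x)+\kappa_1 Q(\bar x)+\kappa_2\,\mathrm{TV}\{g'(s):s>\bar x\},
\end{equation*}
where $V$ is the weighted total variation of wave strengths in $(-\infty,g(\bar x))$, $Q$ is the standard interaction potential, and the last term accounts for future boundary perturbations. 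Using the interaction estimates for \eqref{eq:1.11} together with the boundary reflection estimate that controls the strength of the $1$--wave produced at a corner by $|\Delta g'|$ times the incoming $4$--wave strength, one shows that $\Upsilon$ is nonincreasing in $\bar x$ provided $\varepsilon_0,\tau_0$ are small. This yields the uniform $BV$ bound \eqref{eq:1.23b} and lets me pass to the limit, via Helly's theorem, to a BV entropy solution in the sense of Definition \ref{def:1.1}; the entropy inequality \eqref{eq:1.18} follows because only admissible Lax shocks are used and the approximate entropy production is nonpositive.

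For parts (i) and (iii) I would work at the level of the wave-front tracking trajectories and show that two approximate solutions $U^{(\tau)}_\nu$, $V^{(\tau)}_\nu$, corresponding to initial data $U^{(\tau)}_0,V^{(\tau)}_0$ and boundaries $g,\tilde g$, satisfy a Liu--Yang/Bressan type functional
\begin{equation*}
\Phi(\bar x)=\sum_\alpha \int |q_\alpha(\bar x,\bar y)|\,W_\alpha(\bar x,\bar y)\,\mathrm d\bar y
\end{equation*}
built from the $\bar y$-densities $q_\alpha$ of the shock curves connecting $U^{(\tau)}_\nu$ to $V^{(\tau)}_\nu$ and weights $W_\alpha$ that count both volumetric and boundary interactions yet to come. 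The crucial new contribution is the boundary term: at each point where the two piecewise linear boundaries disagree, one inserts an artificial $1$--front of strength proportional to $|g'-\tilde g'|$ and adds $\kappa\,\|g'-\tilde g'\|_{L^1((\bar x,\infty))}$ to the weights, so that $\Phi$ is equivalent to the right-hand side of \eqref{eq:1.24}. Showing $\Phi(\bar x)$ is nonincreasing yields the Lipschitz estimate with $L$ independent of $\tau$, and then the semigroup property \eqref{eq:1.23} follows by the standard uniqueness argument of Bressan.

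The main obstacle, and the reason the theorem is delicate, is the combined presence of the \emph{characteristic} Lipschitz wedge boundary and the dependence of the $1$--reflection coefficient on $\tau$. Keeping the Lipschitz constant $L$ uniform as $\tau\to 0$ forces one to exhibit cancellations in the boundary Riemann solver and in the cross terms of the interaction potential that behave like $O(\tau^2)$ relative to the dominant transport terms; this requires exploiting the special block structure of \eqref{eq:1.11} (the two linearly degenerate fields decouple from the $\tau^2$--perturbation of the $1,4$--fields at leading order). Once this uniform reflection estimate is in hand, the wave-front tracking machinery adapted from Amadori--Colombo and Bressan carries through with the boundary term added to the Glimm and Liu--Yang functionals as above.
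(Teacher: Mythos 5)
Your overall architecture (interior and boundary Riemann solvers with $\tau$-uniform bounds, wave-front tracking, a Glimm functional augmented by the future total variation of $g'$, a Liu--Yang functional for $L^1$-stability, then passage to the limit) coincides with the paper's. The gap is in the mechanism you propose for the boundary contribution to the Lyapunov functional. You suggest inserting artificial $1$-fronts of strength $\sim|g'-\tilde g'|$ and adding $\kappa\|g'-\tilde g'\|_{L^1((\bar x,\infty))}$ to the weights, so that differentiating in $\bar x$ produces a decay term $-\kappa|g'(\bar x)-\tilde g'(\bar x)|$. But the boundary flux term that actually appears in $\frac{\mathrm{d}\mathscr{L}^{(\tau)}}{\mathrm{d}\bar x}$ is evaluated on the \emph{common lower envelope} $\check g=\min\{g,\tilde g\}$ and, after the key algebraic reduction, is proportional to the difference of the two solutions' \emph{flow-slope traces} $\big(\tfrac{v^{(\tau)}}{1+\tau^2u^{(\tau)}}-\tfrac{\hat v^{(\tau)}}{1+\tau^2\hat u^{(\tau)}}\big)(\bar x,\check g(\bar x))$, not of $g'-\tilde g'$. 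Each solution satisfies its slip condition only on its \emph{own} boundary, so identifying the trace at $\check g$ with the boundary slope incurs an error that must itself be estimated. The paper closes this loop with two ingredients you do not supply: (a) Lemma \ref{lem:2.8}, an exact structural identity $q^{(\tau)}_1(b)=\mathbb{K}^{(\tau)}_b q^{(\tau)}_4(b)+\mathbb{K}^{(\tau)}_\theta(\theta'_{k'}-\theta_k)$ at the characteristic boundary, which rests on the invariance of the flow angle and pressure across the two linearly degenerate fields (this is a cancellation by identity, not the $O(\tau^2)$ smallness you invoke); and (b) Lemma \ref{lem-BV-velocity} together with Proposition \ref{prop-flow-slop}, a BV bound for $(\tfrac{v^{(\tau)}}{1+\tau^2u^{(\tau)}},p^{(\tau)})$ along arbitrary Lipschitz curves and an $L^1$-comparison of the flow slope evaluated on two different curves, controlled by $\|\check g_h-g_h\|_{L^\infty}$. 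Without (b) the term you would need to absorb cannot be reduced to $\|g'-\tilde g'\|_{L^1}$ plus the $L^1$-distance of the data, and without (a) the linear combination of the $q^{(\tau)}_i$ at the boundary cannot be cancelled by any choice of weights, precisely because the boundary is characteristic for the multiplicity-two linearly degenerate family.

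A secondary imprecision: the correct forward-in-$\bar x$ boundary correction in the Lyapunov functional is
\begin{equation*}
\kappa^{(\tau)}_g\int_{\bar x}^{\infty}\Big|\Big(\frac{v^{(\tau)}}{1+\tau^2u^{(\tau)}}\Big)(s,\check g(s))-\Big(\frac{\hat v^{(\tau)}}{1+\tau^2\hat u^{(\tau)}}\Big)(s,\check g(s))\Big|\,\mathrm{d}s,
\end{equation*}
an integral of solution traces, while the future turning angles of the \emph{two} boundaries enter separately (as $\kappa^{(\tau)}_c\sum_{k>[\bar x/h]}\omega_k+\kappa^{(\tau)\prime}_c\sum_{k'>[\bar x/h']}\omega'_{k'}$) inside the weights $W^{(\tau)}_j$; replacing all of this by $\|g'-\tilde g'\|_{L^1((\bar x,\infty))}$ alone, as you propose, loses the decay needed to dominate the reflected waves at corners of each individual boundary. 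The rest of your outline (existence, the uniform $BV$ bound, the semigroup property, and the limits $\nu\to\infty$, $h\to 0$) is consistent with the paper's proof.
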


\noindent
{In Theorem \ref{thm:1.1} and in the sequel, $\|\cdot\|_{BV}$ represents the total variation of the state on the domain under consideration.}

\smallskip
Based on Theorem \ref{thm:1.1}, our second main result is to provide a positive answer to Problem II.

\begin{theorem}\label{thm:1.2}
Under the same assumptions in {\rm Theorem \ref{thm:1.1}}, let
$U^{(\tau)}(\bar{x}, \bar{y})$
be the entropy solution of {\rm Problem I}
as given in {\rm Theorem \ref{thm:1.1}}.
Let $U(\bar{x},\bar{y})$
be the entropy solution of the initial-boundary value problem \eqref{eq:1.12}--\eqref{eq:1.15}
given in {\rm Proposition A.2}.
Then there exist small parameters $\varepsilon^*_{0}<\varepsilon_{0}$ and $\tau^{*}_{0}<\tau_{0}$
depending only on $(\underline{U},a_{\infty})$ such that, for any $\varepsilon\in (0, \varepsilon_0^*)$ and $\tau\in (0, \tau_0^*)$,
the following optimal $L^1$--convergence rate
holds{\rm :}
\begin{eqnarray}\label{eq:1.25}
\|U^{(\tau)}(\bar{x}, \cdot)-U(\bar{x}, \cdot)\|_{L^{1}((-\infty, g(\bar{x})))}
\leq C_2\bar{x} \tau^{2},
\end{eqnarray}
where $C_2>0$ depends only on $\|U_{0}(\cdot)\|_{BV(\mathcal{I})}$, $|g'(0)|$, $\|g'(\cdot)\|_{BV(\mathbb{R}_{+})}$, and $L$,
but is independent of $\tau$ and $\bar{x}$.
\end{theorem}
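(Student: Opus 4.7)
The plan is to leverage the uniform-in-$\tau$ Lipschitz continuity of the solution map $\mathcal{P}^{(\tau)}$ provided by estimate \eqref{eq:1.24} of Theorem \ref{thm:1.1}, together with Bressan's error-estimate formula for Lipschitz continuous semigroups. Regarding the entropy solution $U$ of \eqref{eq:1.12}--\eqref{eq:1.15} (supplied by Proposition A.2) as a ``competitor'' trajectory for the $\tau$-system, the stability bound from Theorem \ref{thm:1.1} yields
\begin{equation*}
\|U^{(\tau)}(\bar{x},\cdot) - U(\bar{x},\cdot)\|_{L^{1}((-\infty, g(\bar{x})))}
\leq L \int_{0}^{\bar{x}} \liminf_{h\to 0^{+}} \frac{\|\mathcal{P}^{(\tau)}(\bar{s}+h, \bar{s})(U(\bar{s},\cdot)) - U(\bar{s}+h,\cdot)\|_{L^{1}}}{h}\, \dd \bar{s}.
\end{equation*}
Thus the whole problem reduces to showing that the integrand is uniformly bounded by $C\tau^{2}$, which immediately gives the linear-in-$\bar{x}$ rate $C_{2}\bar{x}\tau^{2}$ claimed in \eqref{eq:1.25}.

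To estimate the integrand, I would work with the wave-front tracking approximations of $U$ (the construction of which is parallel to that of $U^{(\tau)}$ in the proof of Theorem \ref{thm:1.1}, with $\tau=0$) and compare, locally at each front in the interior and at each contact of the approximated solution with the wedge boundary, the Riemann solvers of the two systems \eqref{eq:1.11} and \eqref{eq:1.14}. In the interior, systems \eqref{eq:1.11} and \eqref{eq:1.14} differ only through the factor $1+\tau^{2}\bar{u}$ appearing in the $\bar{x}$-fluxes; consequently the eigenvalues, the Hugoniot loci of the two nonlinear families, the rarefaction curves, and the contact-discontinuity curve of \eqref{eq:1.11}, parameterized by wave strength, differ from their $\tau=0$ counterparts by $O(\tau^{2})$ in $C^{1}$. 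A standard strength-by-strength comparison then yields, for any Riemann data $(U^{-},U^{+})$ with $|U^{+}-U^{-}|\leq C\varepsilon$,
\begin{equation*}
\|R^{(\tau)}(h,\cdot)-R^{(0)}(h,\cdot)\|_{L^{1}} \leq C h\,\tau^{2}\,|U^{+}-U^{-}|,
\end{equation*}
so that summation over all fronts (whose total strength is controlled by $C\varepsilon$ thanks to the Glimm-type functional used in Theorem \ref{thm:1.1}) produces an $O(\tau^{2})$ contribution per unit $\bar{s}$-interval from the interior.

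At the wedge boundary, the two slip conditions \eqref{eq:1.13} and \eqref{eq:1.15} differ precisely by the $\tau^{2}\bar{u}$-correction; for a fixed wedge slope $g'(\bar{x})$ the reflected waves produced by the two boundary Riemann solvers therefore differ in strength by $O(\tau^{2})$. A case-by-case analysis, identical in structure to the boundary interaction estimates used in establishing the Lipschitz semigroup of Theorem \ref{thm:1.1} (one must in particular treat the interaction of an incoming $1$-, $2$-, $3$-, or $4$-wave with the boundary, and the appearance or disappearance of boundary-generated fronts), again gives an $L^{1}$-discrepancy of order $h\tau^{2}$ per boundary interaction, whose sum over $\bar{s}\in (0,\bar{x})$ is controlled uniformly by the total variation of $g'$. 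Combining the interior and boundary contributions and invoking the Lipschitz bound \eqref{eq:1.24} delivers \eqref{eq:1.25}, with the constant $C_{2}$ expressed in terms of $L$, $\|U_{0}-\underline{U}\|_{BV}$, $|g'(0)|$ and $\|g'\|_{BV}$.

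The principal obstacle is the boundary Riemann comparison: the nonlinear geometric factor $1+\tau^{2}\bar{u}$ in \eqref{eq:1.13} prevents one from reducing the scaled slip condition to a standard reflection condition, and the curved characteristic nature of the wedge means the comparison must be carried out along a boundary whose normal itself varies. This is exactly why the Lipschitz dependence of $\mathcal{P}^{(\tau)}$ on the boundary function established in Theorem \ref{thm:1.1}, which is new for characteristic boundaries, is essential: it quantifies the effect of boundary perturbations and lets us convert the pointwise $O(\tau^{2})$ defect in the boundary condition into a global $L^{1}$ error. Finally, for the optimality of the $\tau^{2}$ rate, I would exhibit a background problem with straight wedge $g(\bar{x})=-\delta \bar{x}$ and uniform upstream state, for which both $U^{(\tau)}$ and $U$ reduce to a single oblique shock and can be computed explicitly; a Taylor expansion in $\tau^{2}$ of the Rankine--Hugoniot relations for \eqref{eq:1.1} against those for \eqref{eq:1.14} shows that the leading-order $L^{1}$ discrepancy is precisely of order $\tau^{2}$, proving that the exponent in \eqref{eq:1.25} cannot be improved.
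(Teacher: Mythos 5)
Your proposal follows essentially the same route as the paper: reduce via the Lipschitz map $\mathcal{P}^{(\tau)}_{h}$ and the integral error formula (the paper's Lemma 4.1) to a local comparison of the Riemann solvers of \eqref{eq:1.11} and \eqref{eq:1.14}, show the interior and boundary local defects are each $O(\tau^{2})s$ by $C^{1}$-closeness of the wave curves and of the two slip conditions, sum using the Glimm-functional BV control, and pass to the limit in $(h,\nu)$; the optimality argument via an explicit single shock whose $\tau$-counterpart is a full wave fan with $O(\tau^{2})$ secondary waves is also the paper's construction (done there for an interior Riemann datum rather than an attached oblique shock, but to the same effect). No substantive gap.
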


Since $U^{(\tau)}=\big(\bar{\rho}^{(\tau)}, \bar{u}^{(\tau)}, \bar{v}^{(\tau)}, \bar{p}^{(\tau)}\big)^{\top}$
and $U=\big(\bar{\rho}, \bar{u}, \bar{v}, \bar{p}\big)^{\top}$, estimate \eqref{eq:1.25} is equivalent to
\begin{eqnarray}\label{eq:1.25a}
\big\|\big(\bar{\rho}^{(\tau)}-\bar{\rho},\bar{u}^{(\tau)}-\bar{u}, \bar{v}^{(\tau)}-\bar{v}, \bar{p}^{(\tau)}-\bar{p}\big)(\bar{x},\cdot)\big\|_{L^{1}((-\infty, g(\bar{x})))}
\leq C_2\bar{x} \tau^{2}.
\end{eqnarray}
The order of $\tau^2$ in Theorem \ref{thm:1.2} for a finite length of the wedge is consistent with
what the Newtonian-Busemann law is predicted, which states that the error term should be of order $\tau^2$
(see \cite[(3.29), pp.\,67]{anderson} for more details).
Moreover, this $L^1$--convergence rate in \eqref{eq:1.25a} is optimal for the infinite length of wedge,
as indicated by a special solution to be demonstrated in the last part of \S4 below.

{
Mathematically, the main difficulties for solving Problems I--II are that the wedge boundary conditions are both characteristic.
One of the boundary conditions is of Neumann-type, while the other one is of Dirichlet-type.
Both systems considered have two characteristics with multiplicity, and the corresponding characteristic fields are linearly degenerate.
They are different from the ones studied in \cite{colombo-guerra, donadello-marson, kuang-xiang-zhang-2} for the non-characteristic cases.
Moreover, due to the curved boundary, the error estimate of the semigroup for the Cauchy problem as stated in \cite{bressan} can not be applied directly
so that the approaches used in \cite{chen-christoforou-zhang-1, chen-christoforou-zhang-2, chen-xiang-zhang, zhang-3}
for the Cauchy problem have to be further developed for the initial-boundary value problem with multiplicity characteristic boundary.

In fact,
the comparison between the solutions of the Cauchy problems governed by two different systems
was considered in \cite{chen-christoforou-zhang-1, chen-christoforou-zhang-2, chen-xiang-zhang, zhang-3},
provided that one of them is the standard Riemann semigroup and the other is a global entropy solution obtained by the wave-front tracking scheme.
As far as we have known,
the result established in this paper is the first rigorous mathematical result on the comparison of two entropy solutions
of the compressible Euler equations with characteristic boundary conditions.

To overcome these difficulties and establish the main theorems, we first need to construct a new Lipschitz continuous map
by carefully piecing together the Riemann solutions with a piecewise straight and characteristic boundary,
and then to establish the error estimate of this map with this boundary.

\smallskip
More precisely, to establish Theorem \ref{thm:1.1}, we first proceed with the following four main steps:

\vspace{2pt}
$\mathbf{Step\ 1.}$ We approximate the boundary: $y=g(x)$, by a piecewise straight curve $y=g_{h}(x)$ with a mesh length $h$,
and then construct a global $(\nu,h)$--approximate solution $U^{(\tau)}_{h,\nu}$ of the initial-boundary value problem \eqref{eq:1.11}--\eqref{eq:1.12}
for $\tau \neq 0$ with boundary $y=g_{h}(x)$ and piecewise constant initial data $U^{\nu}_{0}$ via the wave-front tracking scheme.

\vspace{1pt}
$\mathbf{Step\ 2.}$ To derive the $L^1$--stability of the approximate solution $U^{(\tau)}_{h,\nu}$, we construct a weighted Lyapunov functional $\mathscr{L}^{(\tau)}$ for the two approximate solutions $U^{(\tau)}_{h,\nu}$ and $V^{(\tau)}_{h',\nu}$ corresponding to the initial-boundary data $(U^{\nu}_{0}, g_{h})$ and $(V^{\nu}_0, g_{h'}(x))$, respectively; see \eqref{eq:2.41} below for more details.

\vspace{1pt}
$\mathbf{Step\ 3.}$ We differentiate $\mathscr{L}^{(\tau)}$ with respect to $x$ at the non-interaction point to prove that $\mathscr{L}^{(\tau)}$ decreases along the flow direction $x>0$. The proof is divided into the two cases:

\vspace{2pt}
(i) {\it The point is away from the boundary}. This can be dealt with as done in \cite{bressan-liu-yang}.

(ii) {\it The point is near the boundary}. For this case, since the boundary is a characteristic that corresponds to the linearly degenerate field, the linear combination of ${q^{(\tau)}_i}, 1\leq i\leq 4$, in $\frac{\dd\mathscr{L}^{(\tau)}}{\dd x}$ can not be canceled by using the usual weighting methods.
Fortunately, we observe that the flow angle is unchanged across the linearly degenerate characteristic field.
Based on this observation, we can establish a new relation of the strengths between the waves of the {genuinely} nonlinear fields and the linear degenerate {characteristic} fields; see {Lemma \ref{lem:2.8}} in detail.
By this relation, we can see that
functional $\mathscr{L}^{(\tau)}$ is almost decreasing.
{Moreover, by the invariance property of the flow angle and the pressure when crossing the linearly degenerate characteristic field, we can further
establish the $L^1$-comparison estimate for the flow slope function with respect to the different boundary curves.}

\vspace{1pt}
$\mathbf{Step\ 4.}$ With these, we can show that $U^{(\tau)}_{h,\nu}$ converges to $U^{(\tau)}_{h}$ uniquely as $\nu\rightarrow \infty$. Hence, we establish the global existence of a Lipschitz continuous map $\mathcal{P}^{(\tau)}_{h}$ with $U^{(\tau)}_{h}=\mathcal{P}^{(\tau)}_{h}(\bar{x},0)(U_{0})$ and an $L^{1}$--stability estimate
with boundary function $g_{h}(\bar{x})$.
Then it follows from the limit: $\mathcal{P}^{(\tau)}_{h}(\bar{x},0)(U_{0})\rightarrow \mathcal{P}^{(\tau)}(\bar{x},0)(U_{0})$ as $h\rightarrow 0$
to establish the existence {and $L^1$--stability} of the global entropy solution of Problem I.

Similarly, we can also establish the {existence of the approximate solution} $U_{h,\nu}$ of problem \eqref{eq:1.12}--\eqref{eq:1.15}
for piecewise constant initial data and piecewise straight boundary; see Appendix A for the details.
Therefore, taking the limits: $\nu\rightarrow \infty$ first and $h\rightarrow 0$ second as above,
we can also establish the existence of the global entropy solution of problem \eqref{eq:1.12}--\eqref{eq:1.15}.

Next, in order to prove Theorem \ref{thm:1.2}, we can proceed in the following way: First, we compare the solutions of the Riemann problems between \eqref{eq:1.11}--\eqref{eq:1.12} and \eqref{eq:1.12}--\eqref{eq:1.15} case by case.
Then, applying the Lipschitz continuity of map $\mathcal{P}^{(\tau)}_{h}$ of problem \eqref{eq:1.11}--\eqref{eq:1.12},
we can obtain the local $L^1$--estimates between the approximate solution $U_{h,\nu}(\bar{x}+s,\cdot)$ of problem \eqref{eq:1.12}--\eqref{eq:1.15}
and trajectory $\mathcal{P}^{(\tau)}_{h}(\bar{x}+s,\bar{x})(U_{h,\nu}(\bar{x},\cdot))$ for $s>0$ sufficiently small.
Based on these, to obtain the global $L^1$--estimate between the approximate solution $U_{h,\nu}$ and
trajectory
$\mathcal{P}^{(\tau)}_{h}(\bar{x},0)(U^{\nu}_{0}(\cdot))$, we establish the following formula
with an approximate boundary $g_{h}$ (see Lemma \ref{lem:5.1}):
\begin{align*}
&\big\|\mathcal{P}^{(\tau)}_{h}(\bar{x},0)(U^{\nu}(0, \cdot))-U_{h,\nu}(\bar{x}, \cdot)\big\|_{L^{1}((-\infty, g_{h}(\bar{x})))}\\[2pt]
&\leq L\int^{\bar{x}}_{0}
\lim_{s\rightarrow0^{+}}\inf\frac{\big\|\mathcal{P}^{(\tau)}_{h}(\xi+s, \xi)(U_{h,\nu}(\xi,\cdot))-U_{h,\nu}(\xi+s, \cdot)\big\|_{L^{1}((-\infty, g_{h}(\xi+s)))}}{s}\,\dd\xi.
\end{align*}

Finally,
we let $\nu\rightarrow \infty$ and $h\rightarrow 0$ to show estimate \eqref{eq:1.25}, which justifies the Van Dyke's similarity theory rigorously.
}

\smallskip
The remaining sections of this paper are organized as follows:
In \S 2, we consider the entropy solutions $U^{(\tau)}$ of Problem I via a modified wave-front tracking algorithm
and then {complete} the proof of Theorem \ref{thm:1.1}.
Meanwhile, by a similar argument, we can also establish the existence {theorem}
for the initial-boundary value
problem \eqref{eq:1.12}--\eqref{eq:1.15}
by the modified wave-front tracking scheme, which {is} given in Appendix A.
After that, we compare the Riemann solvers with boundaries between systems \eqref{eq:1.11} and \eqref{eq:1.14},
and then obtain the local $L^1$--estimates between the approximate solution $U_{h,\nu}(\bar{x}+s,\cdot)$
and trajectory $\mathcal{P}^{(\tau)}_{h}(\bar{x}+s,\bar{x})(U_{h,\nu}(\bar{x},\cdot))$ for $s>0$ sufficiently small in \S 3.
In \S 4, we establish the global $L^1$--estimate between $\mathcal{P}^{(\tau)}_{h}(\bar{x},0)(U^{\nu}_{0})$
and the approximate solution $U_{h,\nu}$ of the initial-boundary {value} problem \eqref{eq:1.12}--\eqref{eq:1.15}
to show estimate \eqref{eq:1.25}. Finally, we verify that the convergence rate obtained in \eqref{eq:1.25} is optimal
by considering a special solution.

\smallskip
For notational convenience in the following sections,
$C>0$ always represents a universal constant that depends only on $(\underline{U}, a_\infty)$,
$O(1)$ is a universal function so that $|O(1)|\le C$,
and $(\bar{\rho}^{(\tau)}, \bar{u}^{(\tau)}, \bar{v}^{(\tau)}, \bar{p}^{(\tau)})$,
$(\bar{\rho}, \bar{u}, \bar{v}, \bar{p})$, and $(\bar{x}, \bar{y})$ are denoted as $(\rho^{(\tau)}, u^{(\tau)}, v^{(\tau)}, p^{(\tau)})$,
$(\rho, u, v, p)$, and $(x,y)$, correspondingly.

\section{Modified Wave-Front Tracking Scheme and Well-Posedness of {Problem I}}\setcounter{equation}{0}
In this section, we establish the global existence of entropy solutions of Problem I
by introducing a modified wave-front tracking algorithm
in $BV\cap L^{1}$.
Constructing a weighted entropy functional carefully involving a characteristic boundary,
we further show the $L^{1}$--stability estimate between the two entropy solutions with respect to the initial and boundary data,
and then we complete the proof of Theorem \ref{thm:1.1}.

\subsection{Elementary wave curves of system \eqref{eq:1.11}}

By direct computation,
the eigenvalues of system \eqref{eq:1.11} are
\begin{align}
&\lambda^{(\tau)}_{j}(U^{(\tau)},\tau^{2})\nonumber\\[2pt]
&=\frac{\big(1+\tau^{2}u^{(\tau)}\big)v^{(\tau)}+(-1)^{j}c^{(\tau)}\sqrt{\big(1+\tau^{2}u^{(\tau)}\big)^{2}
+\tau^{2}\big((v^{(\tau)})^{2}-(c^{(\tau)})^{2}\big)}}{\big(1+\tau^{2}u^{(\tau)}\big)^{2}-\tau^{2}\big(c^{(\tau)}\big)^{2}} \label{eq:2.2}
\end{align}
for $j=1, 4$, and
\begin{equation}\label{eq:2.3}
\lambda^{(\tau)}_{i}(U^{(\tau)},\tau^{2})=\frac{v^{(\tau)}}{1+\tau^{2}u^{(\tau)}} \qquad\,\, \mbox{for $i=2, 3$},
\end{equation}
with the corresponding right eigenvectors:
\begin{align}\label{eq:2.4}
\tilde{\mathbf{r}}^{(\tau)}_{j}(U^{(\tau)}, \tau^{2})=(\frac{\big(1+\tau^{2}(\lambda^{(\tau)}_{j})^2\big)\rho^{(\tau)}}{(1+\tau^{2}u^{(\tau)})\lambda^{(\tau)}_{j}-v^{(\tau)}}, -\lambda^{(\tau)}_{j}, 1, \big((1+\tau^{2}u^{(\tau)})\lambda^{(\tau)}_{j}-v^{(\tau)}\big)\rho^{(\tau)})^{\top}
\end{align}
for $j=1, 4$, and
\begin{eqnarray}\label{eq:2.5}
\tilde{\mathbf{r}}^{(\tau)}_{2}(U^{(\tau)}, \tau^{2})=\big(0, 1+\tau^2u^{(\tau)}, \tau^{2}v^{(\tau)}, 0\big)^{\top},
\quad \tilde{\mathbf{r}}^{(\tau)}_{3}(U^{(\tau)}, \tau^{2})=\big(1, 0,0 ,0 \big)^{\top}.\quad
\end{eqnarray}

For $\lambda^{(\tau)}_{j}$ and $\tilde{\mathbf{r}}^{(\tau)}_{j}$, $1\leq j\leq 4$, we have the following lemma
to know that the $1^{\rm st}$ and
$4^{\rm th}$ characteristic fields are genuinely nonlinear, and the $2^{\textnormal{nd}}$ and $3^{\textnormal{rd}}$ characteristic fields
are linearly degenerate.

\begin{lemma}\label{lem:2.1}
For a fixed hypersonic similarity parameter $a_{\infty}$ defined in \eqref{eq:1.9},
there exist positive constants $\tau_{1}$ and $\epsilon_{1}$ depending only on $(\underline{U}, a_{\infty})$
such that, for $\epsilon\in (0,\epsilon_{1})$ and $\tau\in (0,\tau_{1})$,
if $U^{(\tau)}\in \mathcal{O}_{\epsilon}(\underline{U})$, then
\begin{align}
&\nabla_{U^{(\tau)}}\lambda^{(\tau)}_{j}(U^{(\tau)},\tau^{2})\cdot \tilde{\mathbf{r}}^{(\tau)}_{j}(U^{(\tau)}, \tau^{2})>0
\qquad \mbox{for $j=1, 4$},\label{eq:2.6}\\[1mm]
&\nabla_{U^{(\tau)}}\lambda^{(\tau)}_{i}(U^{(\tau)},\tau^{2})\cdot \tilde{\mathbf{r}}^{(\tau)}_{i}(U^{(\tau)}, \tau^{2})=0
\qquad \mbox{for $i=2, 3$}.\label{eq:2.7}
\end{align}
\end{lemma}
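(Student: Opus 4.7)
\smallskip
\noindent\textbf{Proof proposal.}
The plan is to handle the two assertions separately: \eqref{eq:2.7} is a direct algebraic identity that holds for every admissible $U^{(\tau)}$ and every $\tau$, while \eqref{eq:2.6} is established at the background state with $\tau=0$ and then extended by a continuity (perturbation) argument to the neighborhood $\mathcal{O}_{\epsilon}(\underline{U})\times[0,\tau_1)$. A preliminary observation is that at $\underline{U}$ one has $u=v=0$, $\rho=1$, and $c^2=\gamma p/\rho=1/a_\infty^2$, so the denominator $(1+\tau^2 u)^2-\tau^2 c^2$ in \eqref{eq:2.2} is bounded below by $1-\tau^2/a_\infty^2>0$ and the radicand under the square root is $1+\tau^2(v^2-c^2)\ge 1-\tau^2/a_\infty^2>0$ for $\tau$ small; hence $\lambda^{(\tau)}_j$ and $\tilde{\mathbf{r}}^{(\tau)}_j$ depend smoothly on $(U^{(\tau)},\tau)$ near $(\underline{U},0)$.

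For the linearly degenerate fields $i=2,3$, the identity \eqref{eq:2.7} reduces to direct differentiation of \eqref{eq:2.3}. Since $\tilde{\mathbf{r}}^{(\tau)}_3=(1,0,0,0)^{\top}$ and $\lambda^{(\tau)}_3=v/(1+\tau^2 u)$ is independent of $\rho$, we get $\nabla_{U^{(\tau)}}\lambda^{(\tau)}_3\cdot\tilde{\mathbf{r}}^{(\tau)}_3=\partial_{\rho}\lambda^{(\tau)}_3=0$. For $i=2$ one computes
\[
\partial_u\lambda^{(\tau)}_2=-\frac{\tau^2 v}{(1+\tau^2u)^2},\qquad \partial_v\lambda^{(\tau)}_2=\frac{1}{1+\tau^2u},
\]
so that $\nabla_{U^{(\tau)}}\lambda^{(\tau)}_2\cdot\tilde{\mathbf{r}}^{(\tau)}_2=(1+\tau^2u)\partial_u\lambda^{(\tau)}_2+\tau^2 v\,\partial_v\lambda^{(\tau)}_2=0$ identically.

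For the genuinely nonlinear fields $j=1,4$, the plan is to evaluate the quantity $\mathcal{G}_j(U^{(\tau)},\tau):=\nabla_{U^{(\tau)}}\lambda^{(\tau)}_j\cdot\tilde{\mathbf{r}}^{(\tau)}_j$ at the base point $(\underline{U},0)$. There, \eqref{eq:2.2} reduces to $\lambda^{(0)}_j=v+(-1)^j c$, and \eqref{eq:2.4} collapses to $\tilde{\mathbf{r}}^{(0)}_j=\bigl(\rho/((-1)^j c),-(-1)^j c,1,(-1)^j c\rho\bigr)^{\top}$. Using $\partial_\rho c=-c/(2\rho)$ and $\partial_p c=c/(2p)$, a straightforward dot-product computation yields
\[
\mathcal{G}_j(\underline{U},0)=-\tfrac{1}{2}+1+\tfrac{c^2\rho}{2p}\Big|_{\underline{U}}=\tfrac{\gamma+1}{2}>0.
\]
By the smoothness noted above, $\mathcal{G}_j$ is continuous in $(U^{(\tau)},\tau)$ on a neighborhood of $(\underline{U},0)$, so one can pick $\tau_1,\epsilon_1>0$ depending only on $(\underline{U},a_\infty)$ so that $\mathcal{G}_j\geq\tfrac{\gamma+1}{4}>0$ on $\mathcal{O}_{\epsilon_1}(\underline{U})\times[0,\tau_1)$, yielding \eqref{eq:2.6}.

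I do not expect a serious obstacle here: the only subtlety is book-keeping to confirm that the regularity of $\lambda^{(\tau)}_j$ and $\tilde{\mathbf{r}}^{(\tau)}_j$ extends uniformly in $\tau$ to $\tau=0$, which amounts to checking that the denominators $(1+\tau^2u)^2-\tau^2 c^2$ and $(1+\tau^2u)\lambda^{(\tau)}_j-v$ stay bounded away from zero, and that the radicand in \eqref{eq:2.2} stays strictly positive, on the chosen $\mathcal{O}_{\epsilon_1}(\underline{U})\times[0,\tau_1)$. All three properties hold at $(\underline{U},0)$ with strict inequality, so shrinking $\epsilon_1$ and $\tau_1$ if necessary gives the desired smooth dependence, and the proof is complete.
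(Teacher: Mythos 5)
Your proposal is correct and follows essentially the same strategy as the paper: the degenerate identities \eqref{eq:2.7} are verified by direct differentiation, and \eqref{eq:2.6} is obtained by computing the product at a reference state and extending by smallness of the perturbation. The only (harmless) difference is that you evaluate at the point $(\underline{U},\tau=0)$, where the formulas collapse and the product equals $\tfrac{\gamma+1}{2}$, and then perturb jointly in $(U^{(\tau)},\tau)$, whereas the paper differentiates the quadratic \eqref{eq:2.7a} implicitly to obtain the exact value $\frac{(\gamma+1)a^{4}_{\infty}}{2(a^{2}_{\infty}-\tau^{2})^{2}}$ at $\underline{U}$ for every small $\tau>0$ and then perturbs only in $U^{(\tau)}$; both computations are consistent and yield the same neighborhood statement.
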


\begin{proof}
For $j=1,4$, it is direct to see that $\lambda^{(\tau)}_{j}$ satisfies the following equation:
\begin{align}\label{eq:2.7a}
\big((1+\tau^{2}u^{(\tau)})^2-\tau^{2}(c^{(\tau)})^{2}\big)(\lambda^{(\tau)}_{j})^{2}
-2(1+\tau^{2}u^{(\tau)})v^{(\tau)}\lambda^{(\tau)}_{j}+(v^{(\tau)})^{2}-(c^{(\tau)})^{2}=0,
\end{align}
where $c^{(\tau)}=\sqrt{\frac{\gamma p^{(\tau)}}{\rho^{(\tau)}}}$.
Then, taking the derivative on \eqref{eq:2.7a} with respect to $\rho^{(\tau)}$,
we obtain
\begin{eqnarray*}
&&2\Big(\big((1+\tau^{2}u^{(\tau)})^{2}-\tau^{2}(c^{(\tau)})^{2}\big)\lambda^{(\tau)}_{j}-(1+\tau^{2}u^{(\tau)})v^{(\tau)}\Big)
\frac{\partial\lambda^{(\tau)}_{j}}{\partial \rho^{(\tau)}}\\
&&\,\, +\frac{\big(1+(\lambda^{(\tau)}_{j})^{2}\tau^{2}\big)(c^{(\tau)})^{2}}{\rho^{(\tau)}}=0.
\end{eqnarray*}
Therefore, we have
\begin{eqnarray}\label{eq:2.7b}
\frac{\partial\lambda^{(\tau)}_{j}}{\partial\rho^{(\tau)}}=-\frac{\big(1+\tau^{2}(\lambda^{(\tau)}_{j})^{2}\big)(c^{(\tau)})^{2}}
{2\Big(\big((1+\tau^{2}u^{(\tau)})^{2}-\tau^{2}(c^{(\tau)})^{2}\big)
\lambda^{(\tau)}_{j}-(1+\tau^{2}u^{(\tau)})v^{(\tau)}\Big)\rho^{(\tau)}}.
\end{eqnarray}

Similarly, we can also obtain
\begin{eqnarray}\label{eq:2.7c}
\begin{split}
&\frac{\partial\lambda^{(\tau)}_{j}}{\partial u^{(\tau)}}
=-\frac{\tau^{2}\big((1+\tau^{2}u^{(\tau)})\lambda^{(\tau)}_{j}-v^{(\tau)}\big)\lambda^{(\tau)}_{j}}
{\big((1+\tau^{2}u^{(\tau)})^{2}-\tau^{2}(c^{(\tau)})^{2}\big)\lambda^{(\tau)}_{j}-(1+\tau^{2}u^{(\tau)})v^{(\tau)}},\\[2pt]
&\frac{\partial\lambda^{(\tau)}_{j}}{\partial v^{(\tau)}}=\frac{\big(1+\tau^{2}u^{(\tau)})\lambda^{(\tau)}_{j}-v^{(\tau)}}
{\big((1+\tau^{2}u^{(\tau)})^{2}-\tau^{2}(c^{(\tau)})^{2}\big)\lambda^{(\tau)}_{j}-(1+\tau^{2}u^{(\tau)})v^{(\tau)}},\\[2pt]
&\frac{\partial\lambda^{(\tau)}_{j}}{\partial p^{(\tau)}}=\frac{\gamma\big(1+\tau^{2}(\lambda^{(\tau)}_{j})^2\big)}
{2\Big(\big((1+\tau^{2}u^{(\tau)})^{2}-\tau^{2}(c^{(\tau)})^{2}\big)\lambda^{(\tau)}_{j}-(1+\tau^{2}u^{(\tau)})v^{(\tau)}\Big)\rho^{(\tau)}}.
\end{split}
\end{eqnarray}

Thus, by direct calculation, we have
{\small
\begin{align*}
&\nabla_{U^{(\tau)}}\lambda^{(\tau)}_{1}\cdot \tilde{\mathbf{r}}^{(\tau)}_{1}\\[2pt]
&=-\frac{\big(1+\tau^{2}(\lambda^{(\tau)}_{1})^{2}\big)^{2}(c^{(\tau)})^{2}-(\gamma+2)\big(1+\tau^{2}(\lambda^{(\tau)}_{1})^{2}\big)
\big((1+\tau^{2}u^{(\tau)})\lambda^{(\tau)}_{1}-v^{(\tau)}\big)^{2}}
{2\big((1+\tau^{2}u^{(\tau)})\lambda^{(\tau)}_{1}-v^{(\tau)}\big)\Big(\big((1+\tau^{2}u^{(\tau)})^{2}-\tau^{2}(c^{(\tau)})^{2}\big)\lambda^{(\tau)}_{1}
-(1+\tau^{2}u^{(\tau)})v^{(\tau)}\Big)}.
\end{align*}
}

By equations \eqref{eq:2.2} and \eqref{eq:2.7a}, we have
\begin{eqnarray*}
\nabla_{U^{(\tau)}}\lambda^{(\tau)}_{1}\cdot \tilde{\mathbf{r}}^{(\tau)}_{1}
=-\frac{(\gamma+1)\big(1+\tau^{2}(\lambda^{(\tau)}_{1})^{2}\big)
  \big((1+\tau^{2}u^{(\tau)})\lambda^{(\tau)}_{1}-v^{(\tau)}\big)}
{2c^{(\tau)}\sqrt{\big(1+\tau^{2}u^{(\tau)}\big)^{2}+\big((v^{(\tau)})^{2}-\tau^{2}(c^{(\tau)})^{2}\big)}}.
\end{eqnarray*}

Similarly, we also have
\begin{eqnarray*}
\nabla_{U^{(\tau)}}\lambda^{(\tau)}_{4}\cdot \tilde{\mathbf{r}}^{(\tau)}_{4}
=\frac{(\gamma+1)\big(1+\tau^{2}(\lambda^{(\tau)}_{4})^{2}\big)\big((1+\tau^{2}u^{(\tau)})\lambda^{(\tau)}_{4}-v^{(\tau)}\big)}
{2c^{(\tau)}\sqrt{\big(1+\tau^{2}u^{(\tau)}\big)^{2}+\big((v^{(\tau)})^{2}-\tau^{2}(c^{(\tau)})^{2}\big)}}.
\end{eqnarray*}

Then it implies that, at the background solution $U^{(\tau)}=\underline{U}$,
\begin{eqnarray*}
\nabla_{U^{(\tau)}}\lambda^{(\tau)}_{j}\cdot \tilde{\mathbf{r}}^{(\tau)}_{j}\Big|_{U^{(\tau)}=\underline{U}}
=\frac{(\gamma+1)a^{4}_{\infty}}{2(a^{2}_{\infty}-\tau^{2})^2} \qquad\,\, \mbox{for $j=1,4$}.
\end{eqnarray*}
Therefore, we can choose small constants $\tau_{1}>0$ and $\epsilon_{1}>0$ depending only on $(\underline{U}, a_{\infty})$
such that, for $\tau\in(0,\tau_{1})$ and $\epsilon\in(0,\epsilon_{1})$,
$\nabla_{U^{(\tau)}}\lambda^{(\tau)}_{j}\cdot \tilde{\mathbf{r}}^{(\tau)}_{j}>0$ for $j=1,4$.

In the same way, by direct computation, we see that
\begin{eqnarray*}
\nabla_{U^{(\tau)}}\lambda^{(\tau)}_{2}\cdot \tilde{\mathbf{r}}^{(\tau)}_{2}=\nabla_{U^{(\tau)}}\lambda^{(\tau)}_{3}\cdot \tilde{\mathbf{r}}^{(\tau)}_{3}=0.
\end{eqnarray*}
This completes the proof.
\end{proof}

\begin{remark}\label{rem:2.1}
If we define
\begin{align}\label{eq:2.7d}
\mathbf{r}^{(\tau)}_{j}(U^{(\tau)},\tau^{2})
={e}^{(\tau)}_{j}(U^{(\tau)},\tau^{2})\,\tilde{\mathbf{r}}^{(\tau)}_{j}(U^{(\tau)},\tau^{2})
\qquad\mbox{for $ j=1,2,3, 4$},
\end{align}
where ${e}^{(\tau)}_{j}(U^{(\tau)},\tau^{2})=\big(\nabla_{U^{(\tau)}}\lambda^{(\tau)}_{j}\cdot \tilde{\mathbf{r}}^{(\tau)}_{j}\big)^{-1}$
for $j=1,4$, and ${e}^{(\tau)}_{j}(U^{(\tau)},\tau^{2})=1$ for $j=2,3$,
then
\begin{align*}
&\nabla_{U^{(\tau)}}\lambda^{(\tau)}_{j}(U^{(\tau)}, \tau^{2})\cdot \mathbf{r}^{(\tau)}_j(U^{(\tau)}, \tau^{2})\equiv1
\qquad \mbox{in $\mathcal{O}_{\epsilon}(\underline{U})$  for $j=1,4$},\\[1mm]
&\nabla_{U^{(\tau)}}\lambda^{(\tau)}_{j}(U^{(\tau)}, \tau^{2})\cdot \mathbf{r}^{(\tau)}_j(U^{(\tau)}, \tau^{2})\equiv0 \qquad \mbox{in $\mathcal{O}_{\epsilon}(\underline{U})$ for $j=2,3$},
\end{align*}
for some $\epsilon>0$ and $\tau>0$ sufficiently small.
Moreover,  at the background state $U^{(\tau)}=\underline{U}$,
\begin{eqnarray}\label{eq:2.8}
{e}^{(\tau)}_{1}(\underline{U},\tau^{2})
={e}^{(\tau)}_{4}(\underline{U},\tau^{2})=\frac{2(a^{2}_{\infty}-\tau^{2})^2}{(\gamma+1)a^{4}_{\infty}},
\end{eqnarray}
from the proof of {\rm Lemma \ref{lem:2.1}}.
Then ${e}^{(\tau)}_{1}(\underline{U},\tau^{2})={e}^{(\tau)}_{4}(\underline{U},\tau^{2})>0$
for some $\tau\in(0,\tau_{1})$ with $\tau_{1}$ depending only on $(\underline{U},a_{\infty})$.
\end{remark}

Based on Remark \ref{rem:2.1}, following the ideas in \cite{kong-yang, smoller},
we can parameterize the elementary wave curves.

\begin{lemma}\label{lem:2.2}
{ Let a hypersonic similarity parameter $a_{\infty}$ defined in \eqref{eq:1.9} be given}.
Then, for any given constant state
$U^{(\tau)}_{b}\in \mathcal{O}_{\epsilon_1}(\underline{U})$,
the $j^{\rm th}$ physical admissible wave curves in $\mathcal{O}_{\epsilon_1}(\underline{U})$
can be parameterized by $\alpha_j$ with
$\sigma^{(\tau)}_{\alpha_j}\mapsto \Phi^{(\tau)}_{j}(\sigma^{(\tau)}_{\alpha_j}; U^{(\tau)}_{b})$, where
$\Phi^{(\tau)}_{j}\in C^{2}\big((-\delta_{1},\delta_{1})\times \mathcal{O}_{\epsilon_{1}}(\underline{U})\times(0,\tau_{1})\big)$
satisfies
\begin{eqnarray}\label{eq:2.9}
\left. \Phi^{(\tau)}_{j}\right|_{\sigma^{(\tau)}_{\alpha_j}=0}= U^{(\tau)}_{b},\quad
\left. \frac{\partial\Phi^{(\tau)}_{j}}{\partial\sigma^{(\tau)}_{\alpha_j}}\right|_{\sigma^{(\tau)}_{\alpha_j}=0}=
\mathbf{r}^{(\tau)}_{j}(U^{(\tau)}_{b},\tau^{2})\qquad \mbox{for $1\leq j\leq 4$},\qquad
\end{eqnarray}
for some $\delta_{1}>0$.
Moreover, for $j=1,4$, if $\sigma^{(\tau)}_{\alpha_j}>0$,
we call it the rarefaction wave and denote the corresponding elementary wave curve
by $\mathcal{R}^{(\tau)}_{j}(U^{(\tau)}_{b})\cap \mathcal{O}_{\epsilon_{1}}(\underline{U})${\rm ;}
while, if $\sigma^{(\tau)}_{\alpha_j}<0$, we call it the shock wave and denote the corresponding
elementary wave curve by $\mathcal{S}^{(\tau)}_{j}(U^{(\tau)}_{b})\cap \mathcal{O}_{\epsilon_{1}}(\underline{U})$.
\end{lemma}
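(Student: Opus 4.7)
The plan is to construct the wave curves separately in the two regimes: for the genuinely nonlinear fields $j=1,4$ we glue together rarefaction and admissible shock curves at $\sigma^{(\tau)}_{\alpha_j}=0$, while for the linearly degenerate fields $j=2,3$ we use the integral curves of $\mathbf{r}^{(\tau)}_j$ which simultaneously serve as contact discontinuities. The eigenstructure established in Lemma \ref{lem:2.1} and the normalization in Remark \ref{rem:2.1} guarantee smooth dependence of $\lambda^{(\tau)}_j$ and $\mathbf{r}^{(\tau)}_j$ on $(U^{(\tau)},\tau^2)$ on $\mathcal{O}_{\epsilon_1}(\underline U)\times(0,\tau_1)$, which is the starting point for every step.

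First, for each $j\in\{1,2,3,4\}$, I would define the rarefaction/contact branch $\Phi^{(\tau),R}_j(\sigma;U^{(\tau)}_b)$ as the solution of the ODE
\begin{equation*}
\frac{\dd \Phi^{(\tau),R}_j}{\dd\sigma}=\mathbf{r}^{(\tau)}_j\bigl(\Phi^{(\tau),R}_j,\tau^2\bigr),\qquad \Phi^{(\tau),R}_j(0;U^{(\tau)}_b)=U^{(\tau)}_b.
\end{equation*}
Standard ODE theory gives existence, uniqueness, and $C^2$ dependence on $(\sigma,U^{(\tau)}_b,\tau)$ on a uniform interval $(-\delta_1,\delta_1)$, and \eqref{eq:2.9} is immediate. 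For $j=1,4$, Lemma \ref{lem:2.1} combined with $\nabla_{U^{(\tau)}}\lambda^{(\tau)}_j\cdot\mathbf{r}^{(\tau)}_j\equiv 1$ shows $\lambda^{(\tau)}_j$ increases strictly along this curve, so the branch $\sigma>0$ produces physically admissible centred rarefaction waves connecting $U^{(\tau)}_b$ to $\Phi^{(\tau),R}_j(\sigma;U^{(\tau)}_b)$. For $j=2,3$, Lemma \ref{lem:2.1} yields $\nabla_{U^{(\tau)}}\lambda^{(\tau)}_j\cdot\mathbf{r}^{(\tau)}_j\equiv 0$, so $\lambda^{(\tau)}_j$ is constant along the curve, which therefore represents an admissible contact discontinuity for every $\sigma\in(-\delta_1,\delta_1)$, and we simply set $\Phi^{(\tau)}_j:=\Phi^{(\tau),R}_j$.

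Second, for $j=1,4$ and $\sigma<0$, I would build the shock branch $\Phi^{(\tau),S}_j(\sigma;U^{(\tau)}_b)$ from the Rankine--Hugoniot relations associated to the four conservation laws in \eqref{eq:1.11} with shock speed $s^{(\tau)}_j$. These give four scalar equations in the five unknowns $(U^{(\tau)}_-,s^{(\tau)}_j)$, and near the trivial branch $U^{(\tau)}_-=U^{(\tau)}_b$, $s^{(\tau)}_j=\lambda^{(\tau)}_j(U^{(\tau)}_b,\tau^2)$ the Jacobian with respect to $(U^{(\tau)}_-,s^{(\tau)}_j)$ (after dropping a trivially satisfied row) is non-degenerate by the strict hyperbolicity of the system on $\mathcal O_{\epsilon_1}(\underline U)$. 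The implicit function theorem, applied with $\tau$ as an additional smooth parameter, then produces a $C^2$ one-parameter family of Hugoniot states. The Lax admissibility condition $s^{(\tau)}_j<\lambda^{(\tau)}_j(U^{(\tau)}_b)$ for the back state selects the $\sigma<0$ branch. Choosing the parameter $\sigma$ to be the standard entropy-wave-strength parameter (e.g.\ the normal component of $U^{(\tau)}_--U^{(\tau)}_b$ along $\mathbf r^{(\tau)}_j$), a classical computation using genuine nonlinearity shows
\begin{equation*}
\Phi^{(\tau),S}_j(\sigma;U^{(\tau)}_b)=U^{(\tau)}_b+\sigma\,\mathbf{r}^{(\tau)}_j(U^{(\tau)}_b,\tau^2)+O(\sigma^2),\qquad \sigma\to 0^-,
\end{equation*}
so the shock and rarefaction branches agree to second order at $\sigma=0$. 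Defining
\begin{equation*}
\Phi^{(\tau)}_j(\sigma;U^{(\tau)}_b):=\begin{cases}\Phi^{(\tau),R}_j(\sigma;U^{(\tau)}_b)&\sigma\ge 0,\\ \Phi^{(\tau),S}_j(\sigma;U^{(\tau)}_b)&\sigma<0,\end{cases}
\end{equation*}
gives a $C^2$ curve meeting the requirements \eqref{eq:2.9}, with the nomenclature $\mathcal R^{(\tau)}_j$ and $\mathcal S^{(\tau)}_j$ following directly.

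The main obstacle I foresee is verifying the $C^2$ matching at $\sigma=0$ uniformly in $\tau\in(0,\tau_1)$, since the coefficients of the Rankine--Hugoniot system depend on $\tau$ through the continuity and momentum fluxes in \eqref{eq:1.11}, and the genuine nonlinearity constant $\nabla_{U^{(\tau)}}\lambda^{(\tau)}_j\cdot\mathbf r^{(\tau)}_j$ appearing in the second-order tangency identity must stay bounded away from zero in $\tau$. Both points are controlled by \eqref{eq:2.8}, which shows this quantity equals a strictly positive constant at $\underline U$ for all $\tau\in(0,\tau_1)$, so a uniform lower bound survives on a small enough neighbourhood $\mathcal O_{\epsilon_1}(\underline U)$; choosing $\delta_1$ in terms of this bound closes the construction.
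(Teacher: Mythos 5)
Your construction is precisely the classical Lax parameterization that the paper delegates to its references (Smoller, Ch.~17, and Kong--Yang): integral curves of the normalized eigenvectors for the rarefaction and contact branches, the Hugoniot locus via the implicit function theorem for the shock branches, and second-order contact at $\sigma^{(\tau)}_{\alpha_j}=0$ to glue them into a $C^2$ curve. The paper offers no further detail beyond ``following the ideas in [kong-yang, smoller]'' plus Remark~\ref{rem:2.1}, so your proposal is a faithful expansion of the intended argument.

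Two points deserve correction. First, you justify the non-degeneracy of the Rankine--Hugoniot Jacobian ``by the strict hyperbolicity of the system,'' but system \eqref{eq:1.11} is \emph{not} strictly hyperbolic: $\lambda^{(\tau)}_2=\lambda^{(\tau)}_3$ is a double eigenvalue by \eqref{eq:2.3}, and the paper repeatedly stresses this multiplicity as a central difficulty. The argument survives because the shock construction is only needed for $j=1,4$, and \eqref{eq:2.2}--\eqref{eq:2.3} evaluated near $\underline{U}$ show that $\lambda^{(\tau)}_1$ and $\lambda^{(\tau)}_4$ are simple and uniformly separated from the double eigenvalue on $\mathcal{O}_{\epsilon_1}(\underline{U})$ for $\tau\in(0,\tau_1)$; the local bifurcation analysis of the Hugoniot locus requires only simplicity of the eigenvalue in question, not strict hyperbolicity of the whole system. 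Second, your displayed expansion $\Phi^{(\tau),S}_j=U^{(\tau)}_b+\sigma\,\mathbf{r}^{(\tau)}_j+O(\sigma^2)$ exhibits only first-order tangency, which by itself gives a $C^1$ but not a $C^2$ glued curve; the $C^2$ claim needs the additional classical fact that, with the eigenvector-normalized parameter, both branches share the same $\sigma^2$ coefficient $\tfrac12\big(D\mathbf{r}^{(\tau)}_j\cdot\mathbf{r}^{(\tau)}_j\big)(U^{(\tau)}_b)$, so their difference is $O(\sigma^3)$. With these two restatements your proof is complete and coincides with the paper's intended route.
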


From now on, we denote $\boldsymbol{\alpha}=(\alpha_{1},\alpha_{2},\alpha_{3}, \alpha_{4})$, $\boldsymbol{\sigma}^{(\tau)}_{\boldsymbol{\alpha}}
=\big(\sigma^{(\tau)}_{\alpha_{1}},\sigma^{(\tau)}_{\alpha_{2}},\sigma^{(\tau)}_{\alpha_{3}},
\sigma^{(\tau)}_{\alpha_{4}}\big)$, and
{\small
\begin{align}\label{eq:2.10}
\Phi^{(\tau)}(\boldsymbol{\sigma}^{(\tau)}_{\boldsymbol{\alpha}}; U^{(\tau)}_{b},\tau^2)
\doteq\Phi^{(\tau)}_{4}(\sigma^{(\tau)}_{\alpha_{4}};\Phi^{(\tau)}_{3}(\sigma^{(\tau)}_{\alpha_{3}}; \Phi^{(\tau)}_{2}(\sigma^{(\tau)}_{\alpha_{2}};
\Phi^{(\tau)}_{1}(\sigma^{(\tau)}_{\alpha_{1}}; U^{(\tau)}_{b},\tau^2 ),\tau^2\big),\tau^2),\tau^2).\nonumber\\[2pt]
\end{align}
}

\subsection{Solvability of the Riemann problems}
In this subsection, we study the Riemann problems, including the lateral Riemann problems (with boundaries).

We first consider the Riemann problem for system \eqref{eq:1.11} without the boundary as
\begin{equation}\label{eq:2.11}
\begin{cases}
{\rm System}~\eqref{eq:1.11},\\[4pt]
\left.
U^{(\tau)}\right|_{x=\hat{x}_{0}}=
\begin{cases}
U^{(\tau)}_{a} \quad  &\mbox{for $y>\hat{y}_{0}$},\\[4pt]
U^{(\tau)}_{b} \quad  &\mbox{for $y<\hat{y}_{0}$},
\end{cases}
\end{cases}
\end{equation}
where the constant states $U^{(\tau)}_{a}$ and $U^{(\tau)}_{b}$ denote the $above$ state and $below$ state with respect to
line $y=\hat{y}_{0}$, respectively; see Fig. \ref{fig2.3}.
\vspace{5pt}
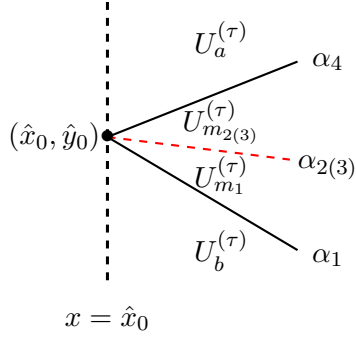
\begin{figure}[ht]
\begin{center}
\begin{tikzpicture}[scale=1.0]
\draw [line width=0.04cm][dashed](-1.5,-1.4) --(-1.5,2.3);
\draw [thick](-1.5,0.5)--(1,1.5);
\draw [thick][dashed][red](-1.5,0.5)--(0.9,0.2);
\draw [thick](-1.5,0.5)--(1,-1);
\node at (1.4,1.5){$\alpha_4$};
\node at (1.4,0.1){$\alpha_{2(3)}$};
\node at (1.4,-1.1){$\alpha_{1}$};
\node at (-1.5, 0.5){$\bullet$};
\node at (-1.5, -1.9){$x=\hat{x}_{0}$};
\node at (0, -1.0){$U^{(\tau)}_{b}$};
\node at (0, 0.7){$U^{(\tau)}_{m_{2(3)}}$};
\node at (0, 0){$U^{(\tau)}_{m_{1}}$};
\node at (0, 1.8){$U^{(\tau)}_{a}$};
\node at (-2.2, 0.5){$(\hat{x}_{0},\hat{y}_{0})$};
\end{tikzpicture}
\end{center}
\caption{The interior Riemann problem }\label{fig2.3}
\end{figure}

\par Then we have the following lemma on the solvability of the Riemann problem \eqref{eq:2.11}.

\begin{lemma}\label{lem:2.3}
For a given hypersonic similarity parameter $a_{\infty}$,
there exist constants $\epsilon_{2}\in (0, \epsilon_{1})$ and $\tau_{2}\in (0, \tau_{1})$
depending only on $(\underline{U},a_{\infty})$ such that,
for any states $U^{(\tau)}_{a}, U^{(\tau)}_{b}\in \mathcal{O}_{\epsilon_{2}}(\underline{U})$
and $\tau\in (0, \tau_{2})$, the Riemann problem \eqref{eq:2.11} admits a unique admissible solution consisting of
four elementary waves $\boldsymbol{\alpha}=(\alpha_{1}, \alpha_{2}, \alpha_{3}, \alpha_{4})$ with strengths
$\boldsymbol{\sigma}^{(\tau)}_{\boldsymbol{\alpha}}=(\sigma^{(\tau)}_{\alpha_{1}},\sigma^{(\tau)}_{\alpha_{2}},
\sigma^{(\tau)}_{\alpha_{3}},\sigma^{(\tau)}_{\alpha_{4}})$.
The constant states $U^{(\tau)}_{m_j}, 0\leq j\leq 4$, in the Riemann solution satisfy
\begin{equation}\label{eq:2.12}
U^{(\tau)}_{m_0}\doteq U^{(\tau)}_{b},\quad U^{(\tau)}_{m_j}
=\Phi^{(\tau)}_j(\boldsymbol{\sigma}^{(\tau)}_{\boldsymbol{\alpha_j}}; U^{(\tau)}_{m_{j-1}},\tau^2)\,\,\,\mbox{for $1\leq j\leq 4$},
\quad U^{(\tau)}_{m_4}\doteq U^{(\tau)}_{a},
\end{equation}
and the properties in \eqref{eq:2.9}.
Moreover, there exists a constant $C_{2,1}>0$ depending only on $\underline{U}$ such that
\begin{equation}\label{eq:2.13}
C^{-1}_{2,1}\sum^{4}_{j=1}|\sigma^{(\tau)}_{\alpha_{j}}|\leq \big|U^{(\tau)}_{a}-U^{(\tau)}_{b}\big|
\leq C_{2,1}\sum^{4}_{j=1}|\sigma^{(\tau)}_{\alpha_{j}}|.
\end{equation}
\end{lemma}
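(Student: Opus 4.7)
The plan is to solve the Riemann problem \eqref{eq:2.11} by applying the implicit function theorem to the composite wave map $\Phi^{(\tau)}$ defined in \eqref{eq:2.10}. The individual wave curves $\Phi^{(\tau)}_j(\sigma^{(\tau)}_{\alpha_j}; U^{(\tau)}_b, \tau^2)$ are already furnished by Lemma \ref{lem:2.2}: for $j=1,4$ they are the $C^{2}$ concatenations of centered rarefaction branches ($\sigma^{(\tau)}_{\alpha_j}>0$) and Lax-admissible shock branches ($\sigma^{(\tau)}_{\alpha_j}<0$); for $j=2,3$, linear degeneracy (Lemma \ref{lem:2.1}) forces the integral curves of $\mathbf{r}^{(\tau)}_j$ to automatically satisfy the Rankine--Hugoniot conditions, yielding two independent families of admissible contact discontinuities.

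The crux is invertibility of the Jacobian
$$
D_{\boldsymbol{\sigma}^{(\tau)}_{\boldsymbol{\alpha}}}\Phi^{(\tau)}\bigl(\mathbf{0}; U^{(\tau)}_b, \tau^2\bigr) = \bigl[\mathbf{r}^{(\tau)}_1,\, \mathbf{r}^{(\tau)}_2,\, \mathbf{r}^{(\tau)}_3,\, \mathbf{r}^{(\tau)}_4\bigr]_{U^{(\tau)}_b},
$$
which follows from the chain rule and \eqref{eq:2.9}. Evaluating at $(\underline{U}, 0)$ via \eqref{eq:2.4}--\eqref{eq:2.5}, one obtains $\tilde{\mathbf{r}}_2=(0,1,0,0)^{\top}$, $\tilde{\mathbf{r}}_3=(1,0,0,0)^{\top}$, and $\tilde{\mathbf{r}}_{j}=(1/\lambda_j, -\lambda_j, 1, \lambda_j)^{\top}$ for $j=1,4$ with $\lambda_1=-1/a_\infty$ and $\lambda_4=1/a_\infty$, whose determinant is readily checked to be nonzero. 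Combined with the positive normalization factors $e^{(\tau)}_j$ from Remark \ref{rem:2.1}, the Jacobian is invertible at $(\underline{U}, 0)$ and, by continuity of $\mathbf{r}^{(\tau)}_j$ in $(U^{(\tau)},\tau)$, stays invertible on a uniform neighborhood. I would accordingly fix constants $\epsilon_2\in (0,\epsilon_1)$ and $\tau_2\in (0,\tau_1)$ depending only on $(\underline{U}, a_\infty)$, so that the implicit function theorem produces a unique $\boldsymbol{\sigma}^{(\tau)}_{\boldsymbol{\alpha}}\in (-\delta_1,\delta_1)^4$ solving $\Phi^{(\tau)}(\boldsymbol{\sigma}^{(\tau)}_{\boldsymbol{\alpha}};U^{(\tau)}_b,\tau^2)=U^{(\tau)}_a$ for all $U^{(\tau)}_a,U^{(\tau)}_b\in\mathcal{O}_{\epsilon_2}(\underline{U})$ and $\tau\in(0,\tau_2)$; the intermediate states $U^{(\tau)}_{m_j}$ are then defined by \eqref{eq:2.12}.

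The equivalence \eqref{eq:2.13} falls out directly. Telescoping $U^{(\tau)}_a-U^{(\tau)}_b=\sum_j (U^{(\tau)}_{m_j}-U^{(\tau)}_{m_{j-1}})$ together with the $C^{2}$ expansion
$$
U^{(\tau)}_{m_j}-U^{(\tau)}_{m_{j-1}} = \sigma^{(\tau)}_{\alpha_j}\,\mathbf{r}^{(\tau)}_j(U^{(\tau)}_{m_{j-1}},\tau^2) + O\bigl(|\sigma^{(\tau)}_{\alpha_j}|^2\bigr)
$$
yields the upper bound $|U^{(\tau)}_a-U^{(\tau)}_b|\leq C_{2,1}\sum_j|\sigma^{(\tau)}_{\alpha_j}|$. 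For the lower bound, the uniform invertibility of the Jacobian makes $\boldsymbol{\sigma}^{(\tau)}_{\boldsymbol{\alpha}}$ a Lipschitz function of $U^{(\tau)}_a-U^{(\tau)}_b$ with norm controlled only by $\underline{U}$. The main anticipated difficulty is not conceptual but notational: tracking the joint $(U^{(\tau)},\tau)$-dependence through the parameterization. The lack of strict hyperbolicity caused by $\lambda^{(\tau)}_2=\lambda^{(\tau)}_3$ is harmless here, since the two-dimensional eigenspace is spanned by the linearly independent vectors $\tilde{\mathbf{r}}^{(\tau)}_2,\tilde{\mathbf{r}}^{(\tau)}_3$ in \eqref{eq:2.5}, which generate two genuinely independent contact-discontinuity curves.
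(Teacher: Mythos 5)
Your proposal is correct and follows essentially the same route as the paper: invertibility of the Jacobian $D_{\boldsymbol{\sigma}}\Phi^{(\tau)}(\mathbf{0};U^{(\tau)}_b,\tau^2)=[\mathbf{r}^{(\tau)}_1,\mathbf{r}^{(\tau)}_2,\mathbf{r}^{(\tau)}_3,\mathbf{r}^{(\tau)}_4]$ plus the implicit function theorem, with \eqref{eq:2.13} obtained from \eqref{eq:2.9} and \eqref{eq:2.12}. The only cosmetic difference is that the paper expands the determinant for general $(U^{(\tau)}_b,\tau)$ and then evaluates at $\underline{U}$ (obtaining $-8(a^2_\infty-\tau^2)^{7/2}/((\gamma+1)^2a^8_\infty)<0$), whereas you evaluate at $(\underline{U},0)$ and invoke continuity in $(U^{(\tau)},\tau)$ — both are valid.
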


\begin{proof}
To obtain the solvability of the Riemann problem \eqref{eq:2.11}, it suffices to show that
\begin{equation}
\det\bigg(\frac{\partial \Phi^{(\tau)}(\boldsymbol{\sigma}^{(\tau)}; U^{(\tau)}_{b},\tau^2)}{\partial \boldsymbol{\sigma}^{(\tau)}} \bigg)
\Bigg|_{\boldsymbol{\sigma}^{(\tau)}=0}\neq0.
\end{equation}

Notice that

{\small
\begin{align*}
&\det\bigg(\frac{\partial \Phi^{(\tau)}(\boldsymbol{\sigma}^{(\tau)}; U^{(\tau)}_{b},\tau^2)}{\partial \boldsymbol{\sigma}^{(\tau)}} \bigg)
\Bigg|_{\boldsymbol{\sigma}^{(\tau)}=0}\\[5pt]
&=\det\Big(\mathbf{r}^{(\tau)}_{1}(U^{(\tau)}_{b},\tau^{2}),\mathbf{r}^{(\tau)}_{2}(U^{(\tau)}_{b},\tau^{2}),
\mathbf{r}^{(\tau)}_{3}(U^{(\tau)}_{b},\tau^{2}), \mathbf{r}^{(\tau)}_{4}(U^{(\tau)}_{b},\tau^{2})\Big)\\[5pt]
&=e^{(\tau)}_{1}e^{(\tau)}_{4}\begin{vmatrix}
\frac{(1+\tau^{2}\lambda^{(\tau)}_{1})\rho^{(\tau)}}{(1+\tau^{2}u^{(\tau)})\lambda^{(\tau)}_{1}-v^{(\tau)}}
    &&\hspace{-10pt} 0 &&\hspace{-9pt}1 &&\hspace{-9pt} \frac{(1+\tau^{2}\lambda^{(\tau)}_{4})\rho^{(\tau)}}{(1+\tau^{2}u^{(\tau)})\lambda^{(\tau)}_{4}-v^{(\tau)}}  \\[6pt]
  -\lambda^{(\tau)}_{1}&&\hspace{-10pt} 1+\tau^{2}u^{(\tau)}&&\hspace{-9pt} 0 &&\hspace{-9pt}  -\lambda^{(\tau)}_{4}\\[6pt]
1&&\hspace{-10pt} \tau^{2}v^{(\tau)} &&\hspace{-9pt} 0 &&\hspace{-9pt}1 \\[6pt]
\big((1+\tau^{2}u^{(\tau)})\lambda^{(\tau)}_{1}-v^{(\tau)}\big)\rho^{(\tau)} &&\hspace{-10pt} 0 &&\hspace{-9pt} 0 &&\hspace{-9pt} \big((1+\tau^{2}u^{(\tau)})\lambda^{(\tau)}_{4}-v^{(\tau)}\big)\rho^{(\tau)}
\end{vmatrix}
\\[6pt]
&=-e^{(\tau)}_{1}e^{(\tau)}_{4}\big(\lambda^{(\tau)}_{4}-\lambda^{(\tau)}_{1}\big)
\Big((1+\tau^{2}u^{(\tau)})\rho^{(\tau)}-\tau^{2}\rho^{(\tau)}v^{(\tau)}\lambda^{(\tau)}_{2}\Big).
\end{align*}
}
Then, by Remark \ref{rem:2.1}, at the background state $U^{(\tau)}_{b}=\underline{U}$, we have
\begin{eqnarray*}
\det\bigg(\frac{\partial \Phi^{(\tau)}(\boldsymbol{\sigma}^{(\tau)}; U^{(\tau)}_{b},\tau^2)}{\partial \boldsymbol{\sigma}^{(\tau)}}
\bigg)\bigg|_{\boldsymbol{\sigma}^{(\tau)}=0, U^{(\tau)}_{b}=\underline{U}}
=-\frac{8(a^{2}_{\infty}-\tau^{2})^{\frac{7}{2}}}{(\gamma+1)^{2}a^{8}_{\infty}}<0,
\end{eqnarray*}
where $\tau\in(0,\tau_2)$ with choice of sufficiently small $\tau_2>0$.
Therefore, the solvability of the Riemann problem follows from the implicit function theorem directly.
Finally, by \eqref{eq:2.9} and \eqref{eq:2.12}, we can obtain estimates \eqref{eq:2.13}.
\end{proof}

Next, we consider the Riemann problem with a boundary. As shown in Fig. \ref{fig2.5x},
let $\textsc{C}_{l}(x_l, g_l)$, $l=1,2, 3$,
be three points on $\Gamma$ with $x_{1}<x_{2}<x_{3}$ and $g_{l}=g(x_l)$.
Denote
\begin{eqnarray}\label{eq-B1}
\theta_{1}=\arctan\Big(\frac{g_{2}-g_{1}}{x_{2}-x_1}\Big),\quad  \theta_{2}=\arctan\Big(\frac{g_{3}-g_{2}}{x_{3}-x_{2}}\Big),  \quad  \omega=\theta_{2}-\theta_{1}.
\end{eqnarray}

For $l=1,2$, we define
\begin{eqnarray}\label{eq-B2}
\begin{split}
&\Omega_{l}=\{(x,y)\,:\, x_{l}\leq x< x_{l+1}, \ y< g_l+(x-x_{l})\tan(\theta_{l})\},\\[2pt]
&\Gamma_{l}=\{(x,y)\,:\, x_{l}\leq x< x_{l+1}, \ y=g_l+(x-x_{l})\tan(\theta_{l})\}.
\end{split}
\end{eqnarray}
Let $\textbf{n}_{l}$ be the interior unit normal vector to $\Gamma_{l}$, \emph{that is}, $\textbf{n}_{l}=(\sin\theta_{l},-\cos\theta_{l})$.
Consider the following Riemann problem with a boundary $\Gamma_{l}$:
\begin{eqnarray}\label{eq:2.17}
\begin{cases}
{\rm System} \ \eqref{eq:1.11} \quad & \mbox{in $\Omega_{2}$},\\[5pt]
U^{(\tau)}=U^{(\tau)}_b \quad & \mbox{on $\{x=x_{2}\}\cap\Omega_{2}$},\\[5pt]
(1+\tau^{2}u^{(\tau)}_{\Gamma_2},v^{(\tau)}_{\Gamma_2})\cdot\textbf{n}_{2}=0 \quad &\mbox{on $\Gamma_{2}$},
\end{cases}
\end{eqnarray}
where $U^{(\tau)}_b$ is a constant state near $\underline{U}$ and satisfies
\begin{eqnarray}\label{eq:2.18}
(1+\tau^{2}u^{(\tau)}_{b},v^{(\tau)}_{b})\cdot\textbf{n}_{1}=0 \quad & \mbox{on $\Gamma_{1}$}.
\end{eqnarray}

\vspace{5pt}
\begin{figure}[ht]
\begin{center}
\begin{tikzpicture}[scale=1.0]
\draw [line width=0.03cm](-5.0,1.5)--(-3,1)--(-1,1.8);

\draw [line width=0.05cm](-5.0,1.5)--(-5.0,-1.5);
\draw [line width=0.05cm](-3,1)--(-3,-1.5);
\draw [line width=0.05cm](-1.0,1.8)--(-1.0,-1.5);

\draw [thin](-4.8,1.45)--(-4.5, 1.75);
\draw [thin](-4.5,1.38)--(-4.2, 1.68);
\draw [thin](-4.2,1.30)--(-3.9, 1.60);
\draw [thin](-3.9, 1.23)--(-3.6,1.53);
\draw [thin](-3.6, 1.16)--(-3.3,1.46);
\draw [thin](-3.3, 1.08)--(-3.0,1.38);
\draw [thin](-3.0, 1.0)--(-2.8,1.4);
\draw [thin](-2.7,1.12)--(-2.5, 1.52);
\draw [thin](-2.4,1.23)--(-2.2, 1.63);
\draw [thin](-2.1,1.35)--(-1.9, 1.75);
\draw [thin](-1.8,1.47)--(-1.6, 1.87);
\draw [thin](-1.5, 1.59)--(-1.3,1.99);
\draw [thin](-1.2,1.71)--(-1.0, 2.11);

\draw [thick](-3,1)--(-1.8,0.5);
\draw [thick](-3,1)--(-2.0,0.2);

\node at (-5.5, 1.6) {$\textsc{C}_{1}$};
\node at (-3.0, 1.5) {$\textsc{C}_{2}$};
\node at (-0.5, 1.8) {$\textsc{C}_{3}$};
\node at (-1.5, 0.3) {$\alpha_1$};
\node at (-2.2, 1.0) {$U^{(\tau)}_{\Gamma_2}$};
\node at (-2.4, -0.2) {$U^{(\tau)}_{b}$};
\node at (1, 2) {$$};

\node at (-4.0, -0.8) {$\Omega_{1}$};
\node at (-2.0, -0.8) {$\Omega_{2}$};

\node at (-5.0, -1.9) {$x_{1}$};
\node at (-3.0, -1.9) {$x_{2}$};
\node at (-0.9, -1.9) {$x_{3}$};
\end{tikzpicture}
\caption{The Riemann problem near the boundary}\label{fig2.5x}
\end{center}
\end{figure}
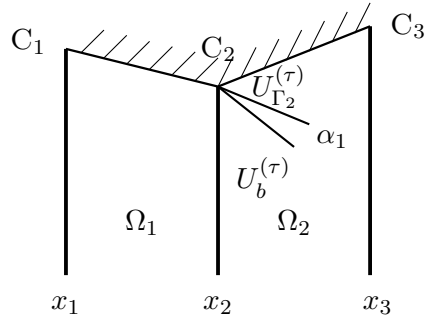

We have the following lemma on the solvability of problem \eqref{eq:2.17}.

\begin{lemma}\label{lem:2.5}
For a given hypersonic similarity parameter $a_{\infty}$,
there exist small constants $\epsilon_{3}\in (0, \epsilon_{1})$ and $\tau_{3}\in (0, \tau_{1})$
depending only on $(\underline{U},a_{\infty})$
such that, if $\tau\in(0,\tau_{3})$, $U^{(\tau)}_{b}\in \mathcal{O}_{\epsilon_{3}}(\underline{U})$ with \eqref{eq:2.18}, and $|\omega|+|\theta_{1}|<\epsilon_{3}$,
then problem \eqref{eq:2.17} admits a unique solution $U^{(\tau)}_{\Gamma_2}$ that
connects $U^{(\tau)}_{b}$ by a weak $1^{\rm st}$ wave $\alpha_1$
with strength $\sigma^{(\tau)}_{\alpha_1}$, {i.e.},
\begin{eqnarray}\label{eq:2.19}
U^{(\tau)}_{\Gamma_2}=\Phi^{(\tau)}_{1}(\sigma^{(\tau)}_{\alpha_1};\,U^{(\tau)}_{b},\tau^2),
\end{eqnarray}
such that
\begin{eqnarray}\label{eq:2.20}
\sigma^{(\tau)}_{\alpha}=K^{(\tau)}_{b}\omega,
\end{eqnarray}
where the bound of
function
$K^{(\tau)}_{b}>0$ depends only on $(\underline{U}, a_{\infty})$, but is particularly independent of $\tau$.
\end{lemma}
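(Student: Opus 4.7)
The plan is to reduce the Riemann problem with boundary to a single scalar equation on the wave strength $\sigma^{(\tau)}_{\alpha_1}$ and then apply the implicit function theorem with careful tracking of the dependence on $\tau$. Introduce the map
\[
F(\sigma, \omega; U^{(\tau)}_b, \tau^2)
:= (1+\tau^2 u(\sigma))\,\sin(\theta_1+\omega) - v(\sigma)\,\cos(\theta_1+\omega),
\]
where $(u(\sigma), v(\sigma))$ are the second and third components of $\Phi^{(\tau)}_1(\sigma; U^{(\tau)}_b, \tau^2)$. Then $U^{(\tau)}_{\Gamma_2}=\Phi^{(\tau)}_1(\sigma; U^{(\tau)}_b, \tau^2)$ satisfies the slip condition on $\Gamma_2$ iff $F=0$. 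By the parameterization property $\Phi^{(\tau)}_1(0; U^{(\tau)}_b, \tau^2)=U^{(\tau)}_b$ in Lemma \ref{lem:2.2} together with assumption \eqref{eq:2.18}, $F(0, 0; U^{(\tau)}_b, \tau^2)=0$.

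Next, compute the linearization. By Lemma \ref{lem:2.2} and the explicit formulas \eqref{eq:2.4}, \eqref{eq:2.7d},
\[
\partial_\sigma F\big|_{\sigma=0}
= \tau^2 e^{(\tau)}_1(-\lambda^{(\tau)}_1)\sin(\theta_1+\omega) - e^{(\tau)}_1\cos(\theta_1+\omega).
\]
At the background state $U^{(\tau)}_b=\underline{U}$ with $\theta_1=\omega=0$, Remark \ref{rem:2.1} yields
\[
\partial_\sigma F\big|_{0} = -e^{(\tau)}_1(\underline{U}, \tau^2) = -\frac{2(a_\infty^2-\tau^2)^2}{(\gamma+1)a_\infty^4},
\]
which is strictly bounded away from zero uniformly for $\tau\in(0,\tau_1)$. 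Hence, for $\epsilon_3>0$ and $\tau_3>0$ sufficiently small, $\partial_\sigma F$ stays uniformly nonzero whenever $U^{(\tau)}_b\in\mathcal{O}_{\epsilon_3}(\underline{U})$ and $|\theta_1|+|\omega|<\epsilon_3$. The implicit function theorem then produces a unique $C^1$ function $\sigma=\sigma(\omega; U^{(\tau)}_b, \tau^2)$ with $\sigma(0; U^{(\tau)}_b, \tau^2)=0$, solving $F=0$; setting $\sigma^{(\tau)}_{\alpha_1}:=\sigma(\omega; U^{(\tau)}_b, \tau^2)$ yields the unique solution \eqref{eq:2.19}.

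To obtain \eqref{eq:2.20}, write $\sigma(\omega)=\omega \int_0^1 \partial_\omega\sigma(s\omega; U^{(\tau)}_b, \tau^2)\,ds =: K^{(\tau)}_b(\omega, U^{(\tau)}_b)\,\omega$. The implicit derivative at $(\omega=0, U^{(\tau)}_b=\underline{U})$ is
\[
\partial_\omega \sigma\big|_0 = -\frac{\partial_\omega F}{\partial_\sigma F}\bigg|_0 = \frac{1}{e^{(\tau)}_1(\underline{U}, \tau^2)} = \frac{(\gamma+1)a_\infty^4}{2(a_\infty^2-\tau^2)^2},
\]
using $\partial_\omega F|_0 = (1+\tau^2 u_b)\cos\theta_1 + v_b\sin\theta_1 \big|_{\underline{U}, \theta_1=0} = 1$. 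This quantity is strictly positive with upper and lower bounds depending only on $(\underline{U}, a_\infty)$ for $\tau\in(0,\tau_3)$, so by continuity $K^{(\tau)}_b>0$ is uniformly bounded on the entire parameter region, independent of $\tau$. The only real subtlety of the argument is the $\tau$-uniformity: it is handled directly by the explicit formula for $e^{(\tau)}_1(\underline{U}, \tau^2)$ in Remark \ref{rem:2.1}, which remains bounded away from zero and infinity as $\tau\to 0$, so the obstruction effectively disappears.
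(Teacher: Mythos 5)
Your proposal is correct and follows essentially the same route as the paper: the same scalar boundary functional (your $F$ is the paper's $\mathcal{L}_b$), the same implicit function theorem argument with $\partial_\sigma F|_0=-e^{(\tau)}_1(\underline{U},\tau^2)$ and $\partial_\omega F|_0=1$, and the same conclusion $K^{(\tau)}_b=1/e^{(\tau)}_1(\underline{U},\tau^2)$ at the base point with $\tau$-uniform bounds from the explicit formula in Remark \ref{rem:2.1}.
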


\begin{proof}
From the boundary condition on $\Gamma_{2}$ in \eqref{eq:2.17}, we define
\begin{align*}
\mathcal{L}_{b}(\sigma^{(\tau)}_{\alpha_1}, \omega, \theta_{1}, U^{(\tau)}_{b}, \tau^2)
&=\Big(1+\tau^{2}\Phi^{(\tau), (2)}_{1}(\sigma^{(\tau)}_{\alpha_1}; U^{(\tau)}_{b},\tau^2)\Big)\sin(\theta_{1}+\omega)\\[2pt]
&\ \ \ \ -\Phi^{(\tau), (3)}_{1}(\sigma^{(\tau)}_{\alpha_1}; U^{(\tau)}_{b},\tau^2)\cos(\theta_{1}+\omega),
\end{align*}
where $\Phi^{(\tau), (2)}_{1}$ and $\Phi^{(\tau), (3)}_{1}$ represent the second and third components of $\Phi^{(\tau)}_{1}$, respectively.

By \eqref{eq:1.16}, we see that $\mathcal{L}(0, 0, 0, \underline{U}, \tau^{2})= 0$ for any fixed number $\tau>0$.
Notice that
\begin{eqnarray*}
&&\frac{\partial\mathcal{L}_{b}(\sigma^{(\tau)}_{\alpha_1}, \omega ,\theta_{1}, U^{(\tau)}_{b}, \tau^{2}) }{\partial \sigma^{(\tau)}_{\alpha_1}}
\Big|_{\sigma^{(\tau)}_{\alpha_1}=\omega=\theta_{1}=0, U^{(\tau)}_{b}=\underline{U}}\\[4pt]
&&=\Big(1+\tau^{2}\frac{\partial\Phi^{(\tau), (2)}_{1}(\sigma^{(\tau)}_{\alpha_1}; U^{(\tau)}_{b},\tau^2)}{\partial \sigma^{(\tau)}_{\alpha_1}}\Big)\sin(\theta_{1}+\omega)\bigg|_{\sigma^{(\tau)}_{\alpha_1}=\omega_{k}=\theta_{1}=0, U^{(\tau)}_{b}=\underline{U}}\\[4pt]
&&\quad\, -\frac{\partial\Phi^{(\tau), (3)}_{1}(\sigma^{(\tau)}_{\alpha_1}; U^{(\tau)}_{b},\tau^2)}{\partial \sigma^{(\tau)}_{\alpha_1}}
  \cos(\theta_{1}+\omega)\bigg|_{\sigma^{(\tau)}_{\alpha_1}=\omega=\theta_{1}=0, U^{(\tau)}_{b}=\underline{U}}\\[4pt]
&&=-e^{(\tau)}_{1}(\underline{U}_{b},\tau^{2})<0.
\end{eqnarray*}
Thus, by the implicit function theorem, we can show that there exist small constants
$\epsilon_{3}\in(0, \epsilon_{1})$ and $\tau_{3}\in(0, \tau_{1})$ such that the Riemann problem \eqref{eq:2.17}
admits a unique solution $U^{(\tau)}_{\Gamma_2}$ that consists of the $1^{\rm st}$ wave $\sigma^{(\tau)}_{\alpha_1}$ only.
Furthermore, we have
\begin{eqnarray*}
&&\frac{\partial\mathcal{L}_b(\sigma^{(\tau)}_{\alpha_1}, \omega ,\theta_{1}, U^{(\tau)}_{b}, \tau^{2}) }{\partial \omega}
\Big|_{\sigma^{(\tau)}_{\alpha_1}=\omega=\theta_{1}=0, U^{(\tau)}_{b}=\underline{U}}\\[2pt]
&&=\Big(1+\tau^{2}\Phi^{(\tau), (2)}_{1}(\sigma^{(\tau)}_{\alpha_1}; U^{(\tau)}_{b},\tau^2)\Big)
 \cos(\theta_{1}+\omega)\Big|_{\sigma^{(\tau)}_{\alpha_1}=\omega=\theta_{1}=0, U^{(\tau)}_{b}=\underline{U}}\\[2pt]
&&\quad\,  +\Phi^{(\tau), (3)}_{1}(\sigma^{(\tau)}_{\alpha_1}; U^{(\tau)}_{b},\tau^2)\sin(\theta_{1}+\omega)\Big|_{\sigma^{(\tau)}_{\alpha_1}=\omega=\theta_{1}=0, U^{(\tau)}_{b}=\underline{U}}\\[5pt]
&&=1.
\end{eqnarray*}
Therefore, we obtain
\begin{align*}
K^{(\tau)}_{b}\Big|_{\sigma^{(\tau)}_{\alpha_1}=\omega=\theta_{1}=0, U^{(\tau)}_{b}=\underline{U}}
=-\frac{\frac{\partial\mathcal{L}_b(\sigma^{(\tau)}_{\alpha_1}, \omega,\theta_{1}, U^{(\tau)}_{b}, \tau^{2}) }{\partial \omega}
\Big|_{\sigma_{\alpha_1}=\omega=\theta_{1}=0, U^{\tau}_{b}=\underline{U}}}{\frac{\partial\mathcal{L}_b(\sigma_{\alpha_1}, \omega,\theta_{1}, U^{(\tau)}_{b}, \tau^{2}) }{\partial \sigma^{(\tau)}_{\alpha_1}}
\Big|_{\sigma^{(\tau)}_{\alpha_1}=\omega=\theta_{1}=0, U^{(\tau)}_{b}=\underline{U}}}
=\frac{1}{e^{(\tau)}_{1}(\underline{U},\tau^{2})}.
\end{align*}
This indicates that, for any $\tau\in(0,\tau_3)$, the bound of function $K^{(\tau)}_{b}>0$ depends only on $(\underline{U},a_{\infty})$.
This completes the proof.
\end{proof}

\begin{figure}[ht]
\begin{center}
\begin{tikzpicture}[scale=0.6]
\draw [line width=0.05cm] (-3.5,-3.8) --(-3.5,1.5);
\draw [line width=0.05cm] (2.5,-3.8) --(2.5,0.5);
\draw [thick] (-3.5,1.5)--(2.5,0.5);

\draw [thin] (-3, 1.4) --(-2.6, 1.8);
\draw [thin] (-2.6, 1.35) --(-2.2, 1.75);
\draw [thin] (-2.2, 1.30) --(-1.8, 1.70);
\draw [thin] (-1.8, 1.23) --(-1.4, 1.63);
\draw [thin] (-1.4, 1.16) --(-1.0, 1.56);
\draw [thin] (-1.0, 1.10) --(-0.6, 1.50);
\draw [thin] (-0.6, 1.03) --(-0.2, 1.43);
\draw [thin] (-0.2, 0.97) --(0.2, 1.37);
\draw [thin] (0.2, 0.9) --(0.6, 1.30);
\draw [thin] (0.6, 0.83) --(1, 1.23);
\draw [thin] (1, 0.76) --(1.4, 1.16);
\draw [thin] (1.4, 0.67) --(1.8, 1.07);
\draw [thin] (1.8, 0.60) --(2.2, 1.0);
\draw [thin] (2.2, 0.55) --(2.6, 0.95);

\draw [thick](-2.5,-1.5)--(-0.5,1);
\draw [thick][red](-0.5,1)--(1.7,-1.0);

\draw [thick][<-] (1.5,2.2)--(0.6,1.5);

\node at (1.8, 2.4){$\mathbf{n}_{2}$};

\node at (0.2, -2.8){$\Omega_{2}$};
\node at (3.3, 0.4){$\Gamma_{2}$};

\node at (-2.6, -1.9){$\alpha_{k}$};
\node at (2.1, -1.1){$\beta_{1}$};

\node at (-0.4, -1.0){$U^{(\tau)}_{b}$};
\node at (-2.2, 0.2){$U^{(\tau),-}_{\Gamma_2}$};
\node at (1.7, -0.1){$U^{(\tau),+}_{\Gamma_2}$};

\node at (-3.5, -4.5){$x_{2}$};
\node at (2.5, -4.5){$x_{3}$};
\end{tikzpicture}
\end{center}
\caption{ Weak waves hit on the boundary and reflect}\label{fig2.5}
\end{figure}

Next, we consider the reflection of the weak waves on the approximate boundary.
\begin{lemma}\label{lem:2.6}
For a given hypersonic similarity parameter $a_{\infty}$ with $\tau\in(0,\tau_{3})$,
assume that $U^{(\tau),-}_{\Gamma_2}$ and $U^{(\tau)}_b$ are two constant states
in $\mathcal{O}_{\epsilon_{3}}(\underline{U})$, as shown in {\rm Fig. \ref{fig2.5}}.
Assume that there is $k\in \{2,3,4\}$ such that
\begin{equation}\label{eq:2.21}
U^{(\tau),-}_{\Gamma_2}=\Phi^{(\tau)}_{k}(\sigma^{(\tau)}_{\alpha_k}; U^{\tau}_b,\tau^2),
\quad\,
((1+\tau^{2}u^{(\tau),-}_{\Gamma_2}), v^{(\tau),-}_{\Gamma_2})\cdot\mathbf{n}_2=0.
\end{equation}
Then there exists a constant state $U^{(\tau),+}_{\Gamma_2}\in \mathcal{O}_{\epsilon_{3}}(\underline{U})$ satisfying
\begin{equation}\label{eq:2.22}
U^{(\tau),+}_{\Gamma_2}=\Phi^{(\tau)}_{1}(\sigma^{(\tau)}_{\beta_1};U^{(\tau)}_b,\tau^2), \quad\,
((1+\tau^{2}u^{(\tau),+}_{\Gamma_2}), v^{(\tau),+}_{\Gamma_2})\cdot\mathbf{n}_2=0
\end{equation}
with
\begin{equation}\label{eq:2.23}
\sigma^{(\tau)}_{\beta_1}=K^{(\tau)}_{r,k} \sigma^{(\tau)}_{\alpha_k},
\end{equation}
where $K^{(\tau)}_{r,k}$ is a $C^{2}$-function of $(\sigma^{(\tau)}_{\alpha_k}, \tau^{2}, U^{(\tau)}_b)$ and satisfies
\begin{equation}\label{eq:2.24}
K^{(\tau)}_{r,4}|_{\sigma^{(\tau)}_{\alpha_4}=0,\ U^{(\tau)}_{b}=\underline{U}}=1\quad\,\,\mbox{or}\quad\,\,
K^{(\tau)}_{r,k}|_{\sigma^{(\tau)}_{\alpha_k}=0,\  U^{(\tau)}_{b}=\underline{U}}=0 \,\,\,\,\mbox{if $k=2,3$}.
\end{equation}
\end{lemma}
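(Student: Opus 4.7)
The plan is to mirror the proof of Lemma \ref{lem:2.5} by applying the implicit function theorem twice and then composing the resulting maps. First, I would encode the two slip boundary conditions as zeros of the functionals
\begin{align*}
\mathcal{L}_{r}(\sigma, \theta_{2}, U^{(\tau)}_{b}, \tau^{2}) &:= \bigl(1+\tau^{2}\Phi^{(\tau),(2)}_{1}(\sigma; U^{(\tau)}_{b},\tau^{2})\bigr)\sin\theta_{2} - \Phi^{(\tau),(3)}_{1}(\sigma; U^{(\tau)}_{b},\tau^{2})\cos\theta_{2}, \\
\mathcal{L}_{i,k}(\sigma, \theta_{2}, U^{(\tau)}_{b}, \tau^{2}) &:= \bigl(1+\tau^{2}\Phi^{(\tau),(2)}_{k}(\sigma; U^{(\tau)}_{b},\tau^{2})\bigr)\sin\theta_{2} - \Phi^{(\tau),(3)}_{k}(\sigma; U^{(\tau)}_{b},\tau^{2})\cos\theta_{2},
\end{align*}
so that \eqref{eq:2.22} becomes $\mathcal{L}_{r}(\sigma^{(\tau)}_{\beta_{1}},\theta_{2}, U^{(\tau)}_{b}, \tau^{2}) = 0$ and the compatibility hypothesis \eqref{eq:2.21} reads $\mathcal{L}_{i,k}(\sigma^{(\tau)}_{\alpha_{k}},\theta_{2}, U^{(\tau)}_{b}, \tau^{2}) = 0$.

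The first application of the implicit function theorem is to $\mathcal{L}_{i,k}$: it vanishes at $(0, 0, \underline{U})$ and $\partial_{\theta_{2}}\mathcal{L}_{i,k}|_{(0,0,\underline{U})} = 1 \neq 0$, whence $\theta_{2} = \Theta^{(\tau)}_{k}(\sigma^{(\tau)}_{\alpha_{k}}, U^{(\tau)}_{b}, \tau^{2})$ is a $C^{2}$-function with $\Theta^{(\tau)}_{k}(0, \underline{U}, \tau^{2}) = 0$ and $\partial_{\sigma^{(\tau)}_{\alpha_{k}}}\Theta^{(\tau)}_{k}|_{(0,\underline{U})} = \mathbf{r}^{(\tau),(3)}_{k}(\underline{U}, \tau^{2})$. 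The second application is to $\mathcal{L}_{r}$: exactly as in Lemma \ref{lem:2.5}, $\partial_{\sigma}\mathcal{L}_{r}|_{(0,0,\underline{U})} = -e^{(\tau)}_{1}(\underline{U}, \tau^{2}) \neq 0$, so $\mathcal{L}_{r} = 0$ yields $\sigma^{(\tau)}_{\beta_{1}} = \mathcal{F}^{(\tau)}(\theta_{2}, U^{(\tau)}_{b}, \tau^{2})$ of class $C^{2}$ with $\mathcal{F}^{(\tau)}(0, \underline{U}, \tau^{2}) = 0$ and $\partial_{\theta_{2}}\mathcal{F}^{(\tau)}|_{(0,\underline{U})} = 1/e^{(\tau)}_{1}(\underline{U}, \tau^{2})$.

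Composing, $\sigma^{(\tau)}_{\beta_{1}} = \mathcal{F}^{(\tau)}(\Theta^{(\tau)}_{k}(\sigma^{(\tau)}_{\alpha_{k}}, U^{(\tau)}_{b}, \tau^{2}), U^{(\tau)}_{b}, \tau^{2})$ vanishes at $\sigma^{(\tau)}_{\alpha_{k}} = 0$, and Hadamard's lemma factors this as $\sigma^{(\tau)}_{\beta_{1}} = K^{(\tau)}_{r,k} \sigma^{(\tau)}_{\alpha_{k}}$ with $K^{(\tau)}_{r,k}$ of class $C^{2}$. Chain-rule evaluation at the background state gives $K^{(\tau)}_{r,k}|_{(0,\underline{U})} = \mathbf{r}^{(\tau),(3)}_{k}(\underline{U}, \tau^{2})/e^{(\tau)}_{1}(\underline{U}, \tau^{2})$. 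From \eqref{eq:2.5} and \eqref{eq:2.7d}, one reads off $\mathbf{r}^{(\tau),(3)}_{2}(\underline{U}, \tau^{2}) = \mathbf{r}^{(\tau),(3)}_{3}(\underline{U}, \tau^{2}) = 0$ while $\mathbf{r}^{(\tau),(3)}_{4}(\underline{U}, \tau^{2}) = e^{(\tau)}_{4}(\underline{U}, \tau^{2})$; combined with $e^{(\tau)}_{1}(\underline{U}, \tau^{2}) = e^{(\tau)}_{4}(\underline{U}, \tau^{2})$ from \eqref{eq:2.8}, this yields exactly \eqref{eq:2.24}.

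The main obstacle is conceptual rather than computational: identifying which third components of $\mathbf{r}^{(\tau)}_{k}(\underline{U}, \tau^{2})$ vanish at the background state. The vanishing for $k = 2, 3$ is the precise algebraic manifestation of the fact that the flow slope $v/(1+\tau^{2}u)$ is preserved across the linearly degenerate fields, so incoming $2$- or $3$-waves leave the slip condition invariant and trigger no reflected $1$-wave at leading order, whereas the genuinely nonlinear $4$-wave rotates the flow angle and reflects as a $1$-wave of equal infinitesimal strength.
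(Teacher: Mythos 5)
Your proof is correct and follows essentially the same route as the paper: the paper eliminates $\theta_{2}$ at the outset by equating the flow slopes of the two boundary states into a single relation $\mathcal{L}_{r,k}(\sigma^{(\tau)}_{\beta_1},\sigma^{(\tau)}_{\alpha_k},U^{(\tau)}_b,\tau^2)=0$ and applies the implicit function theorem once, whereas you apply it twice (first for $\theta_2$, then for $\sigma^{(\tau)}_{\beta_1}$) and compose — an organizational, not substantive, difference. Your evaluation $K^{(\tau)}_{r,k}|_{0}=\mathbf{r}^{(\tau),(3)}_{k}(\underline{U},\tau^{2})/e^{(\tau)}_{1}(\underline{U},\tau^{2})=\delta_{k4}$, resting on the vanishing of the third (i.e., $v$-) components of $\mathbf{r}^{(\tau)}_{2},\mathbf{r}^{(\tau)}_{3}$ at $\underline{U}$ and on $e^{(\tau)}_{1}(\underline{U},\tau^2)=e^{(\tau)}_{4}(\underline{U},\tau^2)$, matches the paper's computation.
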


\begin{proof}
It follows from \eqref{eq:2.21}--\eqref{eq:2.22} that, for a fixed integer $k\in\{2,3,4\}$,
\begin{align}\label{eq:2.28x}
&\big(1+\tau^{2}\Phi^{(\tau),(2)}_{1}(\sigma^{(\tau)}_{\beta_1}; U^{(\tau)}_{b},\tau^2)\big)\Phi^{(\tau),(3)}_{k}(\sigma^{(\tau)}_{\alpha_k}; U^{(\tau)}_{b},\tau^2)\nonumber\\[2pt]
&\, -\big(1+\tau^{2}\Phi^{(\tau),(2)}_{k}(\sigma^{(\tau)}_{\alpha_k}; U^{(\tau)}_{b},\tau^2)\big)\Phi^{(\tau),(3)}_{1}(\sigma^{(\tau)}_{\beta_1}; U^{(\tau)}_{b},\tau^2)=0.
\end{align}
Then the existence proof of
$U^{(\tau),+}_{\Gamma_2}$ is similar to the one in the proof of Lemma \ref{lem:2.5}.

Then, to establish the estimate for $\sigma^{(\tau)}_{\beta_1}$,
we take the derivative with respect to $\sigma^{(\tau)}_{\alpha_k}$ on both sides of \eqref{eq:2.28x} and use Remark \ref{rem:2.1} to obtain that, for and $k=2,3,4$,
\begin{align*}
&K^{(\tau)}_{r, k}\Big|_{\sigma^{(\tau)}_{\beta_1}=\sigma^{(\tau)}_{\alpha_k}=0,\ U^{(\tau)}_{b}=\underline{U}}\\[2pt]
&=-\frac{\partial_{\sigma^{(\tau)}_{\alpha_k}}\mathcal{L}_{r, k}(\sigma^{(\tau)}_{\beta_1}, \sigma^{(\tau)}_{\alpha_k},
U^{(\tau)}_{b},\tau^2)}{\partial_{\sigma^{(\tau)}_{\beta_1}}\mathcal{L}_{r,k}(\sigma^{(\tau)}_{\beta_1}, \sigma^{(\tau)}_{\alpha_k}, U^{\tau}_{b},\tau^2)}\Bigg|_{\sigma^{(\tau)}_{\beta_1}=\sigma^{(\tau)}_{\alpha_k}=0,\ U^{(\tau)}_{b}=\underline{U}}\\[2pt]
&=\frac{(1+\tau^{2}u^{(\tau)}_{b})\mathbf{r}^{(\tau),(2)}_{k}(U^{(\tau)}_{b},\tau^2)
-\tau^{2}v^{(\tau)}_{b}\mathbf{r}^{(\tau),(1)}_{k}(U^{(\tau)}_{b},\tau^2)}
{(1+\tau^{2}u^{(\tau)}_{b})\mathbf{r}^{(\tau),(2)}_{1}(U^{(\tau)}_{b},\tau^2)
-\tau^{2}v^{(\tau)}_{b}\mathbf{r}^{(\tau),(1)}_{1}(U^{(\tau)}_{b},\tau^{2})}\Bigg|_{\sigma^{(\tau)}_{\beta_1}=\sigma^{(\tau)}_{\alpha_k}=0,\ U^{(\tau)}_{b}=\underline{U}}\\[4pt]
&=\frac{\mathbf{r}^{(\tau),(2)}_{k}(\underline{U},\tau^{2})}{\mathbf{r}^{(\tau),(2)}_{1}(\underline{U},\tau^{2})}\\[4pt]
&=\delta_{k 4},
\end{align*}
where
\begin{align*}
\mathcal{L}_{r,k}(\sigma^{(\tau)}_{\beta_1}, \sigma^{(\tau)}_{\alpha_{k}}, U^{(\tau)}_{b},\tau^{2})
=&\,\big(1+\tau^{2}\Phi^{(\tau),(2)}_{1}(\sigma^{(\tau)}_{\beta_1}; U^{(\tau)}_{b},\tau^2)\big)\Phi^{(\tau),(3)}_{k}(\sigma^{(\tau)}_{\alpha_k}; U^{(\tau)}_{b},\tau^2)\\[2pt]
&\,-\big(1+\tau^{2}\Phi^{(\tau),(2)}_{k}(\sigma^{(\tau)}_{\alpha_k}; U^{(\tau)}_{b},\tau^2)\big)\Phi^{(\tau),(3)}_{1}(\sigma^{(\tau)}_{\beta_1}; U^{(\tau)}_{b},\tau^2),
\end{align*}
and $\delta_{jk}$ is the Kronecker symbol. This completes the proof.
\end{proof}

\subsection{Wave-front tracking algorithm for {Problem I}}\label{sec:3.1}
Based on the results obtained in \S 2.2, we now construct the $(\nu, h)$--approximate
solutions $U^{(\tau)}_{\nu,h}$ of Problem I via the wave-front tracking scheme
in the following three steps  (see also Fig. \ref{fig2.6}):

\medskip
{\bf 1.} Define mesh-length $h=\Delta x>0$, and denote $\{\textsc{C}_{k}\}^{\infty}_{k=0}$ for $\textsc{C}_{k}:=(x_{k}, g_k)=(kh,g(kh))$
for the points on boundary $\Gamma$ for integers $k\geq0$.
Denote
\begin{equation}\label{eq:2.25}
\theta_{k}=\arctan(\frac{g_{k+1}-g_{k}}{h}),\,\,\, \omega_{0}=\arctan(\frac{g_1-g_0}{h}),
\,\,\, \omega_{k}=\theta_{k+1}-\theta_k \quad\,\,\, \mbox{for $k\geq 1$}.
\end{equation}

By \eqref{eq:1.21}, there exists a constant $g'_{\infty}$ such that
$\lim_{x\rightarrow \infty}g'(x)=g'_{\infty}$.
Therefore, for any given $h>0$, we can choose an integer $k_{*}\in \mathbb{N}_{+}$ satisfying
that $\|g'(x)-g'_{\infty}\|_{L^{\infty}(\{x>k_{*}h\})}<h$.

Now, for sufficiently small $h>0$, we can construct a piecewise straight line $y=g_{h}(x)$ with finite corners
to the approximate boundary $y=g(x)$ as
\begin{equation}\label{eq:2.26}
g_{h}(x)=\begin{cases}
g_k+\tan(\theta_k)(x-x_k) \quad &\mbox{for $x\in[x_{k-1}, x_{k})$ and $k\in[1,k_{*})\cap\mathbb{N}_+$},\\[1pt]
g_{k_{*}}+g'_{\infty}(x-x_{k_{*}}) & \mbox{for $x\in[x_{k_{*}}, \infty)$},
\end{cases}
\end{equation}
so that
\begin{equation}\label{eq:2.27}
\begin{aligned}
&\|g'_{h}(\cdot)-g'(\cdot)\|_{L^{1}(\mathbb{R}_{+})}\leq h,\qquad \lim_{x\rightarrow \infty}(g'_{h}(x)-g'(x))=0,\\
&\|g'_{h}(\cdot)\|_{BV(\mathbb{R}_{+})}\leq \|g'(\cdot)\|_{BV(\mathbb{R}_{+})}.
\end{aligned}
\end{equation}
Then we define the approximate domain:
\begin{eqnarray*}
\Omega_{h}={\displaystyle\cup^{k_{*}}_{k=1}\Omega_{h,k}}\cup \Omega_{h,k_*},
\end{eqnarray*}
where
\begin{eqnarray*}
&&\Omega_{h,k}=\{(x,y)\,:\,y<g_{h}(x), x\in[x_{k-1}, x_{k})\} \qquad \mbox{for $k\in[1,k_{*})\cap\mathbb{N}_+$}, \\[2pt]
&&\Omega_{h,k_{*}}=\{(x,y)\,:\, y<g_{h}(x), x\in[x_{k_*},\infty)\},
\end{eqnarray*}
and the corresponding approximate boundary:
\begin{eqnarray}\label{eq:2.27b}
\Gamma_{h}=\cup^{k_{*}}_{k=1}\Gamma_{h,k}\cup\Gamma_{h,k_{*}},
\end{eqnarray}
where
\begin{eqnarray}\label{eq:2.27c}
\begin{split}
&\Gamma_{h,k}=\{(x,y)\,:\,y=g_{h}(x), x\in[x_{k-1}, x_{k})\}\qquad \mbox{for $k\in[0,k_{*})\cap\mathbb{N}_+$}, \\[4pt]
&\Gamma_{h,k_{*}}=\{(x,y)\,:\,y=g_{h}(x), x\in[x_{k_{*}}, \infty)\}.
\end{split}\quad
\end{eqnarray}

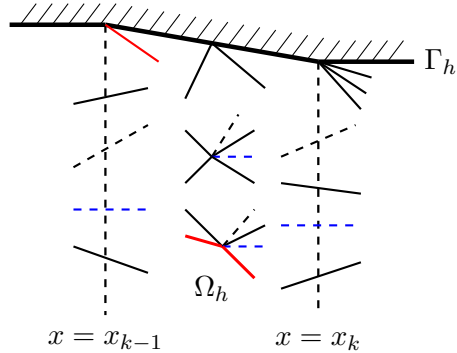
\begin{figure}[ht]
\begin{center}
\begin{tikzpicture}[scale=0.7]
\draw [line width=0.06cm](-5.8,2)--(-4,2);
\draw [line width=0.06cm](-4,2)--(0,1.3);
\draw [line width=0.06cm](0,1.3)--(1.8,1.3);
\draw [line width=0.03cm][dashed](-4,2)--(-4,-3.5);
\draw [line width=0.03cm][dashed](0,1.3)--(0,-3.5);

\draw [thin](-5.6,2)--(-5.3,2.4);
\draw [thin](-5.3,2)--(-5.0,2.4);
\draw [thin](-5,2)--(-4.7,2.4);
\draw [thin](-4.7,2)--(-4.4,2.4);
\draw [thin](-4.4,2)--(-4.1,2.4);
\draw [thin](-4.1,2)--(-3.8,2.4);
\draw [thin](-3.8,1.95)--(-3.5,2.35);
\draw [thin](-3.5, 1.9)--(-3.2,2.3);
\draw [thin](-3.2,1.85)--(-2.9,2.25);
\draw [thin](-2.9,1.80)--(-2.6,2.2);
\draw [thin](-2.6,1.75)--(-2.3,2.15);
\draw [thin](-2.3,1.70)--(-2.0,2.1);
\draw [thin](-2.0,1.65)--(-1.70,2.05);
\draw [thin](-1.7,1.60)--(-1.4,2.00);
\draw [thin](-1.4,1.55)--(-1.1,1.95);
\draw [thin](-1.1,1.50)--(-0.8,1.90);
\draw [thin](-0.8,1.45)--(-0.5,1.85);
\draw [thin](-0.5,1.40)--(-0.2,1.8);
\draw [thin](-0.2,1.35)--(0.1,1.75);
\draw [thin](0.1,1.30)--(0.4,1.70);
\draw [thin](0.4,1.30)--(0.7,1.70);
\draw [thin](0.7,1.30)--(1.0,1.70);
\draw [thin](1.0,1.30)--(1.3,1.70);
\draw [thin](1.3,1.30)--(1.6,1.70);

\draw [thick][red](-4,2)--(-3,1.3);

\draw [thick](-4.6,0.5)--(-3.2,0.8);
\draw [thick][dashed](-4.6,-0.7)--(-3.2,0.1);
\draw [thick][dashed][blue](-4.6,-1.5)--(-3.2,-1.5);
\draw [thick](-4.6,-2.2)--(-3.2,-2.7);

\draw [thick](-2.5,0.6)--(-2,1.65);
\draw [thick](-2,1.65)--(-1,0.8);

\draw [thick](-2.5,0)--(-2,-0.5);
\draw [thick](-2.5,-1)--(-2,-0.5);

\draw [thick][dashed][black](-2,-0.5)--(-1.5,0.3);
\draw [thick](-2,-0.5)--(-1.2,0);
\draw [thick][dashed][blue](-2,-0.5)--(-1.2,-0.5);
\draw [thick](-2,-0.5)--(-1.2,-1);

\draw [thick](-2.5,-1.5)--(-1.8,-2.2);
\draw [line width=0.04cm][red](-2.5,-2)--(-1.8,-2.2);

\draw [thick][dashed][black](-1.8,-2.2)--(-1.2,-1.5);
\draw [thick](-1.8,-2.2)--(-1.0,-1.8);
\draw [thick][dashed][blue](-1.8,-2.2)--(-1.0,-2.2);
\draw [line width=0.04cm][red](-1.8,-2.2)--(-1.2,-2.8);

\draw[thick](0,1.3)--(1,1);
\draw[thick](0,1.3)--(0.9,0.7);
\draw[thick](0,1.3)--(0.8,0.4);

\draw [thick][dashed](-0.7,-0.5)--(0.7,0.1);
\draw [thick](-0.7,-1)--(0.8,-1.2);
\draw [thick][dashed][blue](-0.7,-1.8)--(0.8,-1.8);
\draw [thick](-0.7,-3.0)--(0.8,-2.5);

\node at (-2.0, -3) {$\Omega_h$};
\node at (2.3, 1.3) {$\Gamma_h$};
\node at (2.6, 2) {$$};
\node at (-4, -4.0) {$x=x_{k-1}$};
\node at (0, -4.0) {$x=x_{k}$};
\end{tikzpicture}
\end{center}
\caption{Wave-front tracking scheme for {Problem I}}\label{fig2.6}
\end{figure}

Finally, for a sufficiently large parameter $\nu \in\mathbb{N}_{+}$, we can construct a piecewise constant function $U^{\nu}_{0}$ satisfying
\begin{eqnarray}\label{eq:2.28}
\|U^{\nu}_{0}-U_{0}\|_{L^{1}(\mathcal{I})}<2^{-\nu},\qquad  \|U^{\nu}_{0}-\underline{U}\|_{BV(\mathcal{I})}\leq
\|U_{0}-\underline{U}\|_{BV(\mathcal{I})}.
\end{eqnarray}

\smallskip
{\bf 2}.
By Lemmas \ref{lem:2.3}--\ref{lem:2.5}, at each discontinuity point of $U^{\nu}_{0}$ and the boundary corner $\textrm{C}_0$,
the corresponding Riemann problem can be solved, whose solution consists of shocks, vortex sheet/entropy wave,
or rarefaction waves. To control the number of the wave-fronts, two types of Riemann solvers are introduced as in \cite{bressan}.
One is called the accurate Riemann solver, denoted by $(ARS)$, and the other is called the simplified Riemann solver, denoted by $(SRS)$.
For $(ARS)$, we further partition the rarefaction waves into several small central rarefaction fans (piecewise constant solution)
with the speed of each fan closing to the characteristic speed and the strength of each fan less than $\nu^{-1}$ (see \cite[pp. 129--130]{bressan} for more details).
For $(SRS)$, all the new waves are lumped into a single non-physical wave with a fixed traveling speed that is larger than all the characteristics speed,
and the strength of the non-physical wave is defined by the Euclidean distance between the states on both sides (see \cite[pp. 131--132]{bressan} for more details).

Then the piecewise constant approximate solution $U^{(\tau)}_{h, \nu}$ exists along the $x$--direction up to line $x=x_*$,
on which there is an interaction point inside or on the boundary (\emph{i.e.}, $x_*=x_k$ for an integer $k\leq k_*$, or there is a reflection on the boundary).
If the interaction occurs on the boundary,
we adopt $(ARS)$.
If the interaction occurs inside, to decide which Riemann solver is used after the interaction,
we introduce a threshold parameter $\varrho>0$ to
be determined later.
If the two wave-fronts $\alpha$ and $\beta$ with strengths $\sigma^{(\tau)}_{\alpha}$ and $\sigma^{(\tau)}_{\beta}$ respectively
satisfy that $|\sigma^{(\tau)}_{\alpha}\sigma^{(\tau)}_{\beta}|>\varrho$, $(ARS)$ is used. Otherwise, $(SRS)$
is used.
Finally, in each construction, we may change the speed of a front slightly, so that no more than two wave-fronts interact,
no more than one wave hits the boundary at the same non-corner point, and no wave-front hits the boundary corner $\textrm{C}_k$.

\vspace{5pt}
\begin{figure}[ht]
\begin{center}
\begin{tikzpicture}[scale=1.3]
\draw [thick](-4.5,1.9)--(-3,0.5);
\draw [thick](-4.5,0.3)--(-3,0.5);

\draw [thick][dashed][red](-3,0.5)--(-1.4,1.7);
\draw [thick](-3,0.5)--(-1.3,1.2);
\draw [thick][dashed](-3,0.5)--(-1.3,0.3);
\draw [thick](-3,0.5)--(-1.5,-0.7);

\draw [thin][line width=0.04cm][dashed](-3,2)--(-3, -1);

\node at (-4.7, 1.9) {$\alpha_i$};
\node at (-4.7, 0.3) {$\beta_k$};

\node at (-1.1, 1.8) {$\gamma_{\mathcal{NP}}$};
\node at (-1.1, 1.2) {$\gamma_{4}$};
\node at (-0.9, 0.3) {$\gamma_{2,3}$};
\node at (-1.2, -0.8) {$\gamma_{1}$};

\node at (-3.7, -0.1) {$U^{(\tau)}_{b}$};
\node at (-3.8, 0.7) {$U^{(\tau)}_{m}$};
\node at (-3.7, 1.7) {$U^{(\tau)}_{a}$};

\node at (-1.9, -0.9) {$U^{(\tau)}_{b}$};
\node at (-1.9, 0) {$\hat{U}^{(\tau)}_{m_{1}}$};
\node at (-1.9, 0.7) {$\hat{U}^{(\tau)}_{m_{2(3)}}$};
\node at (-1.9, 1.8) {$U^{(\tau)}_{a}$};
\node at (-3, -1.5) {$x=\hat{x}$};
\end{tikzpicture}
\caption{Interactions between weak waves}\label{fig2.4}
\end{center}
\end{figure}
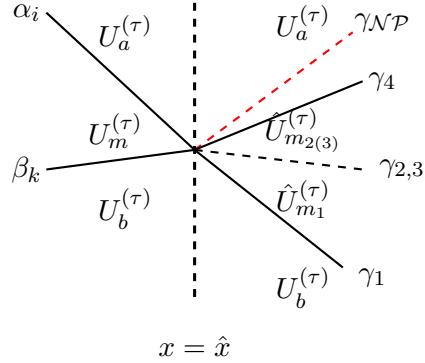

Based on Lemma \ref{lem:2.3}, following the standard procedure as done in \cite{bressan, smoller},
we have the following local interaction estimate between two weak waves.

\begin{lemma}[Local interaction estimate between two weak waves]\label{lem:2.4}
For a given hypersonic similarity parameter $a_{\infty}$, there exist constants $\epsilon_{2}>0$
and $\tau_{2}>0$ sufficiently small, depending only on $(\underline{U},a_{\infty})$,
such that, if three constant states $U^{(\tau)}_a, U^{(\tau)}_m, U^{(\tau)}_b\in \mathcal{O}_{\epsilon_{2}}(\underline{U})$
for $\tau\in(0, \tau_{2})$ satisfy
\begin{eqnarray*}
&&U^{(\tau)}_a=\Phi^{(\tau)}_{i}(\sigma^{(\tau)}_{\alpha_i}; U^{(\tau)}_m,\tau^2), \qquad
U^{(\tau)}_m=\Phi^{(\tau)}_{k}(\sigma^{(\tau)}_{\beta_k}; U^{(\tau)}_b,\tau^2),\\
&&U^{(\tau)}_a=\Phi^{(\tau)}(\boldsymbol{\sigma}^{(\tau)}_{\boldsymbol{\gamma}}; U^{(\tau)}_b,\tau^2)\qquad
\mbox{for $\, \boldsymbol{\sigma}^{(\tau)}_{\boldsymbol{\gamma}}=(\sigma^{(\tau)}_{\gamma_{1}},\sigma^{(\tau)}_{\gamma_{2}}.
\sigma^{(\tau)}_{\gamma_{3}},\sigma^{(\tau)}_{\gamma_{4}})$},
\end{eqnarray*}
then
\begin{eqnarray}\label{eq:2.14}
\sigma^{(\tau)}_{\gamma_{j}}=\delta_{ji}\sigma^{(\tau)}_{\alpha_i}+\delta_{jk}\sigma^{(\tau)}_{\beta_k}
+O(1)Q(\sigma^{(\tau)}_{\alpha_i},\sigma^{(\tau)}_{\beta_k}) \qquad \mbox{for $1\leq i,j,k\leq 4$}.
\end{eqnarray}
Moreover, if the non-physical wave $\gamma_{\mathcal{NP}}$ is constructed after the interaction by $(SRS)$, then
\begin{eqnarray}\label{eq:2.15}
\sigma^{(\tau)}_{\gamma_{\mathcal{NP}}}=O(1)Q(\sigma^{(\tau)}_{\alpha_i},\sigma^{(\tau)}_{\beta_k}),
\end{eqnarray}
where $Q(\sigma^{(\tau)}_{\alpha},\sigma^{(\tau)}_{\beta})$ is given by
\begin{equation}\label{eq:2.16}
Q(\sigma^{(\tau)}_{\alpha_i},\sigma^{(\tau)}_{\beta_k})=
\begin{cases}
 0 \ \ \  &\emph{if}\ i>k, \ \emph{or}\ i=k \ \emph{and}\ \min\{\sigma^{(\tau)}_{\alpha_i},\sigma^{(\tau)}_{\beta_k}\}>0, \\[5pt]
|\sigma^{(\tau)}_{\alpha_i}||\sigma^{(\tau)}_{\beta_k}|\ \ \ & \emph{otherwise},
\end{cases}
\end{equation}
and the bound of $O(1)$ depends only on $(\underline{U}, a_{\infty})$.
\end{lemma}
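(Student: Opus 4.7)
The plan is to reduce the lemma to a smooth implicit-function computation and then refine the remainder using the structure of the wave curves. First I would define the map
\[
F^{(\tau)}\,:\,(\sigma^{(\tau)}_{\alpha_i},\sigma^{(\tau)}_{\beta_k})\longmapsto \boldsymbol{\sigma}^{(\tau)}_{\boldsymbol{\gamma}}=(\sigma^{(\tau)}_{\gamma_1},\sigma^{(\tau)}_{\gamma_2},\sigma^{(\tau)}_{\gamma_3},\sigma^{(\tau)}_{\gamma_4})
\]
implicitly by the equation
\[
\Phi^{(\tau)}(\boldsymbol{\sigma}^{(\tau)}_{\boldsymbol{\gamma}};U^{(\tau)}_b,\tau^2)=\Phi^{(\tau)}_i\big(\sigma^{(\tau)}_{\alpha_i};\Phi^{(\tau)}_k(\sigma^{(\tau)}_{\beta_k};U^{(\tau)}_b,\tau^2),\tau^2\big).
\]
Since $\Phi^{(\tau)}$ and $\Phi^{(\tau)}_j$ are $C^2$ in their arguments and since $\det(\partial\Phi^{(\tau)}/\partial\boldsymbol{\sigma}^{(\tau)})|_{\boldsymbol{\sigma}^{(\tau)}=0,U^{(\tau)}_b=\underline{U}}\neq 0$ by the computation in the proof of Lemma \ref{lem:2.3}, the implicit function theorem gives a $C^2$ map $F^{(\tau)}$ defined on a neighborhood of the origin, uniformly for $\tau\in(0,\tau_2)$ and $U^{(\tau)}_b\in\mathcal{O}_{\epsilon_2}(\underline{U})$.

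Next I would Taylor-expand each component $F^{(\tau)}_j$ at $(0,0)$. Setting $\sigma^{(\tau)}_{\beta_k}=0$ reduces the identity to $\Phi^{(\tau)}(\boldsymbol{\sigma}^{(\tau)}_{\boldsymbol{\gamma}};U^{(\tau)}_b,\tau^2)=\Phi^{(\tau)}_i(\sigma^{(\tau)}_{\alpha_i};U^{(\tau)}_b,\tau^2)$, whose unique admissible solution is $\sigma^{(\tau)}_{\gamma_j}=\delta_{ji}\sigma^{(\tau)}_{\alpha_i}$ identically in $\sigma^{(\tau)}_{\alpha_i}$; symmetrically with $\sigma^{(\tau)}_{\alpha_i}=0$. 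Hence all pure powers of $\sigma^{(\tau)}_{\alpha_i}$ alone and of $\sigma^{(\tau)}_{\beta_k}$ alone in the expansion of $F^{(\tau)}_j$ are precisely $\delta_{ji}\sigma^{(\tau)}_{\alpha_i}$ and $\delta_{jk}\sigma^{(\tau)}_{\beta_k}$, respectively, so the only quadratic remainder is a cross term:
\[
\sigma^{(\tau)}_{\gamma_j}=\delta_{ji}\sigma^{(\tau)}_{\alpha_i}+\delta_{jk}\sigma^{(\tau)}_{\beta_k}+O(1)\,|\sigma^{(\tau)}_{\alpha_i}||\sigma^{(\tau)}_{\beta_k}|,
\]
with the $O(1)$ bound controlled by the $C^2$ norms of $\Phi^{(\tau)}$ and $\Phi^{(\tau)}_j$ and hence depending only on $(\underline{U},a_\infty)$.

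To upgrade this to the sharper $Q$ defined in \eqref{eq:2.16} I would treat the non-approaching cases separately. If $i>k$, the two incoming fronts already propagate in the correct order of characteristic speeds, so no Riemann problem actually occurs at the interaction point and the composition identity $\Phi^{(\tau)}_i(\sigma^{(\tau)}_{\alpha_i};\Phi^{(\tau)}_k(\sigma^{(\tau)}_{\beta_k};U^{(\tau)}_b,\tau^2),\tau^2)=\Phi^{(\tau)}(\boldsymbol{\sigma}^{(\tau)}_{\boldsymbol{\gamma}};U^{(\tau)}_b,\tau^2)$ forces $\sigma^{(\tau)}_{\gamma_i}=\sigma^{(\tau)}_{\alpha_i}$, $\sigma^{(\tau)}_{\gamma_k}=\sigma^{(\tau)}_{\beta_k}$, and the remaining components vanish exactly. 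If $i=k$ and both strengths are positive (so both waves are rarefactions in the genuinely nonlinear case $i=k\in\{1,4\}$, or both are contacts for the linearly degenerate fields $i=k\in\{2,3\}$), I would use that in either case the wave curve $\sigma^{(\tau)}\mapsto\Phi^{(\tau)}_i(\sigma^{(\tau)};\cdot,\tau^2)$ is precisely an integral curve of $\mathbf{r}^{(\tau)}_i$, so the group property $\Phi^{(\tau)}_i(\sigma^{(\tau)}_{\alpha_i};\Phi^{(\tau)}_i(\sigma^{(\tau)}_{\beta_i};U^{(\tau)}_b,\tau^2),\tau^2)=\Phi^{(\tau)}_i(\sigma^{(\tau)}_{\alpha_i}+\sigma^{(\tau)}_{\beta_i};U^{(\tau)}_b,\tau^2)$ yields $\sigma^{(\tau)}_{\gamma_i}=\sigma^{(\tau)}_{\alpha_i}+\sigma^{(\tau)}_{\beta_i}$ with the other components zero. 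Thus in these regimes $F^{(\tau)}_j-\delta_{ji}\sigma^{(\tau)}_{\alpha_i}-\delta_{jk}\sigma^{(\tau)}_{\beta_k}\equiv 0$, consistent with $Q=0$.

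Finally, for the $(SRS)$ estimate \eqref{eq:2.15}, I would recall that the simplified solver keeps the two outgoing waves of families $i,k$ with their original strengths $\sigma^{(\tau)}_{\alpha_i},\sigma^{(\tau)}_{\beta_k}$ and lumps the discrepancy into a single non-physical front travelling at a fixed super-characteristic speed, with strength defined as the Euclidean distance between the true right state $U^{(\tau)}_a$ and the simplified right state $\tilde{U}^{(\tau)}_a\doteq\Phi^{(\tau)}_i(\sigma^{(\tau)}_{\alpha_i};\Phi^{(\tau)}_k(\sigma^{(\tau)}_{\beta_k};U^{(\tau)}_b,\tau^2),\tau^2)$ taken in the opposite order. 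Expanding this distance at $(0,0)$ by the same argument as above, the zeroth, pure-$\sigma^{(\tau)}_{\alpha_i}$, and pure-$\sigma^{(\tau)}_{\beta_k}$ contributions all vanish, so the leading order is again $O(1)|\sigma^{(\tau)}_{\alpha_i}||\sigma^{(\tau)}_{\beta_k}|$. The main obstacle, as usual in this scheme, is the case $i=k$ with at least one shock of the same family, where the group property fails and one must use the second-order contact between shock and rarefaction curves through a common point to see that the mixed quadratic coefficient indeed controls the outcome; this is covered by the classical argument in \cite{bressan, smoller} and goes through unchanged here because the wave curves $\Phi^{(\tau)}_j$ are $C^2$ uniformly in $\tau\in(0,\tau_2)$.
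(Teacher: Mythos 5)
Your argument is correct and is precisely the standard Glimm-type interaction estimate that the paper itself invokes without proof (after Lemma \ref{lem:2.3} it simply writes ``following the standard procedure as done in [Bressan, Smoller]''): implicit-function solvability from the nondegenerate Jacobian of $\Phi^{(\tau)}$, vanishing of the pure powers along the two coordinate axes so that only the mixed quadratic term survives, and exact cancellation in the non-approaching cases via the canonical ordering and the semigroup property of the integral curves. The only blemish is notational: your displayed formula for the simplified right state $\tilde{U}^{(\tau)}_a$ coincides with $U^{(\tau)}_a$ itself and should read $\Phi^{(\tau)}_k(\sigma^{(\tau)}_{\beta_k};\Phi^{(\tau)}_i(\sigma^{(\tau)}_{\alpha_i};U^{(\tau)}_b,\tau^2),\tau^2)$, i.e.\ the commuted, canonically ordered composition, as your phrase ``taken in the opposite order'' intends.
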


{\bf 3}. We denote all the fronts in $U^{(\tau)}_{h, \nu}$
by $\mathcal{J}^{(\tau)}(U^{(\tau)}_{h, \nu})=\mathcal{S}^{(\tau)}\cup\mathcal{C}^{(\tau)}\cup\mathcal{R}^{(\tau)}\cup\mathcal{NP}^{(\tau)}$,
where $\mathcal{S}^{(\tau)}$, $\mathcal{C}^{(\tau)}$, $\mathcal{R}^{(\tau)}$, and $\mathcal{NP}^{(\tau)}$ denote the shock wave-fronts,
vortex sheet/entropy wave-fronts, rarefaction fans, and nonphysical waves, respectively.
For each wave-front, a generation order is introduced to count how many interactions needed to produce such a front inductively as follows:

\smallskip
\rm(i) All wave-fronts generated by the corner points and the initial data are of order one.

\smallskip
\rm (ii) A wave-front $\alpha$ of order $k_{\alpha}$ hits the boundary at a non-corner point $(\hat{x}, g_{h}(\hat{x}))$.
Then the order of the new wave from $(\hat{x}, g_{h}(\hat{x}))$ is also set to be $k_{\alpha}$.

\smallskip
\rm(iii) The $i^{\rm th}$ wave-front $\alpha_i$ of order $k_{\alpha}$ and $j^{\rm th}$ wave-front $\beta_j$ of order $k_{\beta}$
interact at $(\hat{x}, \hat{y})$ with $\hat{y}<g_h(\hat{x})$.
Here we denote the non-physical wave-front as the $5^{\rm th}$ wave-front.
Assume that $\alpha_i$ lies below $\beta_j$.
Then the orders of the new wave-fronts are given as below:

\smallskip
$\quad$ $\rm (iii)_{1}\,$ If $i\neq j$, then the order of the outgoing $i^{\rm th}$ wave-front is $k_{\alpha}$ and the order of the outgoing $j^{\rm th}$ wave-front is $k_{\beta}$.
The order of the outgoing fronts of every other family is $\max\{k_{\alpha},k_{\beta}\}+1$.

\smallskip
$\quad$ $\rm (iii)_{2}\,$ If $i=j$, then the order of the outgoing $i^{\rm th}$ wave-front is $\min\{k_{\alpha},k_{\beta}\}$.
The order of the outgoing fronts of every other family is $\max\{k_{\alpha},k_{\beta}\}+1$.

\smallskip
With these, we can now construct the approximate solution $U^{(\tau)}_{h, \nu}$ globally, provided that the uniform bound of the total variation of the approximate solution
$U^{(\tau)}_{h, \nu}$ can be obtained and the total number of the wave-fronts is finite. These will be achieved
in \S 2.4 below.

\subsection{Uniform bounds and compactness of the approximate solutions}
We now introduce the weighted Glimm-type functional and apply the functional to obtain the uniform bound
of $U^{(\tau)}_{h, \nu}$ and to show that the total number of the fronts is finite for each fixed $(\tau, h, \nu)$.
First, let us introduce the notation of the approaching waves.

\begin{definition}[Approaching waves]\label{def:2.1}
The $i^{\rm th}$ wave $\alpha_i$ of strength $\sigma^{(\tau)}_{\alpha_i}$ and the $j^{\rm th}$ wave $\beta_j$ of strength $\sigma^{(\tau)}_{\beta_j}$,
which locate at $(x,y_{\alpha_i})$ and $(x,y_{\beta_j})$ respectively with $y_{\alpha_i}<y_{\beta_j}$, are called approaching if one of
the following cases occurs{\rm :}

\smallskip
\par \rm (i)\ $\alpha\in \mathcal{NP}^{(\tau)}$ and $\beta\in \mathcal{S}^{(\tau)}\cup\mathcal{C}^{(\tau)}\cup\mathcal{R}^{(\tau)}$;
\par \rm (ii)\ $i>j$;
\par \rm (iii)\  $i=j$ and $\min\{\sigma^{(\tau)}_{\alpha_i}, \sigma^{(\tau)}_{\beta_j}\}<0$.

\noindent
Denote by $\mathcal{A}^{(\tau)}(\hat{x})$ the set of all these approaching waves on $x=\hat{x}$.
\end{definition}

Assume that the approximate solution $U^{(\tau)}_{h,\nu}$, constructed in \S 2.3, satisfy the following assumptions up to $x=\hat{x}$:
\begin{itemize}
\item[$\mathbf{(P1)}$]\ $U^{(\tau)}_{h, \nu}$ has been defined for $x<\hat{x}$ and satisfies
$U^{(\tau)}_{h, \nu}(\hat{x}-,\cdot)\in \mathcal{O}_{\epsilon_*}(\underline{U})$,
where $\epsilon_{*}=\min\{\epsilon_{2},\epsilon_{3}\}$,  $\tau_{*}=\min\{\tau_{2},\tau_{3}\}$, and $\tau\in(0,\tau_*)$;

\item[$\mathbf{(P2)}$] \ For $x<\hat{x}$, the strength $\sigma^{(\tau)}_{\alpha}$ of each front $\alpha$ satisfies
$|\sigma^{(\tau)}_{\alpha}|\leq C\nu^{-1}$.
\end{itemize}
Then we define the modified Glimm-type functional for $x>0$ by
\begin{eqnarray}\label{2.30}
\mathcal{G}^{(\tau)}(x)=\mathcal{V}^{(\tau)}(x)+\mathcal{K}^{(\tau)}\mathcal{Q}^{(\tau)}(x),
\end{eqnarray}
where
\begin{eqnarray}
&&\mathcal{V}^{(\tau)}(x)=\mathcal{V}^{(\tau)}_{1}(x)+\sum^{4}_{ i=2}\mathcal{K}^{(\tau)}_{i}\mathcal{V}^{(\tau)}_{i}(x)
+\mathcal{V}^{(\tau)}_{\mathcal{NP}^{(\tau)}}(x)+\mathcal{K}^{(\tau)}_c\mathcal{V}^{(\tau)}_{c}(x),\qquad\quad \label{2.31}\\
&&\mathcal{Q}^{(\tau)}(x)=\sum_{(\alpha_i,\beta_j)\in \mathcal{A}^{(\tau)}(x)}|\sigma^{(\tau)}_{\alpha_i}||\sigma^{(\tau)}_{\beta_j}|,
\end{eqnarray}
with
\begin{eqnarray}
&&\mathcal{V}^{(\tau)}_{i}(x)=\sum_{\alpha_{i}\in \mathcal{J}^{(\tau)}}|\sigma^{(\tau)}_{\alpha_i}|
\qquad  \mbox{for $1\leq i\leq 4$},\label{eq:2.32}\\
&&\mathcal{V}^{(\tau)}_{\mathcal{NP}^{(\tau)}}(x)=\sum_{\alpha_{\mathcal{NP}}\in \mathcal{NP}^{(\tau)}}|\sigma^{(\tau)}_{\alpha_{\mathcal{NP}}}|, \qquad \mathcal{V}^{(\tau)}_{c}(x)=\sum_{k>[\frac{x}{h}]}|\omega_{k}|. \label{eq:2.33}
\end{eqnarray}
These constants $\mathcal{K}^{(\tau)}_{c}$, $\mathcal{K}^{(\tau)}$, and  $\mathcal{K}^{(\tau)}_{i}, i=2, 3, 4$,  are chosen such that
\begin{eqnarray}\label{eq:2.34}
\begin{split}
&\mathcal{K}^{(\tau)}_{c}>\max\big\{|K^{(\tau)}_{b}|+\frac{1}{2}, 1\big\},
\quad\, \mathcal{K}^{(\tau)}_{i}>\max\big\{|K^{(\tau)}_{r, i}|+\frac{1}{4}, 1\big\} \,\,\,\mbox{for $i=2,3,4$},\\[2pt]
&\mathcal{K}^{(\tau)}>4C_{2,1}\max\big\{\mathcal{K}^{(\tau)}_{2}, \ \mathcal{K}^{(\tau)}_{3},\ \mathcal{K}^{(\tau)}_{4}\big\}+1,
\end{split}\nonumber\\
\end{eqnarray}
where $C_{2,1}$ is the bound in \eqref{eq:2.13} in Lemma \ref{lem:2.3} depending only on $(\underline{U},a_{\infty})$.

We now show that functional $\mathcal{G}^{(\tau)}(x)$ is decreasing with respect to $x$ such that $U^{(\tau)}_{h, \nu}$ can be defined
and satisfies
assumptions ($\mathbf{P1}$) and ($\mathbf{P2}$) for all $x>0$.

\begin{lemma}\label{lem:2.7}
Let $\mathcal{K}^{(\tau)}_{c}$, $\mathcal{K}^{(\tau)}$, and {$\mathcal{K}^{(\tau)}_{\ell}, 2\leq \ell\leq 4$}, satisfy \eqref{eq:2.34}.
Under assumptions $\mathbf{(P1)}$ and $\mathbf{(P2)}$,
there exists a small parameter $\varepsilon_{1}>0$ such that, for $\varepsilon\in(0, \varepsilon_{1})$ and $\tau \in(0,\tau_{*})$,
if $\mathcal{G}^{(\tau)}(\hat{x}-)<\varepsilon$, then
\begin{equation}\label{eq:2.35}
\mathcal{G}^{(\tau)}(\hat{x}+)-\mathcal{G}^{(\tau)}(\hat{x}-)\leq -\frac{1}{8}\mathcal{E}^{(\tau)}(\hat{x}),\qquad
U^{(\tau)}_{h,\nu}(\hat{x}+,\cdot)\in \mathcal{O}_{\epsilon_{*}}(\underline{U}),
\end{equation}
where
\begin{eqnarray*}
\mathcal{E}^{(\tau)}(\hat{x})=\left\{
\begin{array}{llll}
|\sigma^{(\tau)}_{\alpha_i}||\sigma^{(\tau)}_{\beta_j}|
& \emph{if $(\alpha_i, \beta_j)\in \mathcal{A}^{(\tau)}(\hat{x})$ and $y_{\alpha_i}=y_{\beta_j}<g_h(\hat{x})$},\\[5pt]
|\sigma^{(\tau)}_{\alpha_i}|
& \emph{if the $i^{\rm th}$ wave hits the boundary $\Gamma_{h}$ at point $(\hat{x},g_{h}(\hat{x}))$}, \\[5pt]
|\omega_{k}|
 & \emph{if $\hat{x}=x_k$},
\end{array}
\right.
\end{eqnarray*}
for $i,j=2, 3$ or $4$, and $k=[\frac{\hat{x}}{h}]$.
\end{lemma}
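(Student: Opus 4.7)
The plan is to proceed by a case analysis at $\hat{x}$ matching the three alternatives in the definition of $\mathcal{E}^{(\tau)}(\hat{x})$: (a) two approaching waves interact in the interior at $(\hat{x},\hat{y})$ with $\hat{y}<g_h(\hat{x})$; (b) a single incoming wave of family $i\in\{2,3,4\}$ strikes the approximate boundary $\Gamma_h$ at a non-corner point $(\hat{x},g_h(\hat{x}))$ and reflects; (c) $\hat{x}=x_k$ is a corner of $\Gamma_h$. In every case, the strategy is to compare the jump of $\mathcal{V}^{(\tau)}$ (a linear, weighted sum over crossing fronts) with the jump of the interaction potential $\mathcal{Q}^{(\tau)}$ (or with a dedicated weight in $\mathcal{V}^{(\tau)}$, for the boundary cases), and to show that the large coefficients prescribed by \eqref{eq:2.34} produce a strict negative total change proportional to $\mathcal{E}^{(\tau)}(\hat{x})$. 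The smallness hypothesis $\mathcal{G}^{(\tau)}(\hat{x}-)<\varepsilon_1$ will be used to absorb the higher-order terms that involve the pre-interaction total variation.

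For case (a), Lemma~\ref{lem:2.4} gives $\sigma^{(\tau)}_{\gamma_j}=\delta_{ji}\sigma^{(\tau)}_{\alpha_i}+\delta_{jk}\sigma^{(\tau)}_{\beta_k}+O(1)Q(\sigma^{(\tau)}_{\alpha_i},\sigma^{(\tau)}_{\beta_k})$ and the nonphysical-wave bound \eqref{eq:2.15}. Summing with the weights $\mathcal{K}^{(\tau)}_i$ yields $\mathcal{V}^{(\tau)}(\hat{x}+)-\mathcal{V}^{(\tau)}(\hat{x}-)\le C_0\,|\sigma^{(\tau)}_{\alpha_i}||\sigma^{(\tau)}_{\beta_k}|$ for some $C_0=C_0(\underline{U},a_\infty)$, while $\mathcal{V}^{(\tau)}_c$ is unchanged. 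The pair $(\alpha_i,\beta_k)$ is removed from $\mathcal{A}^{(\tau)}$, and the only new approaching pairs are between the outgoing fronts $\gamma_j$ and the pre-existing fronts; hence $\mathcal{Q}^{(\tau)}(\hat{x}+)-\mathcal{Q}^{(\tau)}(\hat{x}-)\le -|\sigma^{(\tau)}_{\alpha_i}||\sigma^{(\tau)}_{\beta_k}|+C_1\,\mathcal{V}^{(\tau)}(\hat{x}-)|\sigma^{(\tau)}_{\alpha_i}||\sigma^{(\tau)}_{\beta_k}|$. Taking $\varepsilon_1$ so small that $C_1\mathcal{V}^{(\tau)}(\hat{x}-)\le\tfrac12$, and $\mathcal{K}^{(\tau)}$ large enough as in \eqref{eq:2.34}, gives $\Delta\mathcal{G}^{(\tau)}(\hat{x})\le (C_0-\tfrac12\mathcal{K}^{(\tau)})|\sigma^{(\tau)}_{\alpha_i}||\sigma^{(\tau)}_{\beta_k}|\le-\tfrac18\mathcal{E}^{(\tau)}(\hat{x})$.

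For case (b), Lemma~\ref{lem:2.6} produces a reflected first-family wave $\beta_1$ with strength $\sigma^{(\tau)}_{\beta_1}=K^{(\tau)}_{r,i}\sigma^{(\tau)}_{\alpha_i}$ (plus $O(\varepsilon)|\sigma^{(\tau)}_{\alpha_i}|$ corrections coming from the nonzero linearization state). Therefore
\begin{equation*}
\mathcal{V}^{(\tau)}(\hat{x}+)-\mathcal{V}^{(\tau)}(\hat{x}-)\le\bigl(|K^{(\tau)}_{r,i}|-\mathcal{K}^{(\tau)}_i\bigr)|\sigma^{(\tau)}_{\alpha_i}|+O(\varepsilon)|\sigma^{(\tau)}_{\alpha_i}|\le -\tfrac14|\sigma^{(\tau)}_{\alpha_i}|,
\end{equation*}
by the weight choice $\mathcal{K}^{(\tau)}_i>|K^{(\tau)}_{r,i}|+\tfrac14$. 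The change in $\mathcal{Q}^{(\tau)}$ is of order $\mathcal{V}^{(\tau)}(\hat{x}-)|\sigma^{(\tau)}_{\alpha_i}|\le C\varepsilon|\sigma^{(\tau)}_{\alpha_i}|$, hence $\Delta\mathcal{G}^{(\tau)}(\hat{x})\le-\tfrac18|\sigma^{(\tau)}_{\alpha_i}|=-\tfrac18\mathcal{E}^{(\tau)}(\hat{x})$ after shrinking $\varepsilon_1$. Case (c) is analogous: at the corner $x_k$ the potential $\mathcal{V}^{(\tau)}_c$ loses exactly $\mathcal{K}^{(\tau)}_c|\omega_k|$, while Lemma~\ref{lem:2.5} generates a new first wave of strength $|K^{(\tau)}_b||\omega_k|$, so the choice $\mathcal{K}^{(\tau)}_c>|K^{(\tau)}_b|+\tfrac12$ produces a net decrease of at least $\tfrac12|\omega_k|$ in $\mathcal{V}^{(\tau)}$, and the $\mathcal{Q}^{(\tau)}$-term is again $O(\varepsilon)|\omega_k|$.

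Finally, the state bound $U^{(\tau)}_{h,\nu}(\hat{x}+,\cdot)\in\mathcal{O}_{\epsilon_*}(\underline{U})$ follows from the first inequality in \eqref{eq:2.35} together with $\mathcal{G}^{(\tau)}(x)\le\mathcal{G}^{(\tau)}(0+)\le C\varepsilon<\epsilon_*$ for $\varepsilon_1$ small, since the oscillation of $U^{(\tau)}_{h,\nu}(\hat{x}+,\cdot)$ around $\underline{U}$ is controlled by $\mathcal{V}^{(\tau)}(\hat{x}+)$ via \eqref{eq:2.13}. The delicate point, and the real reason the scheme works in this characteristic-boundary setting, is not the interior estimate (which is classical) but the boundary book-keeping in (b)--(c): the coefficients $K^{(\tau)}_{r,i}$ and $K^{(\tau)}_b$ are uniform in $\tau\in(0,\tau_*)$ by Lemmas~\ref{lem:2.5}--\ref{lem:2.6}, so a single choice of weights in \eqref{eq:2.34} depending only on $(\underline{U},a_\infty)$ suffices; the reflected wave must lie in the genuinely nonlinear first family (not in the linearly degenerate families $i=2,3$, for which the wave-angle relation forces $K^{(\tau)}_{r,i}|_{\underline{U}}=0$), which is exactly what enables the weighted Glimm functional to close despite the multiplicity and the linear degeneracy of the characteristic boundary.
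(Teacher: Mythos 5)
Your proposal is correct and follows essentially the same route as the paper: the identical three-way case split (interior interaction, boundary reflection, corner point), the same use of Lemmas \ref{lem:2.4}--\ref{lem:2.6} to bound the jumps of $\mathcal{V}^{(\tau)}$ and $\mathcal{Q}^{(\tau)}$, the same exploitation of the weight inequalities \eqref{eq:2.34}, and the same absorption of the $\mathcal{V}^{(\tau)}(\hat{x}-)$-dependent terms via the smallness of $\mathcal{G}^{(\tau)}(\hat{x}-)$, with the state bound recovered from \eqref{eq:2.13}. The only cosmetic difference is that you carry an extra $O(\varepsilon)$ correction in the reflection case that the paper absorbs directly into the coefficient $K^{(\tau)}_{r,i}$ of Lemma \ref{lem:2.6}; this is harmless.
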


\begin{proof}
We prove estimate \eqref{eq:2.35} case by case.

\smallskip
\par \emph{Case 1}: {\it $(\alpha_i, \beta_j)\in \mathcal{A}^{(\tau)}(\hat{x})$ and $y_{\alpha_i}=y_{\beta_j}<g_h(\hat{x})$}.
Let the strengths of wave-fronts $\alpha_i$ and $\beta_j$ are $\sigma^{(\tau)}_{\alpha_i}$ and
$\sigma^{(\tau)}_{\beta_j}$, respectively.
Let $\boldsymbol{\sigma}^{(\tau)}_{\boldsymbol{\gamma}}=(\sigma^{(\tau)}_{\gamma_{1}},\sigma^{(\tau)}_{\gamma_{2}},
\sigma^{(\tau)}_{\gamma_{3}},\sigma^{(\tau)}_{\gamma_{4}})$
(see Fig. \ref{fig2.4}). Then, by Lemma \ref{lem:2.4}, we have
\begin{eqnarray*}
\mathcal{V}^{(\tau)}(\hat{x}+)-\mathcal{V}^{(\tau)}(\hat{x}-)
\leq C_{2,0}\max\{\mathcal{K}^{(\tau)}_{2}, \mathcal{K}^{(\tau)}_{3},
\mathcal{K}^{(\tau)}_{4}\}|\sigma^{(\tau)}_{\alpha_i}||\sigma^{(\tau)}_{\beta_j}|.
\end{eqnarray*}
Meanwhile, for $\mathcal{Q}^{(\tau)}(x)$,
\begin{eqnarray*}
\mathcal{Q}^{(\tau)}(\hat{x}+)-\mathcal{Q}^{(\tau)}(\hat{x}-)
\leq \big(C_{2,1}\mathcal{V}^{(\tau)}(\hat{x}-)-1\big)|\sigma^{(\tau)}_{\alpha_i}||\sigma^{(\tau)}_{\beta_j}|,
\end{eqnarray*}
where constants {$C_{2,j}>0, j=0,1$}, depend only on $(\underline{U},a_{\infty})$.
If $\mathcal{V}^{(\tau)}(\hat{x}-)<\frac{3}{4C_{2,1}}$ is chosen, then
\begin{eqnarray*}
\mathcal{Q}^{(\tau)}(\hat{x}+)-\mathcal{Q}^{(\tau)}(\hat{x}-)\leq
-\frac{1}{4}|\sigma^{(\tau)}_{\alpha_i}||\sigma^{(\tau)}_{\beta_j}|.
\end{eqnarray*}
Thus, it follows from the estimates of $\mathcal{V}^{(\tau)}(\hat{x})$ and $\mathcal{Q}^{(\tau)}(\hat{x})$ and the choice of
$\mathcal{K}^{(\tau)}$ that
\begin{align*}
\mathcal{G}(\hat{x}+)-\mathcal{G}(\hat{x}-)
&\leq \mathcal{V}^{(\tau)}(\hat{x}+)-\mathcal{V}^{(\tau)}(\hat{x}-)
+\mathcal{K}^{(\tau)}\big(\mathcal{Q}^{(\tau)}(\hat{x}+)-\mathcal{Q}^{(\tau)}(\hat{x}-)\big)\\[2pt]
&\leq \Big(C_{2}\max\{\mathcal{K}^{(\tau)}_{2}, \mathcal{K}^{(\tau)}_{3}, \mathcal{K}^{(\tau)}_{4}\}-\frac{\mathcal{K}^{(\tau)}}{4}\Big)
 |\sigma^{(\tau)}_{\alpha_i}||\sigma^{(\tau)}_{\beta_j}|\\[2pt]
&\leq -\frac{1}{4}|\sigma^{(\tau)}_{\alpha_i}||\sigma^{(\tau)}_{\beta_j}|.
\end{align*}

\emph{Case 2}:  $\hat{x}=x_k$.
Then, by Lemma \ref{lem:2.5}, we have
\begin{eqnarray*}
&&\mathcal{V}^{(\tau)}_{1}(\hat{x}+)-\mathcal{V}^{(\tau)}_{1}(\hat{x}-)
=|K^{(\tau)}_{b}||\omega_{k}|,\quad\,\, \mathcal{V}^{(\tau)}_{c}(\hat{x}+)-\mathcal{V}^{(\tau)}_{c}(\hat{x}-)=-|\omega_{k}|,\\[2mm]
&&\mathcal{V}^{(\tau)}_{i}(\hat{x}+)-\mathcal{V}^{(\tau)}_{i}(\hat{x}-)
=0 \qquad \mbox{for $2\leq i\leq 4$}.
\end{eqnarray*}
Then it follows from the choice of $\mathcal{K}^{(\tau)}_{c}$ that
\begin{eqnarray*}
\mathcal{V}^{(\tau)}(\hat{x}+)-\mathcal{V}^{(\tau)}(\hat{x}-)
=-\big(\mathcal{K}^{(\tau)}_{c}-|K^{(\tau)}_{b}|\big)|\omega_{k}|
\leq -\frac{1}{2}|\omega_{k}|.
\end{eqnarray*}
Meanwhile, for $\mathcal{Q}^{(\tau)}(\hat{x})$, we have
\begin{eqnarray*}
\mathcal{Q}^{(\tau)}(\hat{x}+)-\mathcal{Q}^{(\tau)}(\hat{x}-)
\leq|K^{(\tau)}_{b}|\,\mathcal{V}^{(\tau)}(\hat{x}-)|\omega_{k}|.
\end{eqnarray*}

Thus, if $\mathcal{V}^{(\tau)}(\hat{x}-)\leq \frac{1}{4\mathcal{K}^{(\tau)}|K^{(\tau)}_{b}|}$ is chosen, then
\begin{eqnarray*}
\mathcal{G}^{(\tau)}(\hat{x}+)-\mathcal{G}^{(\tau)}(\hat{x}-)
\leq-\frac{1}{2}\big(1-2\mathcal{K}^{(\tau)}|K^{(\tau)}_{b}|\mathcal{V}^{(\tau)}(\hat{x}-)\big)|\omega_{k}|
\leq-\frac{1}{4}|\omega_{k}|.
\end{eqnarray*}

\emph{Case 3}: {\it The $i^{\rm th}$ wave hits boundary $\Gamma_{h}$ at point $(\hat{x},g_{h}(\hat{x}))$}.
Without loss of generality, we only consider the case: $i=4$,
since the other cases can be done analogously.
Let the strength of the $4^{\rm th}$ wave $\alpha_4$ is $\sigma^{(\tau)}_{\alpha_4}$.
From Lemma \ref{lem:2.6}, we obtain
\begin{align*}
&\mathcal{V}^{(\tau)}_{1}(\hat{x}+)-\mathcal{V}^{(\tau)}_{1}(\hat{x}-)
=|K^{(\tau)}_{r,4}||\sigma^{(\tau)}_{\alpha_4}|,\qquad
\mathcal{V}^{(\tau)}_{4}(\hat{x}+)-\mathcal{V}^{(\tau)}_{4}(\hat{x}-)
=-|\sigma^{(\tau)}_{\alpha_4}|,\\[1mm]
&\mathcal{V}^{(\tau)}_{c}(\hat{x}+)-\mathcal{V}^{(\tau)}_{c}(\hat{x}-)=0,\qquad
\mathcal{V}^{(\tau)}_{\ell}(\hat{x}+)-\mathcal{V}^{(\tau)}_{\ell}(\hat{x}-)
=0 \quad\mbox{for $\ell=2, 3$}.
\end{align*}
Then it follows from the choice of $\mathcal{K}^{(\tau)}_{4}$ that
\begin{eqnarray*}
\mathcal{V}^{(\tau)}(\hat{x}+)-\mathcal{V}^{(\tau)}(\hat{x}-)
=-\big(\mathcal{K}^{(\tau)}_{4}-|K^{(\tau)}_{r, 4}|\big)|\sigma^{(\tau)}_{\alpha_4}|
\leq -\frac{1}{4}|\sigma^{(\tau)}_{\alpha_4}|.
\end{eqnarray*}
Meanwhile,
\begin{eqnarray*}
\mathcal{Q}^{(\tau)}(\hat{x}+)-\mathcal{Q}^{(\tau)}(\hat{x}-)
\leq\big(|K^{(\tau)}_{r, 4}|-1\big)\mathcal{V}^{(\tau)}(\hat{x}-)|\sigma^{(\tau)}_{\alpha_4}|
\leq |K^{(\tau)}_{r, 4}|\,\mathcal{V}^{(\tau)}(\hat{x}-)|\sigma^{(\tau)}_{\alpha_4}|.
\end{eqnarray*}
Therefore, if $\mathcal{V}^{(\tau)}(\hat{x}-)\leq \frac{1}{2\mathcal{K}^{(\tau)}K^{(\tau)}_{r,4}}$ is chosen,
then
\begin{eqnarray*}
\mathcal{G}^{(\tau)}(\hat{x}+)-\mathcal{G}^{(\tau)}(\hat{x}-)
\leq-\frac{1}{4}\big(1-\mathcal{K}^{(\tau)}|K^{(\tau)}_{r, 4}|\mathcal{V}^{(\tau)}(\hat{x}-)\big)|\sigma^{(\tau)}_{\alpha_4}|
\leq -\frac{1}{8}|\sigma^{(\tau)}_{\alpha_4}|.
\end{eqnarray*}
Thus, for all the cases,
if
\begin{eqnarray*}
\mathcal{G}^{(\tau)}(\hat{x}-)<\min\bigg\{\frac{3}{4C_{2,1}}, \frac{1}{2\mathcal{K}^{(\tau)}|K^{(\tau)}_{r,2}|},
\frac{1}{2\mathcal{K}^{(\tau)}|K^{(\tau)}_{r,3}|}, \frac{1}{2\mathcal{K}^{(\tau)}|K^{(\tau)}_{r,4}|},\frac{1}{4\mathcal{K}^{(\tau)}|K^{(\tau)}_{b}|} \bigg\},
\end{eqnarray*}
then
$$
\mathcal{G}(\hat{x}+)<\mathcal{G}(\hat{x}-).
$$

Choose
\begin{eqnarray*}
\varepsilon_1=\min\Big\{\frac{3}{4C_{2,1}}, \frac{1}{2\mathcal{K}^{(\tau)}|K^{(\tau)}_{r,2}|},
\frac{1}{2\mathcal{K}^{(\tau)}|K^{(\tau)}_{r,3}|}, \frac{1}{2\mathcal{K}^{(\tau)}|K^{(\tau)}_{r,4}|},
\frac{1}{4\mathcal{K}^{(\tau)}|K^{(\tau)}_{b}|}, \frac{\epsilon_{*}}{C_{2,1}}\Big\}.
\end{eqnarray*}
If $\mathcal{G}(\hat{x}-)<\varepsilon$ for $\varepsilon\in(0,\varepsilon_1)$, then $\mathcal{G}(\hat{x}+)<\varepsilon$.
This implies
\begin{eqnarray*}
|U^{(\tau)}_{h,\nu}(\hat{x}+,\cdot)-\underline{U}|<C_{2,1}\mathcal{V}^{(\tau)}(\hat{x}+)\leq C_{2,1}\mathcal{G}^{(\tau)}(\hat{x}+)
\leq C_{2,1}\varepsilon.
\end{eqnarray*}
Then we obtain the estimates in \eqref{eq:2.35}.
\end{proof}

Using Lemma \ref{lem:2.7} and following the arguments in \cite{amadori,bressan,colombo-guerra},
we arrive at the following proposition. Because the argument is standard, we omit the proof.

\begin{proposition}\label{prop:2.1}
Under assumptions $(\mathbf{U_0})$ and $(\mathbf{g})$,  there exist constants
$\varepsilon_{2}=\varepsilon_{2}(\varepsilon_{1})>0$ and $\tau_{1}<\tau_{*}$ depending only on $(\underline{U},a_{\infty})$
such that, if the initial data $U_{0}(y)$ and the boundary function $g(x)$ satisfy
\begin{eqnarray}\label{eq:2.36}
\big\|U_{0}(\cdot)-\underline{U}\big\|_{BV(\mathcal{I})}
+ |g'(0)|+\big\|g'(\cdot)\|_{BV((0,\infty))}\leq \varepsilon
\end{eqnarray}
for some $\varepsilon \in (0,\varepsilon_{2})$ and $\tau\in(0,\tau_{1})$,
then there exists a positive threshold $\varrho=\varrho_{\nu}>0$ with $\varrho_{\nu}\rightarrow 0$ as $\nu\rightarrow \infty$
such that the wave-front tracking algorithm introduced in {\rm \S 2.3}
provides an approximate solution $U^{(\tau)}_{h,\nu}(x,y)\in \big(BV_{\rm loc}\cap L^{1}_{\rm loc}\big)(\Omega_{h})$ globally for all $x>0$.
Moreover, the following estimates hold{\rm :}

{\rm (i)} There exist positive constants $C_{2,2}>0$ and $C_{2,3}>0$ independent of $(h, \nu,\tau)$ such that
\begin{equation}\label{eq:2.37}
\sup_{x>0}\|U^{(\tau)}_{h,\nu}(x,\cdot)-\underline{U}\|_{L^{\infty}((-\infty, g_{h}(x)))}+\sup_{x>0}\big\|U^{(\tau)}_{h,\nu}(x,\cdot)-\underline{U}\big\|_{BV((-\infty, g_{h}(x)))}
\leq C_{2,2}\varepsilon,
\end{equation}
and, for any $x', x''>0$,
\begin{equation}\label{eq:2.38}
\big\|U^{(\tau)}_{h,\nu}(x',\cdot+g_{h}(x'))-U^{(\tau)}_{h,\nu}(x'',\cdot+g_{h}(x''))\big\|_{L^1((-\infty,0))}
\leq C_{2,3}|x'-x''|;\quad
\end{equation}

{\rm (ii)} The strength of each rarefaction-front is small, \emph{i.e.},
\begin{eqnarray}\label{eq:2.39}
|\sigma^{(\tau)}_{\alpha}|\leq C_{2,4}\nu^{-1}\qquad \mbox{for $\alpha\in\mathcal{R}^{(\tau)}$},
\end{eqnarray}
and the total strength of the nonphysical waves is small, \emph{i.e.},
\begin{eqnarray}\label{eq:2.40}
\sum_{\alpha\in\mathcal{NP}^{(\tau)}}|\sigma^{(\tau)}_{\alpha}|\leq C_{2,5}2^{-\nu},
\end{eqnarray}
where constants $C_{2,4}>0$ and $C_{2,5}>0$ depend only on $(\underline{U},a_{\infty})$.
\end{proposition}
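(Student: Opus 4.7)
The plan is to iterate Lemma \ref{lem:2.7} globally in $x$ and combine it with the standard accounting of fronts as in \cite{bressan,amadori,colombo-guerra}. First, I would estimate $\mathcal{G}^{(\tau)}(0+)$. The initial piecewise constant data $U^{\nu}_{0}$ produces a finite fan of waves at each jump point, and by Lemma \ref{lem:2.3} the sum of strengths is bounded by $C\|U_{0}-\underline{U}\|_{BV(\mathcal{I})}$; the boundary corner $\textsc{C}_{0}$ contributes $|K^{(\tau)}_{b}||\omega_{0}|$ to $\mathcal{V}^{(\tau)}_{1}$ via Lemma \ref{lem:2.5}, while $\mathcal{V}^{(\tau)}_{c}(0+)\leq\|g'_{h}\|_{BV}\leq\|g'\|_{BV}$ by \eqref{eq:2.27}. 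Together with $\mathcal{Q}^{(\tau)}(0+)\leq(\mathcal{V}^{(\tau)}(0+))^{2}$, this gives $\mathcal{G}^{(\tau)}(0+)\leq C\varepsilon$ for a universal $C=C(\underline{U},a_{\infty})$. Choosing $\varepsilon_{2}$ so small that $C\varepsilon_{2}<\varepsilon_{1}$, I then apply Lemma \ref{lem:2.7} inductively at each interaction/reflection/corner point, which gives $\mathcal{G}^{(\tau)}(x)\leq\mathcal{G}^{(\tau)}(0+)\leq C\varepsilon$ for every $x>0$ where $U^{(\tau)}_{h,\nu}$ has been constructed, and simultaneously ensures $U^{(\tau)}_{h,\nu}(x,\cdot)\in\mathcal{O}_{\epsilon_{*}}(\underline{U})$ so that assumption $(\mathbf{P1})$ is preserved.

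Next I would show the algorithm can be continued to all $x>0$ with only finitely many wave-fronts and interactions. Since $g'\in BV\cap L^{1}$ with $g'(x)\to g'_{\infty}$, the straight approximation $g_{h}$ has only $k_{*}=k_{*}(h)$ corners, so only finitely many ``boundary'' waves are generated from corners. Likewise $U^{\nu}_{0}$ has finitely many jumps. At each interior interaction handled by $(ARS)$, Lemma \ref{lem:2.7} Case 1 yields a strength product $|\sigma^{(\tau)}_{\alpha_{i}}||\sigma^{(\tau)}_{\beta_{j}}|>\varrho$ with a corresponding decrease of $\mathcal{G}^{(\tau)}$, so the number of $(ARS)$ interactions is bounded by $\mathcal{G}^{(\tau)}(0+)/\varrho$; each such interaction spawns at most one $(ARS)$ rarefaction fan, subdivided into at most $C\nu$ fronts by construction, which yields \eqref{eq:2.39}. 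The $(SRS)$ interactions produce nonphysical fronts, and the threshold $\varrho=\varrho_{\nu}$ with $\varrho_{\nu}\to 0$ can be chosen as in \cite[Ch.~7]{bressan} so that $\sum_{\alpha\in\mathcal{NP}^{(\tau)}}|\sigma^{(\tau)}_{\alpha}|\leq C_{2,5}2^{-\nu}$, using that the generation order becomes strictly greater at every $(SRS)$ event and that interactions involving high-order waves contribute only through $(SRS)$.

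Finally, from the uniform bound $\mathcal{V}^{(\tau)}(x)\leq C\varepsilon$, Lemma \ref{lem:2.3} estimate \eqref{eq:2.13} yields $\|U^{(\tau)}_{h,\nu}(x,\cdot)-\underline{U}\|_{BV((-\infty,g_{h}(x)))}\leq C_{2,1}\mathcal{V}^{(\tau)}(x)\leq C_{2,2}\varepsilon$; combined with the decay $U^{(\tau)}_{h,\nu}(x,-\infty)=\underline{U}$, this also gives the $L^{\infty}$ bound in \eqref{eq:2.37}. For \eqref{eq:2.38}, since the eigenvalues $\lambda^{(\tau)}_{j}$ are uniformly bounded on $\mathcal{O}_{\epsilon_{*}}(\underline{U})\times(0,\tau_{1})$ by \eqref{eq:2.2}--\eqref{eq:2.3} and each wave-front is a straight line with slope equal to a Rankine--Hugoniot or characteristic speed, the set $\{y:U^{(\tau)}_{h,\nu}(x',y)\neq U^{(\tau)}_{h,\nu}(x'',y+g_{h}(x'')-g_{h}(x'))\}$ lies in a union of thin strips of total width $O(|x'-x''|)$, whose $L^{1}$ contribution is $\leq C\,\mathcal{V}^{(\tau)}\cdot|x'-x''|$; a shift by the Lipschitz function $g_{h}$ then yields \eqref{eq:2.38}.

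The main obstacle I expect is the globality in $x$: the standard counting arguments are usually formulated on finite intervals, whereas here I need finiteness of the total number of fronts on all of $[0,\infty)$. The crucial input is that $\mathcal{V}^{(\tau)}_{c}(x)\downarrow 0$ as $x\to\infty$ (the boundary is straight for $x\geq x_{k_{*}}$) and $\mathcal{G}^{(\tau)}$ is monotone decreasing, so no new waves are produced from the boundary after $x_{k_{*}}$ and no new interaction potential is generated; hence the total number of interactions on $[0,\infty)$ is dominated by the finite count $\leq\mathcal{G}^{(\tau)}(0+)/\varrho_{\nu}+k_{*}+(\text{number of jumps in }U^{\nu}_{0})$, as required.
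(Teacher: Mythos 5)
Your proposal takes exactly the route the paper intends: Proposition \ref{prop:2.1} is stated there as a consequence of Lemma \ref{lem:2.7} combined with the standard wave-front-tracking accounting of \cite{amadori,bressan,colombo-guerra}, with the details omitted as standard, and your write-up supplies precisely those details (the initial bound $\mathcal{G}^{(\tau)}(0+)\leq C\varepsilon$, the monotone decrease of $\mathcal{G}^{(\tau)}$ across interactions/reflections/corners, finiteness of the fronts using $k_{*}<\infty$ and the threshold $\varrho_{\nu}$, and the resulting $BV$, $L^{\infty}$, $L^{1}$-Lipschitz, rarefaction-strength, and nonphysical-wave estimates). The argument is correct and matches the paper's approach.
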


{
Next, we further investigate properties of the approximate solutions $U^{(\tau)}_{h,\nu}$.
To this end, denote by $y=\mathcal{Y}(x)$ as a Lipschitz continuous curve
that is a linear combination of $g_{h}(x)$ and $\check{g}_{h}(x)$, and
let $\mathcal{Y}(\Sigma)=\{(x, \mathcal{Y}(x))\,:\, y=\mathcal{Y}(x), x\in\Sigma\}$ for an interval $\Sigma$.
Denote $\|f\|_{BV(\mathcal{Y}(\Sigma))}$ to be the total variation of function $f$ along $y=\mathcal{Y}(x)$ on interval $\Sigma$.
Then we have

\begin{lemma}\label{lem-BV-velocity}
Suppose that $U^{(\tau)}_{h,\nu}$ be the approximate solutions
of problem \eqref{eq:1.11}--\eqref{eq:1.12} constructed by the wave-front tracking scheme in {\rm \S 2.4}.
Then, when $\varepsilon>0$ and $\tau>0$ are sufficiently small,
\begin{equation}\label{eq-BV-velocity}
\Big\|(\frac{v^{(\tau)}_{h,\nu}}{1+\tau^2 u^{(\tau)}_{h,\nu}}, p^{(\tau)}_{h,\nu})\Big\|_{BV(\mathcal{Y}(\Sigma_x))}
\le C\Big(\big\|U_{0}\big\|_{BV(\mathcal{I})}+|g'(0)|+\big\|g'(\cdot)\|_{BV((0,\infty))}\Big)
\end{equation}
for $\Sigma_{x}=(0,x)$, where $C>0$ depends only on $(\underline{U}, a_{\infty})$.
\end{lemma}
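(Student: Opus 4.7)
The plan is to exploit the special structure of the linearly degenerate characteristic families of \eqref{eq:1.11}. Setting $\phi^{(\tau)}:=v^{(\tau)}/(1+\tau^2 u^{(\tau)})$, I would first verify that $(\phi^{(\tau)},p^{(\tau)})$ are \emph{Riemann invariants} for both the $2^{\rm nd}$ and $3^{\rm rd}$ fields, i.e.\ $\nabla_{U}\phi^{(\tau)}\cdot\tilde{\mathbf r}^{(\tau)}_k=0$ and $\nabla_{U}p^{(\tau)}\cdot\tilde{\mathbf r}^{(\tau)}_k=0$ for $k=2,3$. The claim for $p^{(\tau)}$ is immediate from \eqref{eq:2.5}, since the $4^{\rm th}$ components of $\tilde{\mathbf r}^{(\tau)}_2$ and $\tilde{\mathbf r}^{(\tau)}_3$ vanish, and a short direct computation with $\nabla_U\phi^{(\tau)}=\bigl(0,-\tau^2 v^{(\tau)}/(1+\tau^2 u^{(\tau)})^2,\,1/(1+\tau^2 u^{(\tau)}),\,0\bigr)$ gives it for $\phi^{(\tau)}$. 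Consequently $(\phi^{(\tau)},p^{(\tau)})$ are constant along every elementary wave curve $\Phi^{(\tau)}_2$ and $\Phi^{(\tau)}_3$, and therefore take the same value on the two sides of any contact discontinuity in the approximate solution $U^{(\tau)}_{h,\nu}$. Note that $\phi^{(\tau)}$ is exactly the multiple eigenvalue $\lambda^{(\tau)}_2=\lambda^{(\tau)}_3$ in \eqref{eq:2.3}, which is consistent with the boundary tangency condition.

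Given this, since $U^{(\tau)}_{h,\nu}$ is piecewise constant and $y=\mathcal{Y}(x)$ is Lipschitz, the total variation of $(\phi^{(\tau)}_{h,\nu},p^{(\tau)}_{h,\nu})$ along $\mathcal{Y}(\Sigma_x)$ equals the sum of the jumps produced as $\mathcal{Y}$ is crossed by individual fronts, and by Step~1 only fronts of families $1$ and $4$ and non-physical fronts contribute. Each such crossing produces a jump bounded by $O(1)|\sigma^{(\tau)}_\alpha|$ thanks to the parametrization in Lemma \ref{lem:2.2} and the equivalence in \eqref{eq:2.13}; since every front in the wave-front tracking scheme is a straight segment and $\mathcal{Y}$ is Lipschitz, each front meets $\mathcal{Y}$ at most once. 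This reduces \eqref{eq-BV-velocity} to the estimate
\[
\sum_{\substack{\alpha\in\mathcal{J}^{(\tau)}(U^{(\tau)}_{h,\nu})\\ \alpha\text{ of family }1,\,4\text{ or NP}\\ \alpha\text{ crosses }\mathcal{Y}(\Sigma_x)}}|\sigma^{(\tau)}_\alpha| \;\le\; C\Bigl(\|U_0\|_{BV(\mathcal{I})}+|g'(0)|+\|g'\|_{BV((0,\infty))}\Bigr).
\]

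To control this sum, I would classify each candidate front by its point of creation: (a) the initial line $\mathcal{I}$ at $x=0$; (b) a corner $\textrm{C}_k$ of $\Gamma_h$, where Lemma \ref{lem:2.5} provides strength $K^{(\tau)}_b\omega_k$; (c) a reflection on $\Gamma_h$ of a $k$-wave, producing a $1$-front of strength $K^{(\tau)}_{r,k}\sigma^{(\tau)}_{\alpha_k}$ by Lemma \ref{lem:2.6}; or (d) an interior interaction, with new strengths $O(1)Q(\sigma^{(\tau)}_{\alpha_i},\sigma^{(\tau)}_{\beta_k})$ by Lemma \ref{lem:2.4}. Non-physical fronts are controlled globally by \eqref{eq:2.40}. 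Contributions from (a) and (b) are directly bounded by the right-hand side above. For (c)--(d), I would invoke the strict monotonicity of the weighted Glimm-type functional $\mathcal{G}^{(\tau)}$ in Lemma \ref{lem:2.7}: the amount $\mathcal{E}^{(\tau)}(\hat x)$ produced at each interaction or reflection is absorbed into a strict decrease of $\mathcal{G}^{(\tau)}$, so the \emph{total} creation amount over all $x>0$ is bounded by $\mathcal{G}^{(\tau)}(0+)\le C\varepsilon$, giving \eqref{eq-BV-velocity} uniformly in $(h,\nu,\tau)$.

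The main obstacle is this last accounting step: one cannot simply quote the cross-sectional bound \eqref{eq:2.37}, because different fronts may cross $\mathcal{Y}$ at different values of $x$, and new fronts are continually created at corners, reflections, and interactions that were not present at $x=0$. The sharp monotonicity of $\mathcal{G}^{(\tau)}$, combined with the weights $\mathcal K^{(\tau)},\mathcal K^{(\tau)}_i,\mathcal K^{(\tau)}_c$ chosen in \eqref{eq:2.34} precisely so as to dominate the reflection and corner coefficients $|K^{(\tau)}_{r,k}|$ and $|K^{(\tau)}_b|$, is what guarantees that this total creation sum is linearly bounded by the initial and boundary variation---yielding \eqref{eq-BV-velocity} with a constant independent of $(h,\nu,\tau)$.
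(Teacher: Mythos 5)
Your proposal is correct, and it rests on the same two pillars as the paper's argument: the fact that $(v^{(\tau)}/(1+\tau^2 u^{(\tau)}),\,p^{(\tau)})$ are Riemann invariants of the linearly degenerate $2^{\rm nd}$ and $3^{\rm rd}$ fields (so only $1$-, $4$- and non-physical fronts produce jumps along $\mathcal{Y}$), plus a Glimm-functional control of the total strength of those fronts. Where you differ is in the bookkeeping. The paper does not classify fronts by their creation event; it instead builds a second weighted functional $\tilde{\mathcal{G}}^{(\tau)}_{\mathcal{Y}}$ that contains $\|(\omega^{(\tau)}_{h,\nu},p^{(\tau)}_{h,\nu})\|_{BV(\mathcal{Y}(\Sigma_x))}$ itself as a summand, together with \emph{side-dependent} weights $\tilde{\mathcal{K}}^{(\tau),a}_{1}>\tilde{\mathcal{K}}^{(\tau),b}_{1}+C^\natural$, $\tilde{\mathcal{K}}^{(\tau),b}_{4}>\tilde{\mathcal{K}}^{(\tau),a}_{4}+C^\natural$, $\tilde{\mathcal{K}}^{(\tau),b}_{\mathcal{NP}}>\tilde{\mathcal{K}}^{(\tau),a}_{\mathcal{NP}}+C^\natural$ for fronts above/below $\mathcal{Y}$: when a front of family $1$, $4$ or $\mathcal{NP}$ crosses the curve, the drop in the weighted linear part pays locally for the new jump recorded in the $BV$ term, the functional is shown to decrease through every interaction, reflection and corner, and the estimate is read off from $\tilde{\mathcal{G}}^{(\tau)}(x)<\tilde{\mathcal{G}}^{(\tau)}(0)$. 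Your ``charge each crossing to its source'' accounting buys transparency, at the price of having to argue that the strength of a front at the moment it meets $\mathcal{Y}$ differs from its strength at creation only by accumulated $O(1)Q$ corrections, which you correctly absorb into the decrease of $\mathcal{G}^{(\tau)}$; the paper's device automates exactly this step. Two points to tighten: (i) ``a straight front meets a Lipschitz curve at most once'' is not true on the stated grounds --- what is actually needed (and what the paper invokes in Proposition \ref{prop-flow-slop}) is the transversality $|\mathcal{Y}'(x)-\dot{y}_{\alpha}(x)|>d^\natural$, which holds because $\lambda^{(\tau)}_{1,4}\approx\mp a_{\infty}^{-1}$ while $|\mathcal{Y}'|=O(\varepsilon)$, and which also rules out a front recrossing after an interaction; (ii) since the $2^{\rm nd}$ and $3^{\rm rd}$ fields are linearly degenerate, the Hugoniot curves coincide with the integral curves of $\tilde{\mathbf{r}}^{(\tau)}_{2}$ and $\tilde{\mathbf{r}}^{(\tau)}_{3}$, so your Riemann-invariant computation does legitimately apply to the contact discontinuities themselves; this should be said explicitly.
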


\begin{proof}
Let $\omega^{(\tau)}_{h,\nu}\doteq \frac{v^{(\tau)}_{h,\nu}}{1+\tau^2 u^{(\tau)}_{h,\nu}}$, and
let $y_{\alpha}(x)$ be the location of each front $\alpha\in\mathcal{J}^{(\tau)}(U^{(\tau)}_{h,\nu})$.
In order to show \eqref{eq-BV-velocity}, we re-define
the modified Glimm-type functional for $x>0$ by
\begin{eqnarray*}
\tilde{\mathcal{G}}^{(\tau)}_{\mathcal{Y}}(x)
=\tilde{\mathcal{V}}^{(\tau)}(x)+\tilde{\mathcal{K}}^{(\tau)}\widetilde{\mathcal{Q}}^{(\tau)}(x),
\end{eqnarray*}
where $\tilde{\mathcal{Q}}^{(\tau)}(x)=\mathcal{Q}^{(\tau)}(x)$,
\begin{align*}
\tilde{\mathcal{V}}^{(\tau)}(x)
=&\sum_{j=a,b}\Big(\sum_{i=1,4}\tilde{\mathcal{K}}^{(\tau),j}_{i}\tilde{\mathcal{V}}^{(\tau),j}_{i}(x)
 +\tilde{\mathcal{K}}^{(\tau),j}_{\mathcal{NP}}\tilde{\mathcal{V}}^{(\tau),j}_{\mathcal{NP}}(x)\Big)\\
  &+\sum_{l=2,3}\tilde{\mathcal{K}}^{(\tau)}_{l}\tilde{\mathcal{V}}^{(\tau)}_{l}(x)
  +\tilde{\mathcal{K}}^{(\tau)}_c\tilde{\mathcal{V}}^{(\tau)}_{c}(x)
   +\tilde{\mathcal{V}}^{(\tau)}_{\mathcal{Y}}(x),
\end{align*}
with $\tilde{\mathcal{V}}^{(\tau)}_{i}(x)=\mathcal{V}^{(\tau)}_{i}(x)$ for $i=2,3$, $\tilde{\mathcal{V}}^{(\tau)}_{c}(x)=\mathcal{V}^{(\tau)}_{c}(x)$,
and
\begin{align*}
&\tilde{\mathcal{V}}^{(\tau),a}_{i}(x)=\sum_{\alpha_{i}\in \mathcal{J}^{(\tau)}(U^{(\tau)}_{h, \nu}),\atop y_{\alpha_i}(x)>\mathcal{Y}(x)}|\sigma^{(\tau)}_{\alpha_i}|,\quad \tilde{\mathcal{V}}^{(\tau),b}_{i}(x)=\sum_{\alpha_{i}\in \mathcal{J}^{(\tau)}(U^{(\tau)}_{h, \nu}), \atop y_{\alpha_i}(x)<\mathcal{Y}(x)}|\sigma^{(\tau)}_{\alpha_i}|
\qquad \mbox{for $i=1, 4$},\\
&\tilde{\mathcal{V}}^{(\tau), a}_{\mathcal{NP}^{(\tau)}}(x)=\sum_{\alpha_{\mathcal{NP}}\in \mathcal{NP}^{(\tau)}, \atop y_{\alpha_{\mathcal{NP}}}(x)>\mathcal{Y}(x)}|\sigma^{(\tau)}_{\alpha_{\mathcal{NP}}}|,\quad  \tilde{\mathcal{V}}^{(\tau), b}_{\mathcal{NP}^{(\tau)}}(x)=\sum_{\alpha_{\mathcal{NP}}\in \mathcal{NP}^{(\tau)},\atop y_{\alpha_{\mathcal{NP}}}(x)<\mathcal{Y}(x)}|\sigma^{(\tau)}_{\alpha_{\mathcal{NP}}}|,\\
&\tilde{\mathcal{V}}^{(\tau)}_{\mathcal{Y}}(x)=\big\|(\omega^{(\tau)}_{h,\nu}, p^{(\tau)}_{h,\nu})\big\|_{BV(\mathcal{Y}(\Sigma_x))},
\end{align*}
constants $\tilde{\mathcal{K}}^{(\tau),j}_{i}$, $\tilde{\mathcal{K}}^{(\tau),j}_{\mathcal{NP}}$, $\tilde{\mathcal{K}}^{(\tau)}_{l}$ for $i=1, 4,\ j=a,b, l=2,3$, and $\tilde{\mathcal{K}}^{(\tau)}_c$
are chosen such that
\begin{eqnarray}\label{eq-coefficient-K}
\begin{split}
&\tilde{\mathcal{K}}^{(\tau),a}_{1}>\tilde{\mathcal{K}}^{(\tau),b}_{1}+C^\natural,\ \ \tilde{\mathcal{K}}^{(\tau),b}_{4}>\tilde{\mathcal{K}}^{(\tau),a}_{4}+C^\natural, \ \
\tilde{\mathcal{K}}^{(\tau),b}_{\mathcal{NP}}>\tilde{\mathcal{K}}^{(\tau),a}_{\mathcal{NP}}+C^\natural,\\[2pt]
&\tilde{\mathcal{K}}^{(\tau),a}_{1}>K^{(\tau)}_{r,i}\tilde{\mathcal{K}}^{(\tau),a}_{i},\quad \tilde{\mathcal{K}}^{(\tau)}_c>\tilde{\mathcal{K}}^{(\tau),a}_{1}K^{(\tau)}_{b}\qquad \mbox{for $i=2,3,4$},
\end{split}
\end{eqnarray}
constant $\tilde{\mathcal{K}}^{(\tau)}>0$ will be determined later,
$C^\natural>0$ is a large constant depending only on $(\underline{U}, a_{\infty})$,
and coefficients $K^{(\tau)}_{b}$, $K^{(\tau)}_{r,i},\ i=2,3,4$, are given by Lemmas \ref{lem:2.5}--\ref{lem:2.6}.

\smallskip
Then we show that $\tilde{\mathcal{G}}^{(\tau)}(x)$ is decreasing by dividing four cases:

\smallskip
\emph{Case 1{\rm :} $\,$Interactions between two physical wave-fronts at $(x^\natural, \mathcal{Y}(x^\natural))$.}
Without loss of generality, consider the $1^{\rm th}$ wave-front $\alpha_1$ from the region above $y=\mathcal{Y}(x)$
interacts with the $4^{\rm th}$ wave-front $\alpha_4$ from the region below $y=\mathcal{Y}(x)$ at $(x^\natural, \mathcal{Y}(x^\natural))$.
If the resulting wave-fronts are all physical, which are denoted by $\beta_j$ for $j=1,2,3,4$,
then, by Lemma \ref{lem:2.6},
\begin{eqnarray*}
\sigma^{(\tau)}_{\beta_j}=\sigma^{(\tau)}_{\alpha_j}+O(1)|\sigma^{(\tau)}_{\alpha_1}\sigma^{(\tau)}_{\alpha_4}| \,\,\,\,\mbox{for $j=1,4$},\quad\,\,\,
\sigma^{(\tau)}_{\beta_i}=O(1)|\sigma^{(\tau)}_{\alpha_1}\sigma^{(\tau)}_{\alpha_4}| \,\,\,\, \mbox{for $i=2,3$}.
\end{eqnarray*}
By direct calculation, we have
\begin{align*}
&\sum_{i=1,4}\tilde{\mathcal{K}}^{(\tau),a}_{i}\tilde{\mathcal{V}}^{(\tau),a}_{i}(x^\natural+)
-\sum_{i=1,4}\tilde{\mathcal{K}}^{(\tau),a}_{i}\tilde{\mathcal{V}}^{(\tau),a}_{i}(x^\natural-)\\
&\quad \leq \tilde{\mathcal{K}}^{(\tau),a}_{4}|\sigma^{(\tau)}_{\alpha_4}|-\tilde{\mathcal{K}}^{(\tau),a}_{1}|\sigma^{(\tau)}_{\alpha_1}|+C|\sigma^{(\tau)}_{\alpha_1}\sigma^{(\tau)}_{\alpha_4}|,\\[2mm]
&\sum_{i=1,4}\tilde{\mathcal{K}}^{(\tau),b}_{i}\tilde{\mathcal{V}}^{(\tau),b}_{i}(x^\natural+)
-\sum_{i=1,4}\tilde{\mathcal{K}}^{(\tau),b}_{i}\tilde{\mathcal{V}}^{(\tau),b}_{i}(x^\natural-)\\
&\quad \leq \tilde{\mathcal{K}}^{(\tau),b}_{1}|\sigma^{(\tau)}_{\alpha_1}|-\tilde{\mathcal{K}}^{(\tau),b}_{4}|\sigma^{(\tau)}_{\alpha_4}|+C|\sigma^{(\tau)}_{\alpha_1}\sigma^{(\tau)}_{\alpha_4}|,\\[2mm]
&\tilde{\mathcal{V}}^{(\tau), a}_{\mathcal{NP}^{(\tau)}}(x^\natural+)-\tilde{\mathcal{V}}^{(\tau), a}_{\mathcal{NP}^{(\tau)}}(x^\natural-)
=\tilde{\mathcal{V}}^{(\tau), b}_{\mathcal{NP}^{(\tau)}}(x^\natural+)-\tilde{\mathcal{V}}^{(\tau), b}_{\mathcal{NP}^{(\tau)}}(x^\natural-)=0,\\[2mm]
&\sum_{i=2,3}\tilde{\mathcal{V}}^{(\tau)}_{i}(x^\natural+)-\sum_{i=2,3}\tilde{\mathcal{V}}^{(\tau)}_{i}(x^\natural-)\le C|\sigma^{(\tau)}_{\alpha_1}\sigma^{(\tau)}_{\alpha_4}|,\\[2mm]
&\tilde{\mathcal{V}}^{(\tau)}_{c}(x^\natural+)-\tilde{\mathcal{V}}^{(\tau)}_{c}(x^\natural-)=0,
\end{align*}
and
\begin{eqnarray*}
\begin{split}
\tilde{\mathcal{V}}^{(\tau)}_{\mathcal{Y}}(x^\natural+)-\tilde{\mathcal{V}}^{(\tau)}_{\mathcal{Y}}(x^\natural-)
&=|\omega^{(\tau)}_{h,\nu}(x^\natural+, \mathcal{Y}(x^\natural+))-\omega^{(\tau)}_{h,\nu}(x^\natural-, \mathcal{Y}(x^\natural-))|\\[2pt]
&\quad\ +|p^{(\tau)}_{h,\nu}(x^\natural+, \mathcal{Y}(x^\natural+))-p^{(\tau)}_{h,\nu}(x^\natural-, \mathcal{Y}(x^\natural-))|\\[2pt]
&\leq C\big(|\sigma^{(\tau)}_{\alpha_1}|+|\sigma^{(\tau)}_{\beta_4}|\big)\\[2pt]
&\leq C\big(|\sigma^{(\tau)}_{\alpha_1}|+|\sigma^{(\tau)}_{\alpha_4}|\big)+ C|\sigma^{(\tau)}_{\alpha_1}\sigma^{(\tau)}_{\alpha_4}|,\\[2pt]
\tilde{\mathcal{Q}}^{(\tau)}(x^\natural+)-\tilde{\mathcal{Q}}^{(\tau)}(x^\natural+)&\leq \big(-1+ C\,\tilde{\mathcal{V}}^{(\tau)}(x^\natural-)\big)|\sigma^{(\tau)}_{\alpha_1}\sigma^{(\tau)}_{\alpha_4}|\leq -\frac{1}{2}|\sigma^{(\tau)}_{\alpha_1}\sigma^{(\tau)}_{\alpha_4}|,
\end{split}
\end{eqnarray*}
when $\tilde{\mathcal{V}}^{(\tau)}(x^\natural-)$ is sufficiently small.

Therefore, choosing $\tilde{\mathcal{K}}^{(\tau)}>0$ sufficiently large and the coefficients as in \eqref{eq-coefficient-K},
we can obtain
\begin{eqnarray*}
\begin{split}
&\tilde{\mathcal{G}}^{(\tau)}(x^\natural+)-\tilde{\mathcal{G}}^{(\tau)}(x^\natural-)\\
&\leq \big(\tilde{\mathcal{K}}^{(\tau),b}_{1}-\tilde{\mathcal{K}}^{(\tau),a}_{1}+ C\big)|\sigma^{(\tau)}_{\alpha_1}|
+\big(\tilde{\mathcal{K}}^{(\tau),a}_{4}-\tilde{\mathcal{K}}^{(\tau),b}_{4}+C\big)|\sigma^{(\tau)}_{\alpha_4}|\\
&\quad +\big(-\frac{\tilde{\mathcal{K}}^{(\tau)}}{2}+C\big)|\sigma^{(\tau)}_{\alpha_1}\sigma^{(\tau)}_{\alpha_4}|<0,
\end{split}
\end{eqnarray*}
by choosing $\tilde{\mathcal{K}}^{(\tau)}>0$ sufficiently large.

If a nonphysical wave-front $\beta_{\mathcal{NP}}$ is generated, then, by Lemma \ref{lem:2.6}, we have
\begin{eqnarray*}
\sigma^{(\tau)}_{\beta_1}=\sigma^{(\tau)}_{\alpha_1},\quad\,\, \sigma^{(\tau)}_{\beta_4}=\sigma^{(\tau)}_{\alpha_4},
\quad\,\, \sigma^{(\tau)}_{\beta_{\mathcal{NP}}}=O(1)|\sigma^{(\tau)}_{\alpha_1}\sigma^{(\tau)}_{\alpha_4}|.
\end{eqnarray*}

Thus, for $\tilde{\mathcal{V}}^{(\tau)}(x^\natural-)$ sufficiently small, we can follow the above procedures to deduce
\begin{eqnarray*}
\begin{split}
\tilde{\mathcal{V}}^{(\tau)}(x^\natural+)-\tilde{\mathcal{V}}^{(\tau)}(x^\natural-)&\leq \big(\tilde{\mathcal{K}}^{(\tau),b}_{1}-\tilde{\mathcal{K}}^{(\tau),a}_{1}+C\,\tilde{\mathcal{K}}^{(\tau)}_{\mathcal{Y}}\big)|\sigma^{(\tau)}_{\alpha_1}|\\[2pt]
&\quad\, +\big(\tilde{\mathcal{K}}^{(\tau),a}_{4}-\tilde{\mathcal{K}}^{(\tau),b}_{4}+C\,\tilde{\mathcal{K}}^{(\tau)}_{\mathcal{Y}}\big)|\sigma^{(\tau)}_{\alpha_4}|
  +C|\sigma^{(\tau)}_{\alpha_1}\sigma^{(\tau)}_{\alpha_4}|,\\[2pt]
\tilde{\mathcal{Q}}^{(\tau)}(x^\natural+)-\tilde{\mathcal{Q}}^{(\tau)}(x^\natural+)&\leq \big(-1 + C\,\tilde{\mathcal{V}}^{(\tau)}(x^\natural-)\big)|\sigma^{(\tau)}_{\alpha_1}\sigma^{(\tau)}_{\alpha_4}|\leq -\frac{1}{2}|\sigma^{(\tau)}_{\alpha_1}\sigma^{(\tau)}_{\alpha_4}|,
\end{split}
\end{eqnarray*}
which implies
\begin{eqnarray*}
\tilde{\mathcal{G}}^{(\tau)}(x^\natural+)-\tilde{\mathcal{G}}^{(\tau)}(x^\natural-)<0,
\end{eqnarray*}
by letting $\tilde{\mathcal{K}}^{(\tau)}$ sufficiently large and the coefficients as in \eqref{eq-coefficient-K}.

\smallskip
\emph{Case 2{\rm :} $\,$ Interactions involving nonphysical wave-fronts at $(x^\natural, \mathcal{Y}(x^\natural))$.}
Consider the $1^{\rm st}$ wave-front $\alpha_1$ from the region above $y=\mathcal{Y}(x)$
interacts with the nonphysical wave-front $\alpha_{\mathcal{NP}}$ from the region below $y=\mathcal{Y}(x)$
at $(x^\natural, \mathcal{Y}(x^\natural))$.
Let $\beta_1$ and $\beta_{\mathcal{NP}}$ be the resulting wave-fronts.
Then, by Lemma \ref{lem:2.6}, we have
\begin{eqnarray*}
\sigma^{(\tau)}_{\beta_1}=\sigma^{(\tau)}_{\alpha_1},\quad
\sigma^{(\tau)}_{\beta_{\mathcal{NP}}}=\sigma^{(\tau)}_{\alpha_{\mathcal{NP}}}+O(1)|\sigma^{(\tau)}_{\alpha_1}\sigma^{(\tau)}_{\alpha_{\mathcal{NP}}}|.
\end{eqnarray*}
Therefore, when $\tilde{\mathcal{V}}^{(\tau)}(x^\natural-)$ is sufficiently small, a direct computation yields
\begin{align*}
&\sum_{i=1,4}\sum_{j=a,b}\big(\tilde{\mathcal{K}}^{(\tau),j}_{i}\tilde{\mathcal{V}}^{(\tau),j}_{i}(x^\natural+)
  -\tilde{\mathcal{K}}^{(\tau),j}_{i}\tilde{\mathcal{V}}^{(\tau),j}_{i}(x^\natural-)\big)
\leq \big(\tilde{\mathcal{K}}^{(\tau),b}_{1}-\tilde{\mathcal{K}}^{(\tau),a}_{1}\big)|\sigma^{(\tau)}_{\alpha_1}|,\\[2pt]
&\sum_{j=a,b}\big(\tilde{\mathcal{V}}^{(\tau), j}_{\mathcal{NP}^{(\tau)}}(x^\natural+)-\tilde{\mathcal{V}}^{(\tau), j}_{\mathcal{NP}^{(\tau)}}(x^\natural-)\big)
\leq \big(\tilde{\mathcal{K}}^{(\tau),a}_{\mathcal{NP}}-\tilde{\mathcal{K}}^{(\tau),b}_{\mathcal{NP}}\big)|\sigma^{(\tau)}_{\alpha_{\mathcal{NP}}}|
  +C|\sigma^{(\tau)}_{\alpha_1}\sigma^{(\tau)}_{\alpha_{\mathcal{NP}}}|,\\[2pt]
&\sum_{i=2,3}\tilde{\mathcal{V}}^{(\tau)}_{i}(x^\natural+)-\sum_{i=2,3}\tilde{\mathcal{V}}^{(\tau)}_{i}(x^\natural-)
 = \tilde{\mathcal{V}}^{(\tau)}_{c}(x^\natural+)-\tilde{\mathcal{V}}^{(\tau)}_{c}(x^\natural-)=0,\\[2pt]
&\tilde{\mathcal{V}}^{(\tau)}_{\mathcal{Y}}(x^\natural+)-\tilde{\mathcal{V}}^{(\tau)}_{\mathcal{Y}}(x^\natural-)
\leq C\big(|\sigma^{(\tau)}_{\alpha_1}|+|\sigma^{(\tau)}_{\alpha_{\mathcal{NP}}}|\big)+C|\sigma^{(\tau)}_{\alpha_1}\sigma^{(\tau)}_{\alpha_{\mathcal{NP}}}|,
\end{align*}
and
\begin{eqnarray*}
\begin{split}
\tilde{\mathcal{Q}}^{(\tau)}(x^\natural+)-\tilde{\mathcal{Q}}^{(\tau)}(x^\natural+)
&\leq \big(-1+ C\tilde{\mathcal{V}}^{(\tau)}(x^\natural-)\big)|\sigma^{(\tau)}_{\alpha_1}\sigma^{(\tau)}_{\alpha_{\mathcal{NP}}}|
\leq -\frac{1}{2}|\sigma^{(\tau)}_{\alpha_1}\sigma^{(\tau)}_{\alpha_{\mathcal{NP}}}|.
\end{split}
\end{eqnarray*}

Choosing $\tilde{\mathcal{K}}^{(\tau)}>0$ sufficiently large and the coefficients as in \eqref{eq-coefficient-K},
we then obtain
\begin{eqnarray*}
\begin{split}
&\tilde{\mathcal{G}}^{(\tau)}(x^\natural+)-\tilde{\mathcal{G}}^{(\tau)}(x^\natural-)\\[2pt]
&\leq \big(\tilde{\mathcal{K}}^{(\tau),b}_{1}-\tilde{\mathcal{K}}^{(\tau),a}_{1}+ C\tilde{\mathcal{K}}^{(\tau)}_{\mathcal{Y}}\big)|\sigma^{(\tau)}_{\alpha_1}|
+\big(\tilde{\mathcal{K}}^{(\tau),a}_{\mathcal{NP}}
  -\tilde{\mathcal{K}}^{(\tau),b}_{\mathcal{NP}}+ C\tilde{\mathcal{K}}^{(\tau)}_{\mathcal{Y}}\big)|\sigma^{(\tau)}_{\alpha_{\mathcal{NP}}}|\\
&\quad +\big(-\frac{\tilde{\mathcal{K}}^{(\tau)}}{2}+C\big)|\sigma^{(\tau)}_{\alpha_1}\sigma^{(\tau)}_{\alpha_{\mathcal{NP}}}|<0.
\end{split}
\end{eqnarray*}
For the other cases of interactions, we can done in the same way as above.

\smallskip
\emph{Case 3{\rm :} $\,$Weak wave-fronts hit on boundary $y=g_{h}(x)$}.
We only consider that a $4^{\rm th}$ wave-front $\alpha_4$ hits the approximate boundary $y=g_{h}(x)$ since the other cases can be done in the same way. Let $\beta_1$ be the reflected wave-front. Then, as shown in Lemma \ref{lem:2.5}, we have the relation:
$\sigma^{(\tau)}_{\beta_1}=K^{(\tau)}_{r,4}\sigma^{(\tau)}_{\alpha_4}$.
By direct computation, we can further obtain
\begin{eqnarray*}
&&\tilde{\mathcal{V}}^{(\tau)}(x^\natural+)-\tilde{\mathcal{V}}^{(\tau)}(x^\natural-)\leq \big(\tilde{\mathcal{K}}^{(\tau),a}_{4}K^{(\tau)}_{r,4}-\tilde{\mathcal{K}}^{(\tau),a}_{1}\big)|\sigma_{\alpha_4}|,\\[2pt]
&&\tilde{\mathcal{Q}}^{(\tau)}(x^\natural+)-\tilde{\mathcal{Q}}^{(\tau)}(x^\natural+)\leq C\,\tilde{\mathcal{V}}^{(\tau)}(x^\natural-)|\sigma^{(\tau)}_{\alpha_4}|.
\end{eqnarray*}
Therefore, when $\tilde{\mathcal{V}}^{(\tau)}(x^\natural-)$ is sufficiently small,
we choose $\tilde{\mathcal{K}}^{(\tau)}$ sufficiently large and the coefficients as in \eqref{eq-coefficient-K} to obtain
\begin{eqnarray*}
\begin{split}
\tilde{\mathcal{G}}^{(\tau)}(x^\natural+)-\tilde{\mathcal{G}}^{(\tau)}(x^\natural-)
\leq \big(\tilde{\mathcal{K}}^{(\tau),a}_{4}K^{(\tau)}_{r,4}-\tilde{\mathcal{K}}^{(\tau),a}_{1}
+ C\,\tilde{\mathcal{K}}^{(\tau)}\tilde{\mathcal{V}}^{(\tau)}(x^\natural-)\big)|\sigma^{(\tau)}_{\alpha_4}|<0.
\end{split}
\end{eqnarray*}

\smallskip
\emph{Case 4{\rm :} $\,$The $1^{\rm st}$ wave-front $\alpha_1$ issues from the corner point $(x_k, g_{h}(x_k))$ of
boundary $y=g_{h}(x)$}. As shown in Lemma \ref{lem:2.4}, we see that
$\sigma^{(\tau)}_{\alpha_1}=K^{(\tau)}_{b}\omega_k$.
Similar to the case above, for $\tilde{\mathcal{V}}^{(\tau)}(x^\natural-)$ sufficiently small,
we choose $\tilde{\mathcal{K}}^{(\tau)}$ sufficiently large and the coefficients as in \eqref{eq-coefficient-K} to obtain
\begin{eqnarray*}
\tilde{\mathcal{G}}^{(\tau)}(x^\natural+)-\tilde{\mathcal{G}}^{(\tau)}(x^\natural-)\leq \big(\tilde{\mathcal{K}}^{(\tau),a}_{1}K^{(\tau)}_{b}-\tilde{\mathcal{K}}^{(\tau)}_{c}
+C\,\tilde{\mathcal{K}}^{(\tau)}\tilde{\mathcal{V}}^{(\tau)}(x^\natural-)\big)|\omega_k|<0.
\end{eqnarray*}
Finally, we remark that the cases of interaction of wave-fronts away from $y=\mathcal{Y}(x)$ can be handled
in the same way as in Lemma \ref{lem:2.7}.

We conclude from the decreasing of $\tilde{\mathcal{G}}^{(\tau)}$ that, for all $x>0$,
$$
\tilde{\mathcal{G}}^{(\tau)}(x)<\tilde{\mathcal{G}}^{(\tau)}(0)\leq C\big(\|U_{0}(\cdot)\|_{BV(\mathcal{I})}
+|g'(0)|+\|g'(\cdot)\|_{BV((0,\infty))}\big),
$$
provided that $\varepsilon$ and $\tau$ are sufficiently small, where $C>0$ depends only on $(\underline{U}, a_{\infty})$.
This implies the uniform bound of $\big\|(\omega^{(\tau)}_{h,\nu}, p^{(\tau)}_{h,\nu})\big\|_{BV(\mathcal{Y}(\Sigma_x))}$
for all $x>0$.
\end{proof}

Based on Lemma \ref{lem-BV-velocity}, we have the following $L^1$-comparison estimate
on the flow slope function $\frac{v^{(\tau)}_{h,\nu}}{1+\tau^2 u^{(\tau)}_{h,\nu}}$.

\begin{proposition}\label{prop-flow-slop}
Suppose that $U^{(\tau)}_{h,\nu}$ are the approximate solutions
of problem \eqref{eq:1.11}--\eqref{eq:1.12} constructed by the wave-front tracking scheme in {\rm \S 2.4}.
Then, for $\varepsilon>0$ and $\tau>0$ sufficiently small,
\begin{align}\label{eq-flow-slop}
&\bigg\|\Big(\frac{v^{(\tau)}_{h,\nu}}{1+\tau^2 u^{(\tau)}_{h,\nu}}\Big)(s, \check{g}_{h}(s))
-\Big(\frac{v^{(\tau)}_{h,\nu}}{1+\tau^2 u^{(\tau)}_{h,\nu}}\Big)(s, g_{h}(s))\bigg\|_{L^1((x^\natural_0,x^\natural_1))}\nonumber\\[2mm]
&\le C_{2,6}\|\check{g}_{h}(s)-g_{h}(s)\|_{L^{\infty}((x^\natural_0,x^\natural_1))}
\end{align}
for any $x^\natural_1>x^\natural_0>0$, where $C_{2,6}>0$ depends only on $(\underline{U},a_{\infty})$.
\end{proposition}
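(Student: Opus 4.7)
The plan is to exploit the observation, already used in the proof of Lemma \ref{lem-BV-velocity}, that the flow slope $\omega^{(\tau)}_{h,\nu}:=v^{(\tau)}_{h,\nu}/(1+\tau^{2}u^{(\tau)}_{h,\nu})$ is a Riemann invariant of both the $2^{\rm nd}$ and $3^{\rm rd}$ linearly degenerate fields, so $\omega^{(\tau)}_{h,\nu}$ jumps only across wave-fronts of the $1^{\rm st}$ or $4^{\rm th}$ family or across nonphysical fronts. For each fixed $s$, reading $\omega^{(\tau)}_{h,\nu}(s,\cdot)$ as a piecewise-constant function of $y$ yields
\[
\bigl|\omega^{(\tau)}_{h,\nu}(s,g_h(s))-\omega^{(\tau)}_{h,\nu}(s,\check{g}_h(s))\bigr|
\le C\sum_{\alpha}|\sigma^{(\tau)}_\alpha|\,\mathbf{1}_{\{y_\alpha(s)\in I(s)\}},
\]
where $I(s):=[\min(g_h,\check{g}_h)(s),\max(g_h,\check{g}_h)(s)]$, $y_\alpha(s)$ denotes the position of front $\alpha$ at time $s$, the sum is taken over $1^{\rm st}$-, $4^{\rm th}$-, and nonphysical fronts present at $s$, and $|[\omega^{(\tau)}_{h,\nu}]_\alpha|\le C|\sigma^{(\tau)}_\alpha|$ follows from the smooth parametrization of Lemma \ref{lem:2.2}.

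Integrating this pointwise inequality over $s\in(x^\natural_0,x^\natural_1)$ and interchanging sum and integral, the contribution of each front $\alpha$ becomes $|\sigma^{(\tau)}_\alpha|\,\mathrm{meas}\{s:y_\alpha(s)\in I(s)\}$. The key geometric estimate is that this measure is at most $2\|\check{g}_h-g_h\|_{L^\infty}/c_0$ for some $c_0=c_0(\underline{U},a_\infty)>0$. Indeed, a direct evaluation of \eqref{eq:2.2} at $\underline{U}$ gives $|\lambda^{(\tau)}_{1,4}(\underline{U},\tau^2)|=(a_\infty^2-\tau^2)^{-1/2}$, so by continuity the characteristic speed on each physical $1^{\rm st}$ or $4^{\rm th}$ front stays bounded away from $0$ uniformly; nonphysical fronts travel strictly faster than every physical one by construction. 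Since $|g_h'|,|\check{g}_h'|\le\varepsilon$ by \eqref{eq:1.21}, the relative speed of each relevant front with respect to either curve is bounded below by $c_0/2$, so the piecewise-linear trajectory $s\mapsto y_\alpha(s)$ can remain in the thin strip only for $s$-time proportional to its thickness.

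Combining these steps gives
\[
\bigl\|\omega^{(\tau)}_{h,\nu}(\cdot,\check{g}_h(\cdot))-\omega^{(\tau)}_{h,\nu}(\cdot,g_h(\cdot))\bigr\|_{L^1((x^\natural_0,x^\natural_1))}
\le \frac{2C}{c_0}\,\|\check{g}_h-g_h\|_{L^\infty}\,\Xi,
\]
where $\Xi$ is the total strength of $1^{\rm st}$-, $4^{\rm th}$-, and nonphysical fronts whose trajectory ever enters the strip during $(x^\natural_0,x^\natural_1)$. The main remaining obstacle is to show that $\Xi$ is bounded uniformly in $(h,\nu,\tau)$. I would handle this by tracking fronts through interactions as continuous world-lines (to avoid the spurious doubling that occurs at the relabellings in the wave-front scheme) and partitioning them according to origin: world-lines already present in the strip at $s=x^\natural_0$ are controlled by $\mathcal{V}^{(\tau)}(x^\natural_0)\le C\varepsilon$ from Proposition \ref{prop:2.1}; world-lines entering from outside across $g_h$ or $\check{g}_h$ are controlled by the trace BV bound $\|\omega^{(\tau)}_{h,\nu}\|_{BV(\mathcal{Y}(\Sigma_x))}\le C\varepsilon$ of Lemma \ref{lem-BV-velocity} applied to $\mathcal{Y}\in\{g_h,\check{g}_h\}$; world-lines born at corners $\textsc{C}_k$ on $g_h$ lying in the strip contribute $\sum_k|K^{(\tau)}_b\omega_k|\le C\|g'\|_{BV}\le C\varepsilon$ by Lemma \ref{lem:2.5}; and genuinely new fronts produced by wave-wave interactions inside the strip carry total strength $O(\mathcal{Q}^{(\tau)}(x^\natural_0)-\mathcal{Q}^{(\tau)}(x^\natural_1))=O(\varepsilon^2)$ by Lemma \ref{lem:2.4} together with the Glimm-functional decay in Lemma \ref{lem:2.7}. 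Summing these four $O(\varepsilon)$ contributions gives $\Xi\le C(\underline{U},a_\infty)$, which completes the proof of \eqref{eq-flow-slop} with $C_{2,6}=C_{2,6}(\underline{U},a_\infty)$, as claimed.
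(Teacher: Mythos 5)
Your argument is correct in outline and rests on the same two pillars as the paper's proof --- transversality of the $1^{\rm st}$, $4^{\rm th}$ and nonphysical fronts to the nearly horizontal curves (so that the time a front spends in the strip is $O(\|\check g_h-g_h\|_{L^\infty})$), and a uniform bound on the total wave strength seen by the strip, ultimately coming from Lemma~\ref{lem-BV-velocity} --- but it organizes the bookkeeping differently. The paper interpolates, setting $\mathcal{Y}_{\vartheta}=\vartheta g_h+(1-\vartheta)\check g_h$, proves the estimate first for a thin strip of width $\eta$ around a single $\mathcal{Y}_{\vartheta}$ (where every front in the strip must cross one of the two bounding curves, so its jump is automatically captured by $\|(\omega^{(\tau)}_{h,\nu},p^{(\tau)}_{h,\nu})\|_{BV(\mathcal{Y}_{\vartheta})}$), and then telescopes over a partition $\vartheta_0<\cdots<\vartheta_n$, the factors $(\vartheta_{j+1}-\vartheta_j)$ summing to $1$. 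This sidesteps entirely the quantity you call $\Xi$. Your route instead works in the full strip at once via Fubini, which buys a shorter and more transparent main estimate but transfers all the difficulty into the bound $\Xi\le C(\underline U,a_\infty)$, and that is the one place where your sketch is thinner than it should be. Two specific points there: (i) nothing ``enters from outside across $g_h$'' since $g_h$ is the wall of the domain of $U^{(\tau)}_{h,\nu}$; what actually happens at $g_h$ is reflection (a $4^{\rm th}$ front is absorbed and a $1^{\rm st}$ front with strength $K^{(\tau)}_{r,4}\sigma^{(\tau)}_{\alpha_4}$ is emitted, Lemma~\ref{lem:2.6}), which is not among your four categories and must either be folded into the world-line with the bounded reflection coefficient or counted as a fifth source; (ii) when you interchange sum and integral, the strength of a world-line is not constant across interactions, so you should either work segment-by-segment between interaction times (each segment has constant strength, and the segment times within the strip still sum to $O(\|\check g_h-g_h\|_{L^\infty})$ by monotone transversal crossing) or replace $|\sigma^{(\tau)}_\alpha|$ by its supremum along the world-line, controlled by the Glimm potential decay of Lemma~\ref{lem:2.7}. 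With those repairs the argument closes and yields the stated constant; the paper's telescoping device is essentially a way of packaging exactly this accounting so that Lemma~\ref{lem-BV-velocity} does all of it in one stroke.
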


\begin{proof}
Let $y=\mathcal{Y}(x)$ be a Lipschitz continuous curve that is a linear combination of $g_{h}(x)$ and $\check{g}_{h}(x)$
as described above.
Without loss of generality, we take $\mathcal{Y}(x)$ in the following form:
\begin{eqnarray*}
\mathcal{Y}_{\vartheta}(x)\doteq\vartheta g_{h}(x)+(1-\vartheta)\check{g}_{h}(x)
\qquad \mbox{for $x\in (0,\infty)$ and $\vartheta\in[0,1]$}.
\end{eqnarray*}

Choosing a positive $C^{0,1}$-function $\eta(x)$ with $\|\eta\|_{C^{0,1}(\mathbb{R}_+)}$ sufficiently small such that
there are only a finitely number of wave-fronts $y_{\alpha}(x)\in \mathcal{J}^{(\tau)}(U^{(\tau)}_{h, \nu})$
between $\mathcal{Y}_{\vartheta}(x)$ and $\mathcal{Y}_{\vartheta}(x)+\eta(x)$,
which divide region $(\mathcal{Y}_{\vartheta}(x), \mathcal{Y}_{\vartheta}(x)+\eta(x))$ into many
subregions
with constant states, and
all the fronts are not interacted with each other.

We first consider estimate \eqref{eq-flow-slop} for the special case $\check{g}_h=\mathcal{Y}_{\vartheta}(x)$
and $g_h=\mathcal{Y}_{\vartheta}(x)+\eta(x)$.
For notational convenience, we still denote $\omega^{(\tau)}_{h,\nu}=\frac{v^{(\tau)}_{h,\nu}}{1+\tau^2 u^{(\tau)}_{h,\nu}}$.
Since $\omega^{(\tau)}_{h,\nu}$ is unchanged when it crosses wave-fronts $\alpha\in \mathcal{C}^{(\tau)}_{2}\cup \mathcal{C}^{(\tau)}_{3}$,
it suffices
to consider the case:
$\alpha\in \mathcal{J}^{(\tau)}(U^{(\tau)}_{h, \nu})\setminus (\mathcal{C}^{(\tau)}_{2}\cup \mathcal{C}^{(\tau)}_{3})$.
To this end, let $y=y_{\alpha}(x)$ be a wave-front issue
from $(x^\natural_{0}, \mathcal{Y}_{\vartheta}(x^\natural_{0}))$
(or $(x^\natural_{0}, \mathcal{Y}_{\vartheta}(x^\natural_{0})+\eta(x^\natural_{0}))$) to
$(x^\natural_{1}, \mathcal{Y}_{\vartheta}(x^\natural_{1})+\eta(x^\natural_{1}))$
(or $(x^\natural_{1},\mathcal{Y}_{\vartheta}(x^\natural_{1}))$).
Moreover, for $\varepsilon>0$  and $\tau>0$ sufficiently small,
there exists a constant $d^\natural>0$ depending only on $(\underline{U}, a_{\infty})$
such that
\begin{eqnarray*}
|\mathcal{Y}'_{\vartheta}(x)-y'_{\alpha}(x)|>d^\natural, \qquad   |\mathcal{Y}'_{\vartheta}(x)+\eta'(x)-y'_{\alpha}(x)|>d^\natural.
\end{eqnarray*}
This implies that
\begin{eqnarray*}
\begin{split}
d^\natural\,(x^\natural_{1}-x^\natural_{0})
&<\int^{x^\natural_{1}}_{x^\natural_{0}}\big|\mathcal{Y}'_{\vartheta}(t)+\eta'(t)-y'_{\alpha}(t)\big|\, {\rm d}t \\[2pt]
&=\bigg|\int^{x^\natural_{1}}_{x^\natural_0}\big(\mathcal{Y}'_{\vartheta}(t)+\eta'(t)-y'_{\alpha}(t)\big)\, {\rm d}t\bigg|\\[2pt]
&=\Big|\mathcal{Y}_{\vartheta}(x^\natural_{1})+\eta(x^\natural_{1})-y_{\alpha}(x^\natural_{1})-\big(\mathcal{Y}_{\vartheta}(x^\natural_0)+\eta(x^\natural_0)-y_{\alpha}(x^\natural_0)\big)\Big|\\[2pt]
&\leq \|\eta\|_{L^{\infty}((x^\natural_{0}, x^\natural_{1}))}.
\end{split}
\end{eqnarray*}

Then it follows from the above estimate that
\begin{align}\label{eq-velocity-special}
&\int^{x^\natural_1}_{x^\natural_0}\big|\omega^{(\tau)}_{h,\nu}(t,\mathcal{Y}_{\vartheta}(t)+\eta(t))-\omega^{(\tau)}_{h,\nu}(t,\mathcal{Y}_{\vartheta}(t))\big|\, {\rm d}t\nonumber \\[2pt]
&\quad =\sum_{\alpha\in\mathcal{J}^{(\tau)}(U^{(\tau)}_{h, \nu})\setminus (\mathcal{C}^{(\tau)}_{2}\cup \mathcal{C}^{(\tau)}_{3})}\big|\omega^{(\tau)}_{h,\nu}(t, y_{\alpha}(t)-)-\omega^{(\tau)}_{h,\nu}(t, y_{\alpha}(t)+)\big|(x^\natural_{1}-x^\natural_{0})\nonumber\\[2pt]
&\quad \leq \frac{\|\eta\|_{L^{\infty}((x^\natural_0, x^\natural_1))}}{d^{\natural}}\sum_{\alpha\in\mathcal{J}^{(\tau)}(U^{(\tau)}_{h, \nu})\setminus (\mathcal{C}^{(\tau)}_{2}\cup \mathcal{C}^{(\tau)}_{3})}
\big|\omega^{(\tau)}_{h,\nu}(t, y_{\alpha}(t)-)-\omega^{(\tau)}_{h,\nu}(t, y_{\alpha}(t)+)\big|\nonumber \\[2pt]
&\quad \leq C\big\|(\omega^{(\tau)}_{h,\nu},p^{(\tau)}_{h,\nu})\big\|_{BV(\mathcal{Y}_{\vartheta}((x^\natural_0,x^\natural_1)))}\|\eta\|_{L^{\infty}((x^\natural_0, x^\natural_1))}.
\end{align}

Next, we turn to consider estimate \eqref{eq-flow-slop}.
Choose a sequence $\{\vartheta_j\}^{n}_{j=0}$ in $[0,1]$
with $0=\vartheta_0<\vartheta_1<\cdot\cdot\cdot <\vartheta_{n-1}<\vartheta_n=1$
satisfying that $\vartheta_{j+1}-\vartheta_{j}$, $j=0, \cdots, n-1$, are sufficiently small when $n$ is sufficiently large.
Define
\begin{eqnarray*}
\mathcal{Y}_{\vartheta_j}(x)\doteq\vartheta_j g_{h}(x)+(1-\vartheta_j)\check{g}_{h}(x) \quad\,\,\mbox{for $x\in (0,\infty)$, $\vartheta_j\in[0,1]$,\ $j=0,1,\cdots, n$}.
\end{eqnarray*}
By direct calculation, we see that, for $j=0,1,\cdots, n$,
\begin{eqnarray*}
\|\mathcal{Y}_{\vartheta_{j+1}}(x)-\mathcal{Y}_{\vartheta_j}(x)\|_{C^{0,1}}=(\vartheta_{j+1}-\vartheta_{j})\|g_{h}(x)-\check{g}_{h}(x)\|_{C^{0,1}}\ll 1.
\end{eqnarray*}

Then, applying estimate \eqref{eq-velocity-special}, we have
\begin{align}\label{eq-velocity-general}
&\int^{x^\natural_1}_{x^\natural_0}\big|\omega^{(\tau)}_{h,\nu}(t,\check{g}_h(t))-\omega^{(\tau)}_{h,\nu}(t,g_h(t))\big|\, {\rm d}t\nonumber \\[2pt]
&\quad \leq \sum^{n}_{j=0}\int^{x^\natural_1}_{x^\natural_0}\big|\omega^{(\tau)}_{h,\nu}(t,\mathcal{Y}_{\vartheta_{j+1}}(t))-\omega^{(\tau)}_{h,\nu}(t,\mathcal{Y}_{\vartheta_j}(t))\big|\, {\rm d}t\nonumber\\[2pt]
&\quad \leq C\sum^{n}_{j=0}(\vartheta_{j+1}-\vartheta_{j})\big\|(\omega^{(\tau)}_{h,\nu},p^{(\tau)}_{h,\nu})\big\|_{BV(\mathcal{Y}_{\vartheta_{j}}((x^\natural_0,x^\natural_1)))}
\|\check{g}_{h}(s)-g_{h}(s)\|_{L^{\infty}((x^\natural_0, x^\natural_1))},
\end{align}
where $C>0$ depends only on $(\underline{U}, a_{\infty})$.

Therefore, using estimate \eqref{eq-BV-velocity} in Lemma \ref{lem-BV-velocity} and estimate \eqref{eq-velocity-general},
we can finally derive inequality \eqref{eq-flow-slop} by choosing a constant $C_{2,6}>0$ that depends only on $(\underline{U},a_{\infty})$.
\end{proof}
}

\subsection{Proof of Theorem 1.1 for Problem I}
In this subsection, we first establish the $L^{1}$--stability estimate of the approximate solutions $U^{(\tau)}_{h, \nu}$
of problem \eqref{eq:1.11}--\eqref{eq:1.12} constructed by the wave-front tracking scheme in \S 2.4.
Then we further show the convergence of the approximate solutions $U^{(\tau)}_{h, \nu}$
in $L^1$ as $h\rightarrow 0$ and $\nu\rightarrow \infty$, whose limit is the entropy solution of Problem I.

To establish the $L^{1}$--stability estimate, we introduce a Lyapunov functional
$\mathscr{L}^{(\tau)}(U^{(\tau)}_{h, \nu}, V^{(\tau)}_{h', \nu})$, which is equivalent to the $L^{1}$--distance between
two approximate solutions:
$$
U^{(\tau)}_{h, \nu}=\big(\rho^{(\tau)}_{h, \nu}, u^{(\tau)}_{h, \nu}, v^{(\tau)}_{h, \nu}, p^{(\tau)}_{h, \nu}\big)^{\top},
\qquad
V^{(\tau)}_{h', \nu}=\big(\hat{\rho}^{(\tau)}_{h', \nu}, \hat{u}^{(\tau)}_{h', \nu}, \hat{v}^{(\tau)}_{h', \nu}, \hat{p}^{(\tau)}_{h', \nu}\big)^{\top},
$$
corresponding to the initial data and boundaries $(U^{\nu}_{0},g_{h})$ and $(V^{\nu}_{0},g_{h'})$, respectively.

First, we connect states $U^{(\tau)}_{h,\nu}$ and $V^{(\tau)}_{h',\nu}$ in the phase space along the Hugoniot
curves $\mathcal{S}^{(\tau)}_{1}$, $\mathcal{C}^{(\tau)}_{2}$, $\mathcal{C}^{(\tau)}_{3}$, and $\mathcal{S}^{(\tau)}_{4}$:
\begin{align}\label{eq:2.41}
V^{(\tau)}_{h',\nu}&=\mathcal{H}^{(\tau)}(\boldsymbol{q}^{(\tau)}; U^{(\tau)}_{h, \nu},\tau^2)\nonumber\\[4pt]
&\doteq\mathcal{H}^{(\tau)}_{4}(q^{(\tau)}_{4};\mathcal{H}^{(\tau)}_{3}(q^{(\tau)}_{3}; \mathcal{H}^{(\tau)}_{2}(q^{(\tau)}_{2}; \mathcal{H}^{(\tau)}_{1}(q^{(\tau)}_{1}; U^{(\tau)}_{h,\nu},\tau^2),\tau^2),\tau^2),\tau^2),
\end{align}
where $\boldsymbol{q}^{(\tau)}=(q^{(\tau)}_{1}, q^{(\tau)}_{2},q^{(\tau)}_{3}, q^{(\tau)}_{4})$ is the Hugoniot wave vector.
Then, due to the boundary effect, we modify $\boldsymbol{q}^{(\tau)}$ by imposing some weights:
\begin{eqnarray}\label{eq:2.42}
\hat{q}^{(\tau)}_{j}=
\begin{cases}
q^{(\tau)}_{j}
&\ \,\,\mbox{for $j=1$},\\[5pt]
\textsc{w}^{(\tau)}_{j}q^{(\tau)}_{j}
&\ \,\, \mbox{for $2\leq j\leq 4$},
\end{cases}
\end{eqnarray}
where $\textsc{w}^{(\tau)}_{j}>0, j=2,3,4$, will be determined later.
Set
\begin{eqnarray}\label{eq:2.43}
\hat{g}_{h}(x)=\max\big\{g_{h}(x), g_{h'}(x)\big\}, \qquad \check{g}_{h}(x)=\min\big\{g_{h}(x), g_{h'}(x)\big\}.
\end{eqnarray}
Then the Lyapunov functional is given as
\begin{align}\label{eq:2.44}
&\mathscr{L}^{(\tau)}\big(U^{(\tau)}_{h, \nu}, V^{(\tau)}_{h', \nu}\big)(x)\nonumber\\[2pt]
&=\sum^{4}_{j=1}\int^{\check{g}_{h}(x)}_{-\infty}|\hat{q}^{(\tau)}_{j}(y)|W^{(\tau)}_{j}(y)\, {\rm d}y,\nonumber\\[2pt]
&\ \ \  +\kappa^{(\tau)}_{g}\int^{\infty}_{x}\Big|\Big(\frac{v^{(\tau)}_{h,\nu}}{1+\tau^{2}u^{(\tau)}_{h,\nu}}\Big)(s,\check{g}_{h}(s))
-\Big(\frac{\hat{v}^{(\tau)}_{h,\nu}}{1+\tau^{2}\hat{u}^{(\tau)}_{h,\nu}}\Big)(s,\check{g}_{h}(s))\Big|\,{\rm d}s,
\end{align}
where
\begin{align}\label{eq:2.45}
W^{(\tau)}_{j}(y)&=1+\kappa_{1}A^{(\tau)}_{j}(y)+\kappa^{(\tau)}_{2}\big(\mathcal{Q}^{(\tau)}(U^{(\tau)}_{h, \nu})
+\mathcal{Q}^{(\tau)}(V^{(\tau)}_{h', \nu})\big)\nonumber\\[2pt]
&\ \ \ +\kappa^{(\tau)}_{3} \sum_{\alpha_{4}\in \mathcal{J}^{(\tau)}}|\sigma^{(\tau)}_{\alpha_4}|
+\kappa^{(\tau)}_{c}\sum_{k>[\frac{x}{h}]}\omega_{k}+\kappa^{(\tau)'}_{c}
\sum_{k'>[\frac{x}{h'}]}\omega'_{k'}
\end{align}
with constants  $\kappa^{(\tau)}_{g}$, $\kappa^{(\tau)}_{c}$, $\kappa^{(\tau)'}_{c}$, and $\kappa^{(\tau)}_{j}, j=1,2,3$, to be determined later.
The functional, $\mathcal{Q}^{(\tau)}$, represents the total interaction potential with respect to $U^{(\tau)}_{h, \nu}$ and $V^{(\tau)}_{h',\nu}$
as defined in \eqref{eq:2.33}, and $\omega_{k}$ and $\omega'_{k'}$ represent the turning angles at the corner points $(x_{k}, g_{h}(x_k))$ and $(x_{k'}, g_{h'}(x_{k'}))$ respectively.
Finally,
$A^{(\tau)}_{j}(y)$, which measures the total strength of the waves in $U^{(\tau)}_{h, \nu}$ and $V^{(\tau)}_{h,\nu}$ approaching to the $j^{\rm th}$ Hugoniot
wave $q^{(\tau)}_{j}$ at $y=y_{\alpha_i}$, is defined by
\begin{align}\label{eq:2.46}
A^{(\tau)}_j(y)&=\sum _{\alpha \in \mathcal{J}(U^{(\tau)}_{h,\nu})\cup\mathcal{J}(V^{(\tau)}_{h',\nu})
\atop y_{\alpha}<y, j<i_{\alpha}\leq 4}|\sigma^{(\tau)}_{\alpha}|+\sum _{\alpha \in \mathcal{J}(U^{(\tau)}_{h,\nu})\cup\mathcal{J}(V^{(\tau)}_{h',\nu})
\atop y_{\alpha}>y, 1\leq i_{\alpha}< j}|\sigma^{(\tau)}_{\alpha}|\nonumber\\[5pt]
&\ \ \  +
\begin{cases}
\bigg(\sum_{ \alpha\in \mathcal{J}(U^{(\tau)}_{h,\nu}) \atop y<y_{\alpha}, i_{\alpha}=j}
+ \sum_{ \alpha\in \mathcal{J}(V^{(\tau)}_{h',\nu}) \atop y>y_{\alpha}, i_{\alpha}=j} \bigg) |\sigma^{(\tau)}_{\alpha}| \qquad \text{if} \  q^{(\tau)}_{j}(y)<0,\\[15pt]
\bigg(\sum_{ \alpha\in \mathcal{J}(V^{(\tau)}_{h',\nu}) \atop y>y_{\alpha}, i_{\alpha}=j}
+ \sum_{ \alpha\in \mathcal{J}(U^{(\tau)}_{h,\nu}) \atop y<y_{\alpha}, i_{\alpha}=j} \bigg) |\sigma^{(\tau)}_{\alpha}|\qquad \text{if}  \ q^{(\tau)}_{j}(y)> 0,
\end{cases}
\end{align}
where $\sigma^{(\tau)}_{\alpha}$ is the strength of the weak wave $\alpha$ of the $i_{\alpha}^{\rm th}$ characteristic family.

By \eqref{eq:2.37} in Proposition \ref{prop:2.1}, we can take $\varepsilon$ sufficiently small such that
$1<W^{(\tau)}_{j}(y)<2$ for $1\leq j\leq 4$. Therefore, we can choose a constant $C_{2,7}>0$, independent of $(h, \nu, \tau)$, so that
\begin{align}\label{eq:2.47}
C_{2,7}^{-1}\big\|U^{(\tau)}_{h, \nu}(x,\cdot)-V^{(\tau)}_{h', \nu}(x,\cdot)\big\|_{L^{1}}
&\leq \sum^{4}_{j=1}\int^{\check{g}_{h}(x)}_{-\infty}|\hat{q}^{(\tau)}_{j}(y)|W^{(\tau)}_{j}(y)\,{\rm d}y\nonumber\\[2pt]
&\leq C_{2,7}\big\|U^{(\tau)}_{h, \nu}(x,\cdot)-V^{(\tau)}_{h', \nu}(x,\cdot)\big\|_{L^{1}}.
\end{align}

Next, we analyze the Lyapunov functional along the flow direction $x>0$.
To achieve this, we first give the following lemma.
\begin{lemma}\label{lem:2.8}
Let two constant states
$U^{(\tau)}(b)=\big(\rho^{(\tau)}, u^{(\tau)}, v^{(\tau)},p^{(\tau)}\big)^{\top}(b)$
and $V^{(\tau)}(b')=\big(\hat{\rho}^{(\tau)},\hat{u}^{(\tau)},\hat{v}^{(\tau)},\hat{p}^{(\tau)}\big)^{\top}(b')$
satisfy
\begin{eqnarray}\label{eq:2.48}
V^{(\tau)}(b')=\mathcal{H}^{(\tau)}(\boldsymbol{q}^{(\tau)}(b); U^{(\tau)}(b),\tau^2),
\end{eqnarray}
and
\begin{eqnarray}\label{eq:2.49}
(1+\tau^{2}u^{(\tau)}(b), v^{(\tau)}(b))\cdot \mathbf{n}_{k}=0,\ \ \
(1+\tau^{2}\hat{u}^{(\tau)}(b'), \hat{v}^{(\tau)}(b'))\cdot \mathbf{n}'_{k'}=0,
\end{eqnarray}
where
$\boldsymbol{q}^{(\tau)}(b)=(q^{(\tau)}_{1},q^{(\tau)}_{2}, q^{(\tau)}_{3}, q^{(\tau)}_{4})(b)$, $\mathbf{n}_{k}=(\sin \theta_{k}, -\cos \theta_{k})$,
and $\mathbf{n}'_{k'}=(\sin\theta'_{k'}, -\cos\theta'_{k'})$.
Then there exist constants $\varepsilon_{3}>0$ and $\tau_{2}>0$ depending only on $(\underline{U},a_{\infty})$ such that,
if $U^{(\tau)}_{b}, \hat{U}^{(\tau)}_{b}\in \mathcal{O}_{\varepsilon}(\underline{U})$,
and $|\theta_{k}|+|\theta'_{k'}|<\varepsilon$ for some $\varepsilon\in(0,\varepsilon_{3})$ and $\tau\in(0,\tau_{2})$, then
\begin{eqnarray}\label{eq:2.50}
q^{(\tau)}_{1}(b)=\mathbb{K}^{(\tau)}_{b}(q^{(\tau)}_{2}(b),q^{(\tau)}_{3}(b),q^{(\tau)}_{4}(b),\tau^{2})q^{(\tau)}_{4}(b)
+\mathbb{K}^{(\tau)}_{\theta}(\theta'_{k'}-\theta_{k}),
\end{eqnarray}
where the bounds of $\mathbb{K}^{(\tau)}_{b}(q^{(\tau)}_{2}(b),q^{(\tau)}_{3}(b),q^{(\tau)}_{4}(b),\tau^{2})$ and $\mathbb{K}^{(\tau)}_{\theta}$
depend only on $(\underline{U},  a_{\infty})$.
\end{lemma}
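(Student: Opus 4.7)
My plan is to exploit the invariance of the ``flow angle'' $\omega(W) := W^{(3)}/(1+\tau^2 W^{(2)})$ and of the pressure across the linearly degenerate $2^{\text{nd}}$ and $3^{\text{rd}}$ characteristic fields, together with the observation that each characteristic boundary condition in \eqref{eq:2.49} is equivalent to a scalar equation for $\omega$, namely $\omega(U^{(\tau)}(b)) = \tan\theta_k$ and $\omega(V^{(\tau)}(b')) = \tan\theta'_{k'}$. I would first verify by direct differentiation using \eqref{eq:2.5} that $\nabla_{U^{(\tau)}} \omega \cdot \tilde{\mathbf{r}}^{(\tau)}_{j} \equiv 0$ and $\nabla_{U^{(\tau)}} p^{(\tau)} \cdot \tilde{\mathbf{r}}^{(\tau)}_{j} \equiv 0$ for $j=2,3$ in $\mathcal{O}_{\epsilon_1}(\underline{U})$, which is essentially a one-line calculation once one notices that $(0,1+\tau^2 u,\tau^2 v,0)$ and $(1,0,0,0)$ annihilate both $\omega$ and $p$.

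Label the intermediate states along $\mathcal{H}^{(\tau)}$ by $U_0 = U^{(\tau)}(b)$, $U_j = \mathcal{H}^{(\tau)}_{j}(q_j^{(\tau)}; U_{j-1},\tau^2)$ for $1 \le j \le 4$, and $U_4 = V^{(\tau)}(b')$. The invariance above then yields the key identity $\omega(U_1) = \omega(U_3)$. Consider the scalar function
\[
\Psi(\boldsymbol{q}^{(\tau)}, \theta_k, \theta'_{k'}, U^{(\tau)}(b), \tau^2) := \omega(U_4) - \omega(U_0) - (\tan\theta'_{k'} - \tan\theta_k),
\]
which vanishes as soon as both conditions \eqref{eq:2.49} hold. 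At the base point $\boldsymbol{q}^{(\tau)} = 0$, $\theta_k = \theta'_{k'} = 0$, $U^{(\tau)}(b) = \underline{U}$, a direct computation using \eqref{eq:2.8} gives $\partial_{q_1^{(\tau)}} \Psi\big|_{\text{base}} = \nabla_{U^{(\tau)}} \omega \cdot \mathbf{r}^{(\tau)}_1\big|_{\underline{U}} = e^{(\tau)}_1(\underline{U}, \tau^2) = \tfrac{2(a^2_\infty - \tau^2)^2}{(\gamma+1) a^4_\infty}$, which stays bounded away from zero uniformly for $\tau \in (0, \tau_2)$. The implicit function theorem therefore produces a $C^{2}$-function $q_1^{(\tau)} = Q(q_2^{(\tau)}, q_3^{(\tau)}, q_4^{(\tau)}, \theta'_{k'} - \theta_k; U^{(\tau)}(b), \tau^2)$ with derivative bounds depending only on $(\underline{U}, a_\infty)$.

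To extract the specific form \eqref{eq:2.50}, I would observe that when $q_4^{(\tau)} = 0$ and $\theta'_{k'} = \theta_k$ we have $U_4 = U_3$, so $\omega(U_1) = \omega(U_3) = \omega(U_4) = \omega(U_0)$; since $\omega$ varies strictly monotonically in $q_1^{(\tau)}$ across the $1^{\text{st}}$ wave, this forces $q_1^{(\tau)} = 0$, i.e.\ $Q$ vanishes identically on the slice $\{q_4^{(\tau)} = 0,\, \theta'_{k'} - \theta_k = 0\}$. Applying Hadamard's lemma separately in the variables $q_4^{(\tau)}$ and $(\theta'_{k'} - \theta_k)$ then yields the decomposition \eqref{eq:2.50} with $\mathbb{K}^{(\tau)}_b$ and $\mathbb{K}^{(\tau)}_\theta$ written as integrals of partial derivatives of $Q$, uniformly bounded in terms of $(\underline{U}, a_\infty)$. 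The main obstacle I expect is the structural Step 1 — recognizing that both boundary conditions collapse to scalar conditions on $\omega$ and that $\omega$ is a Riemann invariant of the entire linearly degenerate block $\{2,3\}$; once this is in place, the remainder reduces to a standard implicit-function-plus-Hadamard argument, with uniform-in-$\tau$ control inherited from the explicit non-vanishing of $e^{(\tau)}_1(\underline{U}, \tau^2)$.
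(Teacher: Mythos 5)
Your proposal is correct and follows essentially the same route as the paper: a scalar boundary-condition function solved for $q^{(\tau)}_1(b)$ by the implicit function theorem (with the same nondegenerate derivative $\pm e^{(\tau)}_1(\underline{U},\tau^2)$ at the base point), the invariance of the flow angle across the linearly degenerate $2^{\rm nd}$ and $3^{\rm rd}$ fields to force $q^{(\tau)}_1=0$ on the slice $\{q^{(\tau)}_4=0,\ \theta'_{k'}=\theta_k\}$, and a Taylor/Hadamard decomposition to produce $\mathbb{K}^{(\tau)}_b$ and $\mathbb{K}^{(\tau)}_\theta$ with bounds depending only on $(\underline{U},a_\infty)$. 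The only cosmetic differences are that you phrase the boundary conditions directly in terms of $\omega=v/(1+\tau^2 u)$ (so your angle increment is $\tan\theta'_{k'}-\tan\theta_k$, harmlessly absorbed into $\mathbb{K}^{(\tau)}_\theta$) and perform the two-variable Hadamard expansion at once, whereas the paper expands sequentially in $\theta'_{k'}$ and then in $q^{(\tau)}_4$.
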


\begin{proof}
According to \eqref{eq:2.48}--\eqref{eq:2.49}, we denote
\begin{align*}
\mathbb{L}_{b}(\boldsymbol{q}^{(\tau)}(b), \theta'_{k'}, U^{(\tau)}(b),\tau^2)
&:=\big(1+\tau^{2}\mathcal{H}^{(\tau), (2)}(\boldsymbol{q}^{(\tau)}_{b};U^{(\tau)}(b),\tau^2)\big)\sin (\theta'_{k'})\\[2pt]
&\,\quad\ -\mathcal{H}^{(\tau), (3)}(\boldsymbol{q}^{(\tau)}(b);U^{(\tau)}(b),\tau^2)\cos(\theta'_{k'}).
\end{align*}
Notice that
$\mathbb{L}_{b}(0, 0, \underline{U},\tau^2)=0$ and
\begin{align*}
&\frac{\partial \mathbb{L}_{b}(\boldsymbol{q}^{(\tau)}(b), \theta'_{k'}, U^{(\tau)}(b),\tau^2)}{\partial q^{(\tau)}_{1}(b)}
\Bigg|_{\boldsymbol{q}^{(\tau)}(b)=\boldsymbol{0}, \theta'_{k'}=0, U^{(\tau)}(b)=\underline{U}}\\[2pt]
&=-\boldsymbol{r}^{(\tau),3}_{1}(\underline{U},\tau^{2})=-\frac{2(a^2_{\infty}-\tau^2)^2}{(\gamma+1)a^4_{\infty}}.
\end{align*}
Then we can choose $\tau'_{2}>0$ depending only on $a_{\infty}$ such that, for $\tau\in (0, \tau'_{2})$,
$$
\frac{\partial \mathbb{L}_{b}(\boldsymbol{q}^{(\tau)}(b), \theta'_{k'}, U^{(\tau)}(b),\tau^2)}{\partial q^{(\tau)}_{1}(b)}
\Bigg|_{\boldsymbol{q}^{(\tau)}(b)=\boldsymbol{0}, \theta'_{k'}=0, U^{(\tau)}(b)=\underline{U}}<0.
$$
Therefore, by the implicit function theorem, for $\varepsilon>0$ sufficiently small, the equation:
$$
\mathbb{L}_{b}(\boldsymbol{q}^{(\tau)}(b), \theta'_{k'}, U^{(\tau)}(b),\tau^2)=0
$$
admits a unique solution $q^{(\tau)}_{1}(b)=q^{(\tau)}_{1}(b)(q^{(\tau)}_{2}(b), q^{(\tau)}_{3}(b), q^{(\tau)}_{4}(b), \theta'_{k'},\tau^2 )$
that is a $C^2$--function of $(q^{(\tau)}_{2}(b), q^{(\tau)}_{3}(b), q^{(\tau)}_{4}(b), \theta'_{k'},\tau^2 )$.
Denote
$$
q^{(\tau)}_{1 *}(b)\doteq q^{(\tau)}_{1}(b)(q^{(\tau)}_{2}(b), q^{(\tau)}_{3}(b), q^{(\tau)}_{4}(b), \theta_{k},\tau^2 ).
$$
Then, by the Taylor formula, we have
\begin{eqnarray}\label{eq:2.51}
q^{(\tau)}_{1}(b)(q^{(\tau)}_{2}(b), q^{(\tau)}_{3}(b), q^{(\tau)}_{4}(b), \theta'_{k'},\tau^2 )
=q^{(\tau)}_{1 *}(b)+\mathbb{K}^{(\tau)}_{\theta}(\theta'_{k'}-\theta_{k}),
\end{eqnarray}
where coefficient $\mathbb{K}^{(\tau)}_{\theta}$ is a $C^2$--function of
$(q^{(\tau)}_{2}(b), q^{(\tau)}_{3}(b), q^{(\tau)}_{4}(b), \theta'_{k'},\tau^2)$.
Moreover, by direct computation, coefficient $\mathbb{K}^{(\tau)}_{\theta}$ satisfies
\begin{align*}
\mathbb{K}^{(\tau)}_{\theta}\Big|_{\boldsymbol{q}^{(\tau)}(b)=\boldsymbol{0}, \theta'_{k'}=0, U^{(\tau)}(b)=\underline{U}}
&=-\frac{\frac{\partial \mathbb{L}_{b}(\boldsymbol{q}^{(\tau)}(b), \theta'_{k'}, U^{(\tau)}(b),\tau^2)}{\partial \theta'_{k'}}\Big|_{\boldsymbol{q}^{(\tau)}(b)=\boldsymbol{0}, \theta'_{k'}=0, U^{(\tau)}(b)=\underline{U}}}{\frac{\partial \mathbb{L}_{b}(\boldsymbol{q}^{(\tau)}(b), \theta'_{k'}, U^{(\tau)}(b),\tau^2)}{\partial q^{(\tau)}_{1}(b)}\Big|_{\boldsymbol{q}^{(\tau)}(b)=\boldsymbol{0}, \theta'_{k'}=0, U^{(\tau)}(b)=\underline{U}}}\\[4pt]
&=\frac{(\gamma+1)a^4_{\infty}}{2(a^2_{\infty}-\tau^2)^2}.
\end{align*}
This implies that $\mathbb{K}^{(\tau)}_{\theta}$ is uniformly bounded if $\tau\in (0,\tau''_{2})$
with $\tau''_{2}>0$ small and depending only on $a_{\infty}$.
Thus, we choose $\varepsilon'_{3}>0$ such that, for $\varepsilon\in(0,\varepsilon'_{3})$,
if $U^{(\tau)}_{b}\in \mathcal{O}_{\varepsilon}(\underline{U})$ and $|\theta_{k}|+|\theta'_{k'}|<\varepsilon$,
then the bound of $\mathbb{K}^{(\tau)}_{\theta}$ is independent of
$(\underline{U},a_{\infty})$.

Next, we estimate $q^{(\tau)}_{1 *}(b)$. Let
$$
\boldsymbol{q}^{(\tau)}_{*}(b)=(q^{(\tau)}_{1 *}(b), q^{(\tau)}_{2}(b), q^{(\tau)}_{3 }(b),q^{(\tau)}_{4 }(b)),
\quad V^{(\tau)}(b)=\big(\hat{\rho}^{(\tau)},\hat{u}^{(\tau)},\hat{v}^{(\tau)},\hat{p}^{(\tau)}\big)^{\top}(b).
$$
Then
\begin{eqnarray}\label{eq:2.52}
V^{(\tau)}(b)=\mathcal{H}^{(\tau)}(\boldsymbol{q}^{(\tau)}_{*}(b); U^{(\tau)}(b),\tau^2),\quad
(1+\tau^{2}\hat{u}^{(\tau)}(b), \hat{v}^{(\tau)}(b))\cdot \mathbf{n}_{k}=0.\quad
\end{eqnarray}

Let $U^{(\tau)}_{i}(b)=(\rho^{(\tau)}_{i},u^{(\tau)}_{i},v^{(\tau)}_{i},p^{(\tau)}_{i})^{\top}(b)$, $i=1,2,3$,
be the middle states of the solution of problem \eqref{eq:2.52} from $U^{(\tau)}(b)$ to $V^{(\tau)}(b)$.
If $q^{(\tau)}_{4}(b)=0$, then $V^{(\tau)}(b)=U^{(\tau)}_{3}(b)$.
Thus, by the  boundary conditions \eqref{eq:2.49} and \eqref{eq:2.52},
we see that
\begin{eqnarray}\label{eq:2.53}
\frac{v^{(\tau)}_{3}(b)}{1+\tau^2 u^{(\tau)}_{3}(b)}=\frac{v^{(\tau)}(b)}{1+\tau^2 u^{(\tau)}(b)}.
\end{eqnarray}

On the other hand, along the $2^{\rm nd}$ wave curve $\mathcal{C}^{(\tau)}_{2}$, we have
\begin{eqnarray*}
u^{(\tau)}_{2}(b)=\big(1+\tau^{2}u^{(\tau)}_{1}(b)\big)e^{\tau^2q^{(\tau)}_{2}(b)}, \quad
v^{(\tau)}_{2}(b)=v^{(\tau)}_{1}(b)e^{\tau^2q^{(\tau)}_{2}(b)},
\end{eqnarray*}
and $\rho^{(\tau)}_{2}(b)=\rho^{(\tau)}_{1}(b)$ and $ p^{(\tau)}_{2}(b)=p^{(\tau)}_{1}(b)$.
Along the $3^{\rm rd}$ wave curve $\mathcal{C}^{(\tau)}_{3}$, we have
\begin{eqnarray*}
\rho^{(\tau)}_{3}(b)=\rho^{(\tau)}_{2}(b)+q^{(\tau)}_{3}(b),\,\, u^{(\tau)}_{3}(b)=u^{(\tau)}_{2}(b),
\,\, v^{(\tau)}_{3}(b)=v^{(\tau)}_{2}(b),
\,\, p^{(\tau)}_{3}(b)=p^{(\tau)}_{2}(b).
\end{eqnarray*}
This implies that
\begin{eqnarray}\label{eq:2.54}
\frac{v^{(\tau)}_{3}(b)}{1+\tau^2 u^{(\tau)}_{3}(b)}
=\frac{v^{(\tau)}_{1}(b)e^{\tau^2q^{(\tau)}_{2}(b)}}{\big(1+\tau^{2}u^{(\tau)}_{1}(b)\big)e^{\tau^2q^{(\tau)}_{2}(b)}}
=\frac{\mathcal{H}^{(\tau), 3}(q^{(\tau)}_{1 *}(b);U^{(\tau)}(b))}{1+\tau^2\mathcal{H}^{(\tau), 2}(q^{(\tau)}_{1 *}(b);U^{(\tau)}(b))}.\quad\,\,
\end{eqnarray}

Then it follows from \eqref{eq:2.53}--\eqref{eq:2.54} that
$$
q^{(\tau)}_{1 *}(b)\Big|_{q^{(\tau)}_{4}(b)=0}=0,
$$
so that applying the Taylor formula again yields
\begin{eqnarray*}
q^{(\tau)}_{1 *}(b)=\mathbb{K}^{(\tau)}_{b}(q^{(\tau)}_{2}(b),q^{(\tau)}_{3}(b),q^{(\tau)}_{4}(b),\theta_{k},\tau^{2})q^{(\tau)}_{4}(b),
\end{eqnarray*}
where
\begin{eqnarray*}
&&\mathbb{K}^{(\tau)}_{b}(q^{(\tau)}_{2}(b),q^{(\tau)}_{3}(b),q^{(\tau)}_{4}(b),\theta_{k}, \tau^{2})\\
&&=\int^{1}_{0}\partial_{\mu}q^{(\tau)}_{1}(b)(q^{(\tau)}_{2}(b), q^{(\tau)}_{3}(b), \mu q^{(\tau)}_{4}(b), \theta_{k},\tau^2)\,{\rm d}\mu,
\end{eqnarray*}
and
\begin{eqnarray*}
\mathbb{K}^{(\tau)}_{b}\Big|_{\boldsymbol{q}^{(\tau)}_{*}(b)=\boldsymbol{0}, \theta_{k}=0, U^{(\tau)}(b)=\underline{U}}
=-\frac{\frac{\partial \mathbb{L}_{b}(\boldsymbol{q}^{(\tau)}_{*}(b), \theta_{k}, U^{(\tau)}(b),\tau^2)}{\partial q^{(\tau)}_{4}(b)}\Big|_{\boldsymbol{q}^{(\tau)}_{*}(b)=\boldsymbol{0}, \theta_{k}=0, U^{(\tau)}(b)=\underline{U}}}{\frac{\partial \mathbb{L}_{b}(\boldsymbol{q}^{(\tau)}_{*}(b), \theta_{k}, U^{(\tau)}(b),\tau^2)}{\partial q^{(\tau)}_{1*}(b)}\Big|_{\boldsymbol{q}^{(\tau)}_{*}(b)=\boldsymbol{0}, \theta_{k}=0, U^{(\tau)}(b)=\underline{U}}}
=-1.
\end{eqnarray*}

Therefore, we can choose $\tau'''_{2}>0$ depending only on $a_{\infty}$ and $\varepsilon''_{3}>0$ such that,
for $\varepsilon\in(0,\varepsilon''_{3})$ and $\tau\in (0,\tau'''_{2})$,
if $U^{(\tau)}_{b}\in \mathcal{O}_{\varepsilon}(\underline{U})$ and $|\theta_{k}|+|\theta'_{k'}|<\varepsilon$,
then the bound of $\mathbb{K}_{b}$ depends only on $(\underline{U},a_{\infty})$.

Finally, let $\varepsilon_{3}=\min\{\varepsilon'_{3}, \varepsilon''_{3}\}$ and $\tilde{\tau}_{0}=\min\{\tau'_{2}, \tau''_{2},\tau'''_{2}\}$.
Then, for $\varepsilon\in(0,\varepsilon_{3})$ and $\tau\in (0,\tau_{2})$, we obtain the desired results.
\end{proof}

Now we can analyze the Lyapunov functional along the flow direction $x>0$.

\begin{lemma}\label{lem:2.9}
Suppose that $U^{(\tau)}_{h,\nu}$ and $V^{(\tau)}_{h',\nu}$ are the approximate solutions
corresponding to the initial data and boundaries $(U^{\nu}_{0},g_{h})$ and $(V^{\nu}_{0},g_{h'})$, respectively,
and satisfy the properties listed in {\rm Proposition \ref{prop:2.1}}.
Then there exist suitable positive constants  $\kappa^{(\tau)}_{c}$, $\kappa^{(\tau)'}_{c}$, $\kappa^{(\tau)}_{g}$,
$\kappa^{(\tau)}_{i}$ for $i=1,2,3$, and $\textsc{w}^{(\tau)}_{j}$ for $j=2,3,4$,
depending only on $\underline{U}$, such that functional $\mathscr{L}^{(\tau)}$ defined by \eqref{eq:2.41} satisfies
the estimate{\rm :}
\begin{eqnarray}\label{eq:2.55}
\mathscr{L}^{(\tau)}\big(U^{(\tau)}_{h, \nu}, V^{(\tau)}_{h', \nu}\big)(x'')
\leq \mathscr{L}^{(\tau)}\big(U^{(\tau)}_{h, \nu}, V^{(\tau)}_{h', \nu}\big)(x')+C\big(x''-x'\big)\nu^{-1}
\end{eqnarray}
for any $0<x'<x''$, where constant $C>0$ depends only on $(\underline{U},a_{\infty})$.
\end{lemma}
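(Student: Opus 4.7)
The plan is to show that $\mathscr{L}^{(\tau)}$ is almost decreasing by separately analyzing: (i) the continuous evolution of $\mathscr{L}^{(\tau)}$ on intervals of $x$ where no discrete event (interior interaction, front hitting the boundary, or corner of $g_h,g_{h'}$) occurs, and (ii) the jump of $\mathscr{L}^{(\tau)}$ across each discrete event. On intervals free of discrete events, $\hat{q}^{(\tau)}_j$ is piecewise constant as a function of $y$ but the Hugoniot-wave locations travel at the shock/vortex speeds of $U^{(\tau)}_{h,\nu}$ and $V^{(\tau)}_{h',\nu}$, so $\frac{d}{dx}\int_{-\infty}^{\check g_h(x)}|\hat q^{(\tau)}_j|W^{(\tau)}_j\,dy$ splits into an internal contribution (speed differences between Hugoniot curves and actual fronts, handled exactly as in Bressan--Liu--Yang and the Amadori weighting) and a boundary contribution at $y=\check g_h(x)$ coming from the motion of the upper limit. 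The internal contribution is $\leq C\nu^{-1}$ because the error only comes from partitioned rarefaction fans of strength $\leq\nu^{-1}$ by Proposition~\ref{prop:2.1}(ii), while the nonphysical front total mass is $O(2^{-\nu})$.

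The principal novelty lies in controlling the boundary term at $y=\check g_h(x)$. Here $\mathbf{n}_k$ and $\mathbf{n}'_{k'}$ differ by the turning angle $\theta'_{k'}-\theta_k$, and a naive weighting would leave an un-cancelled $\sum_{j=1}^4 q^{(\tau)}_j$ contribution because the $2^{\rm nd}$ and $3^{\rm rd}$ characteristic fields are linearly degenerate; this is exactly the difficulty flagged in the introduction. The idea is to apply Lemma~\ref{lem:2.8}, which gives
\[
q^{(\tau)}_1=\mathbb{K}^{(\tau)}_b(q^{(\tau)}_2,q^{(\tau)}_3,q^{(\tau)}_4,\tau^2)\,q^{(\tau)}_4+\mathbb{K}^{(\tau)}_\theta(\theta'_{k'}-\theta_k),
\]
so the $q^{(\tau)}_1$-contribution at the boundary is representable as a bounded multiple of $q^{(\tau)}_4$ plus a term proportional to the flow-angle mismatch of $U^{(\tau)}_{h,\nu}$ and $V^{(\tau)}_{h',\nu}$ along $\check g_h$. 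Choosing the weight $\textsc{w}^{(\tau)}_4$ large enough to absorb the $\mathbb{K}^{(\tau)}_b q^{(\tau)}_4$ piece (possible because the $2^{\rm nd}/3^{\rm rd}$-field weights $\textsc{w}^{(\tau)}_2,\textsc{w}^{(\tau)}_3$ can also be tuned to keep signs right), and then $\kappa^{(\tau)}_g$ large enough that the decreasing outer integral $\kappa^{(\tau)}_g\int_x^\infty|\ldots|\,ds$ dominates the $\mathbb{K}^{(\tau)}_\theta(\theta'_{k'}-\theta_k)$ piece, yields that the continuous $x$-derivative of $\mathscr{L}^{(\tau)}$ is $\leq C\nu^{-1}$.

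At discrete events I plan to verify $\mathscr{L}^{(\tau)}(x+)\leq \mathscr{L}^{(\tau)}(x-)$ case by case, following the template in Lemma~\ref{lem:2.7}. Interior interactions in $U^{(\tau)}_{h,\nu}$ or $V^{(\tau)}_{h',\nu}$ change $W^{(\tau)}_j$ by decreasing $\mathcal{Q}^{(\tau)}$ through Lemma~\ref{lem:2.4}, and by standard arguments the negative jump in $\kappa^{(\tau)}_2\mathcal{Q}^{(\tau)}$ dominates the positive jump from the quadratic interaction error after the other weights are fixed; the $\kappa^{(\tau)}_3\sum_{\alpha_4}|\sigma^{(\tau)}_{\alpha_4}|$ term is included exactly to absorb the extra contribution coming from Lemma~\ref{lem:2.8} whenever a $4^{\rm th}$-family front is created. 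A front hitting $\Gamma_h$ or $\Gamma_{h'}$ is handled by the reflection estimate \eqref{eq:2.23}: the drop of $\kappa^{(\tau)}_3\mathcal{V}^{(\tau)}_4$ (or the drop of $A^{(\tau)}_j$ terms in $W^{(\tau)}_j$) compensates the new $1^{\rm st}$-family front created at the wall, while the boundary contribution is again controlled by the $\kappa^{(\tau)}_g$-term using Proposition~\ref{prop-flow-slop}. Corner events use Lemma~\ref{lem:2.5} with the $\kappa^{(\tau)}_c\sum_{k>[x/h]}\omega_k$ and $\kappa^{(\tau)'}_c\sum_{k'>[x/h']}\omega'_{k'}$ monotone terms in $W^{(\tau)}_j$ dropping by exactly $|\omega_k|$ (resp. $|\omega'_{k'}|$), which we require to exceed the size $|K^{(\tau)}_b|\,|\omega_k|$ of the generated $1^{\rm st}$ wave and its effect on the flow-slope integrand.

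The hardest part, as indicated above, is the boundary/characteristic step: we are simultaneously weighting three devices (the wave-family weights $\textsc{w}^{(\tau)}_j$, the boundary weight $\kappa^{(\tau)}_g$, and the corner weights $\kappa^{(\tau)}_c,\kappa^{(\tau)'}_c$) to offset the linearly-degenerate fields' inability to absorb the 1-wave contribution through usual means. I would therefore fix the constants in the order $\textsc{w}^{(\tau)}_j$ first (using Lemma~\ref{lem:2.8} bounds), then $\kappa^{(\tau)}_3$ (reflection), then $\kappa^{(\tau)}_c,\kappa^{(\tau)'}_c$ (corner), then $\kappa^{(\tau)}_g$ (flow-angle), and finally $\kappa^{(\tau)}_1,\kappa^{(\tau)}_2$ (ensuring the quadratic interaction terms are negative), and show at each step that previous choices are unaffected. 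Integrating the resulting inequality $\frac{d}{dx}\mathscr{L}^{(\tau)}\leq C\nu^{-1}$ between consecutive discrete events and summing the (non-positive) jumps across them gives \eqref{eq:2.55}.
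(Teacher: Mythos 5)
Your proposal is correct and follows essentially the same route as the paper: the same split into a continuous $x$-derivative estimate (interior terms as in Bressan--Liu--Yang giving $C\nu^{-1}$, boundary term resolved via Lemma~\ref{lem:2.8} by enlarging $\textsc{w}^{(\tau)}_4$ and letting $\kappa^{(\tau)}_g$ absorb the flow-angle mismatch) plus non-positive jumps at interactions, reflections, and corners handled as in Lemma~\ref{lem:2.7}. The only cosmetic difference is that Proposition~\ref{prop-flow-slop} is not needed inside this lemma (the $\kappa^{(\tau)}_g$-integral in $\mathscr{L}^{(\tau)}$ already supplies the negative boundary term; the flow-slope comparison enters only later in Proposition~\ref{prop:2.2}).
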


\begin{proof}
Denote by $U^{(\tau)}_0\doteq U^{(\tau)}_{h, \nu}(x,y), U^{(\tau)}_1,\ U^{(\tau)}_2,\ U^{(\tau)}_3$,
and $U^{(\tau)}_4\doteq V^{(\tau)}_{h',\nu}(x,y)$
the intermediate states of the Riemann solution of equation \eqref{eq:2.41}.
Let $\lambda^{(\tau)}_{i}\doteq\lambda_{i}(U^{(\tau)}_{i-1},U^{(\tau)}_{i-1} )$, $1\leq i\leq 4$,
be the corresponding speed of the $i^{\rm th}$ wave $q^{(\tau)}_{i}$ along the Hugoniot curve in the phase space.
Let $\check{g}^{-}_{h}(x)\doteq b>y_{1}>y_{2}>\cdot\cdot\cdot >y_{N}$ with $(x,y_l)$ as
the jump point of $U^{(\tau)}_{h,\nu}$ and $V^{(\tau)}_{h',\nu}$ for $1\leq l\leq N$,
and let $\mathcal{J}^{(\tau)}=\mathcal{J}^{(\tau)}(U^{(\tau)}_{h, \nu})\cup \mathcal{J}^{(\tau)}(V^{(\tau)}_{h', \nu})$
be the set of indices $\alpha\in \{1,2, \cdot\cdot\cdot, N \}$.
We define
\begin{eqnarray}\label{eq:2.56}
\hat{q}^{(\tau),\alpha\pm}_{i}=\hat{q}^{(\tau)}_{i}(y^{\pm}_{\alpha}),\quad\,
 W^{(\tau),\alpha\pm}_{i}=W^{(\tau)}_{i}(y^{\pm}_{\alpha}), \quad\,
 \lambda^{(\tau), \alpha\pm}_{i}=\lambda^{(\tau)}_{i}(y^{\pm}_{\alpha}).
\end{eqnarray}

When there is no wave interaction, no wave reflection, or no corner point on the boundary at $x$,
\begin{align}\label{eq:2.57}
&\frac{{\rm d} \mathscr{L}^{(\tau)}(U^{(\tau)}_{h, \nu}, V^{(\tau)}_{h', \nu})(x)}{{\rm d}x}\nonumber\\[2pt]
& =\sum_{\alpha \in \mathcal{J}^{(\tau)}}\sum^{4}_{j=1}\Big(|\hat{q}^{(\tau),\alpha-}_{j}|W^{(\tau),\alpha-}_{j}-|\hat{q}^{(\tau),\alpha+}_{j}|W^{(\tau),\alpha+}_{j}\Big)\dot{y}_{\alpha}
+\sum^{4}_{j=1}|\hat{q}^{(\tau)}_{j}(b)|W^{(\tau)}_{j}(b)\dot{y}_{b}\quad\nonumber\\[5pt]
&\quad\, -\kappa^{(\tau)}_{g}\bigg|\bigg(\frac{v^{(\tau)}_{h,\nu}}{1+\tau^{2}u^{(\tau)}_{h,\nu}}
-\frac{\hat{v}^{(\tau)}_{h,\nu}}{1+\tau^{2}\hat{u}^{(\tau)}_{h,\nu}}\bigg)(x,\check{g}_{h}(x))\bigg|,
\end{align}
where $\dot{y}_{\alpha}$ is the speed of the weak wave $\alpha\in \mathcal{J}^{(\tau)}$, $b=\check{g}^{-}_{h}(x)$,
and $\dot{y}_{b}=\check{g}_{h}'(x)$.
Notice that
\begin{eqnarray*}
&&|\hat{q}^{(\tau),1+}_{j}|W^{(\tau),1+}_{j}\lambda^{(\tau),1+}_{j}=|\hat{q}^{(\tau)}_{j}(b)|W^{(\tau)}_{j}(b)\lambda^{(\tau)}_{j}(b),\\[2pt]
&&|\hat{q}^{(\tau),(\alpha-1)-}_{j}|W^{(\tau),(\alpha-1)-}_{j}\lambda^{(\tau),(\alpha-1)-}_{j}
=|\hat{q}^{(\tau),\alpha+}_{j}|W^{(\tau),\alpha+}_{j}\lambda^{(\tau),\alpha+}_{j}.
\end{eqnarray*}
Then
\begin{align}\label{eq:2.58}
&\sum_{\alpha\in \mathcal{J}^{(\tau)}}\sum^{4}_{j=1}
|\hat{q}^{(\tau),\alpha-}_{j}|W^{(\tau),\alpha-}_{j}\lambda^{(\tau),\alpha-}_{j}\nonumber\\[2pt]
&\ =\sum_{\alpha\in \mathcal{J}^{(\tau)}}\sum^{4}_{j=1}|\hat{q}^{(\tau),\alpha+}_{j}|W^{(\tau),\alpha+}_{j}\lambda^{(\tau),\alpha+}_{j}
 -\sum^{4}_{j=1}|\hat{q}^{(\tau)}_{j}(b)|W^{(\tau)}_{j}(b)\lambda^{(\tau)}_{j}(b).
\end{align}

Substituting \eqref{eq:2.58} into \eqref{eq:2.57}, we have
\begin{align}\label{eq:2.59}
&\frac{{\rm d} \mathscr{L}^{(\tau)}(U^{(\tau)}_{h, \nu}, V^{(\tau)}_{h', \nu})(x)}{{\rm d}x}\nonumber\\[2pt]
&=\sum_{\alpha \in \mathcal{J}^{(\tau)}}\sum^{4}_{j=1}\Big(|\hat{q}^{(\tau),\alpha-}_{j}|W^{(\tau),\alpha-}_{j}\big(\dot{y}_{\alpha}-\lambda^{(\tau),\alpha-}_{i}\big)
-|\hat{q}^{(\tau),\alpha+}_{j}|W^{(\tau),\alpha+}_{j}\big(\dot{y}_{\alpha}-\lambda^{(\tau),\alpha+}_{i}\big)\Big)\nonumber\\[2pt]
&\quad \ +\sum^{4}_{j=1}|\hat{q}^{(\tau)}_{j}(b)|W^{(\tau)}_{j}(b)\big(\dot{y}_{b}-\lambda^{(\tau)}_{j}(b)\big)\nonumber\\
&\quad \, -\kappa^{(\tau)}_{g}\bigg|\bigg(\frac{v^{(\tau)}_{h,\nu}}{1+\tau^{2}u^{(\tau)}_{h,\nu}}
-\frac{\hat{v}^{(\tau)}_{h',\nu}}{1+\tau^{2}\hat{u}^{(\tau)}_{h',\nu}}\bigg)(x,\check{g}_{h}(x))\bigg|\nonumber\\[2pt]
&\doteq\sum_{\alpha \in \mathcal{J}^{(\tau)}}\sum^{4}_{j=1}E^{(\tau)}_{\alpha, j}
+\sum^{4}_{j=1}E^{(\tau)}_{b, j}-\kappa^{(\tau)}_{g}\bigg|\bigg(\frac{v^{(\tau)}_{h,\nu}}{1+\tau^{2}u^{(\tau)}_{h,\nu}}
-\frac{\hat{v}^{(\tau)}_{h',\nu}}{1+\tau^{2}\hat{u}^{(\tau)}_{h',\nu}}\bigg)(x,\check{g}_{h}(x))\bigg|.
\end{align}

Following the standard arguments
in \cite{bressan-liu-yang},
if $\alpha\in \mathcal{S}^{(\tau)}\cup \mathcal{C}^{(\tau)}\cup\mathcal{R}^{(\tau)}\cup\mathcal{NP}^{(\tau)}$,
then we choose $\kappa_{1}$ sufficiently large to obtain
\begin{eqnarray}\label{eq:2.60}
\sum^{4}_{j=1} E^{(\tau)}_{\alpha, j} \leq C |\sigma^{(\tau)}_{\alpha}|\nu^{-1} \qquad \mbox{or}\qquad  \sum^{4}_{j=1} E^{(\tau)}_{\alpha, j} \leq C |\sigma^{(\tau)}_{\alpha}|,
\end{eqnarray}
where constant {$C>0$} depends only on $(\underline{U},a_{\infty})$.
Therefore, the remaining task is to estimate $\sum^{4}_{i=1}E^{(\tau)}_{b,i}$ involving the approximate boundary $\check{\Gamma}_{h}$.

\smallskip
$\bullet$ If $\check{g}_{h}(x)=g_{h}(x)$, then, by Lemma \ref{lem:2.8}, we have
\begin{align*}
\sum^{4}_{j=1}E^{(\tau)}_{b,j}&=|q^{(\tau)}_{1}(b)|W^{(\tau)}_{1}(b)\big(\dot{y}_{b}-\lambda^{(\tau)}_{1}(b)\big)
+\textsc{w}^{(\tau)}_{4}|q^{(\tau)}_{4}(b)|W^{(\tau)}_{4}(b)\big(\dot{y}_{b}-\lambda^{(\tau)}_{4}(b)\big)\\[-2pt]
&\quad\ +\sum^{4}_{j=3}\textsc{w}^{(\tau)}_{j}|q^{(\tau)}_{j}(b)|W^{(\tau)}_{j}(b)\big(\dot{y}_{b}-\lambda^{(\tau)}_{j}(b)\big)\\[2pt]
&\leq \Big(|\mathbb{K}^{(\tau)}_{b}|W^{(\tau)}_{1}(b)(\dot{y}_{b}-\lambda^{(\tau)}_{1}(b))+|\text{w}^{(\tau)}_{4}|W^{(\tau)}_{4}(b)(\dot{y}_{b}-\lambda^{(\tau)}_{4}(b))\Big)
|q^{(\tau)}_{4}(b)|\\[2pt]
&\quad\ +|\mathbb{K}^{(\tau)}_{\theta}|W^{(\tau)}_{1}(b)\big(\dot{y}_{b}-\lambda^{(\tau)}_{1}(b)\big)
 \\
&\quad\quad\,\,\, \times\Big|\Big(\arctan(\frac{v^{(\tau)}_{h,\nu}}{1+\tau^{2}u^{(\tau)}_{h,\nu}})
-\arctan(\frac{\hat{v}^{(\tau)}_{h',\nu}}{1+\tau^{2}\hat{u}^{(\tau)}_{h',\nu}})\Big)(x,g_{h}(x))\Big|\\[-2pt]
&\quad\ +\sum^{4}_{j=3}\textsc{w}^{(\tau)}_{j}|q^{(\tau)}_{j}(b)|W^{(\tau)}_{j}(b)\big(\dot{y}_{b}-\lambda^{(\tau)}_{j}(b)\big)\\[2pt]
&\doteq J_{1, b}+J_{2,b}+J_{3,b}.
\end{align*}

Since
\begin{eqnarray*}
\lambda^{(\tau)}_{1}(b)\Big|_{U^{(\tau)}_{h, \nu}=V^{(\tau)}_{h', \nu}=\underline{U}}
=-\lambda^{(\tau)}_{4}(b)\Big|_{U^{(\tau)}_{h, \nu}=V^{(\tau)}_{h', \nu}=\underline{U}}
\end{eqnarray*}
at the background states when $U^{(\tau)}_{h, \nu}(b)=V^{(\tau)}_{h', \nu}(b)=\underline{U}$ and $\dot{y}_{b}=0$,
we find that, for the coefficient function in $J_{1, b}$,
if $\textsc{w}^{(\tau)}_{4}>0$ sufficiently large and $\tau>0$ sufficiently small,
\begin{align*}
&\Big(|\mathbb{K}^{(\tau)}_{b}|W^{(\tau)}_{1}(b)\big(\dot{y}_{b}-\lambda^{(\tau)}_{1}(b)\big)
+|\textsc{w}^{(\tau)}_{4}|W^{(\tau)}_{4}(b)\big(\dot{y}_{b}
-\lambda^{(\tau)}_{4}(b)\big)\Big)\Big|_{U^{(\tau)}_{h, \nu}(b)=V^{(\tau)}_{h', \nu}(b)=\underline{U}, \dot{y}_{b}=0}\\[2pt]
&= \Big(|\mathbb{K}^{(\tau)}_{b}|W^{(\tau)}_{1}(b)-|\textsc{w}^{(\tau)}_{4}|W^{(\tau)}_{4}(b)\Big)\lambda^{(\tau)}_{4}(b)
\Big|_{U^{(\tau)}_{h, \nu}(b)=V^{(\tau)}_{h', \nu}(b)=\underline{U}, \dot{y}_{b}=0}\\[2pt]
&\leq -C_{2,8},
\end{align*}
where constant $C_{2,8}>0$ depends only on $(\underline{U},a_{\infty})$.

Therefore, if $\textsc{w}^{(\tau)}_{4}>0$ is large enough, depending only on $(\underline{U},a_{\infty})$,
and if $\varepsilon\in(0, \min\{\varepsilon_{2},\varepsilon_{3} \})$ and $\tau\in(0,\min\{\tau_{1},\tau_{2}\})$,
there exists a constant $C_{2,9}>0$ depending only on $(\underline{U},a_{\infty})$ such that
\begin{eqnarray*}
J_{1, b}\leq -C_{2,9}|q^{(\tau)}_{4}(b)|.
\end{eqnarray*}

For $J_{2,b}$, if $\varepsilon>0$ and $\tau>0$ are sufficiently small, we obtain
\begin{eqnarray*}
J_{2, b}\leq C\bigg|\Big(\frac{v^{(\tau)}_{h,\nu}}{1+\tau^{2}u^{(\tau)}_{h,\nu}}
-\frac{\hat{v}^{(\tau)}_{h',\nu}}{1+\tau^{2}\hat{u}^{(\tau)}_{h',\nu}}\Big)(x,g_{h}(x))\bigg|.
\end{eqnarray*}

For $J_{3,b}$, note that, when $j=2,3$, it follows from Lemma \ref{lem:2.8} that
\begin{align*}
\dot{y}_{b}-\lambda^{(\tau)}_{j}(b)&\leq C\Big(|q^{(\tau)}_{1}(b)|+|q^{(\tau)}_{4}(b)|\Big)\\[2pt]
&\leq C|q^{(\tau)}_{4}(b)|+O(1)\bigg|\Big(\frac{v^{(\tau)}_{h,\nu}}{1+\tau^{2}u^{(\tau)}_{h,\nu}}
-\frac{\hat{v}^{(\tau)}_{h',\nu}}{1+\tau^{2}\hat{u}^{(\tau)}_{h',\nu}}\Big)(x,g_{h}(x))\bigg|.
\end{align*}
Inserting this estimate into $J_{3, b}$ to obtain
\begin{eqnarray*}
J_{3,b}
\leq C\big(\textsc{w}^{(\tau)}_{2}+\textsc{w}^{(\tau)}_3\big)\bigg(|q^{(\tau)}_{4}(b)|
+\bigg|\Big(\frac{v^{(\tau)}_{h,\nu}}{1+\tau^{2}u^{(\tau)}_{h,\nu}}
-\frac{\hat{v}^{(\tau)}_{h',\nu}}{1+\tau^{2}\hat{u}^{(\tau)}_{h',\nu}}\Big)(x,g_{h}(x))\bigg|\bigg).
\end{eqnarray*}

Finally, by letting $\textsc{w}^{(\tau)}_{4}>0$ and $\kappa^{(\tau)}_{g}$ sufficiently large
and choosing $\textsc{w}^{(\tau)}_{2}$ and $\textsc{w}^{(\tau)}_{3}$ appropriately,
depending only on $(\underline{U},a_{\infty})$,
it follows from the estimates of $J_{1, b}$, $J_{2, b}$, and $J_{3, b}$ that
\begin{eqnarray}\label{eq:2.61}
\sum^{4}_{j=1}E^{(\tau)}_{b,j}-\kappa^{(\tau)}_{g}\bigg|\Big(\frac{v^{(\tau)}_{h,\nu}}{1+\tau^{2}u^{(\tau)}_{h,\nu}}
-\frac{\hat{v}^{(\tau)}_{h',\nu}}{1+\tau^{2}\hat{u}^{(\tau)}_{h',\nu}}\Big)(x,g_{h}(x))\bigg|
< 0.
\end{eqnarray}

$\bullet$ If $\check{g}_{h}(x)=g_{h'}(x)$, this can done in the same way as done for the first case.

$\bullet$ If $\check{g}_{h}(x)=g_{h}(x)=g_{h'}(x)$, then it follows from Lemma \ref{lem:2.8} that
$\mathbb{K}^{(\tau)}_{\theta}=0$. Then $J_{2,b}\equiv 0$.
Now, choosing $\textsc{w}^{(\tau)}_{4}>0$ sufficiently large and choosing $\textsc{w}^{(\tau)}_{2}$
and $\textsc{w}^{(\tau)}_{3}$ appropriately, we obtain that $\sum^{4}_{j=1}E^{(\tau)}_{b,j}<0$.

Thus, inequality \eqref{eq:2.61} holds for all the three subcases above.
Then, by estimates \eqref{eq:2.60}--\eqref{eq:2.61}, we see from equality \eqref{eq:2.59} that
\begin{eqnarray}\label{eq:2.62}
\frac{\dd \mathscr{L}^{(\tau)}(U^{(\tau)}_{h, \nu}, V^{(\tau)}_{h', \nu})(x)}{\dd x}<C\nu^{-1}.
\end{eqnarray}

To complete the proof of this lemma, let $\check{x}_{1}<\check{x}_{2}<\cdot\cdot\cdot<\check{x}_{N}$ be the points in $(x',x'')$
such that, when $x=\check{x}_j$ for $1\leq j\leq N$, the fronts in $U^{(\tau)}_{h, \nu}$ or $V^{(\tau)}_{h',\nu}$ interact,
reflect on the boundary, or are generated by the corner points on the boundary.
We integrate inequality \eqref{eq:2.60} along the flow direction
on each of intervals $(x',\check{x}_{1})$, $(\check{x}_{1},\check{x}_{2})$, $\cdots$,
$(\check{x}_{N}, x'')$ to obtain
\begin{equation}\label{eq:2.63}
\begin{split}
&\mathscr{L}^{(\tau)}(U^{(\tau)}_{h, \nu}, V^{(\tau)}_{h', \nu})(\check{x}_{1}-)
  <\mathscr{L}^{(\tau)}(U^{(\tau)}_{h, \nu}, V^{(\tau)}_{h', \nu})(x')+C(\check{x}_{1}-x')\nu^{-1},\\[2pt]
&\mathscr{L}^{(\tau)}(U^{(\tau)}_{h, \nu}, V^{(\tau)}_{h', \nu})(\check{x}_{2}-)
 <\mathscr{L}^{(\tau)}(U^{(\tau)}_{h, \nu}, V^{(\tau)}_{h', \nu})(\check{x}_{1}+)+C(\check{x}_{2}-\check{x}_{1})\nu^{-1},\\[2pt]
& \qquad\qquad\qquad\qquad\qquad \cdot\cdot\cdot\cdot \\[2pt]
&\mathscr{L}^{(\tau)}(U^{(\tau)}_{h, \nu}, V^{(\tau)}_{h', \nu})(x)<\mathscr{L}^{(\tau)}(U^{(\tau)}_{h, \nu}, V^{(\tau)}_{h', \nu})(\check{x}_{N}+)
 +C(x''-\check{x}_{N})\nu^{-1},\qquad
\end{split}
\end{equation}
where $C>0$ depends only on $(\underline{U},a_{\infty})$.
When $x=\check{x}_{k}$, for $k=1,\cdot\cdot\cdot, N$,
choosing $\kappa^{(\tau)}_{1}$, $\kappa^{(\tau)}_{2}$, $\kappa^{(\tau)}_{3}$, $\kappa^{(\tau)}_{c}$, and $\kappa^{(\tau)'}_{c}$ sufficiently large and
using Lemma \ref{lem:2.7}, we have
\begin{equation}\label{eq:2.64}
\mathscr{L}^{(\tau)}(U^{(\tau)}_{h, \nu}, V^{(\tau)}_{h', \nu})(\check{x}_{k}+)<\mathscr{L}^{(\tau)}(U^{(\tau)}_{h, \nu}, V^{(\tau)}_{h', \nu})(\check{x}_{k}-)
\quad\,\,\mbox{for $k=1,\cdot\cdot\cdot, N$}.\,\,\,
\end{equation}

Combining estimates \eqref{eq:2.63}--\eqref{eq:2.64}, we obtain inequality \eqref{eq:2.55}.
\end{proof}

Now we are ready to show the following proposition.
\begin{proposition}\label{prop:2.2}
Under assumptions $\mathbf{(U_0)}$ and $\mathbf{(g)}$,
for a given fixed hypersonic similarity parameter $a_{\infty}$,
there exist constants $\varepsilon_{0}<\min\{\varepsilon_{2},\varepsilon_{3} \})$ and $\tau_{0}<\min\{\tau_{1},\tau_{2}\}$
depending only on $(\underline{U},a_{\infty})$ such that, for $\varepsilon\in(0, \varepsilon_{0})$ and $\tau\in(0,\tau_{0})$,
if $U_{0}$ and $g(x)$ satisfy \eqref{eq:2.36}, then
the approximate solution $U^{(\tau)}_{h,\nu}$ converges to $U^{(\tau)}_{h}$ in $L^{1}(-\infty, g_{h}(x))$ as $\nu\rightarrow\infty$.
Moreover, there exist a constant $L>0$, a domain $\mathcal{D}_{h,x}$, and a map $\mathcal{P}^{(\tau)}_{h}(x,x'_{0})${\rm :}
\begin{eqnarray}\label{eq:2.65}
\mathcal{P}^{(\tau)}_{h}(x,x'_{0}):\mathcal{D}_{h,x'_{0}}\mapsto \mathcal{D}_{h, x} \qquad \mbox{for all $x\geq x'_{0}\geq 0$},
\end{eqnarray}
such that
\begin{enumerate}
\item [\rm (i)]\
For any $x\geq 0$ and $U^{(\tau)}_{h}(x,\cdot)\in \mathcal{D}_{h, x}$,  $\mathcal{P}^{(\tau)}_{h}(x,x)(U^{(\tau)}_{h}(x,\cdot))=U^{(\tau)}_{h}(x,\cdot)$, and,
for all $x\geq x'\geq x'_{0}\geq 0$ and $U^{(\tau)}_{h}(x'_{0},\cdot)\in \mathcal{D}_{h, x'_{0}}$,
\begin{eqnarray}\label{eq:2.66}
\mathcal{P}^{(\tau)}_{h}(x,x'_{0})(U^{(\tau)}_{h}(x'_{0}))=\mathcal{P}^{(\tau)}_{h}(x,x')\circ \mathcal{P}^{(\tau)}_{h}(x',x'_{0})(U^{(\tau)}_{h}(x'_{0}));
\end{eqnarray}

\item[\rm (ii)]\ If $\mathcal{P}^{(\tau)}_{h'}$ and $\mathcal{D}_{h', x}$ are the map and domain corresponding
to the boundary function $g_{h'}(x)$ and initial data $V^{\nu}_{0}$, then, for any $x''\geq x'>x'_{0}>0$,
\begin{eqnarray}\label{eq:2.67}
&&\big\|\mathcal{P}^{(\tau)}_{h}(x',x'_0)(U^{(\tau)}_{h}(x'_0,\cdot))-\mathcal{P}^{(\tau)}_{h'}(x'',x'_0)(V^{(\tau)}_{h'}(x'_0,\cdot))
\big\|_{L^{1}((-\infty, \hat{g}_{h}(x'')))}\nonumber\\[4pt]
&&\leq
L\Big(|x'-x''|+\|U^{(\tau)}_{h}(x'_0,\cdot)-V^{(\tau)}_{h'}(x'_0,\cdot)\|_{L^{1}((-\infty, \hat{g}_{h}(x'_0)))}\nonumber\\
&&\qquad\,\,\, +\|g'_{h}(\cdot)-g'_{h'}(\cdot)\|_{L^{1}((x_0',\infty))}\Big),
\end{eqnarray}
where constant $L$ depends only on $U_{\infty}$ and $a_{\infty}$, and
we have used that
\begin{align*}
&\mathcal{P}_h^{(\tau)}(x',x_{0})(U_h^{(\tau)}(x_{0},\cdot))=U_h^{(\tau)}(x',\cdot)=\underline{U} \qquad\,\,\mbox{on $(g_h(x'),\infty)$},\\ &\mathcal{P}_{h'}^{(\tau)}({x}'',{x}_{0})(V_{h'}^{(\tau)}({x}_{0},\cdot))=V_{h'}^{(\tau)}(x'',\cdot)=\underline{U}  \qquad\mbox{on $(g_{h'}(x''),\infty)$}.
\end{align*}
\end{enumerate}
\end{proposition}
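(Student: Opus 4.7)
The plan is to extract from the Lyapunov estimate of Lemma \ref{lem:2.9} first a Cauchy property, and then build the map $\mathcal{P}^{(\tau)}_{h}$ by passing to the limit $\nu\to\infty$. Fix $h$ and pick two piecewise-constant approximations $U^{\nu_1}_{0}, U^{\nu_2}_{0}$ of the same initial datum $U_{0}$. Apply Lemma \ref{lem:2.9} to the pair $(U^{(\tau)}_{h,\nu_1}, U^{(\tau)}_{h,\nu_2})$ on the common boundary $g_h$: since the boundary is identical, the boundary term in \eqref{eq:2.44} vanishes at $x=0$, and the wave-strength part of $\mathscr{L}^{(\tau)}(\cdot,\cdot)(0)$ is controlled by $\|U^{\nu_1}_{0}-U^{\nu_2}_{0}\|_{L^1(\mathcal{I})}\le 2^{-\nu_1}+2^{-\nu_2}$ via \eqref{eq:2.47} and \eqref{eq:2.28}. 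Combining with \eqref{eq:2.55}, one obtains $\|U^{(\tau)}_{h,\nu_1}(x,\cdot)-U^{(\tau)}_{h,\nu_2}(x,\cdot)\|_{L^1}\le C(2^{-\nu_1}+2^{-\nu_2})+C\nu^{-1} x$, so $\{U^{(\tau)}_{h,\nu}(x,\cdot)\}_{\nu}$ is Cauchy in $L^1_{\rm loc}$ for every fixed $x$. Denote its limit by $U^{(\tau)}_{h}(x,\cdot)$ and define $\mathcal{P}^{(\tau)}_{h}(x,x'_0)(U_0) \doteq U^{(\tau)}_{h}(x,\cdot)$ for any initial trace at $x'_0$. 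Property \eqref{eq:2.66} is inherited from the construction of the wave-front tracking scheme (the scheme restarted from $U^{(\tau)}_{h}(x',\cdot)$ produces the same function on $\{x\ge x'\}$, up to a null set of $\nu$-artifacts that vanish in the limit).

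For the Lipschitz estimate \eqref{eq:2.67}, consider the approximations $U^{(\tau)}_{h,\nu}$ and $V^{(\tau)}_{h',\nu}$ issued from $(U^\nu(x'_0,\cdot),g_{h})$ and $(V^\nu(x'_0,\cdot), g_{h'})$, and apply Lemma \ref{lem:2.9} on $[x'_0,x']$. The key is to bound the initial Lyapunov functional $\mathscr{L}^{(\tau)}(U^{(\tau)}_{h,\nu},V^{(\tau)}_{h',\nu})(x'_0)$. Its wave-strength part is majorized by $\|U^{(\tau)}_{h,\nu}(x'_0,\cdot)-V^{(\tau)}_{h',\nu}(x'_0,\cdot)\|_{L^1((-\infty,\hat g_h(x'_0)))}$ through \eqref{eq:2.47}. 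For the boundary integral
$$
\kappa^{(\tau)}_{g}\int_{x'_0}^{\infty}\Big|\big(\tfrac{v^{(\tau)}_{h,\nu}}{1+\tau^2 u^{(\tau)}_{h,\nu}}\big)(s,\check g_h(s))-\big(\tfrac{\hat v^{(\tau)}_{h',\nu}}{1+\tau^2 \hat u^{(\tau)}_{h',\nu}}\big)(s,\check g_h(s))\Big|\,\dd s,
$$
use the triangle inequality to split the integrand by inserting the two physical boundary traces at $s\mapsto g_h(s)$ and $s\mapsto g_{h'}(s)$: the two outer discrepancies are bounded by Proposition \ref{prop-flow-slop} applied to each approximate solution on its own wedge, yielding $C_{2,6}\|\check g_h-g_h\|_{L^\infty}+C_{2,6}\|\check g_h-g_{h'}\|_{L^\infty}\le 2C_{2,6}\|g_h-g_{h'}\|_{L^\infty}$; the inner discrepancy, on the physical boundaries, reduces to $|\tan\theta_{h}(s)-\tan\theta_{h'}(s)|=|g'_h(s)-g'_{h'}(s)|+O(\varepsilon)|g'_h(s)-g'_{h'}(s)|$ by the slip condition \eqref{eq:1.13}. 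Integrating and using $\|g_h-g_{h'}\|_{L^\infty}\le \|g'_h-g'_{h'}\|_{L^1}$ (as $g_h(x'_0)-g_{h'}(x'_0)=\int_{x'_0}^{\cdot}(g'_h-g'_{h'})$, combined with decay at infinity from \eqref{eq:2.27}), one obtains
$$
\mathscr{L}^{(\tau)}(U^{(\tau)}_{h,\nu},V^{(\tau)}_{h',\nu})(x'_0)\le C\|U^{(\tau)}_{h,\nu}(x'_0,\cdot)-V^{(\tau)}_{h',\nu}(x'_0,\cdot)\|_{L^1}+C\|g'_h-g'_{h'}\|_{L^1((x'_0,\infty))}.
$$

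Next, apply \eqref{eq:2.55} and \eqref{eq:2.47} to propagate this bound up to $x=x'$, getting an $L^1$ estimate between $U^{(\tau)}_{h,\nu}(x',\cdot)$ and $V^{(\tau)}_{h',\nu}(x',\cdot)$ with an additional $C(x'-x'_0)\nu^{-1}$ term. To pass from $x'$ to $x''$ on the second solution, invoke the uniform $L^1$-in-$x$ Lipschitz bound \eqref{eq:2.38}, giving $\|V^{(\tau)}_{h',\nu}(x',\cdot+g_{h'}(x'))-V^{(\tau)}_{h',\nu}(x'',\cdot+g_{h'}(x''))\|_{L^1}\le C_{2,3}|x'-x''|$, after which the uniform $L^{\infty}$ bound \eqref{eq:2.37} handles the change of the domain of integration from $(-\infty,g_{h'}(x''))$ to $(-\infty,\hat g_h(x''))$. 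Letting $\nu\to\infty$ kills the $\nu^{-1}$ error and, by the convergence established in Step 1, produces \eqref{eq:2.67} with a constant $L=L(\underline U,a_\infty)$ independent of $(h,h',\tau)$.

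\textbf{Main obstacle.} The delicate step is the bound on the boundary piece of the initial Lyapunov functional. Here neither the total-variation control of Lemma \ref{lem-BV-velocity} alone nor the wave-interaction estimates suffice: because the flow-slope comparison is taken along a common auxiliary curve $\check g_h$, which is itself only Lipschitz, one must insert two physical traces via Proposition \ref{prop-flow-slop} and use in an essential way that the $2^{\rm nd}$ and $3^{\rm rd}$ fields are linearly degenerate (so that the flow-slope trace on $y=g_h$ equals exactly $\tan\theta_{h}$). The reduction of $\|g_h-g_{h'}\|_{L^\infty}$ to $\|g'_h-g'_{h'}\|_{L^1}$ is then harmless, but without this structural observation the stability would only depend on $\|g_h-g_{h'}\|_{L^\infty}$, which is strictly weaker than what is needed in Theorem \ref{thm:1.2}.
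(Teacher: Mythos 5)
Your overall strategy coincides with the paper's: both proofs run the weighted Lyapunov functional of Lemma \ref{lem:2.9} forward from $x_0'$, control its boundary part by inserting the physical traces $g'_h$ and $g'_{h'}$ and invoking Proposition \ref{prop-flow-slop}, obtain the Cauchy property in $\nu$ by taking $h=h'$ (where the two boundary traces of the flow slope coincide by the slip condition, so the boundary term vanishes), and recover the semigroup identity and the Lipschitz dependence on $x$ from the restart argument and \eqref{eq:2.38}. So there is no methodological divergence to report.

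There is, however, one concrete gap. You reduce the boundary contribution to $\|g_h-g_{h'}\|_{L^\infty}$ and then assert $\|g_h-g_{h'}\|_{L^\infty}\le\|g'_h-g'_{h'}\|_{L^1((x_0',\infty))}$, justified by writing $g_h-g_{h'}$ as an integral of $g'_h-g'_{h'}$ from $x_0'$ ``combined with decay at infinity from \eqref{eq:2.27}''. For $x_0'>0$ this fails: the fundamental theorem of calculus only gives $|g_h(x)-g_{h'}(x)|\le|g_h(x_0')-g_{h'}(x_0')|+\|g'_h-g'_{h'}\|_{L^1((x_0',\infty))}$, and the constant $|g_h(x_0')-g_{h'}(x_0')|$ need not vanish since the two approximate boundaries agree only at $x=0$; the decay of $g'_h-g'$ at infinity in \eqref{eq:2.27} says nothing about the value of $g_h-g_{h'}$ at $x_0'$. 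Since \eqref{eq:2.67} is stated with the $L^1((x_0',\infty))$ norm of the derivatives, you also cannot simply integrate from $0$ instead. The paper closes this by observing that $|g_h(x_0')-g_{h'}(x_0')|$ is itself controlled by $\int_{\check{g}_h(x_0')}^{\hat{g}_h(x_0')}|U^{(\tau)}_{h,\nu}-V^{(\tau)}_{h',\nu}|\,\dd y$, i.e.\ it is absorbed into the $L^1$ distance of the data at $x_0'$; your argument needs this (or an equivalent) extra step. A secondary omission: you never construct the domains $\mathcal{D}_{h,x}$ (the paper defines them in \eqref{eq:2.71} as $L^1$-closures of states with small Glimm functional and uses the lower semicontinuity of $\mathbf{G}^{(\tau)}$ to check that the limit trajectory stays inside), which is part of what the proposition asserts.
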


\begin{proof}
For two approximate solutions $U^{(\tau)}_{h,\nu}=(\rho^{(\tau)}_{h,\nu},u^{(\tau)}_{h,\nu},
v^{(\tau)}_{h,\nu}, p^{(\tau)}_{h,\nu})^{\top}$
and $V^{(\tau)}_{h',\nu}=(\hat{\rho}^{(\tau)}_{h',\nu},\hat{u}^{(\tau)}_{h',\nu},
\hat{v}^{(\tau)}_{h',\nu}, \hat{p}^{(\tau)}_{h',\nu})^{\top}$ corresponding
to $(U^{\nu}_{0}, g_{h})$ and $(V^{\nu}_{0}, g_{h'})$, respectively,
it follows from estimates \eqref{eq:2.47} and \eqref{eq:2.55} that
\begin{eqnarray*}
&&\big\|U^{(\tau)}_{h,\nu}(x,\cdot)-V^{(\tau)}_{h',\nu}(x,\cdot)\big\|_{L^1((-\infty,\check{g}_{h}(x)))}\\[2pt]
&&\leq C\big\|U^{(\tau)}_{h,\nu}(x'_{0},\cdot)-V^{(\tau)}_{h',\nu}(x'_0,\cdot)\big\|_{L^1((-\infty,\check{g}_{h}(x'_0)))}\\[2pt]
&&\quad +C\bigg\|\Big(\frac{v^{(\tau)}_{h,\nu}}{1+\tau^{2}u^{(\tau)}_{h,\nu}}
-\frac{\hat{v}^{(\tau)}_{h',\nu}}{1+\tau^{2}\hat{u}^{(\tau)}_{h',\nu}}\Big)(s,\check{g}_{h}(s))\bigg\|_{L^{1}((x'_0,x))}
+C(x-x'_{0})\nu^{-1}.
\end{eqnarray*}

Notice that, by the triangle inequality,
\begin{eqnarray}\label{eq:2.68}
&&\bigg\|\Big(\frac{v^{(\tau)}_{h,\nu}}{1+\tau^{2}u^{(\tau)}_{h,\nu}}
-\frac{\hat{v}^{(\tau)}_{h',\nu}}{1+\tau^{2}\hat{u}^{(\tau)}_{h',\nu}}\Big)(s,\check{g}_{h}(s))\bigg\|_{L^{1}((x'_0,x))}\nonumber\\[2pt]
&&\leq \bigg\|\Big(\frac{v^{(\tau)}_{h,\nu}}{1+\tau^{2}u^{(\tau)}_{h,\nu}}\Big)(s,\check{g}_{h}(s))
-g'_{h}(s)\bigg\|_{L^{1}((x'_0,x))}+\big\|g'_{h}(s)-g'_{h'}(s)\big\|_{L^{1}((x'_0,x))}\nonumber\\[2pt]
&&\quad +\bigg\|\Big(\frac{\hat{v}^{(\tau)}_{h',\nu}}{1+\tau^{2}\hat{u}^{(\tau)}_{h',\nu}}\Big)(s,\check{g}_{h}(s))
-g'_{h'}(s)\Bigg\|_{L^{1}((x'_0,x))}.
\end{eqnarray}

For the first term on the right side of \eqref{eq:2.68}, {by Proposition \ref{prop-flow-slop}},
we can obtain
\begin{align*}
& \bigg\|\Big(\frac{v^{(\tau)}_{h,\nu}}{1+\tau^{2}u^{(\tau)}_{h,\nu}}\Big)(s,\check{g}_{h}(s))
-g'_{h}(s)\bigg\|_{L^{1}((x'_0,x))}\\[2pt]
&=\bigg\|\Big(\frac{v^{(\tau)}_{h,\nu}}{1+\tau^{2}u^{(\tau)}_{h,\nu}}\Big)(s,\check{g}_{h}(s))
-\Big(\frac{v^{(\tau)}_{h,\nu}}{1+\tau^{2}u^{(\tau)}_{h,\nu}}\Big)(s,g_{h}(s))\bigg\|_{L^{1}((x'_0,x))}\\[2pt]
&\leq C\big\|g_{h}(s)-g_{h'}(s)\big\|_{L^{\infty}((x'_0,x))} \\[2pt]
&\leq C\Big(\|g'_{h}(\cdot)-g'_{h'}(\cdot)\|_{L^{1}((x'_0,\infty))}+|g'_{h}(x_0')-g'_{h'}(x_0')|\Big),
\end{align*}
where constant $C>0$ depends only on $(\underline{U},a_{\infty})$, provided that $\tau$ is sufficiently small.

In the same way, we can also apply Proposition \ref{prop-flow-slop}
for $\frac{\hat{v}^{(\tau)}_{h',\nu}}{1+\tau^{2}\hat{u}^{(\tau)}_{h',\nu}}$ to obtain
\begin{eqnarray*}
&&\bigg\|\Big(\frac{\hat{v}^{(\tau)}_{h',\nu}}{1+\tau^{2}\hat{u}^{(\tau)}_{h',\nu}}\Big)(s,\check{g}_{h}(s))
-g'_{h'}(s)\bigg\|_{L^{1}((x_0',x))}\\[2pt]
&&\leq C\big\|g_{h}(s)-g_{h'}(s)\big\|_{L^{\infty}((x_0',x))}\\[2pt]
&&\leq C\Big(\|g'_{h}(\cdot)-g'_{h'}(\cdot)\|_{L^{1}((x'_0,\infty))}+|g'_{h}(x_0')-g'_{h'}(x_0')|\Big),
\end{eqnarray*}
provided  that $\tau$ is sufficiently small.

Inserting the above two estimates into \eqref{eq:2.68}, we have
\begin{eqnarray*}
&&\bigg\|\Big(\frac{v^{(\tau)}_{h,\nu}}{1+\tau^{2}u^{(\tau)}_{h,\nu}}
-\frac{\hat{v}^{(\tau)}_{h',\nu}}{1+\tau^{2}\hat{u}^{(\tau)}_{h',\nu}}\Big)(s,\check{g}_{h}(s))\bigg\|_{L^{1}((x_0',x))}\\[1pt]
&&\leq C\big(\|g'_{h}(\cdot)-g'_{h'}(\cdot)\|_{L^{1}((x'_0,\infty))}+|g_{h}(x_0')-g_{h'}(x_0')|\big).
\end{eqnarray*}

Therefore, there exists a constant $C_{2,10}>0$ depending only on $(\underline{U},a_{\infty})$ such that
\begin{align*}
&\big\|U^{(\tau)}_{h,\nu}(x,\cdot)-V^{(\tau)}_{h',\nu}(x,\cdot)\big\|_{L^1((-\infty,\check{g}_{h}(x)))}\\[4pt]
&\leq C_{2,10}\Big(\big\|U^{(\tau)}_{h,\nu}(x_{0},\cdot)-V^{(\tau)}_{h',\nu}(x_0,\cdot)\big\|_{L^1((-\infty,\check{g}_{h}(x'_0)))}\\[2pt]
&\qquad\qquad\, +\big\|g'_{h}(\cdot)-g'_{h'}(\cdot)\big\|_{L^{1}((x'_0,\infty))}+\big|g_{h}(x_0')-g_{h'}(x_0')\big|\Big)+ C(x-x'_0)\nu^{-1}.\quad
\end{align*}

We notice from Proposition \ref{prop:2.1} that
\begin{align*}
\big|g_{h}(x_0')-g_{h'}(x_0')\big|
&\leq C\int^{\hat{g}_{h}(x_0')}_{\check{g}_{h}(x_0')}\big|U^{(\tau)}_{h,\nu}(x_0',s)-V^{(\tau)}_{h',\nu}(x_0',s)\big|\,{\rm d}s\\[2pt]
&\leq C\big\|U^{(\tau)}_{h,\nu}(x'_{0},\cdot)-V^{(\tau)}_{h',\nu}(x'_0,\cdot)\big\|_{L^1((-\infty,\hat{g}_{h}(x'_0)))}.
\end{align*}
Then
\begin{align}\label{eq:2.69}
&\big\|U^{(\tau)}_{h,\nu}(x,\cdot)-V^{(\tau)}_{h',\nu}(x,\cdot)\big\|_{L^1((-\infty,\hat{g}_{h}(x)))}\nonumber\\[2pt]
&\leq \big\|U^{(\tau)}_{h,\nu}(x,\cdot)-V^{(\tau)}_{h',\nu}(x,\cdot)\big\|_{L^1((-\infty,\check{g}_{h}(x)))}
+\int^{\hat{g}_{h}(x)}_{\check{g}_{h}(x)}\big|U^{(\tau)}_{h,\nu}(x,s)-V^{(\tau)}_{h',\nu}(x,s)\big|\,{\rm d}s\nonumber\\[3pt]
&\leq C\Big(\big\|U^{(\tau)}_{h,\nu}(x'_{0},\cdot)-V^{(\tau)}_{h',\nu}(x'_0,\cdot)\big\|_{L^1((-\infty,\hat{g}_{h}(x'_0)))}
+\|g'_{h}(\cdot)-g'_{h'}(\cdot)\|_{L^{1}((x'_0,\infty))}\Big)\nonumber\\[2pt]
&\quad + C\big|g_{h}(x_0')-g_{h'}(x_0')\big|+C(x-x'_0)\nu^{-1}\nonumber\\[2pt]
&\leq C_{2,11}\Big(\big\|U^{(\tau)}_{h,\nu}(x'_{0},\cdot)-V^{(\tau)}_{h',\nu}(x'_0,\cdot)\big\|_{L^1((-\infty,\hat{g}_{h}(x'_0)))}
+\big\|g'_{h}(\cdot)-g'_{h'}(\cdot)\big\|_{L^{1}((x'_0,\infty))}\Big)\nonumber\\[2pt]
&\quad + C(x-x'_0)\nu^{-1},
\end{align}
where constant $C_{2,11}>0$ depends only on $(\underline{U},a_{\infty})$.

Now, let $x_{0}'=0$, $h=h'$, and $V^{(\tau)}_{h', \nu}=U^{(\tau)}_{h, \nu'}$. Then
\begin{eqnarray}\label{eq:2.70}
&&\big\|U^{(\tau)}_{h,\nu}-U^{(\tau)}_{h,\nu'}\big\|_{L^1((-\infty,g_{h}(x)))}\nonumber\\
&&\leq C_{2,9}\big\|U^{\nu}_{0}-U^{\nu'}_{0}\big\|_{L^{1}(\mathcal{I})}+ C\,x\max\big\{\nu^{-1}, \nu'^{-1}\big\}.
\end{eqnarray}
For given $U_{0}\in \mathcal{D}_{h, 0}$, we approximate $U_{0}$ by $U^{\nu}_{0}$ with property \eqref{eq:2.28}
and construct the approximate solution $U^{(\tau)}_{h,\nu}$ with boundary $y=g_{h}(x)$.
Thus, $\big\{U^{(\tau)}_{h,\nu}\big\}_{\nu>0}$ is a Cauchy sequence and
converges in $L^{1}(-\infty, g_{h}(x))$ to a unique limit $U^{(\tau)}_{h}$ as $\nu\rightarrow\infty$.
Then define
$$
\mathcal{P}^{(\tau)}_{h}(x,0)(U_{0}(y))\doteq U^{(\tau)}_{h}(x,y)
$$
and
\begin{align}\label{eq:2.71}
\mathcal{D}_{h, x}\doteq \mathbf{cl}\left\{U^{(\tau)}_{h}-\underline{U}\in (L^{1}\cap BV)(\mathbb{R};\mathbb{R}^{4})\, :
\begin{array}{l}
U^{(\tau)}_{h}=\underline{U} \ \mbox{for $y>g_{h}(x)$},\\
\mathbf{G}^{(\tau)}(U^{(\tau)}_{h})<\varepsilon
\end{array}
\right \},\nonumber\\[2pt]
\end{align}
where $\mathbf{cl}$ denotes the closure in the $L^{1}$--topology, and $\mathbf{G}^{(\tau)}$ is defined by
\begin{eqnarray*}
&&\mathbf{G}^{(\tau)}(U^{(\tau)}_{h})=\mathbf{V}^{(\tau)}(U^{(\tau)}_{h})+\mathcal{K}^{(\tau)}\mathbf{Q}^{(\tau)}(U^{(\tau)}_{h}),\\[2pt]
&&\mathbf{V}^{(\tau)}(U^{(\tau)}_{h})=\mathbf{V}^{(\tau)}_{1}(U^{(\tau)}_{h})+\sum^{4}_{ i=2}\mathcal{K}^{(\tau)}_{i}\mathbf{V}^{(\tau)}_{i}(U^{(\tau)}_{h})
+\mathcal{K}^{(\tau)}_{c}\mathbf{V}^{(\tau)}_{c}(U^{(\tau)}_{h}),\\[2pt]
&&
\mathbf{V}^{(\tau)}_{i}(U^{(\tau)}_{h})=\sum_{\alpha_i\in \mathcal{J}^{(\tau)}}|\sigma^{(\tau)}_{\alpha_i}|
\qquad\, \mbox{for $1\leq i\leq 4$}, \\[2pt]
&&\mathbf{V}^{(\tau)}_{c}(U^{(\tau)}_{h})=\sum_{k>[\frac{x}{h}]}|\omega_{k}|, \\[2pt]
&&\mathbf{Q}^{(\tau)}(U^{(\tau)}_{h})=\sum_{(\alpha_i,\beta_j)\in \mathcal{A}^{(\tau)}\setminus \mathcal{NP}^{(\tau)}}|\sigma^{(\tau)}_{\alpha_i}||\sigma^{(\tau)}_{\beta_j}|.
\end{eqnarray*}

By assumption \eqref{eq:2.36}, we notice that
\begin{eqnarray*}
\mathcal{G}^{(\tau)}(0)< \mathbf{G}^{(\tau)}(U^{\nu}_{0})
<\mathbf{G}^{(\tau)}(U_{0})<\varepsilon.
\end{eqnarray*}
Then
\begin{eqnarray*}
\mathbf{G}^{(\tau)}(U^{(\tau)}_{h,\nu})<\mathcal{G}^{(\tau)}(x)< \mathcal{G}^{(\tau)}(0)
<\varepsilon.
\end{eqnarray*}
By the lower semicontinuity of $\mathbf{G}^{(\tau)}(U^{(\tau)}_{h, \nu})$
and the convergence of $U^{(\tau)}_{h,\nu}$ as $\nu\rightarrow\infty$,
we see that $\mathcal{P}^{(\tau)}_{h}(x,0)(U_{0}(y))\in \mathcal{D}_{h, x}$.

From the construction, we know that map $\mathcal{P}^{(\tau)}_{h}$ can start at any ``time'' $x\geq 0$.
Clearly,
for $x\geq 0$ and $U^{(\tau)}_{h}(x,y)\in \mathcal{D}_{h, x}$,
$\mathcal{P}^{(\tau)}_{h}(x,x)(U^{(\tau)}_{h}(x,y))=U^{(\tau)}_{h}(x,y)$
and, for any $x\geq x'_{0}\geq 0$ and $U^{(\tau)}_{h}(x'_{0},\cdot)\in \mathcal{D}_{h, x_0'}$,
$\mathcal{P}^{(\tau)}_{h}(x,x'_{0})(U^{(\tau)}_{h}(x'_0,\cdot))\in \mathcal{D}_{h, x}$.

To show equality \eqref{eq:2.66} for map $\mathcal{P}^{(\tau)}_{h}$, for a given solution $U^{(\tau)}_{h}$,
consider its approximate solutions $U^{(\tau)}_{h,\nu}(x,\cdot)$ and $U^{(\tau)}_{h,\nu'}(x,\cdot)$ corresponding
to the initial data $U^{(\tau)}_{h,\nu}(x'_0,\cdot)$ and $U^{(\tau)}_{h,\nu}(x',\cdot)$ respectively for $\nu'>\nu$.
By \eqref{eq:2.70}, we see that, for $x>x'$,
\begin{align*}
&\big\|U^{(\tau)}_{h,\nu}(x,\cdot)-\mathcal{P}^{(\tau)}_{h}(x,x')(U^{(\tau)}_{h,\nu}(x',\cdot))\big\|_{L^1((-\infty, g_{h}(x)))}\\[2pt]
&=\big\|U^{(\tau)}_{h,\nu}(x,\cdot)-U^{(\tau)}_{h,\nu'}(x,\cdot)\big\|_{L^1((-\infty, g_{h}(x)))}\\[2pt]
&\leq C(x-x')\nu'^{-1},
\end{align*}
where constant $C$ depends only on $(\underline{U},a_{\infty})$.
Taking the limit: $\nu$, $\nu'\rightarrow \infty$ above, we obtain
\begin{eqnarray*}
\big\|U^{(\tau)}_{h}(x,\cdot)-\mathcal{P}^{(\tau)}_{h}(x,x')(U^{(\tau)}_{h}(x',\cdot))\big\|_{L^1((-\infty, g_{h}(x)))}=0,
\end{eqnarray*}
which implies \eqref{eq:2.66}.

Finally, we show \eqref{eq:2.67}.
For the Lipschtiz continuity of map $\mathcal{P}^{(\tau)}_{h}(x,x'_{0})$
with respect to $x$, it is a direct consequence of Proposition \ref{prop:2.1}.
For the Lipschitz continuity of $\mathcal{P}^{(\tau)}_{h}(x,x'_{0})$ with respect to the initial data,
by
\eqref{eq:2.69},
\begin{align*}
&\big\|U^{(\tau)}_{h}(x,\cdot)-V^{(\tau)}_{h'}(x,\cdot)\big\|_{L^1((-\infty,\hat{g}_{h}(x)))}\\[4pt]
&\leq C_{2,11}\Big(\|U^{(\tau)}_{h}(x'_{0},\cdot)-V^{(\tau)}_{h'}(x'_0,\cdot)\|_{L^1((-\infty,\check{g}_{h}(x_0)))}
+\|g'_{h}(\cdot)-g'_{h'}(\cdot)\|_{L^{1}((x'_0,\infty))}\\
&\qquad\qquad +|g_{h}(x_0')-g_{h'}(x_0')|\Big).
\end{align*}
Then estimate \eqref{eq:2.67} can be obtained by the triangle inequality,
and the Lipschitz continuity of $\mathcal{P}^{(\tau)}_{h}(x,x'_{0})$ with respect to both $x$ and the initial data.
This completes the proof.
\end{proof}

\smallskip
Now we are ready to prove Theorem \ref{thm:1.1}.

\begin{proof}[Proof of {\rm Theorem \ref{thm:1.1}}]
Consider two approximate solutions $U^{(\tau)}_{h}$ and $U^{(\tau)}_{h'}$, which are the limits of $U^{(\tau)}_{h,\nu}$ and $U^{(\tau)}_{h',\nu}$
as $\nu\rightarrow \infty$ respectively, to the same initial data $U_{0}$ and the boundary functions $g_{h}$ and $g_{h'}$ respectively.
Then, by \eqref{eq:2.67},
\begin{eqnarray*}
\big\|\mathcal{P}^{(\tau)}_{h}(x,0)(U_{0}(\cdot))-\mathcal{P}^{(\tau)}_{h'}(x,0)(U_{0}(\cdot))
\big\|_{L^{1}((-\infty, \hat{g}_{h}(x)))}
\leq L\big\|g'_{h}(\cdot)-g'_{h'}(\cdot)\big\|_{L^{1}(\mathbb{R}_{+})}.
\end{eqnarray*}
Note that the right-hand side of the above inequality tends to zero as $h$, $h'\rightarrow 0$.
Thus,
$\{\mathcal{P}^{(\tau)}_{h}(x,0)(U_{0}(\cdot))\}_{h}$ is a Cauchy sequence and
converges to a unique limit $\mathcal{P}^{(\tau)}(x,0)(U_{0})$, \emph{i.e.},
$\mathcal{P}^{(\tau)}_{h}(x,0)(U_{0})\rightarrow \mathcal{P}^{(\tau)}(x,0)(U_{0})$ in $L^{1}((-\infty, g(x)))$ as $h \rightarrow 0$.
Define
\begin{equation}\label{eq-D}
\mathcal{D}_{x}=\bigcap_{h>0}\mathcal{D}_{h, x}\subseteq\big\{U^{(\tau)}-\underline{U}\in (L^{1}\cap BV)(\mathbb{R};\mathbb{R}^{4}) \, :
\, U^{(\tau)}=\underline{U} \  \mbox{for}\   y>g(x)\big\}
\end{equation}
and $U^{(\tau)}(x,y)\doteq\mathcal{P}^{(\tau)}(x,0)(U_{0}(y))$.
Then, by Proposition \ref{prop:2.2} and the uniqueness of $\mathcal{P}^{(\tau)}$,
we know that $\mathcal{P}^{(\tau)}(x,x'_{0}):[0, \infty)\times \mathcal{D}_{x_0}\mapsto \mathcal{D}_{x}$, and property \rm(i) in Theorem \ref{thm:1.1} holds.
Furthermore, by the same arguments as done in \cite{chen-kuang-zhang}, using Proposition \ref{prop:2.1} and the Lebesgue dominate convergence theorem,
we can show that $U^{(\tau)}(x,y)$ is a global entropy solution of Problem I.
Finally, consider the approximate solutions $\mathcal{P}^{(\tau)}_{h}(x,x'_{0})(U^{(\tau)}_{h}(x'_{0},y))$
and $\mathcal{P}^{(\tau)}_{h'}(x,x'_{0})(V^{(\tau)}_{h'}(x'_{0},y))$
corresponding to the initial states and the boundary functions $(U^{(\tau)}_{h}(x'_{0},y),g_h)$ and $(V^{(\tau)}_{h'}(x_{0},y),g_{h'})$, respectively.
Then, by letting $h\rightarrow0$ and $h'\rightarrow0$, it follows from estimate \eqref{eq:2.67} that estimate \eqref{eq:1.24} holds.
\end{proof}

Finally, we remark that, by a similar  argument, the well-posedness theorems for the initial-boundary value problem \eqref{eq:1.12}--\eqref{eq:1.15}
for $\tau=0$ can also be obtained. We provide brief proofs of the well-posedness theorems in Appendix A.

\section{Comparison between the Riemann Solvers for the Two Systems}\setcounter{equation}{0}
In this section, we compare the Riemann solvers for system \eqref{eq:1.11} and system \eqref{eq:1.14} with appropriate initial and boundary conditions.

\subsection{Comparison of the Riemann solvers away from the boundary}
For the approximate solutions $U_{h,\nu}$ constructed in Appendix A for system \eqref{eq:1.14},
let $(\xi, y_I)$ be a discontinuity point of $U_{h,\nu}$. Define
\begin{equation}\label{eq:4.1}
\begin{split}
&U_{a}\doteq(\rho_{a}, u_{a}, v_{a}, p_{a})^{\top}=\lim_{y\rightarrow y_{I}+}U_{h,\nu}(\xi,y),\\
&U_{b}\doteq(\rho_{b}, u_{b}, v_{b}, p_{b})^{\top}=\lim_{y\rightarrow y_{I}-}U_{h,\nu}(\xi,y).
\end{split}
\end{equation}
Let  $\mathcal{P}^{(\tau)}_{h}$ be the Lipschtiz continuous map as given in Proposition \ref{prop:2.2} for system \eqref{eq:1.11}.
We first establish the following lemma.

\begin{lemma}\label{lem:4.1}
For $\varepsilon\in (0,\min\{\varepsilon_0, \tilde{\varepsilon}_0\})$,
let  $U_{b}\in \mathcal{O}_{\varepsilon}(\underline{U})$ be a given constant state, and let $|\sigma_j|<\varepsilon$ be parameters for $1\leq j\leq 4$.
Then,
for $\tau\in (0,\min\{\tau_0, \tilde{\tau}_0\})$, and for maps
$\Phi^{(\tau)}_j$ and $\Phi_j$ given by {\rm Lemma \ref{lem:2.2}} and
{\rm Lemma A.1}
respectively,
\begin{eqnarray}
&&\Phi^{(\tau)}_j(\sigma_j; U_{b},\tau^2)\big|_{\tau=0}=\Phi_j(\sigma_j;U_{b}) \qquad \mbox{for $1\leq j\leq 4$},\label{eq:4.2}\\
&&\Phi^{(\tau)}(\boldsymbol{\sigma}; U_{b},\tau^2)\big|_{\tau=0}=\Phi(\boldsymbol{\sigma};U_{b}),
\quad \boldsymbol{\sigma}=(\sigma_1, \sigma_2,\sigma_3,\sigma_4),\quad\label{eq:4.3}
\end{eqnarray}
where $\Phi(\boldsymbol{\sigma};U_{b})=\Phi_4(\sigma_4;\Phi_3(\sigma_3;\Phi_2(\sigma_2; \Phi_{1}(\sigma_1;U_{b}))))$.
\end{lemma}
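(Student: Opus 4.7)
The plan is to establish \eqref{eq:4.2} family by family, by observing that at $\tau=0$ the system \eqref{eq:1.11} reduces term by term to \eqref{eq:1.14}, so that both the infinitesimal data defining the wave curves and the Rankine--Hugoniot jump relations coincide; the composition identity \eqref{eq:4.3} is then an immediate consequence of the definitions \eqref{eq:2.10}.

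First, I would verify from the explicit formulas \eqref{eq:2.2}--\eqref{eq:2.5} that
\begin{equation*}
\lambda^{(\tau)}_{j}(U,\tau^2)\big|_{\tau=0}, \qquad \mathbf{r}^{(\tau)}_{j}(U,\tau^2)\big|_{\tau=0}
\end{equation*}
agree with the eigenvalues and right eigenvectors of \eqref{eq:1.14} that are employed in the Appendix to build $\Phi_j$. Indeed, the denominators $(1+\tau^2 u)^2-\tau^2 c^2$ become $1$ at $\tau=0$; one reads off $\lambda^{(\tau)}_{j}|_{\tau=0}=v+(-1)^{j}c$ for $j=1,4$ and $\lambda^{(\tau)}_{i}|_{\tau=0}=v$ for $i=2,3$, and a similar direct substitution gives the reduction of $\tilde{\mathbf{r}}^{(\tau)}_{j}$, hence of $\mathbf{r}^{(\tau)}_{j}=e^{(\tau)}_{j}\tilde{\mathbf{r}}^{(\tau)}_{j}$. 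A term-by-term inspection of \eqref{eq:1.11} shows that every $\tau^2$-dependent factor disappears at $\tau=0$, recovering \eqref{eq:1.14} exactly, so the Rankine--Hugoniot conditions for the two systems coincide at $\tau=0$ as well.

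Next, I would invoke these reductions family by family. For $j=1,4$ the rarefaction portion of $\Phi^{(\tau)}_j(\sigma_j;U_b,\tau^2)$ is the integral curve of $\mathbf{r}^{(\tau)}_j(\cdot,\tau^2)$ through $U_b$, while the shock portion solves the Rankine--Hugoniot conditions associated with \eqref{eq:1.11}. At $\tau=0$ the rarefaction ODEs coincide with those defining the rarefaction part of $\Phi_j$, so by uniqueness of ODE solutions the curves agree; the shock branches agree because the Rankine--Hugoniot systems are identical at $\tau=0$, with the implicit function theorem selecting the same branch through $U_b$ in each case. For the linearly degenerate families $j=2,3$, the same argument applies to the contact/entropy wave curves, since the corresponding jump relations (and, equivalently, the Riemann invariants) degenerate to those of \eqref{eq:1.14}. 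The normalizations in \eqref{eq:2.9} and its analogue in Lemma A.1 match at $\tau=0$ because $\mathbf{r}^{(\tau)}_j(U_b,0)=\mathbf{r}_j(U_b)$, ensuring that the same parameter $\sigma_j$ is used on both sides. This yields \eqref{eq:4.2} for every $j$.

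Finally, \eqref{eq:4.3} follows by iterating \eqref{eq:4.2} inside the definition \eqref{eq:2.10} of $\Phi^{(\tau)}$, together with the parallel composition for $\Phi$. The main obstacle is purely bookkeeping: verifying the term-by-term reduction of the four conservation laws in \eqref{eq:1.11} to \eqref{eq:1.14} when $\tau=0$, and checking that the parameterization $e^{(\tau)}_j$ in \eqref{eq:2.7d} specializes at $\tau=0$ to the parameterization used for $\Phi_j$; once this is in place, the conclusion is forced by the uniqueness of the wave-curve parameterization established in Lemma \ref{lem:2.2} and its $\tau=0$ counterpart.
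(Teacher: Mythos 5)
Your proposal is correct and follows essentially the same route as the paper: both check that the two curves share the base point $U_b$ at $\sigma_j=0$ and, via \eqref{eq:3.5a}, the tangent vector $\mathbf{r}_j(U_b)$ at $\sigma_j=\tau=0$, then conclude by uniqueness of the wave-curve parameterization, with \eqref{eq:4.3} following by iterating inside \eqref{eq:2.10}. Your explicit separate handling of the shock and contact branches through the coincidence of the Rankine--Hugoniot relations at $\tau=0$ is a slightly fuller justification than the paper's single appeal to ODE uniqueness, but it is the same underlying argument.
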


\begin{proof}
By definition,
\begin{eqnarray*}
\Phi^{(\tau)}_{j}(\sigma_{j}; U_{b},\tau^2)\Big|_{\sigma_{j}=0}
=\Phi_{j}(\sigma_{j}; U_{b})\Big|_{\sigma_{j}=0}=U_{b} \qquad\mbox{for $1\leq j\leq 4$}.
\end{eqnarray*}
Using \eqref{eq:3.5a},
we have
\begin{eqnarray*}
\frac{\partial\Phi^{(\tau)}_{j}(\sigma_{j}; U_{b},\tau^2)}{\partial \sigma_{j}}\bigg|_{\sigma_{j}=0, \tau=0}
=\frac{\partial\Phi_{j}(\sigma_{j}; U_{b})}{\partial \sigma_{j}}\bigg|_{\sigma_{j}=0}=\boldsymbol{r}_j(U_{b})
\qquad\,\, \mbox{for $1\leq j\leq 4$}.
\end{eqnarray*}
Thus, by the uniqueness of solutions of the ordinary differential equations,
we can obtain \eqref{eq:4.2} for $|\sigma_{j}|<\min\{\varepsilon_0,\tilde{\varepsilon}_0\}$
and $U_{b}\in \mathcal{O}_{\min\{\varepsilon_0,\tilde{\varepsilon}_0\}}(\underline{U})$.
Moreover, by \eqref{eq:2.10} and \eqref{eq:4.2}, we conclude \eqref{eq:4.3}.
\end{proof}

We now consider the comparison of the Riemann solvers of problem \eqref{eq:2.11} and  problem \eqref{eq:3.6}
with the same \emph{below} and \emph{above} states away from the boundary.

\begin{lemma}\label{lem:4.2}
For $\varepsilon\in (0, \min\{\varepsilon_{0}, \tilde{\varepsilon}_{0}\})$,
let $U_{a}$, $U_{b}\in \mathcal{O}_{\varepsilon}(\underline{U})$ be two constant states defined by \eqref{eq:4.1}.
Let $\alpha_{k}\in \mathcal{S}\cup \mathcal{C}\cup\mathcal{R}$ for some $k\in \{1,2,3,4\}$ and $\alpha_{\mathcal{NP}}\in \mathcal{NP}$
be fronts with strength $\sigma_{\alpha_k}$ and $\sigma_{\alpha_{\mathcal{NP}}}$, respectively, satisfying
\begin{eqnarray}\label{eq:4.4}
U_{a}=\Phi_{k}(\sigma_{\alpha_k}; U_{b})\qquad \mbox{or}\qquad  \sigma_{\alpha_{\mathcal{NP}}}=|U_{a}-U_{b}|.
\end{eqnarray}
Let $\boldsymbol{\beta}=(\beta_1, \beta_2,\beta_3, \beta_4)$ be the physical wave-fronts with strengths
$\boldsymbol{\sigma}^{(\tau)}_{\boldsymbol{\beta}}=(\sigma^{(\tau)}_{\beta_1}, \sigma^{(\tau)}_{\beta_2}, \sigma^{(\tau)}_{\beta_3},\sigma^{(\tau)}_{\beta_4})$  and satisfy
\begin{eqnarray}\label{eq:4.5x}
U_{a}=\Phi^{(\tau)}(\boldsymbol{\sigma}^{(\tau)}_{\boldsymbol{\beta}}; U_{b},\tau^2).
\end{eqnarray}
Then, for the given fixed constant $a_{\infty}$
and for $\tau<\min\{\tau_{0}, \tilde{\tau}_{0}\}$,
\begin{eqnarray}\label{eq:4.6}
\sigma^{(\tau)}_{\beta_{j}}=\delta_{jk}\sigma_{\alpha_{k}}+O(1)|\sigma_{\alpha_{k}}|\tau^{2} \qquad\  \mbox{for $1\leq j, k\leq 4$},
\end{eqnarray}
or
\begin{eqnarray}\label{eq:4.7}
\sigma^{(\tau)}_{\beta_{j}}=O(1)\sigma_{\alpha_{\mathcal{NP}}} \qquad \mbox{for $1\leq j\leq 4$},
\end{eqnarray}
where $\delta_{jk}$ is the Kronecker symbol and the bound of $O(1)$ depends only on $(\underline{U},a_{\infty})$,
but independent of $(\tau, h, \nu)$.
\end{lemma}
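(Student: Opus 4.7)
The plan is to apply the implicit function theorem to the equation defining the $\tau$-wave fan, and to exploit the fact that at $\tau=0$ this fan reduces exactly to the single wave curve $\Phi_k$, as stated in Lemma~\ref{lem:4.1}. The argument is essentially a two-parameter perturbation: one parameter is the incoming wave strength $\sigma_{\alpha_k}$, the other is $\tau^2$; vanishing of the error on either coordinate axis will yield the product-type bound~\eqref{eq:4.6}.

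For the physical case, I would introduce
\begin{equation*}
H\big(\boldsymbol{\sigma}^{(\tau)},\sigma_{\alpha_k},\tau^2\big) \doteq \Phi^{(\tau)}\big(\boldsymbol{\sigma}^{(\tau)};U_b,\tau^2\big)-\Phi_k\big(\sigma_{\alpha_k};U_b\big),
\end{equation*}
and observe two identities: $H(\boldsymbol{0},0,\tau^2)\equiv \boldsymbol{0}$, since both terms reduce to $U_b$; and, by Lemma~\ref{lem:4.1}, the choice $\boldsymbol{\sigma}^{(\tau)}=(\delta_{jk}\sigma_{\alpha_k})_{j=1}^4$ makes $H$ vanish identically in $\sigma_{\alpha_k}$ at $\tau=0$. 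The Jacobian $\partial H/\partial\boldsymbol{\sigma}^{(\tau)}$ at $\boldsymbol{\sigma}^{(\tau)}=\boldsymbol{0}$ and $U_b=\underline{U}$ equals $\big(\mathbf{r}^{(\tau)}_{1},\mathbf{r}^{(\tau)}_{2},\mathbf{r}^{(\tau)}_{3},\mathbf{r}^{(\tau)}_{4}\big)(\underline{U},\tau^2)$, whose nonsingularity uniformly in small $\tau$ is established in the proof of Lemma~\ref{lem:2.3}. The implicit function theorem then produces a unique smooth map $(\sigma_{\alpha_k},\tau^2)\mapsto\boldsymbol{\sigma}^{(\tau)}_{\boldsymbol{\beta}}$ satisfying the two boundary behaviors $\boldsymbol{\sigma}^{(\tau)}_{\boldsymbol{\beta}}(0,\tau^2)\equiv\boldsymbol{0}$ and $\boldsymbol{\sigma}^{(\tau)}_{\boldsymbol{\beta}}(\sigma_{\alpha_k},0)=(\delta_{jk}\sigma_{\alpha_k})_{j=1}^4$.

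To extract the product bound in~\eqref{eq:4.6}, set $G_j(\sigma_{\alpha_k},\tau^2)\doteq \sigma^{(\tau)}_{\beta_j}(\sigma_{\alpha_k},\tau^2)-\delta_{jk}\sigma_{\alpha_k}$. This is smooth in its arguments and vanishes on both coordinate axes $\{\sigma_{\alpha_k}=0\}$ and $\{\tau^2=0\}$, so a standard Hadamard-type factorization (integrating $\partial_{\sigma_{\alpha_k}}G_j$ along $\sigma_{\alpha_k}$, then applying the same device in $\tau^2$ to the resulting remainder) yields $G_j(\sigma_{\alpha_k},\tau^2)=\sigma_{\alpha_k}\,\tau^2\,R_j(\sigma_{\alpha_k},\tau^2)$ with $R_j$ smooth and uniformly bounded on the admissible parameter range, which is precisely~\eqref{eq:4.6}. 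The nonphysical case~\eqref{eq:4.7} is more elementary: the $\tau$-uniform nondegeneracy of $\partial\Phi^{(\tau)}/\partial\boldsymbol{\sigma}^{(\tau)}$ makes the inverse Riemann map $(U_a-U_b)\mapsto\boldsymbol{\sigma}^{(\tau)}_{\boldsymbol{\beta}}$ Lipschitz continuous uniformly in $\tau\in(0,\min\{\tau_0,\tilde{\tau}_0\})$, so $|\sigma^{(\tau)}_{\beta_j}|\le C\,|U_a-U_b|=C\,\sigma_{\alpha_{\mathcal{NP}}}$, which is a direct restatement of~\eqref{eq:2.13} with a $\tau$-uniform constant.

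The main obstacle is not the implicit function theorem itself but showing that every constant appearing above---in the domain on which the implicit function theorem applies, in the Hadamard remainder $R_j$, and in the Lipschitz bound for the nonphysical case---is uniform in $\tau\in(0,\min\{\tau_0,\tilde{\tau}_0\})$. This uniformity rests on two structural features established earlier: the scaled system depends on $\tau$ only through $\tau^2$, so $\Phi^{(\tau)}$ extends smoothly up to $\tau=0$; and the eigenvector matrix $\big(\mathbf{r}^{(\tau)}_j(\underline{U},\tau^2)\big)_{j=1}^4$ remains invertible in a full neighborhood of $(\underline{U},0)$ in $(U,\tau^2)$--space, by continuity together with Remark~\ref{rem:2.1} and the computation inside the proof of Lemma~\ref{lem:2.3}. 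Granted this, the $O(1)$ constants in~\eqref{eq:4.6}--\eqref{eq:4.7} depend only on $(\underline{U},a_\infty)$ and are independent of $(\tau,h,\nu)$, as asserted.
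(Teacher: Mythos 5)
Your proposal is correct and follows essentially the same route as the paper: both invoke the $\tau$-uniform nondegeneracy of $\partial\Phi^{(\tau)}/\partial\boldsymbol{\sigma}^{(\tau)}$ from Lemma~\ref{lem:2.3} to solve \eqref{eq:4.5x} by the implicit function theorem, then use Lemma~\ref{lem:4.1} to see that $\boldsymbol{\sigma}^{(\tau)}_{\boldsymbol{\beta}}-(\delta_{jk}\sigma_{\alpha_k})_j$ vanishes on both the axis $\{\sigma_{\alpha_k}=0\}$ and the axis $\{\tau=0\}$, extracting the product bound (your Hadamard factorization is exactly the paper's Taylor-formula step), and handle the nonphysical case via the $\tau$-uniform two-sided estimate \eqref{eq:2.13}.
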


\begin{proof}
By Lemma \ref{lem:2.2}, we know that, for $\tau\in (0,\min\{\tau_{0}, \tilde{\tau}_{0}\})$,
\begin{eqnarray*}
\det\bigg(\frac{\partial \Phi^{(\tau)}(\boldsymbol{\sigma}^{(\tau)}_{\boldsymbol{\beta}}; U_{b},\tau^2)}{\partial \boldsymbol{\sigma}^{(\tau)}_{\boldsymbol{\beta}}}\bigg)\bigg|_{\boldsymbol{\sigma}^{(\tau)}_{\boldsymbol{\beta}}=\boldsymbol{0},\ U_{b}=\underline{U}}\neq 0.
\end{eqnarray*}

For $\alpha_{k}\in \mathcal{S}\cup \mathcal{C}\cup\mathcal{R}$, using conditions \eqref{eq:4.4}--\eqref{eq:4.5x}, we have
\begin{eqnarray*}
\Phi^{(\tau)}(\boldsymbol{\sigma}^{(\tau)}_{\boldsymbol{\beta}}; U_{b},\tau^2)=\Phi_{k}(\sigma_{\alpha_k};U_{b}), \qquad
\boldsymbol{\sigma}^{(\tau)}_{\boldsymbol{\beta}}=(\sigma^{(\tau)}_{\beta_{1}}, \sigma^{(\tau)}_{\beta_{2}},\sigma^{(\tau)}_{\beta_{3}},\sigma^{(\tau)}_{\beta_{4}}).
\end{eqnarray*}
Thus, it follows from the implicit function theorem that the above equations admit a unique solution
$\boldsymbol{\sigma}^{(\tau)}_{\boldsymbol{\beta}}(\sigma_{\alpha_k}, \tau^2)$
for $U_{b}\in \mathcal{O}_{\min\{\varepsilon_{0}, \tilde{\varepsilon}_{0}\}}(\underline{U})$ and $|\sigma_{\alpha_{k}}|<\min\{\varepsilon_{0}, \tilde{\varepsilon}_{0}\}$.

To estimate $\sigma^{(\tau)}_{\beta_j}(\sigma_{\alpha_k}, \tau^2)$ for $1\leq j\leq 4$, by Lemma \ref{lem:4.1},
\begin{eqnarray*}
\Phi^{(\tau)}(\boldsymbol{\sigma}^{(\tau)}_{\boldsymbol{\beta}}\big|_{\tau=0}; U_{b},\tau^2)\big|_{\tau=0}
=\Phi(\boldsymbol{\sigma}^{(\tau)}_{\boldsymbol{\beta}}|_{\tau=0}; U_{b})=\Phi_{k}(\sigma_{\alpha_k}; U_{b}).
\end{eqnarray*}
Thus, it follows from $\sigma^{(\tau)}_{\beta_j}(0, \tau^2)=0$ and $\sigma^{(\tau)}_{\beta_j}(0, \tau^2)\big|_{\tau=0}=0$ that
\begin{eqnarray*}
\sigma^{(\tau)}_{\beta_j}(\sigma_{\alpha_k}, \tau^2)\big|_{\tau=0}=\delta_{jk}\sigma_{\alpha_k}.
\end{eqnarray*}

Then, by the Taylor formula, we have
\begin{align*}
\sigma^{(\tau)}_{\beta_j}(\sigma_{\alpha_k}, \tau^2)&=\sigma^{(\tau)}_{\beta_j}(\sigma_{\alpha_k}, \tau^2)\big|_{\tau=0}
+\sigma^{(\tau)}_{\beta_j}(0, \tau^2)-\sigma^{(\tau)}_{\beta_j}(0, \tau^2)\big|_{\tau=0}+O(1)|\sigma_{\alpha_k}|\tau^2\\[2pt]
&=\delta_{jk}\sigma_{\alpha_k}+O(1)|\sigma_{\alpha_k}|\tau^2,
\end{align*}
which is exactly
estimate \eqref{eq:4.6}.

Next,
for $\alpha_{\mathcal{NP}}\in\mathcal{NP}$, using estimate \eqref{eq:2.15} in Lemma \ref{lem:2.2} and \eqref{eq:4.4} for $\sigma_{\alpha_{\mathcal{NP}}}$,
we can follow the same argument above to obtain estimate \eqref{eq:4.7}.
\end{proof}

Based on Lemma \ref{lem:4.2}, we now consider the error estimates between the trajectory of $\mathcal{P}^{(\tau)}_{h}(\xi+s,\xi)(U_{h, \nu}(\xi, y))$
and $U_{h,\nu}(\xi+s, y)$ away from the boundary for $s>0$ sufficiently small.

\begin{proposition}\label{prop:4.1}
Let $\mathcal{P}^{(\tau)}_{h}$ be the Lipschtiz continuous map as given in {\rm Proposition \ref{prop:2.2}} for \emph{Problem I}.
Let $U_{h,\nu}(\xi,y)$ be the approximate solution constructed in {\rm Appendix A} for system \eqref{eq:1.14}
with $U_a$, $U_{b}\in \mathcal{O}_{\min\{\varepsilon_0,\tilde{\varepsilon}_0\}}(\underline{U})$
defined by \eqref{eq:4.1}.
Assume that, near point $(\xi,y_I)$, $U_{h,\nu}(\xi,y)$ is of the following form{\rm :}
\begin{equation}\label{eq:4.9}
U_{h,\nu}(\xi+s,y)=
\begin{cases}
U_{a} \quad  &\mbox{for $y>y_{I}+\dot{y}_{\alpha_k}s$},\\[2pt]
U_{b} \quad  &\mbox{for $y<y_{I}+\dot{y}_{\alpha_k}s$},
\end{cases}
\end{equation}
where $|\dot{y}_{\alpha_k}|<\hat{\lambda}$ if $1\leq k\leq 4$ and $\dot{y}_{\alpha_{k}}=\hat{\lambda}$ if $k=\mathcal{NP}$,
with the fixed constant $\hat{\lambda}$ satisfying
$\hat{\lambda}>\max_{1\leq j\leq 4}\big\{\lambda^{(\tau)}_{j}(U^{(\tau)},\tau^{2}),\lambda_{j}(U)\big\}$ for $U^{(\tau)}, U\in \mathcal{O}_{\min\{\varepsilon_0,\tilde{\varepsilon}_0\}}(\underline{U})$, and $\tau \in(0,\min\{\tau_{0},\tilde{\tau}_0\})$.
For the fixed given hypersonic similarity parameter $a_{\infty}$, if $s>0$ is sufficiently small,
the following estimates hold{\rm :}

\begin{enumerate}
\item[\rm (i)] if $U_{a}=\Phi_{k}(\sigma_{\alpha_k}; U_{b})$ for $\alpha_k\in \mathcal{S}_k$, $k=1,4$,
and $|\dot{y}_{\alpha_k}-\dot{\mathcal{S}}_{k}(\sigma_{\alpha_k})|<2^{-\nu}$, then
\begin{equation}\label{eq:4.10}
\int^{y_I+\hat{\lambda}s}_{y_I-\hat{\lambda}s}|\mathcal{P}^{(\tau)}_{h}(\xi+s,\xi)(U_{h,\nu}(\xi,y))
-U_{h,\nu}(\xi+s,y)|\,{\rm d}y
\leq {C_{3,1}}\big(\tau^{2}+2^{-\nu}\big)|\sigma_{\alpha_k}|s,
\end{equation}
where $\dot{\mathcal{S}}_{k}(\sigma_{\alpha_k})$ is the speed of the $k^{\rm th}$ shock-front $\alpha_k${\rm ;}

\smallskip
\item[\rm (ii)] if $U_{a}=\Phi_{k}(\sigma_{\alpha_k}; U_{b})$ for $\alpha_k\in \mathcal{R}_k$, $k=1,4$,
and $|\dot{y}_{\alpha_k}-\lambda_{k}(U_a)|<2^{-\nu}$, then
\begin{align}
&\int^{y_I+\hat{\lambda}s}_{y_I-\hat{\lambda}s}|\mathcal{P}^{(\tau)}_{h}(\xi+s,\xi)(U_{h,\nu}(\xi,y))
-U_{h,\nu}(\xi+s,y)|\,{\rm d}y\nonumber\\[2pt]
&\leq {C_{3,2}}\big(\tau^{2}+\nu^{-1}+2^{-\nu}\big)|\sigma_{\alpha_k}|s, \label{eq:4.11}
\end{align}
where $\lambda_{k}(U_a)$ is the speed of the $k^{\rm th}$ rarefaction-front $\alpha_k${\rm ;}

\smallskip
\item[\rm (iii)] if $U_{a}=\Phi_{k}(\sigma_{\alpha_k}; U_{b})$ for $\alpha_k\in \mathcal{C}_k$, $k=2,3$,
and $|\dot{y}_{\alpha_k}-\lambda_{k}(U_b)|<2^{-\nu}$, then
\begin{equation}\label{eq:4.12}
\int^{y_I+\hat{\lambda}s}_{y_I-\hat{\lambda}s}|\mathcal{P}^{(\tau)}_{h}(\xi+s,\xi)(U_{h,\nu}(\xi,y))
-U_{h,\nu}(\xi+s,y)|\,{\rm d}y
\leq {C_{3,3}}\big(\tau^{2}+2^{-\nu}\big)|\sigma_{\alpha_k}|s,
\end{equation}
where $\lambda_{k}(U_b)$ is the speed of the $k^{\rm th}$ contact discontinuity $\alpha_k${\rm ;}

\smallskip
\item[\rm (iv)] if $\sigma_{\alpha_{\mathcal{NP}}}=|U_{a}-U_{b}|$ with $\alpha_{\mathcal{NP}}\in \mathcal{NP}$, then
\begin{equation}\label{eq:4.13}
\int^{y_I+\hat{\lambda}s}_{y_I-\hat{\lambda}s}|\mathcal{P}^{(\tau)}_{h}(\xi+s,\xi)(U_{h,\nu}(\xi,y))
-U_{h,\nu}(\xi+s,y)|\,{\rm d}y
\leq {C_{3,4}}|\sigma_{\alpha_{\mathcal{NP}}}|s,
\end{equation}
\end{enumerate}
where {all the constants $C_{3,k}>0$}, $1\leq k\leq4$, depend only on $(\underline{U},a_{\infty})$.
\end{proposition}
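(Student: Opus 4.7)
The plan is to prove all four estimates by a direct comparison of the two Riemann configurations issuing from $(\xi, y_I)$: the Riemann solution of system \eqref{eq:1.11} with initial data $(U_b, U_a)$ that defines $\mathcal{P}^{(\tau)}_{h}(\xi+s,\xi)(U_{h,\nu}(\xi,\cdot))$ versus the single wave (or non-physical jump) present in $U_{h,\nu}(\xi+s,\cdot)$. The principal tool will be Lemma \ref{lem:4.2}, which bounds the discrepancy between the wave strengths of the two Riemann problems by $O(\tau^2)|\sigma_{\alpha_k}|$. As a preliminary reduction, the propagation-speed bound $\hat{\lambda}$ forces both configurations to agree with $U_a$ above and $U_b$ below the strip $[y_I-\hat{\lambda}s,\ y_I+\hat{\lambda}s]$, so the integrand vanishes outside the strip and only the interior needs to be estimated.

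In case (i), Lemma \ref{lem:4.2} gives $\sigma^{(\tau)}_{\beta_j}=\delta_{jk}\sigma_{\alpha_k}+O(1)|\sigma_{\alpha_k}|\tau^2$, so the trajectory consists of a dominant $k$-shock of strength close to $\sigma_{\alpha_k}$, plus three spurious wave-fronts of strength $O(|\sigma_{\alpha_k}|\tau^2)$. I would partition the strip into the four sub-regions cut out by the outgoing wave-fronts of the trajectory and, on each sub-region, compare the intermediate state with $U_a$ or $U_b$ (whichever $U_{h,\nu}(\xi+s,\cdot)$ equals there). In the three spurious sub-regions, the intermediate states deviate from $\{U_a,U_b\}$ by $O(|\sigma_{\alpha_k}|\tau^2)$ over widths $O(s)$, contributing $O(\tau^2 |\sigma_{\alpha_k}|s)$. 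The dominant $k$-shock travels at speed $\dot{\mathcal{S}}^{(\tau)}_k(\sigma^{(\tau)}_{\beta_k})$, which differs from $\dot{\mathcal{S}}_k(\sigma_{\alpha_k})$ by $O(\tau^2)$ due to the smooth dependence of the Rankine--Hugoniot speed on $(\sigma,\tau^2)$, while $\dot{y}_{\alpha_k}$ lies within $2^{-\nu}$ of $\dot{\mathcal{S}}_k(\sigma_{\alpha_k})$; hence the two $k$-shock positions lie within $O((\tau^2+2^{-\nu})s)$, producing a jump contribution of $O((\tau^2+2^{-\nu})|\sigma_{\alpha_k}|s)$ and yielding \eqref{eq:4.10}. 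Case (iii) is analogous with the $k$-contact discontinuity in place of the $k$-shock; the linearly degenerate speed depends smoothly on $\tau^2$ as well, and Lemma \ref{lem:4.2} supplies the required wave-strength estimate, yielding \eqref{eq:4.12}. Case (iv) is handled by the crude pointwise bound $|\mathcal{P}^{(\tau)}_{h}(\xi+s,\xi)(U_{h,\nu}(\xi,\cdot))-U_{h,\nu}(\xi+s,\cdot)|\le C\,\sigma_{\alpha_{\mathcal{NP}}}$ over a strip of width $2\hat{\lambda}s$.

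The main obstacle is case (ii). In $U_{h,\nu}$, the rarefaction wave is already approximated by a fan of small piecewise-constant jumps, each of strength at most $C\nu^{-1}$ by Proposition \ref{prop:2.1}(ii), propagating at speeds within $2^{-\nu}$ of the associated characteristic speed. The Riemann solver of system \eqref{eq:1.11} analogously produces a rarefaction fan of jumps of strength $O(\nu^{-1})$ plus spurious transverse waves of total strength $O(|\sigma_{\alpha_k}|\tau^2)$. My plan is to pair the two fans jump-by-jump, noting that within each matched pair the intermediate states differ by $O((\nu^{-1}+\tau^2)|\sigma_{\alpha_k}|)$ and the characteristic speeds by $O(\tau^2+2^{-\nu})$; integrating across the fan region of width $O(s)$ and adding the $O(\tau^2 |\sigma_{\alpha_k}|s)$ transverse-wave contribution then gives \eqref{eq:4.11}. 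The delicate bookkeeping for the matched fans, together with the need to account for the two endpoints of the fan where the intermediate states smoothly emerge from the rarefaction curve, is the principal difficulty; the other three cases reduce essentially to Lemma \ref{lem:4.2} plus a speed-comparison argument.
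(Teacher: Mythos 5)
Your treatment of cases (i), (iii), and (iv) follows essentially the same route as the paper: reduce the wave-strength comparison to Lemma \ref{lem:4.2}, note that the shock (resp.\ contact) speeds depend smoothly on $(\sigma,\tau^2)$ so that the two dominant fronts are separated by $O\big((\tau^{2}+2^{-\nu})s\big)$, decompose the strip by the outgoing front positions, and absorb the spurious transverse waves of total strength $O(|\sigma_{\alpha_k}|\tau^{2})$. That part is fine.

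Case (ii), however, rests on a misreading of what the two objects being compared actually are, and the ``pair the two fans jump-by-jump'' plan does not apply. First, the hypothesis \eqref{eq:4.9} says that $U_{h,\nu}$ near $(\xi,y_I)$ consists of a \emph{single} jump front $\alpha_k$ from $U_b$ to $U_a$ (one piece of a rarefaction fan, of strength $|\sigma_{\alpha_k}|\le C\nu^{-1}$), not a collection of small jumps to be matched. Second, $\mathcal{P}^{(\tau)}_{h}$ is the $\nu\to\infty$ limit of the wave-front tracking approximations (Proposition \ref{prop:2.2}), so its local Riemann solution contains a genuine \emph{continuous} centered rarefaction $\Phi^{(\tau)}_{1}(\sigma^{(\tau)}_{\beta_1}(\zeta);U_b,\tau^2)$, $\zeta=(y-y_I)/s$, as in \eqref{eq:4.16a}--\eqref{eq:4.16b} --- there are no discrete jumps on that side either. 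Consequently your claimed pointwise estimate ``within each matched pair the intermediate states differ by $O\big((\nu^{-1}+\tau^{2})|\sigma_{\alpha_k}|\big)$'' has no justification: inside the fan region the honest pointwise discrepancy between the continuous fan state and the single-jump state ($U_b$ or $U_a$) is only $O(|\sigma_{\alpha_k}|)$. The correct source of the $\nu^{-1}$ in \eqref{eq:4.11} is different: the fan occupies a $y$-interval of length $\big|\lambda^{(\tau)}_{1}(U^{(\tau)}_{m_1},\tau^2)-\lambda^{(\tau)}_{1}(U_b,\tau^2)\big|\,s=O(|\sigma^{(\tau)}_{\beta_1}|s)$, so the fan-interior contribution is $O(|\sigma^{(\tau)}_{\beta_1}|^{2}s)$, and one then invokes $|\sigma_{\alpha_1}|\le C\nu^{-1}$ to convert this into $O\big((\tau^{2}+\nu^{-1})|\sigma_{\alpha_1}|s\big)$; the remaining contributions come from the mismatch between $\dot{y}_{\alpha_1}$ and the fan endpoints, controlled by the speed expansions \eqref{eq:4.16}--\eqref{eq:4.17}, and require splitting into the three subcases $\dot{y}_{\alpha_1}\ge\lambda^{(\tau)}_{1}(U^{(\tau)}_{m_1},\tau^2)$, $\lambda^{(\tau)}_{1}(U_b,\tau^2)<\dot{y}_{\alpha_1}<\lambda^{(\tau)}_{1}(U^{(\tau)}_{m_1},\tau^2)$, and $\dot{y}_{\alpha_1}<\lambda^{(\tau)}_{1}(U_b,\tau^2)$. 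Your final bound happens to be the right one because an overestimated width $O(s)$ is multiplied by an underestimated pointwise gap, but as written the argument for (ii) is not valid and needs to be replaced by the fan-width/quadratic-strength argument just described.
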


\begin{proof}  We divide the proof into four steps.

\smallskip
1. We only consider the case: $k=1$, since the case: $k=4$ can be dealt with in the same way.
From Proposition \ref{prop:2.2}, we know that
\begin{equation}\label{eq:4.14}
\Phi^{(\tau)}(\boldsymbol{\sigma}^{(\tau)}_{\boldsymbol{\beta}}; U_{b},\tau^2)=\Phi_{1}(\sigma_{\alpha_1}; U_{b}),\qquad
\boldsymbol{\sigma}^{(\tau)}_{\boldsymbol{\beta}}=(\sigma^{(\tau)}_{\beta_1}, \sigma^{(\tau)}_{\beta_2}, \sigma^{(\tau)}_{\beta_3}, \sigma^{(\tau)}_{\beta_4}).
\end{equation}
Then, by Lemma \ref{lem:4.2}, we have estimate \eqref{eq:4.6} for $\sigma^{(\tau)}_{\beta_j}$ with $1\leq j\leq 4$ and $k=1$.
Then it follows from $\sigma_{\alpha_1}<0$ that $\sigma^{(\tau)}_{\beta_1}<0$,
if $\tau$ is sufficiently small. This implies that $\beta_1\in \mathcal{S}^{(\tau)}_{1}$.

\begin{figure}[ht]
\begin{center}
\begin{tikzpicture}[scale=0.6]
\draw [line width=0.05cm](-4,4.5)--(-4,-4.5);
\draw [line width=0.05cm](0.5,4.5)--(0.5,-4.5);

\draw [thin](-4,0)--(0.5, 3.8);
\draw [thick](-4,0)--(0.5, 2.5);
\draw [thick][dashed](-4,0)--(0.5, 0.6);
\draw [thick](-4,0)--(0.5, -2.5);
\draw [thick][red](-4,0)--(0.5, -1.2);
\draw [thin](-4,0)--(0.5, -3.8);

\draw [thin][<->](-3.9,4.2)--(0.4,4.2);

\node at (2.6, 2) {$$};
\node at (-1.8, 3.9){$s$};
\node at (-4.9, 0) {$(\xi, y_I)$};
\node at (-4, -4.9) {$x=\xi$};
\node at (0, -4.9) {$x=\xi+s$};

\node at (2.4, 3.9) {$y=y_I+\hat{\lambda}s$};
\node at (1.2, 2.4) {$\beta_{4}$};
\node at (1.4, 0.5) {$\beta_{2(3)}$};
\node at (1.2, -1.2) {$\alpha_1$};
\node at (1.2, -2.6) {$\beta_{1}$};
\node at (2.4, -3.8) {$y=y_I-\hat{\lambda}s$};

\node at (-1.4, 2.9){$U_{a}$};
\node at (-1.0, 0.9){$U^{(\tau)}_{m_{2(3)}}$};
\node at (-1.0, -0.2){$U^{(\tau)}_{m_{1}}$};
\node at (-1.2, -3.2){$U_{b}$};

\end{tikzpicture}
\end{center}
\caption{Comparison of the Riemann solutions for the case: $\alpha_1\in \mathcal{S}_1$}\label{fig4.1}
\end{figure}

We denote $U^{(\tau)}_{m_1}$, $U^{(\tau)}_{m_2}$, and $U^{(\tau)}_{m_3}$ as the intermediate states of the Riemann
solutions $\mathcal{P}^{(\tau)}_{h}(\xi+s,\xi)(U_{h,\nu}(\xi,y))$ with wave strength $\boldsymbol{\sigma}^{(\tau)}_{\boldsymbol{\beta}}$.
As shown in Fig. \ref{fig4.1}, let $\dot{\mathcal{S}}_{1}(\sigma_{\alpha_1})$ and $\dot{\mathcal{S}}^{(\tau)}_{1}(\sigma^{(\tau)}_{\beta_{1}}, \tau^{2})$
be the speeds of the shock-fronts $\alpha_1$ and $\beta_{1}$, respectively.
From Lemma \ref{lem:2.2} and the results in
\cite[Chapter 17]{smoller}, we have
\begin{align*}
&\dot{\mathcal{S}}_{1}(\sigma_{\alpha_1})\Big|_{\sigma_{\alpha_1}=0}=\lambda_{1}(U_{b}), \quad\,
\frac{\partial\dot{\mathcal{S}}_{1}(\sigma_{\alpha_1}) }{\partial \sigma_{\alpha_1}}\bigg|_{\sigma_{\alpha_1}=0}
=\frac{1}{2}\nabla_{U}\lambda_{1}(U_{b})\cdot \boldsymbol{r}_{1}(U_{b})=1,\\
&\dot{\mathcal{S}}^{(\tau)}_{1}(\sigma^{(\tau)}_{\beta_1},\tau^{2})\Big|_{\sigma^{(\tau)}_{\beta_1}=0}=\lambda^{(\tau)}_{1}(U_{b},\tau^{2}),\\
&\frac{\partial\dot{\mathcal{S}}^{(\tau)}_{1}(\sigma^{(\tau)}_{\beta_1}) }{\partial \sigma^{(\tau)}_{\beta_1}}\bigg|_{\sigma^{(\tau)}_{\beta_1}=0}
=\frac{1}{2}\nabla_{U^{(\tau)}}\lambda^{(\tau)}_{1}(U_{b},\tau^{2})\cdot \boldsymbol{r}^{(\tau)}_{1}(U_{b},\tau^{2})=1.
\end{align*}

Thus,  using \eqref{eq:3.5a} and Lemma \ref{lem:4.2}, we obtain
\begin{align}\label{eq:4.5}
&\dot{\mathcal{S}}^{(\tau)}_{1}(\sigma^{(\tau)}_{\beta_1},\tau^{2})-\dot{\mathcal{S}}_{1}(\sigma_{\alpha_1})\nonumber\\[2pt]
&=\dot{\mathcal{S}}^{(\tau)}_{1}(\sigma^{(\tau)}_{\beta_1},\tau^{2})\Big|_{\sigma^{(\tau)}_{\beta_1}=0}
+\frac{\partial\dot{\mathcal{S}}^{(\tau)}_{1}(\sigma^{(\tau)}_{\beta_1}) }{\partial \sigma^{(\tau)}_{\beta_1}}\bigg|_{\sigma^{(\tau)}_{\beta_1}=0}
\sigma^{(\tau)}_{\beta_1}+O(1)(\sigma^{(\tau)}_{\beta_1})^{2}\nonumber\\[2pt]
&\quad\ -\bigg(\dot{\mathcal{S}}_{1}(\sigma_{\alpha_1})\Big|_{\sigma_{\alpha_1}=0}+\frac{\partial\dot{\mathcal{S}}_{1}(\sigma_{\alpha_1}) }{\partial \sigma_{\alpha_1}}\bigg|_{\sigma_{\alpha_1}=0}\sigma_{\alpha_1}+O(1)\sigma_{\alpha_1}^{2}\bigg)\nonumber\\[2pt]
&=\lambda^{(\tau)}_{1}(U_{b},\tau^{2})-\lambda_{1}(U_{b})+\frac{1}{2}\big(\sigma^{(\tau)}_{\beta_1}-\sigma_{\alpha_1}\big)
+O(1)\big(\sigma^{(\tau)}_{\beta_1}-\sigma_{\alpha_1}\big)^{2}+O(1)\tau^2(\sigma^{(\tau)}_{\beta_1})^{2}\nonumber\\[2pt]
&=O(1)\big(1+|\sigma_{\alpha_1}|\big)\tau^{2}.
\end{align}

Now we begin to derive estimate \eqref{eq:4.10}. A direct computation shows that
\begin{eqnarray}\label{3.15x}
&&\int^{y_I+\hat{\lambda}s}_{y_I-\hat{\lambda}s}\big|\mathcal{P}^{(\tau)}_{h}(\xi+s,\xi)(U_{h,\nu}(\xi,y))
-U_{h,\nu}(\xi+s,y)|\,{\rm d}y\nonumber\\[2pt]
&& =\int^{\min\{y_I+\dot{\mathcal{S}}^{(\tau)}_{1}(\sigma^{(\tau)}_{\beta_1},\tau^{2})s,y_I+\dot{y}_{\alpha_1}s\}}
_{y_I-\hat{\lambda}s}\big|\mathcal{P}^{(\tau)}_{h}(\xi+s,\tilde{x})(U_{h,\nu}(\xi,y))
-U_{h,\nu}(\xi+s,y)\big|\,{\rm d}y\nonumber\\[2pt]
&&\quad\ +\int^{\max\{y_I+\dot{\mathcal{S}}^{(\tau)}_{1}(\sigma^{(\tau)}_{\beta_1},\tau^{2})s,y_I+\dot{y}_{\alpha_1}s\}}
_{\min\{y_I+\dot{\mathcal{S}}^{(\tau)}_{1}(\sigma^{(\tau)}_{\beta_1},\tau^{2})s,y_I+\dot{y}_{\alpha_1}s\}}
\big|\mathcal{P}^{(\tau)}_{h}(\xi+s,\xi)(U_{h,\nu}(\xi,y))-U_{h,\nu}(\xi+s,y)\big|\,{\rm d}y\nonumber\\[2pt]
&&\quad\ +\int^{y_I+\hat{\lambda}s}
_{\max\{y_I+\dot{\mathcal{S}}^{(\tau)}_{1}(\sigma^{(\tau)}_{\beta_1},\tau^{2})s,y_I+\dot{y}_{\alpha_1}s\}}
\big|\mathcal{P}^{(\tau)}_{h}(\xi+s,\xi)(U_{h,\nu}(\xi,y))-U_{h,\nu}(\xi+s,y)\big|\,{\rm d}y\nonumber\\[2pt]
&&\doteq I_{\mathcal{S}_1,1}+I_{\mathcal{S}_1,2}+I_{\mathcal{S}_1,3}.
\end{eqnarray}

It is clear that $I_{\mathcal{S}_1, 1}=0$.
For $I_{\mathcal{S}_1, 2}$, by Lemmas \ref{lem:2.2} and \ref{lem:4.2}, we have
\begin{align*}
I_{\mathcal{S}_1, 2}
&\leq C \big|\dot{\mathcal{S}}^{(\tau)}(\sigma^{(\tau)}_{\beta_1},\tau^{2})-\dot{y}_{\alpha_1}\big|\big|U^{(\tau)}_{m_{1}}-U_{b}\big|s\\[2pt]
&\leq C\big(|\dot{\mathcal{S}}^{(\tau)}(\sigma^{(\tau)}_{\beta_1},\tau^{2})-\dot{\mathcal{S}}(\sigma_{\alpha})|+2^{-\nu}\big)
\big|\sigma^{(\tau)}_{\beta_1}\big|s\\[2pt]
&\leq C\big((1+|\sigma_{\alpha_1}|)\tau^{2}+2^{-\nu}\big)\big(1+ C\,\tau^{2}\big)|\sigma_{\alpha_1}|
\\[2pt]
&\leq C\big(\tau^2+2^{-\nu}\big)|\sigma_{\alpha_1}|\tau^{2}s,
\end{align*}
or
\begin{align*}
I_{\mathcal{S}_1, 2}
&\leq C\big|\dot{\mathcal{S}}^{(\tau)}(\sigma^{(\tau)}_{\beta_1},\tau^{2})-\dot{y}_{\alpha_1}\big|\big|U^{(\tau)}_{a}-U_{b}\big|s\\[2pt]
&\leq C\big((1+|\sigma_{\alpha_1}|)\tau^{2}+2^{-\nu}\big)|\sigma_{\alpha_1}|\\[2pt]
&\leq C\big(\tau^2+2^{-\nu}\big)|\sigma_{\alpha_1}|\tau^{2}s.
\end{align*}

For $I_{\mathcal{S}_1,3}$, by Lemma \ref{lem:4.1} again, for both cases: $\beta_4\in \mathcal{S}^{(\tau)}_{4}$ and $\beta_4\in \mathcal{R}^{(\tau)}_{4}$,
we have
\begin{align*}
I_{\mathcal{S}_1, 3}&\leq C\bigg(\sum_{j=1,2}\big|U^{(\tau)}_{m_{j}}-U^{(\tau)}_{m_{j+1}}\big|+\big|U_{a}-U^{(\tau)}_{m_{3}}\big|\bigg)s\\[2pt]
&\leq C\bigg(\sum^{3}_{j=2}|\sigma^{(\tau)}_{\beta_{j}}|\bigg)s
\\[2pt]
&
\leq C|\sigma_{\alpha_1}|\tau^{2}s.
\end{align*}

Finally, combining the estimates of $I_{\mathcal{S}_1, 1}$, $I_{\mathcal{S}_1, 2}$, and $I_{\mathcal{S}_1, 3}$ together,
we can choose a constant  {$C_{3,1}>0$} depending only on $(\underline{U}, a_{\infty})$ such that, for $s>0$ sufficiently small,
estimate \eqref{eq:4.11} holds.

\smallskip
2.
Similarly as done for Step 1,
without loss of generality, we only consider the case: $k=1$.
The solutions of the Riemann solver, $\mathcal{P}^{(\tau)}_{h}(\xi+s,\xi)(U_{h,\nu}(\xi,y))$, satisfy equation \eqref{eq:4.14}.
Following Lemma \ref{lem:4.2}, we know that equation \eqref{eq:4.14} admits a unique solution $\boldsymbol{\sigma}^{(\tau)}_{\boldsymbol{\beta}}=(\sigma^{(\tau)}_{\beta_1},\sigma^{(\tau)}_{\beta_2},\sigma^{(\tau)}_{\beta_3},
\sigma^{(\tau)}_{\beta_4})$, which satisfies estimates \eqref{eq:4.6} for $k=1$ and $1\leq j\leq 4$.
Since $\sigma_{\alpha_1}>0$ and $\sigma^{(\tau)}_{\beta_1}>0$, which implies that $\beta_1 \in \mathcal{R}^{(\tau)}_{1}$.
Denoted by $U^{(\tau)}_{m_j}, j=1,2,3$, the intermediate states of $\mathcal{P}^{(\tau)}_{h}(\xi+s,\xi)(U_{h,\nu}(\xi,y))$
that satisfy relations \eqref{eq:2.12} of the following forms:
{\small
\begin{equation}\label{eq:4.16a}
\mathcal{P}^{(\tau)}_{h}(\xi+s,\xi)(U_{h,\nu}(\xi,y))=\left\{
\begin{array}{ll}
\hspace{-4pt}U_{a}, \  &\zeta\in \big[\dot{\mathcal{S}}^{(\tau)}_{4}(\sigma^{(\tau)}_{\beta_4},\tau^{2}),\hat{\lambda}\big),\\[5pt]
\hspace{-4pt}U^{(\tau)}_{m_3}, \  &\zeta\in \big[\lambda^{(\tau)}_{3}(U^{(\tau)}_{m_3},\tau^{2}),\dot{\mathcal{S}}^{(\tau)}_{4}(\sigma^{(\tau)}_{\beta_4},\tau^{2})\big),\\[5pt]
\hspace{-4pt}U^{(\tau)}_{m_1}, \  &\zeta\in \big[\lambda^{(\tau)}_{1}(U^{(\tau)}_{m_1},\tau^{2}),\lambda^{(\tau)}_{3}(U^{(\tau)}_{m_3},\tau^{2})\big),\\[5pt]
\hspace{-4pt}\Phi^{(\tau)}_{1}(\sigma^{(\tau)}_{\beta_1}(\zeta); U_{b}, \tau^2),
\hspace{-5pt}&\zeta\in \big[\lambda^{(\tau)}_{1}(U_{b},\tau^{2}), \lambda^{(\tau)}_{1}(U^{(\tau)}_{m_1},\tau^{2}) \big),\\[5pt]
\hspace{-4pt}U_{b}, \  &\zeta\in(-\hat{\lambda},\lambda^{(\tau)}_{1}(U_{b},\tau^{2})),
\end{array}
\right.
\end{equation}
}
or
{\small
\begin{equation}\label{eq:4.16b}
\mathcal{P}^{(\tau)}_{h}(\xi+s,\xi)(U_{h,\nu}(\xi,y))=\left\{
\begin{array}{ll}
\hspace{-4pt}U_{a}, \  &\zeta\in \big[\lambda^{(\tau)}_{4}(U_{a},\tau^{2}),\hat{\lambda}\big),\\[5pt]
\hspace{-4pt}\Phi^{(\tau)}_{1}(\sigma^{(\tau)}_{\beta_4}(\zeta); U_{b}, \tau^2),
 \hspace{-5pt}&\zeta\in \big[\lambda^{(\tau)}_{4}(U^{(\tau)}_{m_3},\tau^{2}),\,\lambda^{(\tau)}_{4}(U_{a},\tau^{2})\big),\\[5pt]
\hspace{-4pt}U^{(\tau)}_{m_3}, \  &\zeta\in \big[\lambda^{(\tau)}_{3}(U^{(\tau)}_{m_3},\tau^{2}),\lambda^{(\tau)}_{4}(U^{(\tau)}_{m_3},\tau^{2}) \big),\\[5pt]
\hspace{-4pt}U^{(\tau)}_{m_1}, \  &\zeta\in \big[\lambda^{(\tau)}_{1}(U^{(\tau)}_{m_1},\tau^{2}),\lambda^{(\tau)}_{3}(U^{(\tau)}_{m_3},\tau^{2})\big),\\[5pt]
\Phi^{(\tau)}_{1}(\sigma^{(\tau)}_{\beta_1}(\zeta); U_{b}, \tau^2),
 \hspace{-5pt}&\zeta\in \big[\lambda^{(\tau)}_{1}(U_{b},\tau^{2}),\lambda^{(\tau)}_{1}(U^{(\tau)}_{m_1},\tau^{2}) \big),\\[5pt]
\hspace{-4pt}U_{b}, \  &\zeta\in(-\hat{\lambda},\lambda^{(\tau)}_{1}(U_{b},\,\tau^{2})),
\end{array}
\right.
\end{equation}
}
where $\zeta=\frac{y-y_{I}}{s}$.
Moreover, $\sigma^{(\tau)}_{\beta_1}(\zeta)$ and $\sigma^{(\tau)}_{\beta_4}(\zeta)$
satisfy
\begin{align}\label{eq:4.16c}
\sigma^{(\tau)}_{\beta_1}(\zeta_{0})=0,\quad
\sigma^{(\tau)}_{\beta_1}(\zeta_1)=\sigma^{(\tau)}_{\beta_1},\quad \sigma^{(\tau)}_{\beta_4}(\zeta_{2})=0,\quad
\sigma^{(\tau)}_{\beta_4}(\zeta_3)=\sigma^{(\tau)}_{\beta_4},
\end{align}
where $\zeta_0=\lambda^{(\tau)}_{1}(U_{b},\tau^{2})$, $\zeta_1=\lambda^{(\tau)}_{1}(U^{(\tau)}_{m_1},\tau^{2})$,
$\zeta_2=\lambda^{(\tau)}_{4}(U^{(\tau)}_{m_3},\tau^{2})$, and $\zeta_3=\lambda^{(\tau)}_{4}(U_{a},\tau^{2})$.
By the results in \cite[Chapter 17]{smoller}, we know that the speed, $\lambda_{1}(U_{a})$, of front $\alpha_1$ satisfies
\begin{eqnarray*}
\lambda_{1}(U_{a})=\lambda_{1}(U_{b})+\sigma_{\alpha_1}+O(1)|\sigma_{\alpha_1}|^{2},
\end{eqnarray*}
and the speed, $\lambda^{(\tau)}_{1}(U^{(\tau)}_{m_1}, \tau^{2})$, of front $\beta_1$ satisfies
\begin{eqnarray*}
\lambda^{(\tau)}_{1}(U^{(\tau)}_{m_1}, \tau^{2})=\lambda^{(\tau)}_{1}(U_{b},\tau^{2})+\sigma^{(\tau)}_{\beta_1}+O(1)|\sigma^{(\tau)}_{\beta_1}|^{2},
\end{eqnarray*}
where we have used the relation: $U^{(\tau)}_{m_1}=\Phi^{(\tau)}_{1}(\sigma^{(\tau)}_{\beta_{1}}; U_{b},\tau^{2})$.

Then,  using \eqref{eq:3.5a} and Lemma \ref{lem:4.2}, we obtain
\begin{align}
\lambda^{(\tau)}_{1}(U^{(\tau)}_{m_1}, \tau^{2})-\lambda_{1}(U_{a})
&=\lambda^{(\tau)}_{1}(U_{b},\tau^{2})-\lambda_{1}(U_{b})+O(1)|\sigma_{\alpha_1}|\tau^{2}+O(1)|\sigma^{(\tau)}_{\beta_1}|^{2}\tau^{2}
\nonumber\\[2pt]
&=O(1)\big(1+|\sigma_{\alpha_1}|\big)\tau^{2},\label{eq:4.16}\\[4pt]
\lambda_{1}(U_{a})-\lambda^{(\tau)}_{1}(U_{b}, \tau^{2})
&=\lambda_{1}(U_{b})-\lambda^{(\tau)}_{1}(U_{b},\tau^{2})+\sigma_{\alpha_1}+O(1)|\sigma_{\alpha_1}|^{2}\nonumber\\[2pt]
&=O(1)\big(|\sigma_{\alpha_1}|+\tau^{2}\big).\label{eq:4.17}
\end{align}

Now we can follow the argument for \eqref{3.15x} in Step 1 to rewrite
$$
\int^{y_I+\hat{\lambda}s}_{y_I-\hat{\lambda}s}\big|\mathcal{P}^{(\tau)}_{h}(\xi+s, \xi)(U_{h,\nu}(\xi,y))-U_{h,\nu}(\xi+s,y)\big|\,{\rm d}y
=I_{\mathcal{R}_1,1}+I_{\mathcal{R}_1,2}+I_{\mathcal{R}_1,3}.
$$

For the first term, it follows from \eqref{eq:4.16a}--\eqref{eq:4.16b} that
\begin{align*}
I_{\mathcal{R}_1,1}&\doteq\int^{\min\{y_I+\lambda^{(\tau)}_{1}(U_{b},\tau^{2})s, y_I+\dot{y}_{\alpha_1}s\}}_{y_I-\hat{\lambda}s}
\big|\mathcal{P}^{(\tau)}_{h}(\xi+s,\xi)(U_{h,\nu}(\xi,y))-U_{h,\nu}(\xi+s,y)\big|\,{\rm d}y\\
&=0.
\end{align*}
For the third term, for either $\beta_4 \in \mathcal{S}^{(\tau)}_{4}$ or $\beta_4 \in \mathcal{R}^{(\tau)}_{4}$, we have
\begin{align*}
I_{\mathcal{R}_1,3}&\doteq\int^{y_I+\hat{\lambda}s}_{\max\{y_I+\lambda^{(\tau)}_{1}(U^{(\tau)}_{m_1},\tau^{2})s, y_I+\dot{y}_{\alpha_1}s\}}
\big|\mathcal{P}^{(\tau)}_{h}(\xi+s,\xi)(U_{h,\nu}(\xi,y))-U_{h,\nu}(\xi+s,y)\big|\,{\rm d}y\\[2pt]
&\leq C\bigg(\sum_{j=1,2}|U^{(\tau)}_{m_{j}}-U^{(\tau)}_{m_{j+1}}|+|U^{(\tau)}_{m_3}-U_{a}|\bigg)s\\[2pt]
&\leq C\bigg(\sum^{4}_{j=2}|\sigma^{(\tau)}_{\beta_j}|\bigg)s\\[2pt]
&\leq C|\sigma_{\alpha_1}|\tau^{2}s.
\end{align*}

Finally, for the second term, we have
\begin{eqnarray*}
I_{\mathcal{R}_1,2}\doteq\int^{\max\{y_I+\lambda^{(\tau)}_{1}(U^{(\tau)}_{m_1},\tau^{2})s, y_I+\dot{y}_{\alpha_1}s\}}_{\min\{y_I+\lambda^{(\tau)}_{1}(U_{b},\tau^{2})s, y_I+\dot{y}_{\alpha_1}s\}}
\big|\mathcal{P}^{(\tau)}_{h}(\xi+s, \xi)(U_{h,\nu}(\xi,y))-U_{h, \nu}(\xi+s, y)\big|\,{\rm d}y.
\end{eqnarray*}

Then we estimate $I_{\mathcal{R}_1,2}$ by three subcases based on the different location of $\alpha_1$.

\begin{figure}[ht]
\begin{center}
\begin{tikzpicture}[scale=0.6]
\draw [line width=0.05cm](-4,4.5)--(-4,-4.5);
\draw [line width=0.05cm](0.5,4.5)--(0.5,-4.5);

\draw [thin](-4,0)--(0.5, 3.8);
\draw [thick](-4,0)--(0.5, 2.5);
\draw [thick][dashed](-4,0)--(0.5, 0.6);
\draw [thick][blue](-4,0)--(0.5, -2.0);
\draw [thick][blue](-4,0)--(0.5, -2.3);
\draw [thick][blue](-4,0)--(0.5, -2.6);
\draw [thick][blue](-4,0)--(0.5, -2.9);
\draw [thick][red](-4,0)--(0.5, -1.2);
\draw [thin](-4,0)--(0.5, -3.8);

\draw [thin][<->](-3.9,4.2)--(0.4,4.2);

\node at (2.6, 2) {$$};
\node at (-1.8, 3.9){$s$};
\node at (-4.7, 0) {$(\xi, y_I)$};
\node at (-4, -4.9) {$x=\xi$};
\node at (0, -4.9) {$x=\xi+s$};

\node at (2.4, 3.9) {$y=y_I+\hat{\lambda}s$};
\node at (1.2, 2.4) {$\beta_{4}$};
\node at (1.4, 0.5) {$\beta_{2(3)}$};
\node at (1.1, -1.2) {$\alpha_1$};
\node at (1.2, -2.6) {$\beta_{1}$};
\node at (2.4, -3.8) {$y=y_I-\hat{\lambda}s$};

\node at (-1.4, 2.9){$U_{a}$};
\node at (-1.2, 0.9){$U^{(\tau)}_{m_{2(3)}}$};
\node at (-1.2, -0.2){$U^{(\tau)}_{m_{1}}$};
\node at (-1.2, -3.2){$U_{b}$};

\end{tikzpicture}
\end{center}
\caption{Subcase 1: $\alpha\in \mathcal{R}_1$ and $\dot{y}_{\alpha_1}\geq \lambda^{(\tau)}_{1}(U^{(\tau)}_{m_1},\tau^{2})$}\label{fig4.3}
\end{figure}

\emph{Subcase 1}:\ $\dot{y}_{\alpha_1}\geq \lambda^{(\tau)}_{1}(U^{(\tau)}_{m_1},\tau^{2})$.
As shown in Fig. \ref{fig4.3}, we can decompose $I_{\mathcal{R}_1,2}$ into two terms:
\begin{align*}
I_{\mathcal{R}_1,2}&=\int^{y_I+\dot{y}_{\alpha_1}s}_{y_I+\lambda^{(\tau)}_{1}(U_{b},\tau^{2})s}
\big|\mathcal{P}^{(\tau)}_{h}(\xi+s,\xi)(U_{h,\nu}(\xi,y))-U_{h, \nu}(\xi+s, y)\big|\,{\rm d}y\\[2pt]
&=\int^{y_I+\lambda^{(\tau)}_{1}(U^{(\tau)}_{m_1},\tau^2)s}_{y_I+\lambda^{(\tau)}_{1}(U_{b},\tau^{2})s}
\big|\mathcal{P}^{(\tau)}_{h}(\xi+s,\xi)(U_{h,\nu}(\xi,y))-U_{h, \nu}(\xi+s, y)\big|\,{\rm d}y\\[2pt]
&\quad\, +\int^{y_I+\dot{y}_{\alpha_1}s}_{y_I+\lambda^{(\tau)}_{1}(U^{(\tau)}_{m_1},\tau^{2})s}
\big|\mathcal{P}^{(\tau)}_{h}(\xi+s,\xi)(U_{h,\nu}(\xi,y))-U_{h, \nu}(\xi+s, y)\big|\,{\rm d}y.
\end{align*}
For the first term, by \eqref{eq:4.16a}--\eqref{eq:4.16c}
together with Proposition A.1
and Lemma \ref{lem:4.2}, we obtain
\begin{eqnarray*}
&&\int^{y_I+\lambda^{(\tau)}_{1}(U^{(\tau)}_{m_1},\tau^2)s}_{y_I+\lambda^{(\tau)}_{1}(U_{b},\tau^{2})s}
\big|\mathcal{P}^{(\tau)}_{h}(\xi+s,\xi)(U_{h,\nu}(\xi,y))-U_{h, \nu}(\xi+s, y)\big|\,{\rm d}y\\[2pt]
&&\, \leq \big|\lambda^{(\tau)}_{1}(U^{(\tau)}_{m_1},\tau^2)-\lambda^{(\tau)}_{1}(U^{(\tau)}_{b},\tau^2)\big|\big|\Phi^{(\tau)}_{1}(\sigma^{(\tau)}_{\beta_1}(\zeta); U_{b}, \tau^2)-U_{b}\big|s\\[2pt]
&&\, \leq C\big|\lambda^{(\tau)}_{1}(U^{(\tau)}_{m_1},\tau^2)-\lambda^{(\tau)}_{1}(U^{(\tau)}_{b},\tau^2)\big||\zeta_1-\zeta_0|s\\[2pt]
&&\, \leq C|\sigma^{(\tau)}_{\beta_1}|^{2}\\[2pt]
&&\, \leq C\big(\tau^{2}+\nu^{-1}\big)\big|\sigma_{\alpha_1}\big|s.
\end{eqnarray*}
For the second term, using Lemma \ref{lem:4.2} and \eqref{eq:4.16}, we have
\begin{eqnarray*}
&&\int^{y_I+\dot{y}_{\alpha_1}s}_{y_I+\lambda^{(\tau)}_{1}(U^{(\tau)}_{m_1},\tau^{2})s}
\big|\mathcal{P}^{(\tau)}_{h}(\xi+s,\xi)(U_{h,\nu}(\xi,y))-U_{h, \nu}(\xi+s, y)\big|\,{\rm d}y\\[2pt]
&&\, \leq C\Big(\big|\lambda^{(\tau)}_{1}(U^{(\tau)}_{m_1},\tau^2)-\lambda_{1}(U_a)\big|+2^{-\nu}\Big)|\sigma^{(\tau)}_{\beta_1}|s\\[2pt]
&&\, \leq C(\tau^2+2^{-\nu})|\sigma_{\alpha_1}|s.
\end{eqnarray*}
Combining the two estimates above, we conclude
\begin{eqnarray*}
I_{\mathcal{R}_1,2}\leq C\big(\tau^2+\nu^{-1}+2^{-\nu}\big)|\sigma_{\alpha_1}|s.
\end{eqnarray*}

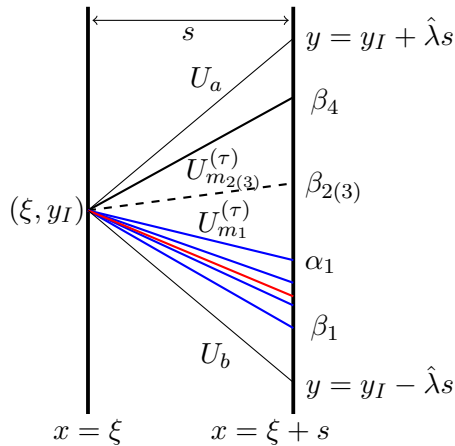
\begin{figure}[ht]
\begin{center}
\begin{tikzpicture}[scale=0.6]
\draw [line width=0.05cm](-4,4.5)--(-4,-4.5);
\draw [line width=0.05cm](0.5,4.5)--(0.5,-4.5);

\draw [thin](-4,0)--(0.5, 3.8);
\draw [thick](-4,0)--(0.5, 2.5);
\draw [thick][dashed](-4,0)--(0.5, 0.6);
\draw [thick][blue](-4,0)--(0.5, -1.1);
\draw [thick][blue](-4,0)--(0.5, -1.6);
\draw [thick][blue](-4,0)--(0.5, -2.1);
\draw [thick][blue](-4,0)--(0.5, -2.6);
\draw [thick][red](-4,0)--(0.5, -1.9);
\draw [thin](-4,0)--(0.5, -3.8);

\draw [thin][<->](-3.9,4.2)--(0.4,4.2);

\node at (2.6, 2) {$$};
\node at (-1.8, 3.9){$s$};
\node at (-4.9, 0) {$(\xi, y_I)$};
\node at (-4, -4.9) {$x=\xi$};
\node at (0, -4.9) {$x=\xi+s$};

\node at (2.4, 3.9) {$y=y_I+\hat{\lambda}s$};
\node at (1.2, 2.4) {$\beta_{4}$};
\node at (1.4, 0.5) {$\beta_{2(3)}$};
\node at (1.1, -1.2) {$\alpha_1$};
\node at (1.2, -2.6) {$\beta_{1}$};
\node at (2.4, -3.8) {$y=y_I-\hat{\lambda}s$};

\node at (-1.4, 2.9){$U_{a}$};
\node at (-1.0, 0.9){$U^{(\tau)}_{m_{2(3)}}$};
\node at (-1.0, -0.2){$U^{(\tau)}_{m_{1}}$};
\node at (-1.2, -3.2){$U_{b}$};

\end{tikzpicture}
\end{center}
\caption{Subcase 2: $\alpha_1\in \mathcal{R}_1$ and $\lambda^{(\tau)}_{1}(U_{b}, \tau^{2})<\dot{y}_{\alpha_1}< \lambda^{(\tau)}_{1}(U^{(\tau)}_{m_1},\tau^{2})$}\label{fig4.4}
\end{figure}

\emph{Subcase 2}:\ $\lambda^{(\tau)}_{1}(U_{b}, \tau^{2})<\dot{y}_{\alpha_1}< \lambda^{(\tau)}_{1}(U^{(\tau)}_{m_1},\tau^{2})$.
As shown in Fig. \ref{fig4.4}, we can decompose $I_{\mathcal{R}_1,2}$ into two terms:
\begin{align*}
I_{\mathcal{R}_1,2}&=\int^{y_I+\lambda^{(\tau)}_{1}(U^{(\tau)}_{m_1})s}_{y_I+\lambda^{(\tau)}_{1}(U_{b},\tau^{2})s}
\big|\mathcal{P}^{(\tau)}_{h}(\xi+s,\xi)(U_{h,\nu}(\xi,y))-U_{h, \nu}(\xi+s, y)\big|\,{\rm d}y\\[2pt]
&=\int^{y_I+\dot{y}_{\alpha_1}s}_{y_I+\lambda^{(\tau)}_{1}(U_{b},\tau^{2})s}
\big|\mathcal{P}^{(\tau)}_{h}(\xi+s,\xi)(U_{h,\nu}(\xi,y))-U_{h, \nu}(\xi+s, y)\big|\,{\rm d}y\\[2pt]
&\quad +\int^{y_I+\lambda^{(\tau)}_{1}(U^{(\tau)}_{m_1},\tau^2)s}_{y_I+\dot{y}_{\alpha_1}s}
\big|\mathcal{P}^{(\tau)}_{h}(\xi+s,\xi)(U_{h,\nu}(\xi,y))-U_{h, \nu}(\xi+s, y)\big|\,{\rm d}y.
\end{align*}
For the first term, it follows from \eqref{eq:4.6}, \eqref{eq:4.16c}, \eqref{eq:4.17}, and
Proposition A.1
that
\begin{eqnarray*}
&&\int^{y_I+\dot{y}_{\alpha_1}s}_{y_I+\lambda^{(\tau)}_{1}(U_{b},\tau^{2})s}
\big|\mathcal{P}^{(\tau)}_{h}(\xi+s,\xi)(U_{h,\nu}(\xi,y))-U_{h, \nu}(\xi+s, y)\big|\,{\rm d}y\\[2pt]
&&\, \leq \big(\dot{y}_{\alpha_1}-\lambda^{(\tau)}_{1}(U_{b},\tau^{2})\big)\big|\Phi^{(\tau)}_{1}(\sigma^{(\tau)}_{\beta_1}(\zeta); U_{b}, \tau^2)-U_{b}\big|s\\[2pt]
&&\, \leq C\Big(\big|\lambda_{1}(U_{a})-\lambda^{(\tau)}_{1}(U_{b},\tau^{2})\big|+2^{-\nu}\Big)|\sigma^{(\tau)}_{\beta_1}|s\\[4pt]
&&\, \leq C\big(\tau^{2}+\nu^{-1}+2^{-\nu}\big)|\sigma_{\alpha_1}|s.
\end{eqnarray*}
For the second term, further together with \eqref{eq:4.16}, we have
\begin{eqnarray*}
&&\int^{y_I+\lambda^{(\tau)}_{1}(U^{(\tau)}_{m_1},\tau^2)s}_{y_I+\dot{y}_{\alpha_1}s}
\big|\mathcal{P}^{(\tau)}_{h}(\xi+s,\xi)(U_{h,\nu}(\xi,y))-U_{h, \nu}(\xi+s, y)\big|\,{\rm d}y\\[2pt]
&&\, \leq \big(\lambda^{(\tau)}_{1}(U^{(\tau)}_{m_1},\tau^{2})-\dot{y}_{\alpha_1}\big)\big|\Phi^{(\tau)}_{1}(\sigma^{(\tau)}_{\beta_1}(\zeta); U_{b}, \tau^2)-U_{a}\big|s\\[2pt]
&&\, \leq C\Big(\big|\lambda_{1}(U_{a})-\lambda^{(\tau)}_{1}(U^{(\tau)}_{m_1},\tau^{2})\big|+2^{-\nu}\Big)\\
&&\qquad\quad\,\,\,\times \Big(\big|\Phi^{(\tau)}_{1}(\sigma^{(\tau)}_{\beta_1}(\zeta); U_{b}, \tau^2)-U_{b}\big|+|U_{a}-U_{b}|\Big)s\\[2pt]
&&\, \leq C\big((1+|\sigma_{\alpha_1}|)\tau^{2}+2^{-\nu}\big)\big(|\sigma^{(\tau)}_{\beta_1}|+|\sigma_{\alpha_1}|\big)s\\[2pt]
&&\, \leq C\big(\tau^{2}+2^{-\nu}\big)|\sigma_{\alpha_1}|s.
\end{eqnarray*}
Therefore, we obtain
\begin{eqnarray*}
I_{\mathcal{R}_1,2}\leq C\big(\tau^2+\nu^{-1}+2^{-\nu}\big)|\sigma_{\alpha_1}|s.
\end{eqnarray*}

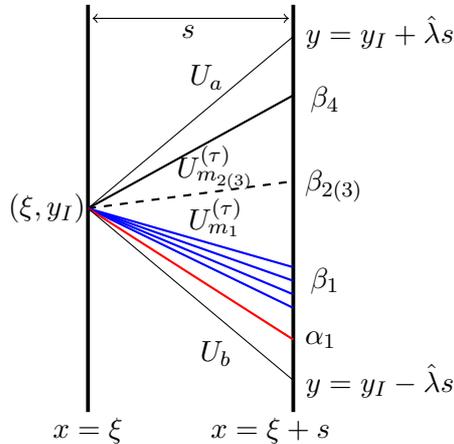
\begin{figure}[ht]
\begin{center}
\begin{tikzpicture}[scale=0.6]
\draw [line width=0.05cm](-4,4.5)--(-4,-4.5);
\draw [line width=0.05cm](0.5,4.5)--(0.5,-4.5);

\draw [thin](-4,0)--(0.5, 3.8);
\draw [thick](-4,0)--(0.5, 2.5);
\draw [thick][dashed](-4,0)--(0.5, 0.6);
\draw [thick][blue](-4,0)--(0.5, -1.3);
\draw [thick][blue](-4,0)--(0.5, -1.6);
\draw [thick][blue](-4,0)--(0.5, -1.9);
\draw [thick][blue](-4,0)--(0.5, -2.2);
\draw [thick][red](-4,0)--(0.5, -2.9);
\draw [thin](-4,0)--(0.5, -3.8);

\draw [thin][<->](-3.9,4.2)--(0.4,4.2);

\node at (2.6, 2) {$$};
\node at (-1.8, 3.9){$s$};
\node at (-4.9, 0) {$(\xi, y_I)$};
\node at (-4, -4.9) {$x=\xi$};
\node at (0, -4.9) {$x=\xi+s$};

\node at (2.4, 3.9) {$y=y_I+\hat{\lambda}s$};
\node at (1.2, 2.4) {$\beta_{4}$};
\node at (1.4, 0.5) {$\beta_{2(3)}$};
\node at (1.1, -2.9) {$\alpha_1$};
\node at (1.2, -1.6) {$\beta_{1}$};
\node at (2.4, -3.8) {$y=y_I-\hat{\lambda}s$};

\node at (-1.4, 2.9){$U_{a}$};
\node at (-1.2, 0.9){$U^{(\tau)}_{m_{2(3)}}$};
\node at (-1.2, -0.2){$U^{(\tau)}_{m_{1}}$};
\node at (-1.2, -3.2){$U_{b}$};

\end{tikzpicture}
\end{center}
\caption{Subcase 3: $\alpha\in \mathcal{R}_1$ and $\dot{y}_{\alpha_1}<\lambda^{(\tau)}_{1}(U_{b}, \tau^{2})$}\label{fig4.5}
\end{figure}

\emph{Subcase 3}:  $\dot{y}_{\alpha_1}<\lambda^{(\tau)}_{1}(U_{b}, \tau^{2})$. As shown in Fig. \ref{fig4.5},
\begin{align*}
I_{\mathcal{R}_1,2}&=\int^{y_I+\lambda^{(\tau)}_{1}(U^{(\tau)}_{m_1})s}_{y_I+\dot{y}_{\alpha_1}s}
\big|\mathcal{P}^{(\tau)}_{h}(\xi+s, \xi)(U_{h,\nu}(\xi,y))-U_{h, \nu}(\xi+s, y)\big|\,{\rm d}y\\[2pt]
&=\int^{y_I+\lambda^{(\tau)}_{1}(U_{b})s}_{y_I+\dot{y}_{\alpha_1}s}
\big|\mathcal{P}^{(\tau)}_{h}(\xi+s, \xi)(U_{h,\nu}(\xi,y))-U_{h, \nu}(\xi+s, y)\big|\,{\rm d}y\\[2pt]
&\quad +\int^{y_I+\lambda^{(\tau)}_{1}(U^{(\tau)}_{m_1})s}_{y_I+\lambda^{(\tau)}_{1}(U_{b})s}
\big|\mathcal{P}^{(\tau)}_{h}(\xi+s, \xi)(U_{h,\nu}(\xi,y))-U_{h, \nu}(\xi+s, y)\big|\,{\rm d}y.
\end{align*}

Then it follows from Lemma \ref{lem:4.2},
Proposition A.1,
and \eqref{eq:4.17} that
\begin{eqnarray*}
&&\int^{y_I+\lambda^{(\tau)}_{1}(U_{b})s}_{y_I+\dot{y}_{\alpha_1}s}
\big|\mathcal{P}^{(\tau)}_{h}(\xi+s, \xi)(U_{h,\nu}(\xi,y))-U_{h, \nu}(\xi+s, y)\big|\,{\rm d}y\\[2pt]
&&\, \leq C\big|\lambda^{(\tau)}_{1}(U_{b},\tau^{2})-\dot{y}_{\alpha_1}\big||U_{a}-U_{b}|s\\[2pt]
&&\, \leq C\Big(|\lambda^{(\tau)}_{1}(U_{b},\tau^{2})-\lambda_{1}(U_a)|+2^{-\nu}\Big)|\sigma_{\alpha_1}|s\\[2pt]
&&\, \leq C\big(\tau^{2}+\nu^{-1}+2^{-\nu}\big)|\sigma_{\alpha_1}|s,
\end{eqnarray*}
and
\begin{eqnarray*}
&&\int^{y_I+\lambda^{(\tau)}_{1}(U^{(\tau)}_{m_1})s}_{y_I+\lambda^{(\tau)}_{1}(U_{b})s}
\big|\mathcal{P}^{(\tau)}_{h}(\xi+s, \xi)(U_{h,\nu}(\xi,y))-U_{h, \nu}(\xi+s, y)\big|\,{\rm d}y\\[2pt]
&&\, \leq C\big|\lambda^{(\tau)}_{1}(U^{(\tau)}_{m_1},\tau^{2})-\lambda^{(\tau)}_{1}(U_b, \tau^2)\big|
\big|\Phi^{(\tau)}_{1}(\sigma^{(\tau)}_{\beta_1};U_{b},\tau^2)-U_{a}\big|s\\[2pt]
&&\, \leq C|\sigma^{(\tau)}_{\beta_1}|\big(|\sigma^{(\tau)}_{\beta_1}|+|\sigma_{\alpha_1}|\big)s\\[2pt]
&&\, \leq C\big(\tau^{2}+\nu^{-1}\big)|\sigma_{\alpha_1}|s.
\end{eqnarray*}
{
Thus, we also obtain in this subcase that
\begin{eqnarray*}
I_{\mathcal{R}_1,2}\leq C\big(\tau^2+\nu^{-1}+2^{-\nu}\big)|\sigma_{\alpha_1}|s.
\end{eqnarray*}
}

Now, for all the three subcases, we know that
$$
I_{\mathcal{R}_1,2}\leq C\big(\tau^{2}+\nu^{-1}+2^{-\nu}\big)|\sigma_{\alpha_1}|s.
$$
Then, combining the estimates for $I_{\mathcal{R}_1, 1}$, $I_{\mathcal{R}_1,2}$, and $I_{\mathcal{R}_1, 3}$ together,
we can choose a constant $C_{4,2}$ depending only on $(\underline{U}, a_{\infty})$ so that \eqref{eq:4.11} holds.

\medskip
3. We know that $\alpha_{k}\in \mathcal{C}_{k}$ for $k=2,3$. Without loss of generality,
we focus only on the case: $k=2$. In this case, we know that the solutions, $\mathcal{P}^{(\tau)}_{h}(\xi+s,\xi)(U_{h,\nu}(\xi,y))$, satisfy
\begin{eqnarray*}
\Phi^{(\tau)}(\boldsymbol{\sigma}^{(\tau)}_{\boldsymbol{\beta}}; U_{b},\tau^2)=\Phi_2(\sigma_{\alpha_2};U_{b})
\qquad \mbox{for $\boldsymbol{\sigma}^{(\tau)}_{\boldsymbol{\beta}}=(\sigma^{(\tau)}_{\beta_1},\sigma^{(\tau)}_{\beta_2},\sigma^{(\tau)}_{\beta_3},
\sigma^{(\tau)}_{\beta_4})$}.
\end{eqnarray*}

By Lemma \ref{lem:4.2}, there exists a unique solution $\boldsymbol{\sigma}^{(\tau)}_{\boldsymbol{\beta}}(\sigma^{(\tau)}_{\beta_1},\sigma^{(\tau)}_{\beta_2},\sigma^{(\tau)}_{\beta_3},
\sigma^{(\tau)}_{\beta_4})$ of the above equation, which satisfies the estimates:
\begin{eqnarray}\label{eq:4.18}
\sigma^{(\tau)}_{\beta_2}=\sigma_{\alpha_2}+O(1)|\sigma_{\alpha_2}|\tau^2, \qquad\,\, \sigma^{(\tau)}_{\beta_j}=O(1)|\sigma_{\alpha_2}|\tau^2
\quad \mbox{for $j\neq2$}.
\end{eqnarray}

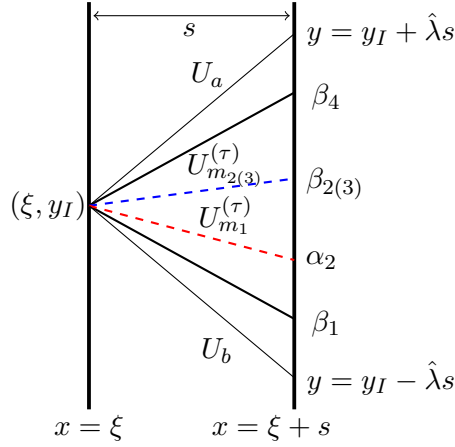
\begin{figure}[ht]
\begin{center}
\begin{tikzpicture}[scale=0.6]
\draw [line width=0.05cm](-4,4.5)--(-4,-4.5);
\draw [line width=0.05cm](0.5,4.5)--(0.5,-4.5);

\draw [thin](-4,0)--(0.5, 3.8);
\draw [thick](-4,0)--(0.5, 2.5);
\draw [thick][dashed][blue](-4,0)--(0.5, 0.6);
\draw [thick](-4,0)--(0.5, -2.5);
\draw [thick][dashed][red](-4,0)--(0.5, -1.2);
\draw [thin](-4,0)--(0.5, -3.8);

\draw [thin][<->](-3.9,4.2)--(0.4,4.2);

\node at (2.6, 2) {$$};
\node at (-1.8, 3.9){$s$};
\node at (-4.9, 0) {$(\xi, y_I)$};
\node at (-4, -4.9) {$x=\xi$};
\node at (0, -4.9) {$x=\xi+s$};

\node at (2.4, 3.9) {$y=y_I+\hat{\lambda}s$};
\node at (1.2, 2.4) {$\beta_{4}$};
\node at (1.4, 0.5) {$\beta_{2(3)}$};
\node at (1.1, -1.2) {$\alpha_2$};
\node at (1.2, -2.6) {$\beta_{1}$};
\node at (2.4, -3.8) {$y=y_I-\hat{\lambda}s$};

\node at (-1.4, 2.9){$U_{a}$};
\node at (-1.0, 0.9){$U^{(\tau)}_{m_{2(3)}}$};
\node at (-1.0, -0.2){$U^{(\tau)}_{m_{1}}$};
\node at (-1.2, -3.2){$U_{b}$};

\end{tikzpicture}
\end{center}
\caption{Comparison of the Riemann solutions for the case: $\alpha_2\in \mathcal{C}_2$}\label{fig4.2}
\end{figure}

Denoted by $U^{(\tau)}_{j}, j=1,2,3$, the corresponding intermediate states of the Riemann solutions to \eqref{eq:2.12} (see Fig. \ref{fig4.2}).
Note that speeds $\dot{\mathcal{S}}_{2}(\sigma_{\alpha_2})$ and $\dot{\mathcal{S}}^{(\tau)}_{2}(\sigma^{(\tau)}_{\beta_{2}})$ of fronts $\alpha_2$ and $\beta_{2}$ satisfy that $\dot{\mathcal{S}}_{2}(\sigma_{\alpha_2})=\lambda_{2}(U_{b})$ and
\begin{align*}
\dot{\mathcal{S}}^{(\tau)}_{2}(\sigma^{(\tau)}_{\beta_{2}},\tau^{2})&=\lambda^{(\tau)}_{2}(U^{(\tau)}_{m_{1}},\tau^{2})\\[2pt]
&=\lambda^{(\tau)}_{2}(U_{b},\tau^{2})+O(1)|U^{(\tau)}_{m_1}-U_{b}|\\[2pt]
&=\lambda^{(\tau)}_{2}(U_{b},\tau^{2})+O(1)|\sigma^{(\tau)}_{\beta_{1}}|\\[2pt]
&=\lambda^{(\tau)}_{2}(U_{b},\tau^{2})+O(1)|\sigma_{\alpha_1}|\tau^{2}.
\end{align*}

Thus, using \eqref{eq:3.5a},
we have
\begin{align*}
\dot{\mathcal{S}}^{(\tau)}_{2}(\sigma^{(\tau)}_{\beta_{2}},\tau^{2})-\dot{\mathcal{S}}_{2}(\sigma_{\alpha_2})
&=\lambda^{(\tau)}_{2}(U_{b},\tau^{2})+O(1)|\sigma_{\alpha_2}|\tau^{2}-\lambda_{2}(U_{b})\\
&=O(1)\big(1+|\sigma_{\alpha_2}|\big)\tau^{2}.
\end{align*}

Now, notice that
\begin{eqnarray*}
&&\int^{y_I+\hat{\lambda}s}_{y_I-\hat{\lambda}s}\big|\mathcal{P}^{(\tau)}_{h}(\xi+s,\tilde{x})(U_{h,\nu}(\tilde{x},y))
-U_{h,\nu}(\xi+s,y)\big|\,{\rm d}y\\[2pt]
&&=\int^{\min\{y_I+\dot{\mathcal{S}}^{(\tau)}_{2}(\sigma^{(\tau)}_{\beta_2}, \,\tau^{2})s,y_I+\dot{y}_{\alpha_2}s\}}
_{y_I-\hat{\lambda}s}\big|\mathcal{P}^{(\tau)}_{h}(\xi+s,\xi)(U_{h,\nu}(\xi,\cdot))-U_{h,\nu}(\xi+s,y)\big|\,{\rm d}y\\[2pt]
&&\quad\ +\int^{\max\{y_{I}+\dot{\mathcal{S}}^{(\tau)}_{2}(\sigma^{(\tau)}_{\beta_2}, \,\tau^{2})s,\tilde{y}+\dot{y}_{\alpha_2}s\}}
_{\min\{y_I+\dot{\mathcal{S}}^{(\tau)}_{2}(\sigma^{(\tau)}_{\beta_2}, \,\tau^{2})s, \,y_I+\dot{y}_{\alpha_2}s\}}
\big|\mathcal{P}^{(\tau)}_{h}(\xi+s,\xi)(U_{h,\nu}(\xi,y))-U_{h,\nu}(\xi+s,y)\big|\,{\rm d}y\\[2pt]
&&\quad \ +\int^{y_I+\hat{\lambda}s}
_{\max\{y_I+\dot{\mathcal{S}}^{(\tau)}_{2}(\sigma^{(\tau)}_{\beta_2}, \,\tau^{2})s,y_I+\dot{y}_{\alpha_2}s\}}
\big|\mathcal{P}^{(\tau)}_{h}(\xi+s,\xi)(U_{h,\nu}(\xi,y))-U_{h,\nu}(\xi+s,y)\big|\,{\rm d}y\\[4pt]
&&\doteq I_{\mathcal{C}_2,1}+I_{\mathcal{C}_2,2}+I_{\mathcal{C}_2,3}.
\end{eqnarray*}

It is clear that $I_{\mathcal{C}_2,1}=0$.

For $I_{\mathcal{C}_2, 2}$, if $\dot{y}_{\alpha_2}\leq \dot{\mathcal{S}}^{(\tau)}_{2}(\sigma^{(\tau)}_{\beta_{2}},\tau^{2})$, then
\begin{align*}
I_{\mathcal{C}_2, 2}
&\leq C\big|\dot{\mathcal{S}}^{(\tau)}_{2}(\sigma^{(\tau)}_{\beta_{2}},\tau^{2})-\dot{y}_{\alpha_2}\big||U^{(\tau)}_{m_{2}}-U_{b}|s\\[2pt]
&\leq C\Big(\big|\dot{\mathcal{S}}^{(\tau)}_{2}(\sigma^{(\tau)}_{\beta_{2}},\tau^{2})-\dot{\mathcal{S}}_{2}(\sigma_{\alpha_2})\big|+2^{-\nu}\Big)
\Big(\sum_{j=1,2}|\sigma^{(\tau)}_{\beta_j}|\Big)s\\[2pt]
&\leq C\Big(\big(1+|\sigma_{\alpha_2}|\big)\tau^{2}+2^{-\nu}\Big)\big(1+ C\,\tau^{2}\big)|\sigma_{\alpha_2}|s\\[2pt]
&\leq C\big(\tau^2+2^{-\nu}\big)|\sigma_{\alpha_2}|s;
\end{align*}
otherwise, if $\dot{y}_{\alpha_2}\geq \dot{\mathcal{S}}^{(\tau)}_{2}(\sigma^{(\tau)}_{\beta_{2}},\tau^{2})$, then
\begin{align*}
I_{\mathcal{C}_2, 2}
&\leq C\big|\dot{y}_{\alpha_2}-\dot{\mathcal{S}}^{(\tau)}_{2}(\sigma^{(\tau)}_{\beta_{2}},\tau^{2})\big||U^{(\tau)}_{m_{2}}-U_{a}|s\\[2pt]
&\leq C\Big(\big|\dot{\mathcal{S}}^{(\tau)}_{2}(\sigma^{(\tau)}_{\beta_{2}},\tau^{2})-\dot{\mathcal{S}}_{2}(\sigma_{\alpha_2})\big|+2^{-\nu}\Big)
\Big(\sum_{k=3,4}|\sigma^{(\tau)}_{\beta_k}|\Big)s\\[2pt]
&\leq C\Big(\big(1+|\sigma_{\alpha_2}|\big)\tau^{2}+2^{-\nu}\Big)|\sigma_{\alpha_2}|\tau^{2}s\\[2pt]
&\leq C|\sigma_{\alpha_2}|\tau^2s.
\end{align*}
This implies that
$$
I_{\mathcal{C}_2, 2}\leq C\big(\tau^2+2^{-\nu}\big)|\sigma_{\alpha_2}|s.
$$

For $I_{\mathcal{C}_2,3}$, it follows from estimates \eqref{eq:4.18} that
\begin{align*}
I_{\mathcal{C}_2,3}
&\leq C\big(|U^{(\tau)}_{m_{2}}-U_{m_3}|+|U^{(\tau)}_{m_{3}}-U_{a}|\big)s\\[2pt]
&\leq C\big(|\sigma^{(\tau)}_{\beta_3}|+|\sigma^{(\tau)}_{\beta_4}|\big)s\\[2pt]
&\leq C|\sigma_{\alpha_2}|\tau^{2}s.
\end{align*}

Therefore, we collect the estimates for $I_{\mathcal{C}_2,1}$, $I_{\mathcal{C}_2,2}$, and $I_{\mathcal{C}_2,3}$ altogether
to conclude that there exists a constant {$C_{3,3}>0$} depending only on $(\underline{U},a_{\infty})$
such that estimate \eqref{eq:4.12} holds.

\medskip
4. In this case, solutions $\mathcal{P}^{(\tau)}_{h}(\xi+s,\xi)(U_{h,\nu}(\xi,y))$ satisfy
\begin{eqnarray*}
U_{a}=\Phi^{(\tau)}(\boldsymbol{\sigma}^{(\tau)}_{\boldsymbol{\beta}}; U_{b},\tau^2) \qquad\,\,
\mbox{for $\boldsymbol{\sigma}^{(\tau)}_{\boldsymbol{\beta}}=(\sigma^{(\tau)}_{\beta_1},\sigma^{(\tau)}_{\beta_2},\sigma^{(\tau)}_{\beta_3},
\sigma^{(\tau)}_{\beta_4})$}.
\end{eqnarray*}

Applying Lemma \ref{lem:4.2}, we have
\begin{eqnarray}
|\sigma^{(\tau)}_{\beta_{j}}|=O(1)\sigma_{\alpha_{\mathcal{NP}}}  \qquad \mbox{for $1\leq j\leq 4$}.
\end{eqnarray}

\begin{figure}[ht]
\begin{center}
\begin{tikzpicture}[scale=0.6]
\draw [line width=0.05cm](-4,4.5)--(-4,-4.5);
\draw [line width=0.05cm](0.5,4.5)--(0.5,-4.5);

\draw [thick][red](-4,0)--(0.5, 3.8);
\draw [thick](-4,0)--(0.5, 2.5);
\draw [thick][dashed](-4,0)--(0.5, 0.6);
\draw [thick](-4,0)--(0.5, -1.3);
\draw [thin](-4,0)--(0.5, -2.9);

\draw [thin][<->](-3.9,4.2)--(0.4,4.2);

\node at (2.6, 2) {$$};
\node at (-1.8, 3.9){$s$};
\node at (-4.9, 0) {$(\xi, y_I)$};
\node at (-4, -4.9) {$x=\xi$};
\node at (0, -4.9) {$x=\xi+s$};

\node at (1.4, 3.9) {$\alpha_{\mathcal{NP}}$};
\node at (1.2, 2.4) {$\beta_{4}$};
\node at (1.4, 0.5) {$\beta_{2(3)}$};
\node at (1.2, -1.6) {$\beta_{1}$};
\node at (2.4, -2.9) {$y=y_I-\hat{\lambda}s$};

\node at (-1.4, 2.9){$U_{a}$};
\node at (-1.2, 0.9){$U^{(\tau)}_{m_{2}}$};
\node at (-1.2, -0.2){$U^{(\tau)}_{m_{1}}$};
\node at (-1.2, -3.2){$U_{b}$};

\end{tikzpicture}
\end{center}
\caption{Comparison of the Riemann solutions for the case: $\alpha_{\mathcal{NP}}\in \mathcal{NP}$}\label{fig4.6}
\end{figure}
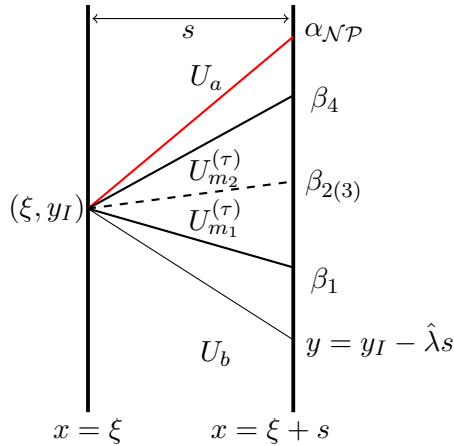

As shown in Fig. \ref{fig4.6}, let $U^{(\tau)}_{m_j}, 1\leq j\leq 3$, be the intermediate states of solutions
$\mathcal{P}^{(\tau)}_{h}(\xi+s,\xi)(U_{h,\nu}(\xi,y))$
and satisfy \eqref{eq:2.12}.
Then
\begin{eqnarray*}
&&\int^{y_I+\hat{\lambda}s}_{y_I-\hat{\lambda}s}\big|\mathcal{P}^{(\tau)}(\xi+s, \xi)(U_{h,\nu}(\xi,y))-U_{h,\nu}(\xi+s,y)\big|\,{\rm d}y\\[2pt]
&&\leq C\Big(|U_{a}-U_{b}|+\sum^{3}_{j=1}|U^{(\tau)}_{m_{j}}-U_{b}|\Big)s\\[2pt]
&&\leq C\Big(|\sigma_{\alpha_{\mathcal{NP}}}|+\sum^{4}_{j=1}|\sigma^{(\tau)}_{\beta_{j}}|\Big)s\\[2pt]
&&\leq C_{4,4}|\sigma_{\alpha_{\mathcal{NP}}}|s,
\end{eqnarray*}
where $C_{4,4}>0$ depends only on $(\underline{U}, a_{\infty})$. This completes the proof.
\end{proof}

Next, based on Proposition \ref{prop:4.1}, we consider the more general case when the approximate solution $U_{h,\nu}(\xi+s,y)$
contains more than one wave.

\begin{proposition}\label{prop:4.2}
Let $\mathcal{P}^{(\tau)}_{h}$ be the Lipschitz continuous map as given in {\rm Proposition \ref{prop:2.2}} for \emph{Problem I}.
Let $U_{h,\nu}(\xi,y)$ be the approximate solution constructed in {\rm Appendix A} for system \eqref{eq:1.14}
with $U_a$, $U_{b}\in \mathcal{O}_{\min\{\varepsilon_0,\tilde{\varepsilon}_0\}}(\underline{U})$ defined by \eqref{eq:4.1}.
Assume that, near point $(\xi,y_I)$, $U_{h,\nu}(\xi,y)$ is of the form{\rm :}
\begin{equation}\label{eq:4.20}
U_{h,\nu}(\xi+s,y)=\left\{
\begin{array}{llll}
U_{a}, \quad  &y>y_{I}+\hat{\lambda}s,\\[2pt]
\hat{U}_{a}, \quad  &y_{I}+\dot{y}_{\alpha_\ell}s< y<y_{I}+\hat{\lambda}s,\\[2pt]
U_{m}, \quad   &y_{I}+\dot{y}_{\alpha_k}s<y< y_{I}+\dot{y}_{\alpha_\ell}s,\\[2pt]
U_{b}, \quad  &y<y_{I}+\dot{y}_{\alpha_k}s,
\end{array}
\right.
\end{equation}
where $\hat{\lambda}>\max_{1\leq j\leq 4}\big\{\lambda^{(\tau)}_{j}(U^{(\tau)},\tau^{2}), \lambda_{j}(U)\big\}$ for $U^{(\tau)}$,
$U\in \mathcal{O}_{\min\{\varepsilon_0, \tilde{\varepsilon}_0\}}(\underline{U})$,
$\tau \in(0,\min\{\tau_{0},\tilde{\tau}_0\})$,
and $|\dot{y}_{\alpha_k}|$, $|\dot{y}_{\alpha_\ell}|<\hat{\lambda}$ for $k<\ell$ with $1\leq k,\ell \leq 4$.
That is, $U_b$ and $U_{m}$ are connected by a $k^{\rm th}$ wave that can be a shock $\alpha_k\in \mathcal{S}_k$
with $|\dot{y}_{\alpha_k}-\dot{\mathcal{S}}_{k}(\sigma_{\alpha_k})|<2^{-\nu}$,
rarefaction-front $\alpha_k\in \mathcal{R}_k$ with $|\dot{y}_{\alpha_k}-\lambda_{k}(U_m)|<2^{-\nu}$,
or vortex sheet/entropy wave $\alpha_k\in \mathcal{C}_k$ with $|\dot{y}_{\alpha_k}-\lambda_{k}(U_b)|<2^{-\nu}${\rm ;}
$U_m$ and $\hat{U}_a$ are connected by an $l^{\rm th}$ wave that could be shock $\alpha_\ell\in \mathcal{S}_\ell$
with $|\dot{y}_{\alpha_\ell}-\dot{\mathcal{S}}_{\ell}(\sigma_{\alpha_\ell})|<2^{-\nu}$,
rarefaction-front $\alpha_\ell\in \mathcal{R}_\ell$ with $|\dot{y}_{\alpha_\ell}-\lambda_{\ell}(\hat{U}_a)|<2^{-\nu}$,
or vortex sheet/entropy wave $\alpha_\ell\in \mathcal{C}_\ell$ with $|\dot{y}_{\alpha_\ell}-\lambda_{\ell}(U_m)|<2^{-\nu}${\rm ;}
and $\hat{U}_a$ and $U_{a}$ are connected by a non-physical wave $\alpha_{\mathcal{NP}}\in \mathcal{NP}$,
where $\dot{\mathcal{S}}_{k}(\sigma_{\alpha_k})$, $\lambda_k(U_m)$, and $\lambda_k(U_b)$ are the speeds of $k^{\rm th}$ shock-front,
rarefaction-front, and vortex sheet/entropy wave-front, respectively, for $1\leq k\leq 4$.
Then, for $s>0$ sufficiently small,
\begin{align}\label{eq:4.21}
 &\int^{y_I+\hat{\lambda}s}_{y_I-\hat{\lambda}s}\big|\mathcal{P}^{(\tau)}_{h}(\xi+s, \xi)(U_{h,\nu}(\xi,y))-U_{h,\nu}(\xi+s,y)\big|\,{\rm d}y\nonumber\\[2pt]
&\,\, \leq {C_{3,5}}\Big(\big(\tau^2+\nu^{-1}+2^{-\nu}\big)\big(|\sigma_{\alpha_{k}}|+|\sigma_{\alpha_\ell}|\big)+\sigma_{\alpha_{\mathcal{NP}}}\Big)s,
\end{align}
where constant {$C_{3,5}>0$} depends only on $(\underline{U},a_{\infty})$.
\end{proposition}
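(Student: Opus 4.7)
The strategy extends the proof of Proposition~\ref{prop:4.1} from single-wave to multi-wave comparisons. Since $U_{h,\nu}(\xi,y)$ equals $U_b$ below $y_I$ and $U_a$ above, $\mathcal{P}^{(\tau)}_{h}(\xi+s,\xi)$ resolves this jump into four elementary waves $\boldsymbol{\sigma}^{(\tau)}_{\boldsymbol{\beta}}=(\sigma^{(\tau)}_{\beta_1},\ldots,\sigma^{(\tau)}_{\beta_4})$ via the scaled full Euler system, while $U_{h,\nu}(\xi+s,\cdot)$ carries three outgoing waves $\alpha_k,\alpha_\ell,\alpha_{\mathcal{NP}}$ connecting the same end-states through intermediate values $U_m$ and $\hat{U}_a$. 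First I will prove a multi-wave extension of Lemma~\ref{lem:4.2} relating $\boldsymbol{\sigma}^{(\tau)}_{\boldsymbol{\beta}}$ to $(\sigma_{\alpha_k},\sigma_{\alpha_\ell},\sigma_{\alpha_{\mathcal{NP}}})$, and then I will mimic the region-by-region integration already carried out in the proof of Proposition~\ref{prop:4.1}.

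\textbf{Step 1 (extended strength estimate).} Writing $\hat{U}_a=\Phi_\ell(\sigma_{\alpha_\ell};\Phi_k(\sigma_{\alpha_k};U_b))$ and setting $\boldsymbol{\sigma}^{(0)}_j\doteq\delta_{jk}\sigma_{\alpha_k}+\delta_{j\ell}\sigma_{\alpha_\ell}$, one has $\Phi(\boldsymbol{\sigma}^{(0)};U_b)=\hat{U}_a$, while $|U_a-\hat{U}_a|=\sigma_{\alpha_{\mathcal{NP}}}$. Lemma~\ref{lem:4.1} allows us to write $\Phi^{(\tau)}(\boldsymbol{\sigma};U_b,\tau^2)=\Phi(\boldsymbol{\sigma};U_b)+\tau^2\boldsymbol{\Psi}(\boldsymbol{\sigma},U_b,\tau^2)$ with $\boldsymbol{\Psi}(0,U_b,\tau^2)=0$. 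Applying the implicit function theorem to $\Phi^{(\tau)}(\boldsymbol{\sigma}^{(\tau)}_{\boldsymbol{\beta}};U_b,\tau^2)=U_a$ at the reference point $\boldsymbol{\sigma}^{(0)}$ yields
\begin{equation*}
\sigma^{(\tau)}_{\beta_j}=\delta_{jk}\sigma_{\alpha_k}+\delta_{j\ell}\sigma_{\alpha_\ell}+O(1)\tau^{2}\bigl(|\sigma_{\alpha_k}|+|\sigma_{\alpha_\ell}|\bigr)+O(1)\sigma_{\alpha_{\mathcal{NP}}},\qquad 1\le j\le 4,
\end{equation*}
with $O(1)$ bounds depending only on $(\underline{U},a_\infty)$. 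In particular, for $j\ne k,\ell$ the waves $\beta^{(\tau)}_j$ are weak, of strength at most $O\bigl(\tau^{2}(|\sigma_{\alpha_k}|+|\sigma_{\alpha_\ell}|)+\sigma_{\alpha_{\mathcal{NP}}}\bigr)$.

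\textbf{Step 2 (speeds and region-by-region integration).} Using the explicit shock, rarefaction and contact speed formulas recalled in the proof of Proposition~\ref{prop:4.1} (cf.\ \eqref{eq:4.5}, \eqref{eq:4.16}, \eqref{eq:4.17}), I will show that $\lambda^{(\tau)}_{k},\lambda^{(\tau)}_{\ell}$ (or the corresponding shock speeds) differ from $\dot y_{\alpha_k},\dot y_{\alpha_\ell}$ by at most $O(\tau^{2}+2^{-\nu}+\sigma_{\alpha_{\mathcal{NP}}})$. I then partition $[y_I-\hat{\lambda}s,\,y_I+\hat{\lambda}s]$ into subintervals delimited by at most seven wave positions $y_I+\lambda^{(\tau)}_j s$ for $j=1,\ldots,4$ together with $y_I+\dot y_{\alpha_k}s$, $y_I+\dot y_{\alpha_\ell}s$, $y_I+\hat{\lambda}s$. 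On the subintervals surrounding the $k$-th and $\ell$-th waves the situation reduces to one of the cases (i)--(iii) of Proposition~\ref{prop:4.1} and contributes $O\bigl((\tau^{2}+\nu^{-1}+2^{-\nu})(|\sigma_{\alpha_k}|+|\sigma_{\alpha_\ell}|)s\bigr)$; the subinterval surrounding $y_I+\hat{\lambda}s$ produces the non-physical contribution $O(\sigma_{\alpha_{\mathcal{NP}}}s)$ as in case (iv); and the subintervals containing only the weak extra waves $\beta^{(\tau)}_j$ with $j\ne k,\ell$ have width $O(s)$ and integrand bounded by $O(|\sigma^{(\tau)}_{\beta_j}|)$, contributing $O\bigl(\tau^{2}(|\sigma_{\alpha_k}|+|\sigma_{\alpha_\ell}|)s+\sigma_{\alpha_{\mathcal{NP}}}s\bigr)$, which is already within the target bound.

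\textbf{Main obstacle.} The delicate point is to preserve the asymmetry of the target estimate: $|\sigma_{\alpha_k}|$ and $|\sigma_{\alpha_\ell}|$ must retain the small prefactor $(\tau^{2}+\nu^{-1}+2^{-\nu})$, whereas $\sigma_{\alpha_{\mathcal{NP}}}$ is only multiplied by $s$. This forces a careful separation of error contributions: terms arising from the $\tau^{2}$-discrepancy between the two systems (which carry the small prefactor) must be kept distinct from those arising from the intrinsic non-physical correction $U_a-\hat{U}_a$ (which persists even at $\tau=0$ and can therefore only be controlled by $\sigma_{\alpha_{\mathcal{NP}}}$). Verifying that the non-physical error terms never multiply $|\sigma_{\alpha_k}|$ or $|\sigma_{\alpha_\ell}|$ alone---only in combinations already carrying an additional small factor---is the principal technical challenge, and its resolution hinges on the structure of the implicit-function-theorem expansion in Step~1.
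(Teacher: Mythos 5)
Your proposal is correct and follows the same overall architecture as the paper's proof: first establish the wave-strength comparison
\(\sigma^{(\tau)}_{\gamma_j}=\delta_{jk}\sigma_{\alpha_k}+\delta_{j\ell}\sigma_{\alpha_\ell}
+O(1)\tau^{2}(|\sigma_{\alpha_k}|+|\sigma_{\alpha_\ell}|)+O(1)\sigma_{\alpha_{\mathcal{NP}}}\)
(the paper's \eqref{eq:4.28}), then repeat the region-by-region integration of Proposition \ref{prop:4.1}, which the paper itself omits as ``long but the same.'' The one genuine difference is how you derive the strength estimate. The paper factors the comparison into three pieces: it first splits off the non-physical contribution by the implicit function theorem (\(\sigma^{(\tau)}_{\gamma_j}=\hat{\sigma}^{(\tau)}_{\hat{\gamma}_j}+O(1)\sigma_{\alpha_{\mathcal{NP}}}\)), then applies Lemma \ref{lem:4.2} to each of the two physical waves \(\alpha_k\), \(\alpha_\ell\) separately to get their \(\tau\)-system strengths \(\hat{\sigma}'\), \(\hat{\sigma}''\), and finally recombines them using the Glimm interaction estimate of Lemma \ref{lem:2.4}, whose quadratic error \(\Delta(\hat{\boldsymbol{\sigma}}',\hat{\boldsymbol{\sigma}}'')\) is absorbed because every product it contains carries a factor \(O(\tau^{2})\). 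You instead linearize \(\Phi^{(\tau)}=\Phi+\tau^{2}\boldsymbol{\Psi}\) with \(\boldsymbol{\Psi}(0,\cdot,\cdot)=0\) and apply the quantitative implicit function theorem once, directly at the reference vector \(\boldsymbol{\sigma}^{(0)}\) with \(\Phi(\boldsymbol{\sigma}^{(0)};U_b)=\hat{U}_a\) (valid since \(k<\ell\) matches the composition order in \eqref{eq:2.10}). Your route is more direct and avoids invoking the interaction estimate altogether; the paper's route reuses already-proved lemmas at the cost of tracking the quadratic interaction terms. Both yield the same decomposition, and your handling of the asymmetry between the \((\tau^{2}+\nu^{-1}+2^{-\nu})\)-weighted physical strengths and the bare \(\sigma_{\alpha_{\mathcal{NP}}}\) term is consistent with the target bound, since any stray product \(\sigma_{\alpha_{\mathcal{NP}}}|\sigma_{\alpha_k}|\) is dominated by \(C\sigma_{\alpha_{\mathcal{NP}}}\).
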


\begin{proof}
By Proposition \ref{prop:2.2}, solution $\mathcal{P}^{(\tau)}_{h}(\xi+s,\xi)(U_{h,\nu}(\xi,y))$ satisfies
\begin{equation}\label{eq:4.22}
U_{a}=\Phi^{(\tau)}(\boldsymbol{\sigma}^{(\tau)}_{\boldsymbol{\gamma}}; U_{b}, \tau^{2}),\,
\hat{U}_{a}=\Phi_{\ell}(\sigma_{\alpha_\ell}; U_{m}),\, U_{m}=\Phi_{k}(\sigma_{\alpha_k}; U_{b}),
\, \sigma_{\alpha_{\mathcal{NP}}}=|U_{a}-\hat{U}_a|,
\end{equation}
where $\boldsymbol{\sigma}^{(\tau)}_{\boldsymbol{\gamma}}=(\sigma^{(\tau)}_{\gamma_{1}}, \sigma^{(\tau)}_{\gamma_{2}},\sigma^{(\tau)}_{\gamma_{3}}, \sigma^{(\tau)}_{\gamma_{4}})$.
Define
\begin{equation}\label{eq:4.23}
\Phi^{(\tau)}(\hat{\boldsymbol{\sigma}}^{(\tau)}_{\hat{\boldsymbol{\gamma}}}; U_{b}, \tau^{2})
=\Phi_{\ell}(\sigma_{\alpha_\ell}; \Phi_{k}(\sigma_{\alpha_k}; U_{b}))
\quad\,\,\mbox{for $\hat{\boldsymbol{\sigma}}^{(\tau)}_{\hat{\boldsymbol{\gamma}}}=(\hat{\sigma}^{(\tau)}_{\hat{\gamma}_{1}},
 \hat{\sigma}^{(\tau)}_{\hat{\gamma}_{2}},\hat{\sigma}^{(\tau)}_{\hat{\gamma}_{3}}, \hat{\sigma}^{(\tau)}_{\hat{\gamma}_{4}})$}.
\end{equation}

By Lemma \ref{lem:2.3}, the implicit function theorem, and \eqref{eq:4.22}--\eqref{eq:4.23},
for $U_b\in \mathcal{O}_{\min\{\varepsilon_0,\tilde{\varepsilon}_0\}}(\underline{U})$ and $\tau\in (0,\min\{\tau_0,\tilde{\tau}_0\})$,
\begin{eqnarray}\label{eq:4.24}
\sigma^{(\tau)}_{\gamma_j}=\hat{\sigma}^{(\tau)}_{\hat{\gamma}_j}+O(1)\sigma_{\alpha_{\mathcal{NP}}} \qquad \mbox{for $1\leq j\leq 4$}.
\end{eqnarray}
Thus, to estimate $\sigma^{(\tau)}_{\gamma_j}$ for $1\leq j\leq 4$ for the completion of the proof of estimate \eqref{eq:4.21},
one needs to estimate $\hat{\sigma}^{(\tau)}_{\hat{\gamma}_j}$ for $1\leq j\leq 4$.

Let $\boldsymbol{\sigma}^{(\tau)}_{\hat{\boldsymbol{\gamma}}'}
=(\hat{\sigma}^{(\tau)}_{\hat{\gamma}'_{1}}, \hat{\sigma}^{(\tau)}_{\hat{\gamma}'_{2}},\hat{\sigma}^{(\tau)}_{\hat{\gamma}'_{3}}, \hat{\sigma}^{(\tau)}_{\hat{\gamma}'_{4}})$
and $\boldsymbol{\sigma}^{(\tau)}_{\hat{\boldsymbol{\gamma}}''}
=(\hat{\sigma}^{(\tau)}_{\hat{\gamma}''_{1}}, \hat{\sigma}^{(\tau)}_{\hat{\gamma}''_{2}},\hat{\sigma}^{(\tau)}_{\hat{\gamma}''_{3}}, \hat{\sigma}^{(\tau)}_{\hat{\gamma}''_{4}})$
satisfy
\begin{eqnarray}\label{eq:4.25}
\hat{U}_a=\Phi^{(\tau)}(\hat{\boldsymbol{\sigma}}^{(\tau)}_{\hat{\boldsymbol{\gamma}}''}; U_{m}, \tau^{2}),\qquad
\hat{U}_m=\Phi^{(\tau)}(\hat{\boldsymbol{\sigma}}^{(\tau)}_{\hat{\boldsymbol{\gamma}}'}; U_{b}, \tau^{2})
\end{eqnarray}

Then, by \eqref{eq:4.22} and Lemma \ref{lem:4.2}, we have
\begin{equation}\label{eq:4.26}
\hat{\sigma}^{(\tau)}_{\hat{\gamma}'_{j}}
=\delta_{jk}\sigma_{\alpha_k}+O(1)|\sigma_{\alpha_k}|\tau^{2},\quad
\hat{\sigma}^{(\tau)}_{\hat{\gamma}''_{j}}
=\delta_{j\ell}\sigma_{\alpha_\ell}+O(1)|\sigma_{\alpha_\ell}|\tau^{2}
\qquad\,\, \mbox{for $1\leq j\leq 4$}.
\end{equation}

By \eqref{eq:4.23}, \eqref{eq:4.25}, and Lemma \ref{lem:2.4}, we have
\begin{eqnarray}\label{eq:4.27}
\hat{\sigma}^{(\tau)}_{\hat{\gamma}_{j}}=\hat{\sigma}^{(\tau)}_{\hat{\gamma}'_{j}}+\hat{\sigma}^{(\tau)}_{\hat{\gamma}''_{j}}
+O(1)\Delta\big(\boldsymbol{\sigma}^{(\tau)}_{\hat{\boldsymbol{\gamma}}'}, \boldsymbol{\sigma}^{(\tau)}_{\hat{\boldsymbol{\gamma}}''}\big),
\end{eqnarray}
where
$$\Delta\big(\boldsymbol{\sigma}^{(\tau)}_{\hat{\boldsymbol{\gamma}}'}, \boldsymbol{\sigma}^{(\tau)}_{\hat{\boldsymbol{\gamma}}''}\big)=\Big(\sum^{4}_{j=2}|\hat{\sigma}^{(\tau)}_{\hat{\gamma}'_{j}}|\Big)
|\hat{\sigma}^{(\tau)}_{\hat{\gamma}''_{1}}|+\Big(\sum^{3}_{j=2}|\hat{\sigma}^{(\tau)}_{\hat{\gamma}'_{j}}|\Big)
|\hat{\sigma}^{(\tau)}_{\hat{\gamma}''_{4}}|+\sum^{4}_{j=1}\Delta_{j}\big(\boldsymbol{\sigma}^{(\tau)}_{\hat{\boldsymbol{\gamma}}'}, \boldsymbol{\sigma}^{(\tau)}_{\hat{\boldsymbol{\gamma}}''}\big),$$
and
\begin{equation*}
\left.\Delta_{j}\right(\boldsymbol{\sigma}^{(\tau)}_{\hat{\boldsymbol{\gamma}}'}, \boldsymbol{\sigma}^{(\tau)}_{\hat{\boldsymbol{\gamma}}''})=\left\{
\begin{array}{llll}
 0,\qquad  & \hat{\sigma}^{(\tau)}_{\hat{\gamma}'_{j}}>0\ \ \mbox{and}\ \ \hat{\sigma}^{(\tau)}_{\hat{\gamma}''_{j}}>0,\\[5pt]
|\hat{\sigma}^{(\tau)}_{\hat{\gamma}'_{j}}||\hat{\sigma}^{(\tau)}_{\hat{\gamma}''_{j}}|, \qquad  &\hat{\sigma}^{(\tau)}_{\hat{\gamma}'_{j}}<0\ \ \mbox{or}\ \ \hat{\sigma}^{(\tau)}_{\hat{\gamma}''_{j}}<0.
\end{array}
\right.
\end{equation*}

Therefore, it follows from \eqref{eq:4.24} and \eqref{eq:4.26}--\eqref{eq:4.27} that, for $ 1\leq j\leq 4 $ and $1\leq k<\ell\leq 4$,
\begin{eqnarray}\label{eq:4.28}
\sigma^{(\tau)}_{\gamma_{j}}
=\delta_{jk}\sigma_{\alpha_k}+\delta_{j\ell}\sigma_{\alpha_\ell}+O(1)\big(|\sigma_{\alpha_k}|+|\sigma_{\alpha_\ell}|\big)\tau^{2}
+O(1)\sigma_{\alpha_{\mathcal{NP}}}.
\end{eqnarray}

With estimate \eqref{eq:4.28} in hand, we can now follow the method exactly as done in the proof of Proposition \ref{prop:4.1}
to deduce estimates \eqref{eq:4.21}. Since the proof is long but the same, we omit it.
\end{proof}

\subsection{Comparison of the Riemann solvers with the boundary}
We first study the case when all the Riemann solutions are generated by corner points on the approximate boundary $\Gamma_{h}$
(see also Fig. \ref{fig4.7a}).
Let $U^{(\tau)}_{h,\nu}$ and $U_{h,\nu}$ be approximate solutions of Problem I (obtained in Proposition \ref{prop:2.1})
and the initial-boundary value problem \eqref{eq:1.12}--\eqref{eq:1.15} (obtained in Proposition A.1),
respectively.
Let $\textsc{C}_{k}(x_k,g_{k})$ be the corner points on the approximate boundary $\Gamma_h$ with turning angle $\omega_k=\theta_{k+1}-\theta_k$.
Define
\begin{align}
&U_{B}=(\rho_{B}, u_{B}, v_{B},p_{B})^{\top}\doteq U_{h,\nu}(x_k,g_{k}-),\,\,\,
U_{\Gamma}=(\rho_{\Gamma}, u_{\Gamma}, v_{\Gamma},p_{\Gamma})^{\top}\doteq U_{h,\nu}(x_k,g_k),\label{eq:4.29}\\
&U^{(\tau)}_{\Gamma}=(\rho^{(\tau)}_{\Gamma}, u^{(\tau)}_{\Gamma}, v^{(\tau)}_{\Gamma},p^{(\tau)}_{\Gamma})^{\top}
\doteq U^{(\tau)}_{h,\nu}(x_k,g_k).\label{eq:4.30}
\end{align}

\begin{figure}[ht]
\begin{center}
\begin{tikzpicture}[scale=0.9]
\draw [thick](-5.0,1.5)--(-3,1)--(-1,1.8);

\draw [line width=0.05cm](-5.0,1.5)--(-5.0,-1.5);
\draw [line width=0.05cm](-3,1)--(-3,-1.5);
\draw [line width=0.05cm](-1.0,1.8)--(-1.0,-1.5);

\draw [thin](-4.8,1.45)--(-4.5, 1.75);
\draw [thin](-4.5,1.38)--(-4.2, 1.68);
\draw [thin](-4.2,1.30)--(-3.9, 1.60);
\draw [thin](-3.9, 1.23)--(-3.6,1.53);
\draw [thin](-3.6, 1.16)--(-3.3,1.46);
\draw [thin](-3.3, 1.08)--(-3.0,1.38);
\draw [thin](-3.0, 1.0)--(-2.8,1.4);
\draw [thin](-2.7,1.12)--(-2.5, 1.52);
\draw [thin](-2.4,1.23)--(-2.2, 1.63);
\draw [thin](-2.1,1.35)--(-1.9, 1.75);
\draw [thin](-1.8,1.47)--(-1.6, 1.87);
\draw [thin](-1.5, 1.59)--(-1.3,1.99);
\draw [thin](-1.2,1.71)--(-1.0, 2.11);

\draw [thick][blue](-3,1)--(-1.7,0.7);
\draw [thick][blue](-3,1)--(-1.7,0.4);
\draw [thick](-3,1)--(-1.9,-0.4);

\node at (-5.5, 1.7) {$\textsc{C}_{k-1}$};
\node at (-3.0, 1.5) {$\textsc{C}_{k}$};
\node at (-0.4, 1.8) {$\textsc{C}_{k+1}$};
\node at (-1.3, 0.5) {$\beta_{1}$};
\node at (-1.4, -0.5) {$\alpha_{1}$};
\node at (-1.8, 1.1) {$U^{(\tau)}_{\Gamma}$};
\node at (-2.0, 0.2) {$U_{\Gamma}$};
\node at (-2.2, -0.8) {$U_{B}$};
\node at (1, 2) {$$};

\node at (-5.0, -1.9) {$x_{k-1}$};
\node at (-3.0, -1.9) {$x_{k}$};
\node at (-0.9, -1.9) {$x_{k+1}$};
\end{tikzpicture}
\caption{Comparison of the Riemann solutions near boundary $\Gamma_h$}\label{fig4.7a}
\end{center}
\end{figure}
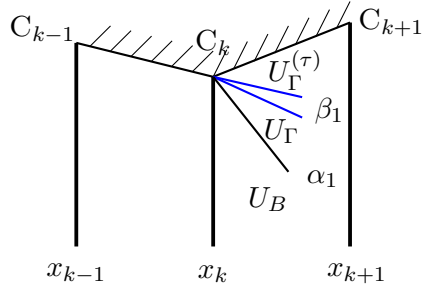

Then we establish the following lemma.

\begin{lemma}\label{lem:4.3}
Let $U_{B},\, U_{\Gamma},\, U^{(\tau)}_{\Gamma}\in \mathcal{O}_{\min\{\varepsilon_0,\tilde{\varepsilon}_0\}}(\underline{U})$
be constant states defined by \eqref{eq:4.29}--\eqref{eq:4.30} and satisfy
\begin{eqnarray}
&&U_{\Gamma}=\Phi_1(\sigma_{\alpha_1};U_{B}),\qquad U^{(\tau)}_{\Gamma}=\Phi^{(\tau)}_1(\sigma^{(\tau)}_{\beta_1};U_{B},\tau^2),\label{eq:4.31}\\
&&v_{B}=\tan(\theta_{k}),\quad  v_{\Gamma}=\tan(\theta_{k+1}), \quad (1+\tau^2u^{(\tau)}_{\Gamma}, v^{(\tau)}_{\Gamma})\cdot \mathbf{n}_{k+1}=0,\qquad\quad \label{eq:4.32}
\end{eqnarray}
where
$\mathbf{n}_{k+1}=\big(\sin(\theta_{k+1}),-\cos(\theta_{k+1})\big)$.
For a given hypersonic similarity parameter $a_{\infty}$, $\varepsilon\in(0,\min\{\varepsilon_0,\tilde{\varepsilon}_0\})$, and $\tau\in(0,\min\{\tau_0,\tilde{\tau}_0\})$,
when $|\theta_{k}|+|\omega_{k}|<\varepsilon$, then
\begin{align}
\sigma^{(\tau)}_{\beta_{1}}& =\sigma_{\alpha_1}+O(1)\big(1+|\sigma_{\alpha_1}|\big)\tau^{2}\label{eq:4.33}\\
&=O(1)(1+\tau^{2})|\omega_{k}|,\label{eq:4.34}
\end{align}
where $\omega_{k}=\theta_{k+1}-\theta_{k}$.
\end{lemma}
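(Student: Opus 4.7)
\noindent\textbf{Proof proposal for Lemma \ref{lem:4.3}.}
The plan is to compare the two implicit equations that determine $\sigma_{\alpha_1}$ and $\sigma^{(\tau)}_{\beta_1}$ respectively, and exploit the fact that $\Phi^{(\tau)}_1$ reduces to $\Phi_1$ as $\tau\to 0$ (Lemma \ref{lem:4.1}) to quantify their difference. Concretely, from the boundary condition $v_\Gamma=\tan\theta_{k+1}$ at $\tau=0$ together with $U_\Gamma=\Phi_1(\sigma_{\alpha_1};U_B)$, the strength $\sigma_{\alpha_1}$ is characterized by the scalar equation
\[
\mathcal{L}^0_b(\sigma_{\alpha_1},\theta_{k+1},U_B):=\Phi^{(3)}_1(\sigma_{\alpha_1};U_B)-\tan\theta_{k+1}=0,
\]
while the boundary condition for $U^{(\tau)}_{\Gamma}$ combined with $U^{(\tau)}_\Gamma=\Phi^{(\tau)}_1(\sigma^{(\tau)}_{\beta_1};U_B,\tau^2)$ yields, exactly as in the proof of Lemma \ref{lem:2.5},
\[
\mathcal{L}_b(\sigma^{(\tau)}_{\beta_1},\theta_{k+1},U_B,\tau^2):=\bigl(1+\tau^{2}\Phi^{(\tau),(2)}_1\bigr)\sin\theta_{k+1}-\Phi^{(\tau),(3)}_1\cos\theta_{k+1}=0.
\]

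The key step is to Taylor-expand $\mathcal{L}_b$ around $\tau=0$. Invoking \eqref{eq:4.2} from Lemma \ref{lem:4.1}, one has $\Phi^{(\tau),(j)}_1(\sigma;U_B,\tau^2)=\Phi^{(j)}_1(\sigma;U_B)+O(1)|\sigma|\tau^{2}$ for $j=2,3$, with the $O(1)$ bound depending only on $(\underline{U},a_\infty)$. Substituting this into $\mathcal{L}_b=0$ and dividing by $\cos\theta_{k+1}$ gives
\[
\Phi^{(3)}_1(\sigma^{(\tau)}_{\beta_1};U_B)-\tan\theta_{k+1}=\tau^{2}\Phi^{(2)}_1(\sigma^{(\tau)}_{\beta_1};U_B)\tan\theta_{k+1}+O(1)\bigl(1+|\sigma^{(\tau)}_{\beta_1}|\bigr)\tau^{2}.
\]
Subtracting $\mathcal{L}^0_b(\sigma_{\alpha_1},\theta_{k+1},U_B)=0$ and applying the mean value theorem to $\Phi^{(3)}_1$ in the first argument, together with the nondegeneracy $\partial_\sigma\Phi^{(3)}_1|_{\sigma=0,U_B=\underline U}=\mathbf{r}^{(3)}_1(\underline U)\neq 0$ (guaranteed by Remark \ref{rem:2.1} upon choosing $\varepsilon,\tau$ small enough), yields the estimate \eqref{eq:4.33}:
\[
\sigma^{(\tau)}_{\beta_1}-\sigma_{\alpha_1}=O(1)\bigl(1+|\sigma_{\alpha_1}|\bigr)\tau^{2}.
\]

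Finally, estimate \eqref{eq:4.34} follows by combining \eqref{eq:4.33} with the $\tau=0$ analogue of Lemma \ref{lem:2.5} (proved in Appendix~A, which furnishes $\sigma_{\alpha_1}=K_b\,\omega_k$ with $K_b$ bounded by a constant depending only on $\underline{U}$): writing
\[
\sigma^{(\tau)}_{\beta_1}=K_b\omega_k+O(1)\bigl(1+|K_b\omega_k|\bigr)\tau^{2}=O(1)(1+\tau^{2})|\omega_k|,
\]
where the last step uses that $|\omega_k|\leq\varepsilon$ is small so the stray $\tau^{2}$ term coming from the constant part in $1+|\sigma_{\alpha_1}|$ can be absorbed after noting that the boundary condition forces $\sigma^{(\tau)}_{\beta_1}=0$ whenever $\omega_k=0$ (an extra application of the implicit function theorem at $\omega_k=0$, together with the smoothness in $\tau^2$, removes the stand-alone $\tau^{2}$ term). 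The main obstacle is precisely this last bookkeeping: one must ensure that the $O(\tau^{2})$ remainder in the Taylor expansion of $\Phi^{(\tau)}_1$ carries a factor of $|\sigma|$ (so that no term independent of $\omega_k$ survives), and that the implicit-function-theorem constants are uniform in $\tau\in(0,\min\{\tau_0,\tilde\tau_0\})$ and in $U_B\in\mathcal{O}_{\varepsilon}(\underline U)$; both follow from the smooth dependence of $\Phi^{(\tau)}_1$ on $(\sigma,U_B,\tau^2)$ established in Lemma \ref{lem:2.2} and the nondegeneracy computations in Remark \ref{rem:2.1}.
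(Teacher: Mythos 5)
Your derivation of \eqref{eq:4.33} follows essentially the same route as the paper: both arguments rest on the implicit function theorem applied to the boundary-condition equations, the identity $\Phi^{(\tau)}_1\big|_{\tau=0}=\Phi_1$ from Lemma \ref{lem:4.1}, and a Taylor expansion in $\tau^2$ whose remainder carries a factor $|\sigma|$ because the difference $\Phi^{(\tau)}_1(\sigma;U_B,\tau^2)-\Phi_1(\sigma;U_B)$ vanishes at both $\sigma=0$ and $\tau=0$. The only cosmetic difference is that the paper eliminates $\theta_{k+1}$ and works with a single relation $\mathcal{L}_B(\sigma^{(\tau)}_{\beta_1},\sigma_{\alpha_1},\tau^2;U_B)=0$ expressing that $U_\Gamma$ and $U^{(\tau)}_\Gamma$ share the same flow angle, whereas you keep two equations parametrized by $\theta_{k+1}$; these are equivalent, and this part of your argument is sound.

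The final step for \eqref{eq:4.34} contains a genuine error. You claim that ``the boundary condition forces $\sigma^{(\tau)}_{\beta_1}=0$ whenever $\omega_k=0$'' and use this to delete the stand-alone $O(\tau^2)$ term. That claim is false: by \eqref{eq:4.32} the below state $U_B$ satisfies the $\tau=0$ slip condition $v_B=\tan\theta_k$, not its $\tau$-version. Hence when $\omega_k=0$ the state $U^{(\tau)}_\Gamma$ must satisfy $v^{(\tau)}_\Gamma/(1+\tau^2u^{(\tau)}_\Gamma)=\tan\theta_k$ while $U_B$ only satisfies $v_B=\tan\theta_k$, so $\sigma^{(\tau)}_{\beta_1}=0$ would require $\tau^2u_Bv_B=0$, which fails generically; the correct value at $\omega_k=0$ is of size $O(\tau^2|u_Bv_B|)\neq 0$ and is not proportional to $\omega_k$. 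Consequently the stand-alone $\tau^2$ cannot be absorbed into a multiple of $|\omega_k|$, and what your computation actually yields is $\sigma^{(\tau)}_{\beta_1}=K_B\omega_k+O(1)(1+|\omega_k|)\tau^2$, i.e.\ $|\sigma^{(\tau)}_{\beta_1}|\le C(|\omega_k|+\tau^2)$. For what it is worth, the paper's own proof does not attempt your patch: it simply substitutes $\sigma_{\alpha_1}=K_B\omega_k$ into \eqref{eq:4.33} and asserts \eqref{eq:4.34}, and in the later applications (e.g.\ Proposition \ref{prop:4.3}) only the weaker bound $|\sigma^{(\tau)}_{\beta_1}|\le C(|\omega_k|+\tau^2)$ is actually used. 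So the substantive content of your argument survives, but the specific justification you offer for removing the extra $\tau^2$ term does not hold and should be dropped in favor of stating the bound in the weaker form.
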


\begin{proof}
By \eqref{eq:4.31}--\eqref{eq:4.32}, we know that $\sigma_{\alpha_1}$ and $\sigma^{(\tau)}_{\beta_1}$
satisfy the following equation:
\begin{align}\label{eq:4.35}
&\mathcal{L}_{B}(\sigma^{(\tau)}_{\beta_{1}}, \sigma_{\alpha_{1}},\tau^2;U_{B})\nonumber\\
&\doteq\Phi^{(3)}_{1}(\sigma^{(\tau)}_{\beta_{1}}; U_{B},\tau^2)
-\big(1+\tau^2\Phi^{(\tau),(2)}_{1}(\sigma^{(\tau)}_{\beta_{1}}; U_{B},\tau^2)\big)\Phi^{(3)}_{1}(\sigma_{\alpha_{1}}; U_{B})
\nonumber\\
&=0.
\end{align}

By direct computation and using Remark \ref{rem:2.1}, we have
\begin{align*}
\frac{\partial\mathcal{L}_{B}(\sigma^{(\tau)}_{\beta_{1}}, \sigma_{\alpha_{1}},\tau^2;U_{B})}{\partial \sigma^{(\tau)}_{\beta_1}}
\Bigg|_{\sigma^{(\tau)}_{\beta_{1}}=\sigma_{\alpha_{1}}=0,\ U_{B}=\underline{U}}
&=\frac{\partial\Phi^{(3)}_{1}(\sigma^{(\tau)}_{\beta_{1}}; U_{B},\tau^2)}{\partial \sigma^{(\tau)}_{\beta_1}}
\Bigg|_{\sigma^{(\tau)}_{\beta_{1}}=0,\ U_{B}=\underline{U}}\\[4pt]
&=\mathbf{r}^{(\tau),(3)}_1(\underline{U},\tau^2)=e_{1}(\underline{U},\tau^2)\neq0
\end{align*}
for $\tau\in (0,\min\{\tau_0,\tilde{\tau}_0\})$.
Then, by the implicit function theorem, $\sigma^{(\tau)}_{\beta_1}$ can be solved uniquely from equation \eqref{eq:4.35}
as a $C^{2}$--function of $(\sigma_{\alpha_{1}},\tau^{2})$.

Notice that, when $\tau=0$, by Lemma \ref{lem:4.1} and \eqref{eq:4.35},
\begin{eqnarray*}
\Phi^{(3)}_{1}\big(\sigma^{(\tau)}_{\beta_{1}}\big|_{\tau=0}; U_{B},\tau^2\big)\Big|_{\tau=0}=\Phi^{(3)}_{1}\big(\sigma^{(\tau)}_{\beta_{1}}\big|_{\tau=0}; U_{B}\big)
=\Phi^{(3)}_{1}(\sigma_{\alpha_{1}}; U_{B}),
\end{eqnarray*}
so that
\begin{eqnarray*}
\sigma^{(\tau)}_{\beta_{1}}\big|_{\tau=0}=\sigma_{\alpha_{1}}, \quad\,\, \sigma^{(\tau)}_{\beta_{1}}\big|_{\sigma_{\alpha_{1}}=\tau=0}=0,
\quad\,\, \sigma^{(\tau)}_{\beta_{1}}\big|_{\sigma_{\alpha_{1}}=0}=O(1)\tau^2.
\end{eqnarray*}

Therefore, by applying the Taylor formula, it follows that
\begin{align*}
\sigma^{(\tau)}_{\beta_1}&=\sigma^{(\tau)}_{\beta_1}\big|_{\tau=0}+\sigma^{(\tau)}_{\beta_1}\big|_{\sigma_{\alpha_1}=0}
-\sigma^{(\tau)}_{\beta_{1}}\big|_{\sigma_{\alpha_{1}}=\tau=0}
+O(1)|\sigma_{\alpha_1}|\tau^{2}\\
&=\sigma_{\alpha_{1}}+O(1)\big(1+|\sigma_{\alpha_{1}}|\big)\tau^2,
\end{align*}
which is estimate \eqref{eq:4.33}.

Next, by Lemma A.3,
for $\varepsilon>0$ sufficiently small, we know that
\begin{eqnarray*}
\sigma_{\alpha_{1}}=K_{B}\omega_{k}
\end{eqnarray*}
with $K_{B}>0$. Then, substituting it into \eqref{eq:4.33},
we can derive estimate \eqref{eq:4.34}.
\end{proof}

Based on Lemma \ref{lem:4.3}, we have
\begin{proposition}\label{prop:4.3}
Let $\textrm{C}_k(x_k,g_k)$ with $g_k=g(x_k)$ be a corner point on the approximate boundary $\Gamma_h$.
Let $U^{(\tau)}_{\Gamma}$, $U_{\Gamma}$, $U_{B}\in\mathcal{O}_{\min\{\varepsilon_0,\tilde{\varepsilon}_0\}}(\underline{U})$.
For $s>0$,
{\rm Lemma A.3}
implies that
\begin{equation}\label{eq:4.37}
U^{\Gamma}_{h,\nu}(x_{k}+s,y)=
\begin{cases}
U_{\Gamma}, \quad  &g_{k}+\dot{y}_{\alpha_1}s<y\leq g_{k}+\tan(\theta_{k+1})s,\\[5pt]
U_{B}, \quad  &y<g_{k}+\dot{y}_{\alpha_1}s,
\end{cases}
\end{equation}
where $U_{\Gamma}\in\mathcal{O}_{\min\{\varepsilon_0,\tilde{\varepsilon}_0\}}(\underline{U})$,
and $v_{B}$ and $v_{\Gamma}$ satisfy \eqref{eq:4.32} and $|\dot{y}_{\alpha_1}|<\hat{\lambda}$ for a fixed constant $\hat{\lambda}$
satisfying
$$
\hat{\lambda}>\max_{1\leq j\leq4}\big\{\lambda^{(\tau)}_{j}(U^{(\tau)},\tau^{2}), \lambda_{j}(U)\big\}
$$
for $U^{(\tau)},U\in\mathcal{O}_{\min\{\varepsilon_0,\tilde{\varepsilon}_0\}}(\underline{U})$.
Let $\mathcal{P}^{(\tau)}_{h}$ be the Lipschtiz map generated by \emph{Problem I} obtained in {\rm Proposition \ref{prop:2.2}}.
Then, for a fixed given hypersonic similarity parameter $a_{\infty}$ and for $s>0$ sufficiently small, the following
statements hold{\rm :}

\begin{enumerate}
\item[\rm (i)] if $\omega_{k}=\theta_{k+1}-\theta_{k}>0$, then $U_{B}$ and $U_{\Gamma}$ are connected by the $1^{\rm st}$
rarefaction-front $\alpha_1\in\mathcal{R}_{1}$ with strength $\sigma_{\alpha_1}>0$ and $|\dot{y}_{\alpha_1}-\lambda_{1}(U_{\Gamma})|<2^{-\nu}$
such that
\begin{align}\label{eq:4.38}
&\int^{g_{k}+\tan(\theta_{k+1})s}_{g_{k}-\hat{\lambda}s}\big|\mathcal{P}^{(\tau)}_{h}(x_k+s, x_{k})(U^{(\tau),\Gamma}_{h,\nu}(x_{k},y))-U^{\Gamma}_{h,\nu}
(x_{k}+s,y)\big|\,{\rm d}y\nonumber\\[2pt]
&\leq {C_{3,6}}\Big(\big(1+|\omega_{k}|\big)\tau^{2}+\big(|\omega_{k}|+\tau^2\big)(\nu^{-1}+2^{-\nu})\Big)s,
\end{align}
where constant {$C_{3,6}>0$} depends only on $(\underline{U},a_{\infty})$.

\item[\rm (ii)]
if $\omega_{k}=\theta_{k+1}-\theta_{k}<0$, then $U_{B}$ and $U_{\Gamma}$ are connected by the $1^{\rm st}$ shock-front $\alpha_1\in\mathcal{S}_{1}$
with strength $\sigma_{\alpha_1}<0$ and $|\dot{y}_{\alpha_1}-\dot{\mathcal{S}}_1(\sigma_{\alpha_1})|<2^{-\nu}$
such that
\begin{align}
&\int^{g_{k}+\tan(\theta_{k+1})s}_{g_{k}-\hat{\lambda}s}\big|\mathcal{P}^{(\tau)}_{h}(x_k+s, x_{k})(U^{(\tau),\Gamma}_{h,\nu}(x_{k},y))-U^{\Gamma}_{h,\nu}
(x_{k}+s,y)\big|\,{\rm d}y\nonumber\\[2pt]
&\leq {C_{3,7}}\Big(\big(1+|\omega_{k}|\big)\tau^{2}+\big(|\omega_{k}|+\tau^2\big)2^{-\nu}\Big)s,\label{eq:4.39}
\end{align}
where
constant {$C_{3,7}>0$} depends only on $(\underline{U},a_{\infty})$.
\end{enumerate}
\end{proposition}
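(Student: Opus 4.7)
The plan is to adapt the interior-wave comparison argument of Proposition \ref{prop:4.1} to the corner-generated wave setting. First, by Lemma \ref{lem:2.5} applied to the map $\mathcal{P}^{(\tau)}_h$ at the corner $\textrm{C}_k$, the trajectory starting from $U^{(\tau),\Gamma}_{h,\nu}(x_k,\cdot)$ admits a unique structure: a single $1^{\rm st}$ wave $\beta_1$ with strength $\sigma^{(\tau)}_{\beta_1}$ issuing from $\textrm{C}_k$, with the state $U_B$ on the left and the boundary-compatible state $U^{(\tau)}_\Gamma=\Phi^{(\tau)}_1(\sigma^{(\tau)}_{\beta_1};U_B,\tau^2)$ on the right. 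Lemma \ref{lem:4.3} then gives
\[
\sigma^{(\tau)}_{\beta_1}=\sigma_{\alpha_1}+O(1)(1+|\sigma_{\alpha_1}|)\tau^2=O(1)(1+\tau^2)|\omega_k|,
\]
which in particular implies that $\beta_1$ is a shock when $\omega_k<0$ and a rarefaction when $\omega_k>0$, for $\tau$ sufficiently small, so the structural type of the two waves matches.

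Next, I would compare the wave speeds. For case (ii), let $\dot{\mathcal{S}}^{(\tau)}_1(\sigma^{(\tau)}_{\beta_1},\tau^2)$ and $\dot{\mathcal{S}}_1(\sigma_{\alpha_1})$ denote the shock speeds of $\beta_1$ and $\alpha_1$. Expanding as in \eqref{eq:4.5} and inserting $\sigma^{(\tau)}_{\beta_1}-\sigma_{\alpha_1}=O(1)(1+|\sigma_{\alpha_1}|)\tau^2$ yields $\dot{\mathcal{S}}^{(\tau)}_1(\sigma^{(\tau)}_{\beta_1},\tau^2)-\dot{\mathcal{S}}_1(\sigma_{\alpha_1})=O(1)(1+|\omega_k|)\tau^2$. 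For case (i), the analogue of \eqref{eq:4.16}--\eqref{eq:4.17} via \eqref{eq:3.5a} and Lemma \ref{lem:4.2} gives $\lambda^{(\tau)}_1(U^{(\tau)}_\Gamma,\tau^2)-\lambda_1(U_\Gamma)=O(1)(1+|\omega_k|)\tau^2$ and $\lambda^{(\tau)}_1(U_B,\tau^2)-\lambda_1(U_B)=O(1)\tau^2$. In both cases, $|U^{(\tau)}_\Gamma-U_\Gamma|=O(1)|\sigma^{(\tau)}_{\beta_1}-\sigma_{\alpha_1}|=O(1)(1+|\omega_k|)\tau^2$.

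With these wave-speed estimates in hand, I would decompose the integration interval $(g_k-\hat{\lambda}s,\,g_k+\tan(\theta_{k+1})s)$ into three sub-intervals analogous to $I_{\mathcal{S}_1,j}$ and $I_{\mathcal{R}_1,j}$ in the proof of Proposition \ref{prop:4.1}: a ``below'' sub-interval on which both solutions equal $U_B$ (contributing zero); a transition sub-interval across the wave itself; and a narrow ``near-boundary'' sub-interval on which $\mathcal{P}^{(\tau)}_h$ takes value $U^{(\tau)}_\Gamma$ while $U^{\Gamma}_{h,\nu}$ takes value $U_\Gamma$. The third sub-interval has width at most $|\tan(\theta_{k+1})-\dot{y}_{\alpha_1}|s\le Cs$ and integrand $|U^{(\tau)}_\Gamma-U_\Gamma|=O((1+|\omega_k|)\tau^2)$, so contributes at most $C(1+|\omega_k|)\tau^2 s$. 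The transition sub-interval, in case (ii), has width bounded by $|\dot{\mathcal{S}}^{(\tau)}_1-\dot{y}_{\alpha_1}|s\le C((1+|\omega_k|)\tau^2+2^{-\nu})s$ with integrand $O(|\omega_k|)$, giving exactly the bound \eqref{eq:4.39}.

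The main obstacle is the rarefaction case (i): the trajectory $\mathcal{P}^{(\tau)}_h$ discretizes $\beta_1$ into central rarefaction fans of strength $\le C\nu^{-1}$ according to the wave-front tracking prescription of \S\ref{sec:3.1}, while the approximate solution $U_{h,\nu}$ uses its own discretization of $\alpha_1$ per Lemma~A.3. One must therefore subdivide the transition sub-interval according to whether $\dot{y}_{\alpha_1}$ lies below $\lambda^{(\tau)}_1(U_B,\tau^2)$, between $\lambda^{(\tau)}_1(U_B,\tau^2)$ and $\lambda^{(\tau)}_1(U^{(\tau)}_\Gamma,\tau^2)$, or above the latter — exactly the three subcases treated in Subcases~1--3 of Step~2 in the proof of Proposition \ref{prop:4.1}. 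In each subcase, using $|\Phi^{(\tau)}_1(\sigma^{(\tau)}_{\beta_1}(\zeta);U_B,\tau^2)-U_B|\le C|\zeta-\lambda^{(\tau)}_1(U_B,\tau^2)|$ inside the fan together with the endpoint speed errors produces the extra $\nu^{-1}$ contribution, which yields \eqref{eq:4.38}. Summing over the three subregions and absorbing the cross terms $|\omega_k|\tau^2$ and $\tau^2\cdot 2^{-\nu}$ into the stated bound closes the proof. The constants $C_{3,6}$ and $C_{3,7}$ depend only on $(\underline{U},a_\infty)$ because every auxiliary constant drawn from Lemmas \ref{lem:2.5}, \ref{lem:4.2}, and \ref{lem:4.3} has that dependence.
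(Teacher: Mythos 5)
Your proposal is correct and follows essentially the same route as the paper's proof: the same three-part decomposition of the integration interval (zero region below, transition region across the wave, boundary strip where the two boundary states differ), the same use of Lemma \ref{lem:4.3} for the strength comparison, the same speed expansions, and the same three subcases for the rarefaction fan. The only points to tighten are that $\left|U^{(\tau)}_{\Gamma}-U_{\Gamma}\right|$ is not literally $O(1)|\sigma^{(\tau)}_{\beta_1}-\sigma_{\alpha_1}|$, since the two states lie on different wave curves ($\Phi^{(\tau)}_1$ versus $\Phi_1$) --- the paper instead Taylor-expands $\mathcal{E}_{\Gamma}(\sigma_{\alpha_1},\tau^2)=\Phi^{(\tau)}_1(\sigma^{(\tau)}_{\beta_1};U_B,\tau^2)-\Phi_1(\sigma_{\alpha_1};U_B)$ in both variables to reach the same bound $O(1)(1+|\omega_k|)\tau^2$ --- and that the $\nu^{-1}$ term comes from the smallness $|\sigma_{\alpha_1}|\le C\nu^{-1}$ of the single rarefaction front in $U^{\Gamma}_{h,\nu}$ compared against the continuous fan produced by $\mathcal{P}^{(\tau)}_h$, not from a discretization of the latter.
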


\begin{proof}  We divide the proof into two steps.

\smallskip
1. Case \rm{(i)}: $\omega_{k}>0$.
It follows from
Lemma A.3
that $\sigma_{\alpha_1}>0$.
Then $U_{B}$ and $U_{\Gamma}$ are connected by the $1^{\rm st}$ rarefaction-front.
Meanwhile, it follows from Proposition \ref{prop:2.2} that  equation \eqref{eq:4.35} admits
solution $P^{(\tau)}_{h}(x_{k}+s,x_{k})(U^{(\tau)}_{\Gamma}(x_{k},y))$.
Moreover, for $s>0$ sufficiently small, based on the construction of the Lipschitz continuous
map $\mathcal{P}^{(\tau)}_{h}$, we know
\begin{align}\label{eq:4.39a}
&\mathcal{P}^{(\tau)}_{h}(x_k+s,x_{k})(U^{(\Gamma)}_{h,\nu}(x_{k},y))\nonumber\\
&=
\begin{cases}
U^{(\tau)}_{\Gamma}, \  &\zeta\in \big [\lambda^{(\tau)}_{1}(U^{(\tau)}_{\Gamma},\tau^2), \,\tan(\theta_{k+1})\big),\\[2pt]
\Phi^{(\tau)}_{1}(\sigma^{(\tau)}_{\beta_1}(\zeta); U_{B}, \tau^2),
\  &\zeta\in \big[\lambda^{(\tau)}_{1}(U_{B},\tau^2),\,\lambda^{(\tau)}_{1}(U^{(\tau)}_{\Gamma},\tau^2) \big),\\[2pt]
U_{B}, \  &\zeta\in(-\hat{\lambda},\,\lambda^{(\tau)}_{1}(U_{B},\tau^{2})),
\end{cases}
\end{align}
where $\zeta=\frac{y-g_{k}}{s}$,
\begin{eqnarray}\label{eq:4.39b}
\sigma^{(\tau)}_{\beta_1}(\lambda^{(\tau)}_{1}(U_{B},\tau^2))=0,  \qquad
\sigma^{(\tau)}_{\beta_1}(\lambda^{(\tau)}_{1}(U^{(\tau)}_{\Gamma},\tau^2))=\sigma^{(\tau)}_{\beta_1}.
\end{eqnarray}
Moreover, by Lemma \ref{lem:4.3}, $\sigma^{(\tau)}_{\beta_1}$ satisfies estimate \eqref{eq:4.33} and $\sigma^{(\tau)}_{\beta_1}>0$ when $\tau$ is sufficiently small.
Thus, it is also a rarefaction-front of the $1^{\rm st}$ family.

Now, to derive estimate \eqref{eq:4.38},
we rewrite its left-hand side as
{\small
\begin{align*}
&\int^{g_{k}+\tan(\theta_{k+1})s}_{g_{k}-\hat{\lambda}s}\big|\mathcal{P}^{(\tau)}_{h}(x_k+s, x_{k})(U^{(\tau),\Gamma}_{h,\nu}(x_{k},y))-U^{\Gamma}_{h,\nu}
 (x_{k}+s,y)\big|\,{\rm d}y\\[2pt]
&=\int^{\min\{g_{k}+\dot{y}_{\alpha_1}s,\,g_{k}+\lambda^{(\tau)}_{1}(U_{B},\tau^{2})s\}}_{g_{k}-\hat{\lambda}s}
\big|\mathcal{P}^{(\tau)}_{h}(x_k+s, x_{k})(U^{(\tau),\Gamma}_{h,\nu}(x_{k},y))-U^{\Gamma}_{h,\nu}
 (x_{k}+s,y)\big|\,{\rm d}y\\[2pt]
&\quad+\int^{\max\{g_{k}+\dot{y}_{\alpha_1}s,\,g_{k}+\lambda^{(\tau)}_{1}(U^{(\tau)}_{\Gamma},\tau^2)s\}}
_{\min\{g_{k}+\dot{y}_{\alpha_1}s,\,g_{k}+\lambda^{(\tau)}_{1}(U_{B},\tau^{2})s\}}
\big|\mathcal{P}^{(\tau)}_{h}(x_k+s, x_{k})(U^{(\tau),\Gamma}_{h,\nu}(x_{k},y))-U^{\Gamma}_{h,\nu}(x_{k}+s,y)\big|\,{\rm d}y\\[2pt]
&\quad+\int^{g_{k}+\tan(\theta_{k+1})s}_{\max\{g_{k}+\dot{y}_{\alpha_1}s,\,g_{k}+\lambda^{(\tau)}_{1}(U^{(\tau)}_{\Gamma},\tau^2)s\}}
\big|\mathcal{P}^{(\tau)}_{h}(x_k+s, x_{k})(U^{(\tau),\Gamma}_{h,\nu}(x_{k},y))-U^{\Gamma}_{h,\nu}
 (x_{k}+s,y)\big|\,{\rm d}y\\[2pt]
&\doteq C_{\mathcal{R}_1,1}+C_{\mathcal{R}_1,2}+C_{\mathcal{R}_1,3}.
\end{align*}
}

For $C_{\mathcal{R}_1,1}$, it is clear that
$$
C_{\mathcal{R}_1,1}=0.
$$
For $C_{\mathcal{R}_1,3}$, by direct computation, we have
\begin{eqnarray*}
C_{\mathcal{R}_1,3}
\leq C|U^{(\tau)}_{\Gamma}-U_{\Gamma}|s
=C\big|\Phi_{1}(\sigma^{(\tau)}_{\beta_1}; U_{B},\tau^{2})-\Phi(\sigma_{\alpha_1}; U_{B})\big|s.
\end{eqnarray*}

Define
\begin{eqnarray*}
\mathcal{E}_{\Gamma}(\sigma_{\alpha_1},\tau^{2})\doteq
\Phi_{1}(\sigma^{(\tau)}_{\beta_1}; U_{B},\tau^{2})-\Phi(\sigma_{\alpha_1}; U_{B}).
\end{eqnarray*}
By Lemma \ref{lem:4.3}, we know that $\sigma^{(\tau)}_{\beta_1}$ is a function
of $\sigma_{\alpha_1}$. Note that $\mathcal{E}_{\Gamma}(\sigma_{\alpha_1},0)=\mathcal{E}_{\Gamma}(0,0)=0$ and $\mathcal{E}_{\Gamma}(0,\tau^{2})=O(1)\tau^2$.
Then, by the Taylor formula,
\begin{eqnarray}\label{eq:4.39c}
\mathcal{E}_{\Gamma}(\sigma_{\alpha_1},\tau^2)=O(1)\big(1+|\sigma_{\alpha_1}|\big)\tau^{2}=O(1)\big(1+|\omega_{k}|\big)\tau^{2},
\end{eqnarray}
so that
\begin{eqnarray*}
C_{\mathcal{R}_1,3}\leq C\big(1+|\omega_{k}|\big)\tau^2.
\end{eqnarray*}

For $C_{\mathcal{R}_1,2}$, by Lemma \ref{lem:4.3}, we have
\begin{align}\label{eq:4.40}
\lambda^{(\tau)}_{1}(U^{(\tau)}_{\Gamma}, \tau^{2})-\lambda_{1}(U_{\Gamma})
&=\lambda^{(\tau)}_{1}(\Phi^{(\tau)}_{1}(\sigma^{(\tau)}_{\beta_1}; U_{B},\tau^2),\tau^2)-\lambda_{1}(\Phi_{1}(\sigma_{\alpha_1}; U_{B}))\nonumber\\[2pt]
&=\lambda^{(\tau)}_{1}(U_{B},\tau^2)-\lambda_{1}(U_{B})+O(1)\big(\sigma^{(\tau)}_{\beta_1}-\sigma_{\alpha_1}\big)\nonumber\\[2pt]
&=O(1)(1+|\sigma_{\alpha_1}|)\tau^2\nonumber\\[2pt]
&=O(1)(1+|\omega_{k}|)\tau^2,\
\end{align}
and
\begin{align}\label{eq:4.41}
\lambda_{1}(U_{\Gamma})-\lambda^{(\tau)}_{1}(U_{B},\tau^2)
&=\lambda_{1}(\Phi_{1}(\sigma_{\alpha_1}; U_{B}))-\lambda_{1}(U_{B})+\lambda_{1}(U_{B})-\lambda^{(\tau)}_{1}(U_{B},\tau^2)\nonumber\\[2pt]
&=O(1)(|\sigma_{\alpha_1}|+\tau^2)\nonumber\\[2pt]
&=O(1)\big(|\omega_{k}|+\tau^{2}\big),
\end{align}
where we have used \eqref{eq:3.5a}
for $U=U_{B}$.

\begin{figure}[ht]
\begin{center}
\begin{tikzpicture}[scale=1.0]
\draw [line width=0.03cm](-5.0,1.5)--(-3,1)--(-1,1.8);

\draw [line width=0.06cm](-5.0,1.5)--(-5.0,-1.5);
\draw [line width=0.06cm](-3,1)--(-3,-1.5);
\draw [line width=0.06cm](-1.0,1.8)--(-1.0,-1.5);
\draw [thick](-1.6,1.6)--(-1.6,-1.6);

\draw [thin](-4.8,1.45)--(-4.5, 1.75);
\draw [thin](-4.5,1.38)--(-4.2, 1.68);
\draw [thin](-4.2,1.30)--(-3.9, 1.60);
\draw [thin](-3.9, 1.23)--(-3.6,1.53);
\draw [thin](-3.6, 1.16)--(-3.3,1.46);
\draw [thin](-3.3, 1.08)--(-3.0,1.38);
\draw [thin](-3.0, 1.0)--(-2.8,1.4);
\draw [thin](-2.7,1.12)--(-2.5, 1.52);
\draw [thin](-2.4,1.23)--(-2.2, 1.63);
\draw [thin](-2.1,1.35)--(-1.9, 1.75);
\draw [thin](-1.8,1.47)--(-1.6, 1.87);
\draw [thin](-1.5, 1.59)--(-1.3,1.99);
\draw [thin](-1.2,1.71)--(-1.0, 2.11);

\draw [thick](-3,1)--(-1.6,0.6);
\draw [thick](-3,1)--(-1.6,0.4);
\draw [thick][blue](-3,1)--(-1.6,-0.6);
\draw [thin](-3,1)--(-1.6,-1.0);

\node at (-5.6, 1.6) {$\textsc{C}_{k-1}$};
\node at (-3.0, 1.5) {$\textsc{C}_{k}$};
\node at (-0.4, 1.8) {$\textsc{C}_{k+1}$};
\node at (-1.3, 0.6) {$\beta_{1}$};
\node at (-1.3, -0.6) {$\alpha_{1}$};
\node at (-2.0, 1.1) {$U^{(\tau)}_{\Gamma}$};
\node at (-1.9, 0.1) {$U_{\Gamma}$};
\node at (-2.4, -0.6) {$U_{B}$};
\node at (1, 2) {$$};

\node at (-5.0, -1.9) {$x_{k-1}$};
\node at (-3.0, -1.9) {$x_{k}$};
\node at (-0.9, -1.9) {$x_{k+1}$};
\end{tikzpicture}
\caption{Case: $\omega_{k}>0$ and $\dot{y}_{\alpha_1}<\lambda^{{\tau}}_{1}(U_{B},\tau^2)$}\label{fig5.7}
\end{center}
\end{figure}

If $\dot{y}_{\alpha_1}<\lambda^{(\tau)}_{1}(U_{B},\tau^2)$ (see Fig. \ref{fig5.7}),
then $C_{\mathcal{R}_1, 2} $ can be rewritten as
\begin{align*}
C_{\mathcal{R}_1,2}&=\int^{g_{k}+\lambda^{(\tau)}_{1}(U^{(\tau)}_{\Gamma},\tau^2)s}_{g_{k}+\dot{y}_{\alpha_1}s}
\big|\mathcal{P}^{(\tau)}_{h}(x_k+s, x_{k})(U^{(\tau),\Gamma}_{h,\nu}(x_{k},y))-U^{\Gamma}_{h,\nu}(x_{k}+s,y)\big|\,{\rm d}y\\[2pt]
&=\int^{g_{k}+\lambda^{(\tau)}_{1}(U_{B},\tau^2)s}_{g_{k}+\dot{y}_{\alpha_1}s}
\big|\mathcal{P}^{(\tau)}_{h}(x_k+s, x_{k})(U^{(\tau),\Gamma}_{h,\nu}(x_{k},y))-U^{\Gamma}_{h,\nu}(x_{k}+s,y)\big|\,{\rm d}y\\[2pt]
&\quad +\int^{g_{k}+\lambda^{(\tau)}_{1}(U^{(\tau)}_{\Gamma},\tau^2)s}_{g_{k}+\lambda^{(\tau)}_{1}(U_{B},\tau^2)s}
\big|\mathcal{P}^{(\tau)}_{h}(x_k+s, x_{k})(U^{(\tau),\Gamma}_{h,\nu}(x_{k},y))-U^{\Gamma}_{h,\nu}(x_{k}+s,y)\big|\,{\rm d}y.
\end{align*}
First of all, using \eqref{eq:4.41} and Lemma \ref{lem:4.3}, we have
\begin{eqnarray*}
&&\int^{g_{k}+\lambda^{(\tau)}_{1}(U_{B},\tau^2)s}_{g_{k}+\dot{y}_{\alpha_1}s}
\big|\mathcal{P}^{(\tau)}_{h}(x_k+s, x_{k})(U^{(\tau),\Gamma}_{h,\nu}(x_{k},y))-U^{\Gamma}_{h,\nu}(x_{k}+s,y)\big|\,{\rm d}y\\[2pt]
&&\quad \leq \big|\lambda^{(\tau)}_{1}(U_{B},\tau^2)-\dot{y}_{\alpha_1}\big||U_{\Gamma}-U_{B}|s\\[5pt]
&&\quad \leq C\Big(\big|\lambda^{(\tau)}_{1}(U_{B},\tau^2)-\lambda_{1}(U_{\Gamma})\big|+2^{-\nu}\Big)|\sigma_{\alpha_1}|s\\[2pt]
&&\quad \leq C\big(\tau^2+2^{-\nu}\big)|\omega_{k}|s.
\end{eqnarray*}
Then, using the properties of the rarefaction wave $\beta_1$ and \eqref{eq:4.39c}--\eqref{eq:4.41}, we can also obtain
{\small
\begin{align*}
&\int^{g_{k}+\lambda^{(\tau)}_{1}(U^{(\tau)}_{\Gamma},\tau^2)s}_{g_{k}+\lambda^{(\tau)}_{1}(U_{B},\tau^2)s}
\big|\mathcal{P}^{(\tau)}_{h}(x_k+s, x_{k})(U^{(\tau),\Gamma}_{h,\nu}(x_{k},y))-U^{\Gamma}_{h,\nu}(x_{k}+s,y)\big|\,{\rm d}y\\[2pt]
&\leq \big|\lambda^{(\tau)}_{1}(U^{(\tau)}_{\Gamma},\tau^2)-\lambda^{(\tau)}_{1}(U_{B},\tau^2)
\big|\big|\Phi^{(\tau)}_{1}(\sigma^{(\tau)}_{\beta_1}(\zeta); U_{B}, \tau^2)-\Phi_{1}(\sigma_{\alpha_1}; U_{B})\big|s\\[2pt]
&\leq C\,\Big|\Phi^{(\tau)}_{1}(\sigma^{(\tau)}_{\beta_1}; U_{B}, \tau^2)-U_{B}\Big|\\[2pt]
&\quad\,\times\Big(|\Phi^{(\tau)}_{1}(\sigma^{(\tau)}_{\beta_1}(\zeta); U_{B}, \tau^2)-\Phi^{(\tau)}_{1}(\sigma^{(\tau)}_{\beta_1}; U_{B},\tau^2)|
+|\Phi^{(\tau)}_{1}(\sigma^{(\tau)}_{\beta_1}; U_{B},\tau^2)-\Phi_{1}(\sigma_{\alpha_1}; U_{B})|\Big)\\[2pt]
&\leq C\big(|\omega_{k}|\tau^2+(|\omega_{k}|+\tau^2)\nu^{-1}\big)s.
\end{align*}
}

Therefore, it follows the three estimates above that
\begin{eqnarray*}
C_{\mathcal{R}_1,2}\leq C\big(|\omega_{k}|\tau^2+(|\omega_{k}|+\tau^2)(\nu^{-1}+2^{-\nu})\big)s.
\end{eqnarray*}

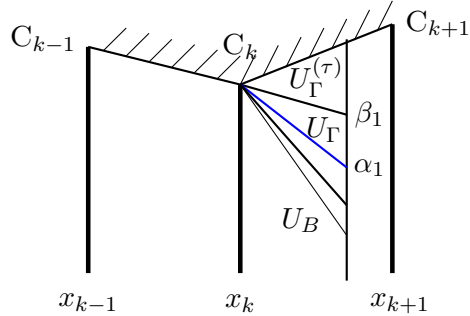
\begin{figure}[ht]
\begin{center}
\begin{tikzpicture}[scale=1.0]
\draw [thick](-5.0,1.5)--(-3,1)--(-1,1.8);

\draw [line width=0.06cm](-5.0,1.5)--(-5.0,-1.5);
\draw [line width=0.06cm](-3,1)--(-3,-1.5);
\draw [line width=0.06cm](-1.0,1.8)--(-1.0,-1.5);
\draw [thick](-1.6,1.6)--(-1.6,-1.6);

\draw [thin](-4.8,1.45)--(-4.5, 1.75);
\draw [thin](-4.5,1.38)--(-4.2, 1.68);
\draw [thin](-4.2,1.30)--(-3.9, 1.60);
\draw [thin](-3.9, 1.23)--(-3.6,1.53);
\draw [thin](-3.6, 1.16)--(-3.3,1.46);
\draw [thin](-3.3, 1.08)--(-3.0,1.38);
\draw [thin](-3.0, 1.0)--(-2.8,1.4);
\draw [thin](-2.7,1.12)--(-2.5, 1.52);
\draw [thin](-2.4,1.23)--(-2.2, 1.63);
\draw [thin](-2.1,1.35)--(-1.9, 1.75);
\draw [thin](-1.8,1.47)--(-1.6, 1.87);
\draw [thin](-1.5, 1.59)--(-1.3,1.99);
\draw [thin](-1.2,1.71)--(-1.0, 2.11);

\draw [thick](-3,1)--(-1.6,0.6);
\draw [thick][blue](-3,1)--(-1.6,-0.1);
\draw [thick](-3,1)--(-1.6,-0.6);
\draw [thin](-3,1)--(-1.6,-1.0);

\node at (-5.6, 1.6) {$\textsc{C}_{k-1}$};
\node at (-3.0, 1.5) {$\textsc{C}_{k}$};
\node at (-0.4, 1.8) {$\textsc{C}_{k+1}$};
\node at (-1.3, 0.6) {$\beta_{1}$};
\node at (-1.3, -0.1) {$\alpha_{1}$};
\node at (-2.0, 1.1) {$U^{(\tau)}_{\Gamma}$};
\node at (-1.9, 0.4) {$U_{\Gamma}$};
\node at (-2.2, -0.8) {$U_{B}$};
\node at (1, 2) {$$};

\node at (-5.0, -1.9) {$x_{k-1}$};
\node at (-3.0, -1.9) {$x_{k}$};
\node at (-0.9, -1.9) {$x_{k+1}$};
\end{tikzpicture}
\caption{Case: $\omega_{k}>0$ and $\lambda^{(\tau)}_{1}(U_{B},\tau^2)<\dot{y}_{\alpha_1}\leq\lambda^{(\tau)}_{1}(U^{(\tau)}_{\Gamma},\tau^2)$}\label{fig5.8}
\end{center}
\end{figure}

If $\lambda^{(\tau)}_{1}(U_{B},\tau^2)<\dot{y}_{\alpha_1}\leq\lambda^{(\tau)}_{1}(U^{(\tau)}_{\Gamma},\tau^2)$ (see Fig. \ref{fig5.8}), then
\begin{align*}
C_{\mathcal{R}_1,2}&=\int^{g_{k}+\lambda^{(\tau)}_{1}(U^{(\tau)}_{\Gamma},\tau^2)s}_{g_{k}+\lambda^{(\tau)}_{1}(U_{B},\tau^2)s}
\big|\mathcal{P}^{(\tau)}_{h}(x_k+s, x_{k})(U^{(\tau),\Gamma}_{h,\nu}(x_{k},y))-U^{\Gamma}_{h,\nu}(x_{k}+s,y)\big|\,{\rm d}y\\[2pt]
&=\int^{g_{k}+\dot{y}_{\alpha_1}s}_{g_{k}+\lambda^{(\tau)}_{1}(U_{B},\tau^2)s}\big|\mathcal{P}^{(\tau)}_{h}(x_k+s, x_{k})(U^{(\tau),\Gamma}_{h,\nu}(x_{k},y))
  -U^{\Gamma}_{h,\nu}(x_{k}+s,y)\big|\,{\rm d}y\\[2pt]
&\quad +\int^{g_{k}+\lambda^{(\tau)}_{1}(U^{(\tau)}_{\Gamma},\tau^2)s}_{g_{k}+\dot{y}_{\alpha_1}s}
\big|\mathcal{P}^{(\tau)}_{h}(x_k+s, x_{k})(U^{(\tau),\Gamma}_{h,\nu}(x_{k},y))-U^{\Gamma}_{h,\nu}(x_{k}+s,y)\big|\,{\rm d}y.
\end{align*}

Then, by Lemma \ref{eq:4.2},
Proposition A.1,
and estimates \eqref{eq:4.39c}--\eqref{eq:4.41},
\begin{eqnarray*}
&&\int^{g_{k}+\dot{y}_{\alpha_1}s}_{g_{k}+\lambda^{(\tau)}_{1}(U_{B},\tau^2)s}
 \big|\mathcal{P}^{(\tau)}_{h}(x_k+s, x_{k})(U^{(\tau),\Gamma}_{h,\nu}(x_{k},y))-U^{\Gamma}_{h,\nu}(x_{k}+s,y)\big|{\rm d}y\\[2pt]
&&=\big|\dot{y}_{\alpha_1}-\lambda^{(\tau)}_{1}(U_{B},\tau^2)\big|\big|\Phi^{(\tau)}_{1}(\sigma_{\beta_1}(\zeta);U_{B},\tau^2)-U_{B}\big|s\\[2pt]
&& \leq C\Big(\big|\lambda^{(\tau)}_{1}(U_{\Gamma})-\lambda^{(\tau)}_{1}(U_{B},\tau^2)\big|+2^{-\nu}\Big)|\sigma^{(\tau)}_{\beta_1}|s\\[2pt]
&& \leq C\big( |\omega_k|\tau^2+(|\omega_k|+\tau^2)(\nu^{-1}+2^{-\nu})\big)s,
\end{eqnarray*}
and
\begin{eqnarray*}
&&\int^{g_{k}+\lambda^{(\tau)}_{1}(U^{(\tau)}_{\Gamma},\tau^2)s}_{g_{k}+\dot{y}_{\alpha_1}s}
\big|\mathcal{P}^{(\tau)}_{h}(x_k+s, x_{k})(U^{(\tau),\Gamma}_{h,\nu}(x_{k},y))-U^{\Gamma}_{h,\nu}(x_{k}+s,y)\big|\,{\rm d}y\\[2pt]
&&= \big(\lambda^{(\tau)}_{1}(U^{(\tau)}_{\Gamma},\tau^2)-\dot{y}_{\alpha_1}\big)
\big|\Phi^{(\tau)}_{1}(\sigma_{\beta_1}(\zeta);U_{B},\tau^2)-U_{\Gamma}\big|s\\[2pt]
&&\leq C\Big(\big|\lambda^{(\tau)}_{1}(U^{(\tau)}_{\Gamma},\tau^2)-\lambda_{1}(U_{\Gamma})\big|
+2^{-\nu}\Big)\big|\Phi^{(\tau)}_{1}(\sigma^{(\tau)}_{\beta_1}(\zeta);U_{B},\tau^2)-\Phi_{1}(\sigma_{\alpha_1};U_{B})\big|s\\[2pt]
&&\leq C\big((1+|\omega_k|)\tau^2+2^{-\nu}\big)\big(|\sigma^{(\tau)}_{\beta_1}|+ (1+|\omega_k|)\tau^2\big)s\\[2pt]
&&\leq C\big( |\omega_k|\tau^2+(|\omega_k|+\tau^2)(\nu^{-1}+2^{-\nu})\big)s.
\end{eqnarray*}
Then
\begin{equation*}
C_{\mathcal{R}_1,2}\leq C\big(|\omega_{k}|\tau^2+(|\omega_{k}|+\tau^2)(\nu^{-1}+2^{-\nu})\big)s.
\end{equation*}

\begin{figure}[ht]
\begin{center}
\begin{tikzpicture}[scale=1.0]
\draw [line width=0.03cm](-5.0,1.5)--(-3,1)--(-1,1.8);

\draw [line width=0.06cm](-5.0,1.5)--(-5.0,-1.5);
\draw [line width=0.06cm](-3,1)--(-3,-1.5);
\draw [line width=0.06cm](-1.0,1.8)--(-1.0,-1.5);
\draw [thick](-1.6,1.6)--(-1.6,-1.6);

\draw [thin](-4.8,1.45)--(-4.5, 1.75);
\draw [thin](-4.5,1.38)--(-4.2, 1.68);
\draw [thin](-4.2,1.30)--(-3.9, 1.60);
\draw [thin](-3.9, 1.23)--(-3.6,1.53);
\draw [thin](-3.6, 1.16)--(-3.3,1.46);
\draw [thin](-3.3, 1.08)--(-3.0,1.38);
\draw [thin](-3.0, 1.0)--(-2.8,1.4);
\draw [thin](-2.7,1.12)--(-2.5, 1.52);
\draw [thin](-2.4,1.23)--(-2.2, 1.63);
\draw [thin](-2.1,1.35)--(-1.9, 1.75);
\draw [thin](-1.8,1.47)--(-1.6, 1.87);
\draw [thin](-1.5, 1.59)--(-1.3,1.99);
\draw [thin](-1.2,1.71)--(-1.0, 2.11);

\draw [thick][blue](-3,1)--(-1.6,0.6);
\draw [thick](-3,1)--(-1.6,-0.1);
\draw [thick](-3,1)--(-1.6,-0.6);
\draw [thin](-3,1)--(-1.6,-1.0);

\node at (-5.6, 1.6) {$\textsc{C}_{k-1}$};
\node at (-3.0, 1.5) {$\textsc{C}_{k}$};
\node at (-0.4, 1.8) {$\textsc{C}_{k+1}$};
\node at (-1.3, 0.5) {$\alpha_{1}$};
\node at (-1.3, -0.3) {$\beta_{1}$};
\node at (-2.0, 1.1) {$U^{(\tau)}_{\Gamma}$};
\node at (-1.8, 0.4) {$U_{\Gamma}$};
\node at (-2.0, -1.0) {$U_{B}$};
\node at (1, 2) {$$};

\node at (-5.0, -1.9) {$x_{k-1}$};
\node at (-3.0, -1.9) {$x_{k}$};
\node at (-0.9, -1.9) {$x_{k+1}$};
\end{tikzpicture}
\caption{Case: $\omega_{k}>0$ and $\dot{y}_{\alpha_1}>\lambda^{(\tau)}_{1}(U^{(\tau)}_{\Gamma},\tau^2)$}\label{fig5.9}
\end{center}
\end{figure}
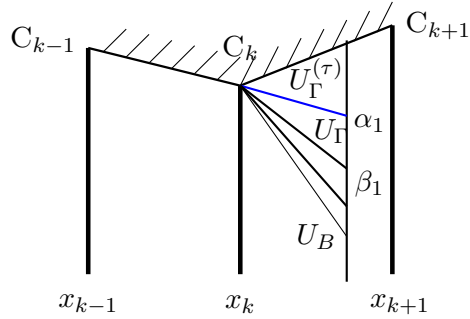

If $\dot{y}_{\alpha_1}>\lambda^{(\tau)}_{1}(U^{(\tau)}_{\Gamma},\tau^2)$ (see Fig. \ref{fig5.9}), then
\begin{align*}
C_{\mathcal{R}_2,3}&=\int^{g_{k}+\dot{y}_{\alpha_1}s}_{g_{k}+\lambda^{(\tau)}_{1}(U_{B},\tau^2)s}
\big|\mathcal{P}^{(\tau)}_{h}(x_k+s, x_{k})(U^{(\tau),\Gamma}_{h,\nu}(x_{k},y))-U^{\Gamma}_{h,\nu}(x_{k}+s,y)\big|\,{\rm d}y\\[2pt]
&=\int^{g_{k}+\lambda^{(\tau)}_1(U^{(\tau)}_{\Gamma}, \tau^2)s}_{g_{k}+\lambda^{(\tau)}_{1}(U_{B},\tau^2)s}
\big|\mathcal{P}^{(\tau)}_{h}(x_k+s, x_{k})(U^{(\tau),\Gamma}_{h,\nu}(x_{k},y))-U^{\Gamma}_{h,\nu}(x_{k}+s,y)\big|\,{\rm d}y\\[2pt]
&\quad+\int^{g_{k}+\dot{y}_{\alpha_1}s}_{g_{k}+\lambda^{(\tau)}_{1}(U^{(\tau)}_{\Gamma},\tau^2)s}
\big|\mathcal{P}^{(\tau)}_{h}(x_k+s, x_{k})(U^{(\tau),\Gamma}_{h,\nu}(x_{k},y))-U^{\Gamma}_{h,\nu}(x_{k}+s,y)\big|\,{\rm d}y.
\end{align*}

Using estimate \eqref{eq:4.41}, Lemma \ref{lem:4.2}, and
Proposition A.1,
we have
\begin{eqnarray*}
&&\int^{g_{k}+\lambda^{(\tau)}_1(U^{(\tau)}_{\Gamma}, \tau^2)s}_{g_{k}+\lambda^{(\tau)}_{1}(U_{B},\tau^2)s}
\big|\mathcal{P}^{(\tau)}_{h}(x_k+s, x_{k})(U^{(\tau),\Gamma}_{h,\nu}(x_{k},y))-U^{\Gamma}_{h,\nu}(x_{k}+s,y)\big|\,{\rm d}y\\[2pt]
&&\quad \leq \big|\lambda^{(\tau)}_1(U^{(\tau)}_{\Gamma}, \tau^2)-\lambda^{(\tau)}_{1}(U_{B},\tau^2)\big|\big|\Phi^{(\tau)}_{1}(\sigma^{(\tau)}_{\beta_1}; U_{B},\tau^2)-U_{B}\big|s\\[2pt]
&&\quad \leq C\,|\sigma^{(\tau)}_{\beta_1}|^2s\\[2pt]
&&\quad\leq C\big( \tau^2+|\omega_k|\big)\nu^{-1}s,
\end{eqnarray*}
and
\begin{eqnarray*}
&&\int^{g_{k}+\dot{y}_{\alpha_1}s}_{g_{k}+\lambda^{(\tau)}_{1}(U^{(\tau)}_{\Gamma},\tau^2)s}
\big|\mathcal{P}^{(\tau)}_{h}(x_k+s, x_{k})(U^{(\tau),\Gamma}_{h,\nu}(x_{k},y))-U^{\Gamma}_{h,\nu}(x_{k}+s,y)\big|\,{\rm d}y\\[2pt]
&&\quad \leq \big|\dot{y}_{\alpha_1}-\lambda^{(\tau)}_{1}(U^{(\tau)}_{\Gamma},\tau^2)\big|
\big|\Phi^{(\tau)}_1(\sigma^{(\tau)}_{\beta_1};U_{B},\tau^2)-U_{B}\big|s\\[2pt]
&&\quad \leq C\big(|\lambda^{(\tau)}_{1}(U^{(\tau)}_{\Gamma},\tau^2)-\lambda_{1}(U_{\Gamma})|+2^{-\nu}\big)|\sigma^{(\tau)}_{\beta_1}|s\\[2pt]
&&\quad\leq C\big((1+|\omega_k|)\tau^2+2^{-\nu}\big)(1+\tau^2)|\omega_k|s\\[2pt]
&&\quad \leq C\big( |\omega_k|\tau^2+(|\omega_k|+\tau^2)(\nu^{-1}+2^{-\nu})\big)s.
\end{eqnarray*}

Therefore, we obtain
\begin{equation*}
C_{\mathcal{R}_1,2}\leq C\big(|\omega_k|\tau^2+(|\omega_k|+\tau^2)(\nu^{-1}+2^{-\nu})\big)s.
\end{equation*}
Then
we can choose a constant {$C_{3,6}>0$} depending only on $(\underline{U},a_{\infty})$ so that estimate \eqref{eq:4.38} holds.

\smallskip
2. Case \rm{(ii)}: $\omega_{k}<0$.
By Lemma A.3,
we know that $\alpha_1\in \mathcal{S}_{1}$ with
strength $\sigma_{\alpha_1}<0$.
It follows from Lemma \ref{lem:4.2} that equation \eqref{eq:4.35} admits a unique
solution $\sigma^{(\tau)}_{\beta_1}$ corresponding to the Riemann solution
$\mathcal{P}^{(\tau)}_{h}(x_{k}+s,x_{k})(U^{(\tau)}_{\Gamma}(x_{k},y))$ obtained by Proposition \ref{prop:2.2} satisfying
$\sigma^{(\tau)}_{\beta_1}<0$ and estimates \eqref{eq:4.33}--\eqref{eq:4.34}; see also Fig. \ref{fig4.8} below.
Denoted by $\dot{\mathcal{S}}^{(\tau)}_{1}(\sigma^{(\tau)}_{\beta_1},\tau^2)$ the speed of the $1^{\rm st}$ shock-front $\beta_{1}$.
\begin{figure}[ht]
\begin{center}
\begin{tikzpicture}[scale=1.0]
\draw [thick](-5.0,1.5)--(-3,1);
\draw [thick](-7.0,1)--(-5,1.5);

\draw [line width=0.06cm](-7.0,1)--(-7.0,-1.5);
\draw [line width=0.06cm](-5.0,1.5)--(-5.0,-1.5);
\draw [line width=0.06cm](-3,1)--(-3,-1.5);
\draw[thick](-3.5,1.16)--(-3.5,-1.5);

\draw [thin](-7.0,1.00)--(-6.7, 1.35);
\draw [thin](-6.7,1.08)--(-6.4, 1.43);
\draw [thin](-6.4,1.16)--(-6.1, 1.50);
\draw [thin](-6.1,1.23)--(-5.8, 1.57);
\draw [thin](-5.8,1.30)--(-5.5, 1.61);
\draw [thin](-5.5,1.38)--(-5.2, 1.69);
\draw [thin](-5.2,1.45)--(-4.9, 1.75);
\draw [thin](-4.8,1.45)--(-4.5, 1.75);
\draw [thin](-4.5,1.38)--(-4.2, 1.68);
\draw [thin](-4.2,1.30)--(-3.9, 1.60);
\draw [thin](-3.9, 1.23)--(-3.6,1.53);
\draw [thin](-3.6, 1.16)--(-3.3,1.46);
\draw [thin](-3.3, 1.08)--(-3.0,1.38);

\draw [thick](-5.0,1.5)--(-3.5, 0.2);
\draw [thick][red](-5.0,1.5)--(-3.5,-0.8);
\draw [thin](-5,1.5)--(-3.5,-1.3);

\node at (-7.3, 1.3) {$\textsc{C}_{k-1}$};
\node at (-5.0, 1.8) {$\textsc{C}_{k}$};
\node at (-2.5, 1.2) {$\textsc{C}_{k+1}$};
\node at (-3.2, 0.2) {$\beta_{1}$};
\node at (-3.2, -0.8) {$\alpha_{1}$};
\node at (-3.3, -1.2) {$\hat{\lambda}$};
\node at (-3.9, 0.9) {$U^{(\tau)}_{\Gamma}$};
\node at (-3.7, 0) {$U_{\Gamma}$};
\node at (-4.5, -0.6) {$U_{B}$};

\node at (-7.0, -1.9) {$x_{k-1}$};
\node at (-5.0, -1.9) {$x_{k}$};
\node at (-3.0, -1.9) {$x_{k+1}$};
\end{tikzpicture}
\caption{Comparison of the Riemann solvers for the case: $\omega_{k}<0$}\label{fig4.8}
\end{center}
\end{figure}
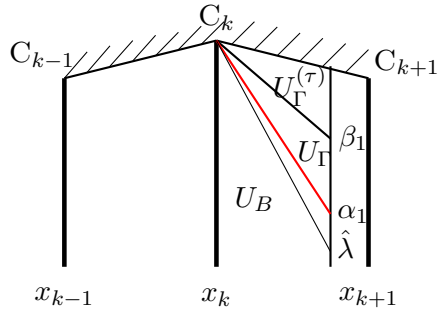

First, we have
{\small
\begin{align*}
&\int^{g_{k}+\tan(\theta_{k+1})s}_{g_{k}-\hat{\lambda}h}\big|\mathcal{P}^{(\tau)}_{h}(x_k+s, x_{k})(U^{(\tau),\Gamma}_{h,\nu}(x_{k},y))-U^{\Gamma}_{h,\nu}(x_{k}+s,y)\big|\,{\rm d}y\\[2pt]
&=\int^{\min\{g_{k}+\dot{y}_{\alpha_1}s,\,g_{k}+\dot{\mathcal{S}}^{(\tau)}_{1}(\sigma^{(\tau)}_{\beta_1},\tau^2)s\}}_{g_{k}-\hat{\lambda}s}
\big|\mathcal{P}^{(\tau)}_{h}(x_k+s, x_{k})(U^{(\tau),\Gamma}_{h,\nu}(x_{k},y))-U^{\Gamma}_{h,\nu}(x_{k}+s,y)\big|\,{\rm d}y\\[2pt]
&\quad +\int^{\max\{g_{k}+\dot{y}_{\alpha_1}s,\,g_{k}+\dot{\mathcal{S}}^{(\tau)}_{1}(\sigma^{(\tau)}_{\beta_1},\tau^2)s\}}_{\min\{g_{k}+\dot{y}_{\alpha_1}s, g_{k}+\dot{\mathcal{S}}^{(\tau)}_{1}(\sigma^{(\tau)}_{\beta_1},\tau^2)s\}}
\big|\mathcal{P}^{(\tau)}_{h}(x_k+s, x_{k})(U^{(\tau),\Gamma}_{h,\nu}(x_{k},y))-U^{\Gamma}_{h,\nu}(x_{k}+s,y)\big|\,{\rm d}y\\[2pt]
&\quad +\int^{g_{k}+\tan(\theta_{k+1})s}_{\max\{g_{k}+\dot{y}_{\alpha_1}s,\,g_{k}+\dot{\mathcal{S}}^{(\tau)}_{1}(\sigma^{(\tau)}_{\beta_1},\tau^2)s\}}
\big|\mathcal{P}^{(\tau)}_{h}(x_k+s, x_{k})(U^{(\tau),\Gamma}_{h,\nu}(x_{k},y))-U^{\Gamma}_{h,\nu}
(x_{k}+s,y)\big|\,{\rm d}y\\[2pt]
&\doteq C_{\mathcal{S}_1,1}+C_{\mathcal{S}_1,2}+C_{\mathcal{S}_1,3}.
\end{align*}
}

It is clear that
$$
C_{\mathcal{S}_1,1}=0.
$$
For $C_{\mathcal{S}_1,2}$, by Lemma \ref{lem:4.2}, we have
\begin{eqnarray}\label{eq:4.42}
\dot{\mathcal{S}}^{(\tau)}_{1}(\sigma^{(\tau)}_{\beta_1},\tau^2)-\dot{\mathcal{S}}_{1}(\sigma_{\alpha_1})
=O(1)\big(1+|\sigma_{\alpha_1}|\big)\tau^{2}
=O(1)\big(1+|\omega_{k}|\big)\tau^{2}.
\end{eqnarray}

If $\dot{y}_{\alpha_1}>\dot{\mathcal{S}}^{(\tau)}_{1}(\sigma^{(\tau)}_{\beta_1},\tau^2)$, then, by \eqref{eq:4.42} and Lemma \ref{lem:4.2},
\begin{align*}
C_{\mathcal{S}_1,2}&=\int^{g_{k}+\dot{y}_{\alpha_1}s}_{g_{k}+\dot{\mathcal{S}}^{(\tau)}_{1}(\sigma^{(\tau)}_{\beta_1},\tau^2)s}
\big|\mathcal{P}^{(\tau)}_{h}(x_k+s, x_{k})(U^{(\tau),\Gamma}_{h,\nu}(x_{k},y))-U^{\Gamma}_{h,\nu}(x_{k}+s,y)\big|\,{\rm d}y\\[2pt]
&\leq C\,\big|\dot{y}_{\alpha_1}-\dot{\mathcal{S}}^{(\tau)}_{1}(\sigma^{(\tau)}_{\beta_1},\tau^2)\big|\big|U^{(\tau)}_{\Gamma}-U_{B}\big|s\\[2pt]
&\leq C\Big(\big|\dot{\mathcal{S}}_{1}(\sigma_{\alpha_1})-\dot{\mathcal{S}}^{(\tau)}_{1}(\sigma^{(\tau)}_{\beta_1},\tau^2)\big|
+2^{-\nu}\Big)\big|\sigma^{(\tau)}_{\beta_1}\big|s\\[2pt]
&\leq C\Big((1+|\omega_{k}|)\tau^2+2^{-\nu}\Big)\big(|\omega_{k}|+\tau^2\big)s.
\end{align*}
If $\dot{y}_{\alpha_1}<\dot{\mathcal{S}}^{(\tau)}_{1}(\sigma^{(\tau)}_{\beta_1},\tau^2)$,
then it follows from \eqref{eq:4.34} and \eqref{eq:4.42} that
\begin{align*}
C_{\mathcal{S}_1,2}&=\int^{g_{k}+\dot{\mathcal{S}}^{(\tau)}_{1}(\sigma^{(\tau)}_{\beta_1},\tau^2)s}_{g_{k}+\dot{y}_{\alpha_1}s}
\big|\mathcal{P}^{(\tau)}_{h}(x_k+s, x_{k})(U^{(\tau),\Gamma}_{h,\nu}(x_{k},y))-U^{\Gamma}_{h,\nu}(x_{k}+s,y)\big|\,{\rm d}y\\[2pt]
&\leq C\,\big|\dot{\mathcal{S}}^{(\tau)}_{1}(\sigma^{(\tau)}_{\beta_1},\tau^2)-\dot{y}_{\alpha_1}\big|\big|U_{\Gamma}-U_{B}\big|s\\[2pt]
&\leq C\Big(\big|\dot{\mathcal{S}}_{1}(\sigma_{\alpha_1})-\dot{\mathcal{S}}^{(\tau)}_{1}(\sigma^{(\tau)}_{\beta_1},\tau^2)\big|
+2^{-\nu}\Big)\big|\sigma_{\alpha_1}\big|s\\[2pt]
&\leq C\Big((1+|\omega_{k}|)\tau^2+2^{-\nu}\Big)|\omega_{k}|s.
\end{align*}

Therefore, we obtain
$$
C_{\mathcal{S}_1,2}\leq C\Big((1+|\omega_{k}|)\tau^2+2^{-\nu}(|\omega_{k}|+\tau^2)\Big)s.
$$

For $C_{\mathcal{S}_1,3}$, it is direct to see
\begin{align*}
C_{\mathcal{S}_1,3}
&\leq C\,|U^{(\tau)}_{\Gamma}-U_{\Gamma}|s\\[2pt]
&\leq C\big|\Phi^{(\tau)}_{1}(\sigma_{\beta_1}; U_{B},\tau^{2})-\Phi(\sigma_{\alpha_1}; U_{B})\big|s\\[2pt]
&\leq C\big(1+|\omega_{k}|\big)\tau^2s.
\end{align*}
Then
we can obtain estimate \eqref{eq:4.37} by choosing
{$C_{3,7}>0$} depending only on $(\underline{U},a_{\infty})$.
\end{proof}

\begin{figure}[ht]
\begin{center}
\begin{tikzpicture}[scale=0.6]
\draw [line width=0.05cm] (-3.5,-4.0) --(-3.5,1.5);
\draw [line width=0.05cm] (2.5,-4.0) --(2.5,0.5);
\draw [thick] (-3.5,1.5)--(2.5,0.5);

\draw [thin][red] (-3.5,1.5)--(0.8,-1.5);
\draw [line width=0.03cm] (-1.5,-4.0) --(-1.5,1.2);
\draw [line width=0.03cm] (0.8,-4.0) --(0.8,0.8);

\draw [thin](-1.5, 0.1) --(0.8,-2.8);
\draw [thick](-1.5, 0.1) --(0.8,-2.0);
\draw [dashed](-1.5, 0.1) --(0.8,-0.9);
\draw [thick](-1.5, 0.1) --(0.8,-0.3);
\draw [thin](-1.5, 0.1) --(0.8,0.3);

\draw [thin] (-3, 1.4) --(-2.6, 1.8);
\draw [thin] (-2.6, 1.35) --(-2.2, 1.75);
\draw [thin] (-2.2, 1.30) --(-1.8, 1.70);
\draw [thin] (-1.8, 1.23) --(-1.4, 1.63);
\draw [thin] (-1.4, 1.16) --(-1.0, 1.56);
\draw [thin] (-1.0, 1.10) --(-0.6, 1.50);
\draw [thin] (-0.6, 1.03) --(-0.2, 1.43);
\draw [thin] (-0.2, 0.97) --(0.2, 1.37);
\draw [thin] (0.2, 0.9) --(0.6, 1.30);
\draw [thin] (0.6, 0.83) --(1, 1.23);
\draw [thin] (1, 0.76) --(1.4, 1.16);
\draw [thin] (1.4, 0.67) --(1.8, 1.07);
\draw [thin] (1.8, 0.60) --(2.2, 1.0);
\draw [thin] (2.2, 0.55) --(2.6, 0.95);

\node at (1.3, -0.3){$\beta_{4}$};
\node at (1.6, -1.1){$\beta_{2(3)}$};
\node at (1.3, -1.7){$\alpha_{1}$};
\node at (1.3, -2.2){$\beta_{1}$};

\node at (-0.4, 0.6){$U_{\Gamma}$};
\node at (-0.4, -2.5){$U_{B}$};

\node at (-2.4, -0.2){$(\xi,y_{\Gamma})$};
\node at (-4.3, 1.5){$\textsc{C}_{k}$};
\node at (3.5, 0.5){$\textsc{C}_{k+1}$};
\node at (-0.5, 1.8){$\Gamma_h$};

\node at (-1.5, -4.5){$\xi$};
\node at (0.8, -4.5){$\xi+s$};
\node at (-3.5, -4.5){$x_{k}$};
\node at (2.5, -4.5){$x_{k+1}$};
\end{tikzpicture}
\end{center}
\caption{Comparison of the Riemann solvers away from the corner points with boundary}\label{fig21}
\end{figure}
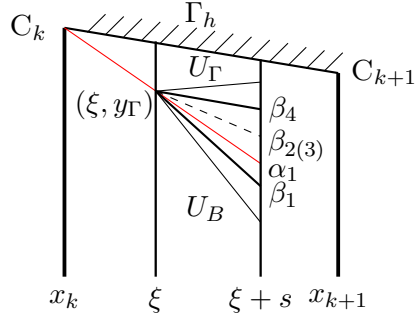

Finally, in this section, we consider the comparison of the Riemann solvers near the approximate boundary $\Gamma_{h}$ but away from the corner points, as shown in Fig. \ref{fig21}.
Let $U_{h,\nu}$ be the approximate solution of  the initial-boundary value problem \eqref{eq:1.12}--\eqref{eq:1.15}.
Let $\alpha_1$ be the front issuing from the corner point $\textsc{C}_{k}(x_k,g_{k})$ for $k\geq1$.
Let $y_{\alpha_1}(x)$ be the location of front $\alpha_1$ with a fixed speed $\dot{y}_{\alpha_1}$, which satisfies $y_{\alpha_1}(x_k)=g_{k}$.
We first establish the following lemma.

\begin{lemma}\label{lem:4.4}
Let $U_{B}=(\rho_B,u_{B}, v_{B},p_{B})^{\top}$ and $U_{\Gamma}=(\rho_{\Gamma}, u_{\Gamma},v_{\Gamma},p_{\Gamma})^{\top}$
be the two constant states as defined in \eqref{eq:4.29} and satisfy
\begin{equation}\label{eq:4.43}
U_{\Gamma}=\Phi_1(\sigma_{\alpha_1}; U_{B}), \,\,\, U_{\Gamma}=\Phi^{(\tau)}(\boldsymbol{\sigma}^{(\tau)}_{\boldsymbol{\beta}}; U_{B},\tau^2),
\,\,\, \boldsymbol{\sigma}^{(\tau)}_{\boldsymbol{\beta}}=\big(\sigma^{(\tau)}_{\beta_1}, \sigma^{(\tau)}_{\beta_2},
\sigma^{(\tau)}_{\beta_3},\sigma^{(\tau)}_{\beta_4}\big),
\end{equation}
and
\begin{eqnarray}\label{eq:4.44}
v_{B}=\tan(\theta_k), \qquad v_{\Gamma}=\tan(\theta_{k+1}).
\end{eqnarray}
Then, for $U_{B},\ U_{\Gamma}\in \mathcal{O}_{\min\{\varepsilon_0,\tilde{\varepsilon}_0\}}(\underline{U})$
and for $\tau\in(0,\min\{\tau_0,\tilde{\tau}_0\})$,
\begin{eqnarray}\label{eq:4.45}
\sigma^{(\tau)}_{\beta_{1}}=\sigma_{\alpha_1}+O(1)|\sigma_{\alpha_1}|\tau^{2},\qquad\, \sigma^{(\tau)}_{\beta_{j}}=O(1)|\sigma_{\alpha_1}|\tau^{2}
\quad \mbox{for $j\neq 1$}.
\end{eqnarray}
Furthermore, for $\varepsilon \in (0,\min\{\varepsilon_0,\tilde{\varepsilon}_0\})$, if $|\theta_{k}|+|\omega_{k}|<\varepsilon$,
\begin{eqnarray}\label{eq:4.46}
\sigma^{(\tau)}_{\beta_{1}}=O(1)\big(1+|\omega_{k}|\big)\tau^{2}, \qquad\, \sigma^{(\tau)}_{\beta_{j}}=O(1)|\omega_{k}|\tau^{2}
\quad \mbox{for $j\neq 1$},
\end{eqnarray}
where $\omega_{k}=\theta_{k+1}-\theta_{k}$, and the bound of $O(1)$ depends only on $(\underline{U},a_{\infty})$.
\end{lemma}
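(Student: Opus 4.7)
The plan is to prove \eqref{eq:4.45} directly from Lemma \ref{lem:4.2}, and then to refine it into \eqref{eq:4.46} by using the boundary slopes \eqref{eq:4.44} to pin down $\sigma_{\alpha_1}$ in terms of $\omega_k$.

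First, the hypothesis \eqref{eq:4.43} places us exactly in the setting of Lemma \ref{lem:4.2} with $k=1$, $U_a=U_\Gamma$, $U_b=U_B$: the same pair of states is joined both by the single $1$-wave $\alpha_1$ of the limit ($\tau=0$) system and by the full four-wave fan $\Phi^{(\tau)}$ of the hypersonic small-disturbance system. Applying \eqref{eq:4.6} with $k=1$ then immediately yields $\sigma^{(\tau)}_{\beta_j}=\delta_{j1}\sigma_{\alpha_1}+O(1)|\sigma_{\alpha_1}|\tau^{2}$ for $1\le j\le 4$, which is exactly \eqref{eq:4.45}. The $O(1)$-constant depends only on $(\underline{U},a_\infty)$ by the uniform implicit function argument already carried out in the proof of Lemma \ref{lem:4.2}.

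Next, to derive \eqref{eq:4.46}, I would use \eqref{eq:4.44} to show that $\sigma_{\alpha_1}$ itself is controlled by $\omega_k$. Set
$$
\mathcal{M}(\sigma_{\alpha_1},\omega_k;U_B)\doteq \Phi^{(3)}_1(\sigma_{\alpha_1};U_B)-\tan(\theta_k+\omega_k),
$$
where $\Phi^{(3)}_1$ is the $v$-component of $\Phi_1$. The boundary data in \eqref{eq:4.44} forces $\mathcal{M}=0$, and at the base point $\sigma_{\alpha_1}=\omega_k=0$, $U_B=\underline{U}$ we have $\mathcal{M}=v_B-\tan(\theta_k)=0$. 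The derivative $\partial_{\sigma_{\alpha_1}}\mathcal{M}\big|_0$ equals the $v$-component of the limit-system right eigenvector $\boldsymbol{r}_1(\underline{U})$, which is nonzero by the same non-characteristic check already performed in Lemma \ref{lem:2.5} and its limit-system analog in Appendix A. The implicit function theorem then produces a unique $C^2$-branch $\sigma_{\alpha_1}=\sigma_{\alpha_1}(\omega_k;U_B)$ with $\sigma_{\alpha_1}|_{\omega_k=0}=0$, so Taylor expansion gives $\sigma_{\alpha_1}=K_B\omega_k+O(\omega_k^2)$ with $K_B$ bounded in terms of $(\underline{U},a_\infty)$ only. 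Substituting this control into \eqref{eq:4.45} delivers the component-wise estimates of \eqref{eq:4.46}.

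The main obstacle is to keep every error constant uniform in $\tau$ when combining the two implicit function theorem arguments. The wave-decomposition step (Lemma \ref{lem:4.2}) involves a $\tau$-dependent linearization, but its output constants have already been shown to depend only on $(\underline{U},a_\infty)$. The boundary-Riemann step for $\sigma_{\alpha_1}(\omega_k;U_B)$ is performed purely inside the $\tau=0$ system, so the constant $K_B$ it produces is automatically independent of $\tau$. The only residual check is uniform nonvanishing of $\partial_{\sigma_{\alpha_1}}\mathcal{M}$ in a neighborhood of $(\underline{U},0,0)$, which is immediate from the explicit formula for $\boldsymbol{r}_1$ under the smallness assumption $|\theta_k|+|\omega_k|<\varepsilon$.
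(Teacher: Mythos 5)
Your proposal is correct and follows essentially the same route as the paper: the paper's own proof likewise reduces \eqref{eq:4.45} to the wave-decomposition identity $\Phi^{(\tau)}(\boldsymbol{\sigma}^{(\tau)}_{\boldsymbol{\beta}};U_{B},\tau^2)=\Phi_1(\sigma_{\alpha_1};U_{B})$ together with an implicit-function/Taylor argument (which you shortcut by citing Lemma \ref{lem:4.2} with $k=1$, legitimately, since its hypotheses are met), and then obtains \eqref{eq:4.46} from the boundary relation $\sigma_{\alpha_1}=O(1)\omega_k$, which the paper gets by citing Lemma A.3 where you re-derive it. The only caveat is that direct substitution yields $\sigma^{(\tau)}_{\beta_1}=O(1)(1+\tau^2)|\omega_k|$, consistent with \eqref{eq:4.34} of Lemma \ref{lem:4.3}, so the form $O(1)(1+|\omega_{k}|)\tau^{2}$ printed for $\sigma^{(\tau)}_{\beta_1}$ in \eqref{eq:4.46} appears to be a misprint in the statement; your argument, like the paper's, actually proves the former.
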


\begin{proof}
By \eqref{eq:4.43}, we know that $\boldsymbol{\sigma}^{(\tau)}_{\boldsymbol{\beta}}=\big(\sigma^{(\tau)}_{\beta_1}, \sigma^{(\tau)}_{\beta_2},
\sigma^{(\tau)}_{\beta_3},\sigma^{(\tau)}_{\beta_4}\big)$ satisfies
\begin{eqnarray}\label{eq:4.47}
\Phi^{(\tau)}(\boldsymbol{\sigma}^{(\tau)}_{\boldsymbol{\beta}}; U_{B},\tau^2)=\Phi_1(\sigma_{\alpha_{1}}; U_{B}).
\end{eqnarray}
Then, by Lemma \ref{lem:2.3} and the implicit function theorem, for $U_{B}\in \mathcal{O}_{\min\{\varepsilon_0,\tilde{\varepsilon}_0\}}(\underline{U})$
and $\tau\in(0,\min\{\tau_0,\tilde{\tau}_0\})$, equation \eqref{eq:4.47} admits a unique solution
$\sigma^{(\tau)}_{\beta_{j}}=\sigma^{(\tau)}_{\beta_{j}}(\sigma_{\alpha_{1}}, \tau^{2})\in C^{2}$ with $1\leq j\leq 4$.
Moreover, from equation \eqref{eq:4.47} and Lemma \ref{lem:4.1}, we see that
$$
\sigma^{(\tau)}_{\beta_{j}}(\sigma_{\alpha_{1}}, 0)=\delta_{j1}\sigma_{\alpha_1},\quad \sigma^{(\tau)}_{\beta_{j}}(0, \tau^2)=0.
$$
Thus, estimates \eqref{eq:4.45} follows from the Taylor formula.

Then, using the boundary conditions \eqref{eq:4.44} and
Lemma A.3,
we further obtain that $\sigma_{\alpha_{1}}=O(1)\omega_k$,
which leads to estimate \eqref{eq:4.46}. This completes the proof.
\end{proof}

Let $(\xi, y_{\Gamma})$ be a point on front $y_{\alpha_1}(x)$ that lies on the lower side of the approximate boundary $\Gamma_{h}$
(see Fig. \ref{fig21}).
Then we have the following proposition:

\begin{proposition}\label{prop:4.4}
For a given hypersonic similarity parameter $a_{\infty}$ and for $\tau\in(0,\min\{\tau_0,\tilde{\tau}_0\})$,
let $\mathcal{P}^{(\tau)}_{h}$ be a uniformly Lipschtiz continuous map obtained in {\rm Proposition \ref{prop:2.2}}.
Let $U^{B}_{h,\nu}(\xi,y)$ be the approximate solution of the initial-boundary value problem \eqref{eq:1.12}--\eqref{eq:1.15}
with $U_{B},\ U_{\Gamma}\in \mathcal{O}_{\min\{\varepsilon_0, \tilde{\varepsilon}_0\}}(\underline{U})$ defined in \eqref{eq:4.29}
and satisfying \eqref{eq:4.44}. By
{\rm Lemma A.3},
for $s>0$ sufficiently small,
\begin{eqnarray}\label{eq:4.49}
U^{B}_{h,\nu}(\xi+s, y)=
\left\{
\begin{array}{llll}
U_{\Gamma}, \quad & y_{\Gamma}+\dot{y}_{\alpha_1}s<y<g_{h}(\xi+s),\\[2pt]
U_{B}, \quad & y<y_{\Gamma}+\dot{y}_{\alpha_1}s,
\end{array}
\right.
\end{eqnarray}
where $|\dot{y}_{\alpha_1}|\leq \hat{\lambda}$ and $\hat{\lambda}$ is a fixed constant
satisfying
$$
\hat{\lambda}>\max_{1\leq j\leq 4}\{\lambda^{(\tau)}_{j}(U^{(\tau)},\tau^{2}), \lambda_{j}(U)\}
\qquad\,\,\mbox{{\rm for} $U^{(\tau)}, U\in \mathcal{O}_{\min\{\varepsilon_0, \tilde{\varepsilon}_0\}}(\underline{U})$}.
$$
Then
\begin{enumerate}
\item[\rm (i)]
if $\omega_{k}=\theta_{k+1}-\theta_{k}>0$, $U_{B}$ and $U_{\Gamma}$ are connected by the $1^{\rm st}$ rarefaction-front $\alpha_1\in \mathcal{R}_1$
satisfying $U_{\Gamma}=\Phi_{1}(\sigma_{\alpha_1}; U_{B})$ with $\sigma_{\alpha_1}>0$ and $|\dot{y}_{\alpha_1}-\lambda_{1}(U_{\Gamma})|<2^{-\nu}$,
and
\begin{align}\label{eq:4.50}
&\int^{y_{\Gamma}+\hat{\lambda}s}_{y_{\Gamma}-\hat{\lambda}s}\big|\mathcal{P}^{(\tau)}_{h}(\xi+s,\xi)(U^{B}_{h,\nu}(\xi,y))
-U^{B}_{h,\nu}(\xi+s,y)\big|\,{\rm d}y\nonumber\\[2pt]
&\leq {C_{3,8}}\big(\tau^{2}+\nu^{-1}+2^{-\nu}\big)|\omega_{k}|s.
\end{align}

\item[\rm (ii)]
if $\omega_{k}=\theta_{k+1}-\theta_{k}<0$, $U_{B}$ and $U_{\Gamma}$ are connected by the $1^{\rm st}$ shock-front $\alpha_1\in \mathcal{S}_1$
satisfying $U_{\Gamma}=\Phi_{1}(\sigma_{\alpha_1}; U_{B})$ with $\sigma_{\alpha_1}<0$ and $|\dot{y}_{\alpha_1}-\dot{\mathcal{S}}_{1}(\sigma_{\alpha_1})|<2^{-\nu}$,
and
\begin{align}\label{eq:4.51}
&\int^{y_{\Gamma}+\hat{\lambda}s}_{y_{\Gamma}-\hat{\lambda}s}\big|\mathcal{P}^{(\tau)}_{h}(\xi+s,\xi)(U^{B}_{h,\nu}(\xi,y))
-U^{B}_{h,\nu}(\xi+s,y)\big|\,{\rm d}y\nonumber\\[2pt]
&\leq {C_{3,9}}\big(\tau^{2}+2^{-\nu}\big)|\omega_{k}|,
\end{align}
where $\dot{\mathcal{S}}_{1}(\sigma_{\alpha_1})$ is the exact speed of the $1^{\rm st}$ shock-front $\alpha_1$,
and constants {$C_{3,8}>0$ and $C_{3,9}>0$ }depend only on $(\underline{U},a_{\infty})$.
\end{enumerate}
\end{proposition}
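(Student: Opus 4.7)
My plan is to reduce Proposition \ref{prop:4.4} to the template of Proposition \ref{prop:4.1}, recognizing that the geometric situation here is essentially that of an interior single-wave discontinuity: within the thin strip $(y_\Gamma-\hat\lambda s,y_\Gamma+\hat\lambda s)$, the approximate boundary $\Gamma_h$ plays no direct role, and the only novelty compared with Proposition \ref{prop:4.1} is that the strength $\sigma_{\alpha_1}$ of the incoming wave is controlled by the corner turning angle $|\omega_k|$ via Lemma A.3. The crucial new input is Lemma \ref{lem:4.4}: for the $\tau$-Riemann problem that connects $U_B$ to $U_\Gamma$, we have $\sigma^{(\tau)}_{\beta_1}=\sigma_{\alpha_1}+O(1)|\sigma_{\alpha_1}|\tau^2$ and $\sigma^{(\tau)}_{\beta_j}=O(1)|\omega_k|\tau^2$ for $j=2,3,4$. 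Thus the far-field values already match, and the total error is driven only by (a) the speed mismatch of the principal fronts $\alpha_1$ versus $\beta_1$, and (b) the three tiny secondary fans inserted by the $\tau$-system.

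First, I would invoke Lemma \ref{lem:4.4} to conclude that, for $\tau$ sufficiently small, $\beta_1$ is of the same type as $\alpha_1$ (shock when $\omega_k<0$, rarefaction when $\omega_k>0$), and to identify the three intermediate states $U^{(\tau)}_{m_1},U^{(\tau)}_{m_2},U^{(\tau)}_{m_3}$ of $\mathcal{P}^{(\tau)}_h(\xi+s,\xi)(U^{B}_{h,\nu}(\xi,\cdot))$. I then decompose the integration interval into three natural pieces: the region left of $\min\{\dot{y}_{\alpha_1}s,\,\dot{\mathcal{S}}^{(\tau)}_1s\text{ or }\lambda^{(\tau)}_1(U_B,\tau^2)s\}$ (both profiles equal $U_B$, contribution zero); the narrow strip between the principal fronts/fans (contribution controlled by the speed mismatch times the jump); and the region right of the principal waves (where the $\tau$-profile visits $U^{(\tau)}_{m_1},U^{(\tau)}_{m_2},U^{(\tau)}_{m_3}$ while $U^{B}_{h,\nu}=U_\Gamma$). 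This last contribution is immediately bounded by $C\sum_{j=2}^{4}|\sigma^{(\tau)}_{\beta_j}|s=O(1)|\omega_k|\tau^2 s$ thanks to Lemma \ref{lem:4.4}.

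For part (ii) (shock case), the analogue of the estimate \eqref{eq:4.5} yields $\dot{\mathcal{S}}^{(\tau)}_1(\sigma^{(\tau)}_{\beta_1},\tau^2)-\dot{\mathcal{S}}_1(\sigma_{\alpha_1})=O(1)(1+|\omega_k|)\tau^2$. Combining this with the admissibility tolerance $|\dot{y}_{\alpha_1}-\dot{\mathcal{S}}_1(\sigma_{\alpha_1})|<2^{-\nu}$ bounds the width of the middle strip by $O(1)((1+|\omega_k|)\tau^2+2^{-\nu})s$, and multiplying by the jump magnitude $|U_\Gamma-U_B|=O(|\omega_k|)$ produces the claimed bound $C_{3,9}(\tau^2+2^{-\nu})|\omega_k|s$. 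The argument is a verbatim adaptation of Proposition \ref{prop:4.1}(i), with every occurrence of $|\sigma_{\alpha_k}|$ replaced by $O(|\omega_k|)$ by virtue of Lemma A.3.

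Part (i) (rarefaction case) is where I expect the real obstacle, because $\beta_1$ is now a $C^1$-spreading fan parametrized by $\Phi^{(\tau)}_1(\sigma^{(\tau)}_{\beta_1}(\zeta);U_B,\tau^2)$ for $\zeta\in[\lambda^{(\tau)}_1(U_B,\tau^2),\lambda^{(\tau)}_1(U^{(\tau)}_\Gamma,\tau^2)]$, while $\alpha_1$ remains a single line at speed $\dot{y}_{\alpha_1}\approx\lambda_1(U_\Gamma)$. I would split the middle strip into the three subcases $\dot{y}_{\alpha_1}<\lambda^{(\tau)}_1(U_B,\tau^2)$, $\lambda^{(\tau)}_1(U_B,\tau^2)\le\dot{y}_{\alpha_1}\le\lambda^{(\tau)}_1(U^{(\tau)}_\Gamma,\tau^2)$, and $\dot{y}_{\alpha_1}>\lambda^{(\tau)}_1(U^{(\tau)}_\Gamma,\tau^2)$, exactly as in Proposition \ref{prop:4.1}(ii). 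The key estimates I would deploy are the two characteristic-speed expansions
\begin{align*}
\lambda^{(\tau)}_1(U^{(\tau)}_\Gamma,\tau^2)-\lambda_1(U_\Gamma)&=O(1)(1+|\omega_k|)\tau^2,\\
\lambda_1(U_\Gamma)-\lambda^{(\tau)}_1(U_B,\tau^2)&=O(1)(|\omega_k|+\tau^2),
\end{align*}
derived from \eqref{eq:3.5a} and Taylor expansion of $\lambda_1$ around $U_B$, along with the fan width identity $|\lambda^{(\tau)}_1(U^{(\tau)}_\Gamma,\tau^2)-\lambda^{(\tau)}_1(U_B,\tau^2)|=O(|\sigma^{(\tau)}_{\beta_1}|)=O(|\omega_k|)$ and the wave-front threshold $|\sigma^{(\tau)}_{\beta_1}|\le C\nu^{-1}$ from Proposition \ref{prop:2.1}(ii). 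The hardest subcase is the middle one: inside the $\tau$-fan the trajectory varies continuously while $U^B_{h,\nu}$ remains piecewise constant, so the contribution from the sub-strip to the left of $\dot{y}_{\alpha_1}s$ must be controlled by $|\sigma^{(\tau)}_{\beta_1}|^2\le C\nu^{-1}|\omega_k|$ (this is exactly where the $\nu^{-1}$ term in the final bound is generated), and the sub-strip to the right of $\dot{y}_{\alpha_1}s$ by the bound on $|\Phi^{(\tau)}_1(\sigma^{(\tau)}_{\beta_1}(\zeta);U_B,\tau^2)-\Phi_1(\sigma_{\alpha_1};U_B)|=O(1)(1+|\omega_k|)\tau^2$ obtained as in \eqref{eq:4.39c}. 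Summing all subcases delivers $C_{3,8}(\tau^2+\nu^{-1}+2^{-\nu})|\omega_k|s$, completing the proof.
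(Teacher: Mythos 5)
Your proposal is correct and follows essentially the same route as the paper: the paper's own proof simply uses Lemma A.3 to write $\sigma_{\alpha_1}=K_B\omega_k$, applies Lemma \ref{lem:4.4} to identify the $\tau$-Riemann waves, and then cites Proposition \ref{prop:4.1} as a black box to get the bound in terms of $|\sigma_{\alpha_1}|$, which becomes $O(|\omega_k|)$. You unfold the proof of Proposition \ref{prop:4.1} instead of invoking it directly, but the decomposition, the speed expansions, and the source of each of the $\tau^2$, $\nu^{-1}$, and $2^{-\nu}$ terms are exactly those of the paper.
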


\begin{proof}
For case \rm (i), \emph{i.e.}, $\omega_{k}=\theta_{k+1}-\theta_{k}>0$, by
Lemma A.3
and conditions \eqref{eq:4.44},
we know that $\sigma_{\alpha_1}=K_{B}\omega_k>0$ for a constant $K_{B}>0$ and satisfies $U_{\Gamma}=\Phi_{1}(\sigma_{\alpha_1}; U_{B})$.
Moreover, note that $\boldsymbol{\sigma}^{(\tau)}_{\boldsymbol{\beta}}=\big(\sigma^{(\tau)}_{\beta_1}, \sigma^{(\tau)}_{\beta_2},
\sigma^{(\tau)}_{\beta_3},\sigma^{(\tau)}_{\beta_4}\big)$ can be solved from equation \eqref{eq:4.47},
and its corresponding solution $\mathcal{P}^{(\tau)}_{h}(\xi+s,\xi)(U^{B}_{h,\nu}(\xi,y))$ satisfies \eqref{eq:4.43}.
Thus, we can apply Lemma \ref{lem:4.4} to show that $\sigma_{\beta_j}$, $1\leq j\leq 4$, satisfy estimate \eqref{eq:4.45}.
It follows from Proposition \ref{prop:4.1} that
\begin{align*}
&\int^{y_{\Gamma}+\hat{\lambda}s}_{y_{\Gamma}-\hat{\lambda}s}\big|\mathcal{P}^{(\tau)}_{h}(\xi+s,\xi)(U^{B}_{h,\nu}(\xi,y))
-U^{B}_{h,\nu}(\xi+s,y)\big|\,{\rm d}y\\[2pt]
&\leq C\big(\tau^{2}+\nu^{-1}+2^{-\nu}\big)|\sigma_{\alpha_{1}}|s.
\end{align*}
Therefore, estimate \eqref{eq:4.50} follows from the estimate on $\sigma_{\alpha_1}$ as stated above.
The proof of estimate \eqref{eq:4.51} is similar as the one for \eqref{eq:4.50},
so we omit it.
\end{proof}

\section{Proof of Theorem \ref{thm:1.2} for Problem II}\setcounter{equation}{0}
In this section, we prove Theorem \ref{thm:1.2} by establishing the optimal convergence rate with respect to the $L^1$--norm
between the entropy solution $U^{(\tau)}$ of {Problem I} and the entropy solution $U$ of the initial-boundary value problem
\eqref{eq:1.12}--\eqref{eq:1.15}.
We first show a lemma to the $L^1$--error formula for $\mathcal{P}^{(\tau)}_{h}$ with boundary.

\begin{lemma}\label{lem:5.1}
Let $U(x): [0,X]\mapsto \mathcal{D}_{h,x}$ be a piecewise Lipschitz continuous function for some $X>0$ with a finite number of wave-fronts,
where domain $\mathcal{D}_{h,x}$ is given by \eqref{eq:2.71}.
Let $\mathcal{P}^{(\tau)}_{h}(x,x'_{0}): \mathcal{D}_{h,x'_{0}}\mapsto \mathcal{D}_{h,x}$ be the Lipschitz continuous map $\mathcal{P}^{(\tau)}_{h}$
generated by \emph{Problem I} corresponding to an approximate boundary $\Gamma_{h}$ as given in {\rm Proposition \ref{prop:2.2}}. Then
\begin{align}
&\big\|\mathcal{P}^{(\tau)}_{h}(x, 0)(U(0))-U(x)\big\|_{L^{1}((-\infty, g_{h}(x)))}\nonumber\\[4pt]
& \leq \bar{L}\int^{x}_{0}\overline{\lim}_{s\rightarrow0^{+}}\frac{\big\|\mathcal{P}^{(\tau)}_{h}(\xi+s, \xi)(U(\xi))-U(\xi+s)\big\|_{L^{1}((-\infty, g_{h}(\xi+s)))}}{{s}}{\rm d}\xi,\label{eq:5.1}
\end{align}
where constant $\bar{L}$ depends only on the Lipschitz constant $L$ given in {\rm Proposition \ref{prop:2.2}}.
\end{lemma}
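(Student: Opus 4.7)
The plan is to introduce the auxiliary trajectory
\[
\Psi(\xi)\doteq \mathcal{P}^{(\tau)}_h(x,\xi)\bigl(U(\xi)\bigr) \qquad\text{for } \xi\in[0,x],
\]
so that $\Psi(0)=\mathcal{P}^{(\tau)}_h(x,0)(U(0))$ while $\Psi(x)=\mathcal{P}^{(\tau)}_h(x,x)(U(x))=U(x)$ by Proposition \ref{prop:2.2}(i). Bounding $\|\Psi(0)-\Psi(x)\|_{L^1((-\infty,g_h(x)))}$ is then the target. The first step is to exploit the semigroup identity \eqref{eq:2.66} from Proposition \ref{prop:2.2}: for $0\le \xi<\xi+s\le x$,
\[
\mathcal{P}^{(\tau)}_h(x,\xi)(U(\xi))=\mathcal{P}^{(\tau)}_h(x,\xi+s)\circ \mathcal{P}^{(\tau)}_h(\xi+s,\xi)(U(\xi)),
\]
so that
\[
\Psi(\xi)-\Psi(\xi+s)=\mathcal{P}^{(\tau)}_h(x,\xi+s)\bigl(\mathcal{P}^{(\tau)}_h(\xi+s,\xi)U(\xi)\bigr)-\mathcal{P}^{(\tau)}_h(x,\xi+s)\bigl(U(\xi+s)\bigr).
\]
Applying the Lipschitz estimate \eqref{eq:2.67} with the same boundary $g_h$ at the common starting ``time'' $\xi+s$, I obtain the Dini-type bound
\[
\bigl\|\Psi(\xi)-\Psi(\xi+s)\bigr\|_{L^1((-\infty,g_h(x)))} \le L\,\bigl\|\mathcal{P}^{(\tau)}_h(\xi+s,\xi)U(\xi)-U(\xi+s)\bigr\|_{L^1((-\infty,g_h(\xi+s)))}.
\]

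Next I would partition $[0,x]$ by $0=\xi_0<\xi_1<\cdots<\xi_N=x$, chosen to avoid the finite set of ``time-jump'' points of $U(\cdot)$, with mesh size $\Delta\xi_i=\xi_{i+1}-\xi_i$. Telescoping gives
\[
\Psi(0)-\Psi(x)=\sum_{i=0}^{N-1}\bigl(\Psi(\xi_i)-\Psi(\xi_{i+1})\bigr),
\]
and the triangle inequality combined with the Dini bound above yields
\[
\|\Psi(0)-\Psi(x)\|_{L^1((-\infty,g_h(x)))}\le L\sum_{i=0}^{N-1}\Delta\xi_i\cdot\frac{\|\mathcal{P}^{(\tau)}_h(\xi_{i+1},\xi_i)U(\xi_i)-U(\xi_{i+1})\|_{L^1((-\infty,g_h(\xi_{i+1})))}}{\Delta\xi_i}.
\]
Refining the mesh, each summand is controlled from above by (a number arbitrarily close to) $\overline{\lim}_{s\to 0^+}\,s^{-1}\|\mathcal{P}^{(\tau)}_h(\xi_i+s,\xi_i)U(\xi_i)-U(\xi_i+s)\|_{L^1((-\infty,g_h(\xi_i+s)))}$, so the Riemann sum converges to the right-hand side integral (using the upper-integral/Vitali covering formulation if the limsup is merely measurable). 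Adding the contribution of the finitely many jump times of $U(\cdot)$, which by Proposition \ref{prop:2.2}(i) are absorbed into the same limsup expression as $s\downarrow 0^+$, establishes inequality \eqref{eq:5.1} with $\bar L=L$.

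The main obstacle is the justification that the discrete telescoping sum genuinely converges to the claimed integral of an $\overline{\lim}$ rather than a $\lim$; this requires treating $\Psi$ as a piecewise Lipschitz $L^1$-valued curve and invoking a vector-valued fundamental theorem of calculus in which the upper Dini derivative controls the total variation. A secondary technical subtlety is that the $L^1$ norms are taken over the shifting domains $(-\infty,g_h(\xi+s))$ while the Lipschitz estimate \eqref{eq:2.67} naturally produces the norm over $(-\infty,g_h(x))$: this is reconciled by the convention, already fixed after \eqref{eq:2.67}, that $\mathcal{P}^{(\tau)}_h(\cdot,\cdot)(\cdot)\equiv \underline U$ above the boundary, so integrals over the strips $(g_h(\xi+s),g_h(x))$ contribute zero and the two domains can be used interchangeably. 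Once these two points are dispatched, the statement follows with $\bar L$ depending only on the Lipschitz constant $L$ of Proposition \ref{prop:2.2}.
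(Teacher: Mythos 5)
Your proposal is correct and follows essentially the same route as the paper: telescope the trajectory $\xi\mapsto\mathcal{P}^{(\tau)}_{h}(x,\xi)(U(\xi))$ via the semigroup identity \eqref{eq:2.66}, control each increment by the uniform Lipschitz bound \eqref{eq:2.67} applied to the one-step error $\mathcal{P}^{(\tau)}_{h}(\xi+s,\xi)(U(\xi))-U(\xi+s)$, and pass from the Riemann sums to the integral of the upper Dini quotient, using the convention $\mathcal{P}^{(\tau)}_{h}\equiv\underline{U}$ above the boundary to reconcile the shifting domains. The one place where the paper is more explicit than your "absorbed into the same limsup" remark is the treatment of the finitely many interaction/reflection times: it excises them by intervals of total length $\tilde{\epsilon}$, bounds that contribution by $C\tilde{\epsilon}$ using the Lipschitz continuity in $x$ of both $U$ and the trajectory (the terms $J_{21}$, $J_{22}$), runs the Riemann-sum argument \eqref{eq:5.2} only on the interaction-free pieces, and then lets $\tilde{\epsilon}\to 0$.
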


\begin{proof}
Define the set
\begin{align*}
\Lambda&\doteq
\bigg\{x\in [0,X]\, :\,\begin{array}{ll}
     \mbox{ there are at least two interacting wave-fronts in $U$,}\\[2pt]
     \mbox{ or $\,$ a wave-front hitting boundary $y=g_{h}(x)$, $\,$ at $x$}\end{array}\bigg\}\\
     &\subset [0,X].
\end{align*}

Let $|\Lambda|=\ell$, and let $a_{k}\in \Lambda $, $k=1,2,\cdots, \ell$,
satisfy $a_{1}<a_{2}<\cdot\cdot\cdot<a_{\ell}$. For any $\tilde{\epsilon}>0$, define
\begin{eqnarray*}
&&\tilde{x}_{0}=0, \ \tilde{x}_{1}=a_{1}-\frac{\tilde{\epsilon}}{2\ell}, \ \tilde{x}_{2}=a_{1}+\frac{\tilde{\epsilon}}{2\ell},
\,\cdots,\,\tilde{x}_{2\ell-1}=a_{\ell}-\frac{\tilde{\epsilon}}{2\ell},\\
&& \tilde{x}_{2\ell}=a_{\ell}+\frac{\tilde{\epsilon}}{2\ell},\
\tilde{x}_{2\ell+1}=X.
\end{eqnarray*}
Then we have
\begin{align*}
&\big\|\mathcal{P}^{(\tau)}_{h}(X,0)(U(0))-U(X)\big\|_{L^1((-\infty, g_{h}(X)))}\\[2pt]
&\leq \sum^{2\ell+1}_{k=1}\big\|\mathcal{P}^{(\tau)}_{h}(X,\tilde{x}_{k-1})(U(\tilde{x}_{k-1}))
-\mathcal{P}^{(\tau)}_{h}(X,\tilde{x}_{k})(U(\tilde{x}_{k}))\big\|_{L^1((-\infty, g_{h}(X)))}\\[2pt]
&=\sum^{\ell}_{k=0}\big\|\mathcal{P}^{(\tau)}_{h}(X,\tilde{x}_{2k})(U(\tilde{x}_{2k}))
-\mathcal{P}^{(\tau)}_{h}(X,\tilde{x}_{2k+1})(U(\tilde{x}_{2k+1}))\big\|_{L^1((-\infty, g_{h}(X)))}\\[2pt]
&\ \ \ + \sum^{\ell}_{k=1}\big\|\mathcal{P}^{(\tau)}_{h}(X,\tilde{x}_{2k-1})(U(\tilde{x}_{2k-1}))
-\mathcal{P}^{(\tau)}_{h}(X,\tilde{x}_{2k})(U(\tilde{x}_{2k}))\big\|_{L^1((-\infty, g_{h}(X)))}
\\[2pt]
&\doteq J_{1}+J_{2}.
\end{align*}

For $J_{1}$, we have
\begin{align*}
J_{1}&\leq \sum^{\ell}_{k=0}\big\|\mathcal{P}^{(\tau)}_{h}(X,\tilde{x}_{2k+1})\circ\mathcal{P}^{(\tau)}_{h}(\tilde{x}_{2k+1},\tilde{x}_{2k})(U(\tilde{x}_{2k}))\\
&\qquad\quad\,\,\,
-\mathcal{P}^{(\tau)}_{h}(X,\tilde{x}_{2k+1})(U(\tilde{x}_{2k+1}))\big\|_{L^1((-\infty, g_{h}(X)))}\\[2pt]
&\leq L\sum^{\ell}_{k=0}\big\|\mathcal{P}^{(\tau)}_{h}(\tilde{x}_{2k+1},\tilde{x}_{2k})(U(\tilde{x}_{2k}))
-U(\tilde{x}_{2k+1})\big\|_{L^1((-\infty, g_{h}(\tilde{x}_{2k+1})))}\\[2pt]
&=L\sum^{\ell}_{k=0}\big\|\mathcal{P}^{(\tau)}_{h}(\tilde{x}_{2k+1},\tilde{x}_{2k})(U(\tilde{x}_{2k}))\\
&\qquad\qquad\,\,-\mathcal{P}^{(\tau)}_{h}(\tilde{x}_{2k+1},\tilde{x}_{2k+1})(U(\tilde{x}_{2k+1}))\big\|_{L^1((-\infty, g_{h}(\tilde{x}_{2k+1})))}.
\end{align*}

We claim:
\begin{align}
&\big\|\mathcal{P}^{(\tau)}_{h}(\tilde{x}_{2k+1},\tilde{x}_{2k})(U(\tilde{x}_{2k}))
-\mathcal{P}^{(\tau)}_{h}(\tilde{x}_{2k+1},\tilde{x}_{2k+1})(U(\tilde{x}_{2k+1}))\big\|_{L^1((-\infty, g_{h}(\tilde{x}_{2k+1})))}\nonumber\\[2pt]
&\leq L\int^{\tilde{x}_{2k+1}}_{\tilde{x}_{2k}}\overline{\lim}_{s\rightarrow0^{+}}\frac{\big\|\mathcal{P}^{(\tau)}_{h}(\xi+s,\xi)(U(\xi))
-U(\xi+s)\big\|_{L^1((-\infty, g_{h}(\xi+s)))}}{{s}}\,{\rm d}\xi.\label{eq:5.2}
\end{align}

In fact,
consider a partition $\{\tilde{x}^j_{2k}\}^{k}_{j=1}$ of interval $[\tilde{x}_{2k},\tilde{x}_{2k+1}]$:
\begin{eqnarray*}
\tilde{x}_{2k}\doteq \tilde{x}^1_{2k}<\tilde{x}^2_{2k}<\cdot\cdot\cdot<\tilde{x}^k_{2k}\doteq \tilde{x}_{2k+1}.
\end{eqnarray*}
Let $\lambda=\max_{1\leq j\leq k-1}\{\tilde{x}^j_{2k}-\tilde{x}^{j+1}_{2k}\}$. Then
{\small
\begin{align*}
&\big\|\mathcal{P}^{(\tau)}_{h}(\tilde{x}_{2k+1}, \tilde{x}_{2k})(U(\tilde{x}_{2k}))-\mathcal{P}^{(\tau)}_{h}(\tilde{x}_{2k+1}, \tilde{x}_{2k+1})(U(\tilde{x}_{2k+1}))\big\|_{L^{1}((-\infty, g_{h}(\tilde{x}_{2k+1})))}\\[2pt]
&\leq\sum^{k-1}_{j=1}\big\|\mathcal{P}^{(\tau)}_{h}(\tilde{x}_{2k+1}, \tilde{x}^j_{2k})(U(\tilde{x}^j_{2k}))-\mathcal{P}^{(\tau)}_{h}(\tilde{x}_{2k+1}, \tilde{x}^{j+1}_{2k})(U(\tilde{x}^{j+1}_{2k}))\big\|_{L^{1}((-\infty, g_{h}(\tilde{x}_{2k+1})))}\\[2pt]
&\leq\sum^{k-1}_{j=1}\big\|\mathcal{P}^{(\tau)}_{h}(\tilde{x}_{2k+1}, \tilde{x}^{j+1}_{2k})\circ\mathcal{P}^{(\tau)}_{h}( \tilde{x}^{j+1}_{2k}, \tilde{x}^{j}_{2k})(U(\tilde{x}^{j}_{2k}))\\
&\qquad\quad\,\,\, -P^{(\tau)}_{h}(\tilde{x}_{2k+1}, \tilde{x}^{j+1}_{2k})(U(\tilde{x}^{j+1}_{2k}))\big\|_{L^{1}((-\infty, g_{h}(\tilde{x}_{2k+1})))}\\[2pt]
&\leq L\sum^{k-1}_{j=1}\big\|\mathcal{P}^{(\tau)}_{h}(\tilde{x}^{j+1}_{2k}, \tilde{x}^{j}_{2k})(U(\tilde{x}^{j}_{2k}))-U(\tilde{x}^{j+1}_{2k})\big\|_{L^{1}((-\infty, g_{h}(\tilde{x}^{j+1}_{2k})))}\\[2pt]
&\leq L\sum^{k}_{j=1}\frac{\big\|\mathcal{P}^{(\tau)}_{h}(\tilde{x}^{j+1}_{2k}, \tilde{x}^{j}_{2k})(U(\tilde{x}^{j}_{2k}))-U(\tilde{x}^{j+1}_{2k})\big\|_{L^{1}((-\infty, g_{h}(\tilde{x}^{j+1}_{2k})))}}{ \tilde{x}^{j}_{2k}- \tilde{x}^{j+1}_{2k}}\big(\tilde{x}^{j}_{2k}- \tilde{x}^{j+1}_{2k}\big).
\end{align*}
}
Taking $\lambda\rightarrow 0$, we can obtain claim \eqref{eq:5.2} so that
\begin{eqnarray*}
J_{1}\leq L^2\sum^{\ell}_{k=0}\int^{\tilde{x}_{2k+1}}_{\tilde{x}_{2k}}\frac{\big\|\mathcal{P}^{(\tau)}_{h}(\xi+s,\xi)(U(\xi))
-U(\xi+s)\big\|_{L^1((-\infty, g_{h}(\xi+s)))}}{{s}}\,{\rm d}\xi.
\end{eqnarray*}

For $J_{2}$, we have
\begin{align*}
J_{2}&= \sum^{\ell}_{k=0}\big\|\mathcal{P}^{(\tau)}_{h}(X,\tilde{x}_{2k})\circ\mathcal{P}^{(\tau)}_{h}(\tilde{x}_{2k},\tilde{x}_{2k-1})(U(\tilde{x}_{2k-1}))\\
&\qquad\quad\,\, -\mathcal{P}^{(\tau)}_{h}(X,\tilde{x}_{2k})(U(\tilde{x}_{2k}))\big\|_{L^1((-\infty, g_{h}(X)))}\\[2pt]
&\leq L\sum^{\ell}_{k=0}\big\|\mathcal{P}^{(\tau)}_{h}(\tilde{x}_{2k},\tilde{x}_{2k-1})(U(\tilde{x}_{2k-1}))
-U(\tilde{x}_{2k})\big\|_{L^1((-\infty, g_{h}(\tilde{x}_{2k})))}\\[2pt]
&\leq L\sum^{\ell}_{k=0}\big\|\mathcal{P}^{(\tau)}_{h}(\tilde{x}_{2k},\tilde{x}_{2k-1})(U(\tilde{x}_{2k-1}))
-U(\tilde{x}_{2k-1})\big\|_{L^1((-\infty, g_{h}(\tilde{x}_{2k})))}\\[2pt]
&\ \ \ +L\sum^{\ell}_{k=0}\big\|U(\tilde{x}_{2k-1})
-U(\tilde{x}_{2k})\big\|_{L^1((-\infty, g_{h}(\tilde{x}_{2k})))}\\[2pt]
&\doteq J_{21}+J_{22}.
\end{align*}

First, for $J_{21}$, we have
\begin{align*}
J_{21}&=L\sum^{\ell}_{k=0}\big\|\mathcal{P}^{(\tau)}_{h}(\tilde{x}_{2k},\tilde{x}_{2k-1})(U(\tilde{x}_{2k-1}))\\
&\qquad\qquad\,\,-\mathcal{P}^{(\tau)}_{h}(\tilde{x}_{2k-1},\tilde{x}_{2k-1})(U(\tilde{x}_{2k-1}))\big\|_{L^1((-\infty, g_{h}(\tilde{x}_{2k})))}
\\[2pt]
& \leq C\sum^{\ell}_{k=0}(\tilde{x}_{2k}-\tilde{x}_{2k-1})
\leq C\ell \frac{\tilde{\epsilon}}{2\ell}=C\tilde{\epsilon},
\end{align*}
while, for $J_{22}$, it follows from the Lipschitz continuity of $U(x)$ that
\begin{eqnarray*}
J_{22}\leq C\sum^{\ell}_{k=0}(\tilde{x}_{2k}-\tilde{x}_{2k-1})\leq C\tilde{\epsilon}.
\end{eqnarray*}
Thus, with the estimates for $J_{21}$ and $J_{22}$, we obtain that $J_{2}\leq C\tilde{\epsilon}$.
Finally, combining the estimates for $J_{1}$ and $J_{2}$ and letting $\tilde{\epsilon}\rightarrow 0$,
we conclude \eqref{eq:5.1}.
\end{proof}

\smallskip
We are now ready to prove Theorem \ref{thm:1.2}, which gives a positive answer to {Problem II}.

\begin{proof}[Proof of {\rm Theorem \ref{thm:1.2}}] We first show estimate \eqref{eq:1.25}.
This is achieved by three steps:

\smallskip
1. \emph{Estimate on $\big\|\mathcal{P}^{(\tau)}_{h}(\xi+s,\xi)(U_{h, \nu}(\xi,y))-U_{h,\nu}(\xi+s, y)\big\|_{L^{1}((-\infty, g_{h}(\xi+s)))}$ for sufficiently small $s$}.
Let $U_{h,\nu}(x,y)$ be the approximate solution of the initial-boundary value problem \eqref{eq:1.12}--\eqref{eq:1.15}
corresponding to the approximate boundary $y=g_h(x)$ and the approximate initial data $U^{\nu}_{0}(y)$ with \eqref{eq:2.27} and \eqref{eq:2.28}.
Let the jumps of $U_{h,\nu}(\xi,y)$ at $x=\xi>0$ be $y_{1}>y_{2}>\cdots > y_{N}$ with $y_{1}\leq g_{h}(\xi)$.
Let $\mathcal{S}$ be the set of indices $\alpha\in\{1,2, \cdot\cdot\cdot, N\}$ such that $U_{h,\nu}(\xi,y_{\alpha}+)$ and $U_{h,\nu}(\xi,y_{\alpha}-)$
are connected by a shock-front with strength $\sigma_{\alpha}$.
Let $\mathcal{R}$ (resp., $\mathcal{C}$ and $\mathcal{NP}$) be the set of indices $\alpha\in\{1,2, \cdot\cdot\cdot, N\}$
such that $U_{h,\nu}(\xi,y_{\alpha}+)$ and $U_{h,\nu}(\xi,y_{\alpha}-)$ are connected
by a rarefaction-front (resp., a vortex sheet/entropy wave and a non-physical wave) with strength $\sigma_{\alpha}$ (or $\sigma_{\alpha_{\mathcal{NP}}}$ for the non-physical waves), respectively.

If $\xi=[\frac{\xi}{h}]h$, then $\xi=x_{k}$ for $k=[\frac{\xi}{h}]$.
According to the construction of the approximate solution $U_{h,\nu}$ in Appendix A, $U_{h,\nu}(\xi,y)$ has no interaction on line $\xi=x_{k}$
and no reflection at the corner point $\textsc{C}_k(x_k,g_{k})$.
Thus,  we choose $s>0$ sufficiently small such that there is no interaction or reflection of the fronts for $U_{h,\nu}(\xi,y)$ between
lines $\xi=x_k$ and $\xi=x_{k}+s$.
Let $\eta=\frac{1}{2}\min_{1\leq j\leq N-1}\big\{b_{h}(\varsigma)-y_1, y_j-y_{j+1}\big\}$. Then
\begin{align}\label{eq:5.3}
&\big\|\mathcal{P}^{(\tau)}_{h}(x_{k}+s,x_{k})(U_{h,\nu}(x_{k},\cdot))-U_{h,\nu}(x_{k}+s,\cdot)\big\|_{L^{1}((-\infty, g_{h}(x_{k}+s)))}\nonumber\\[2pt]
&  =\sum_{\alpha\in \mathcal{S}\cup\mathcal{C}\cup \mathcal{R}\cup \mathcal{NP}}\int^{y_{\alpha}+\eta}_{y_{\alpha}-\eta}
\big|\mathcal{P}^{(\tau)}_{h}(x_{k}+s,x_{k})(U_{h,\nu}(x_{k},\cdot))-U_{h,\nu}(x_{k}+s,\cdot)\big|\,{\rm d}y \nonumber\\[2pt]
&\ \ \  +\int^{g_{h}(x_{k}+s)}_{g_{k}-\eta}\big|\mathcal{P}^{(\tau)}_{h}(x_{k}+s,x_k)(U_{h,\nu}(x_{k},\cdot))
-U_{h,\nu}(x_{k}+s,\cdot)\big|\,{\rm d}y.
\end{align}

For $U_{h,\nu}\in \mathcal{O}_{\min\{\varepsilon_0,\tilde{\varepsilon}_0\}}(\underline{U})$ and $\tau\in(0,\min\{\tau_0,\tilde{\tau}_0\})$,
by Propositions \ref{prop:4.1}--\ref{prop:4.2} and
Proposition A.1,
we have
{\small
\begin{align*}
&\sum_{\alpha\in \mathcal{S}\cup\mathcal{C}\cup \mathcal{R}\cup \mathcal{NP}}\int^{y_{\alpha}+\eta}_{y_{\alpha}-\eta}
\big|\mathcal{P}^{(\tau)}_{h}(x_{k}+s,x_{k})(U_{h,\nu}(x_{k},\cdot))-U_{h,\nu}(x_{k}+s,\cdot)\big|\,{\rm d}y \\[2pt]
&\leq C\big(\tau^2+\nu^{-1}+2^{-\nu}\big)\Big(\sum _{\alpha\in \mathcal{S}\cup\mathcal{C}\cup \mathcal{R}}|\sigma_{\alpha}|\Big)s
+C\Big(\sum _{\alpha\in \mathcal{NP}}\sigma_{\alpha_{\mathcal{NP}}}\Big)s\\[2pt]
&\leq C\Big(\big(\|U_{h,\nu}(x_k, \cdot)\|_{BV((-\infty,g_{h}(x_k)))} +|g'(0)|+\|g'_{h}(\cdot)\|_{BV(\mathbb{R}_{+})}\big)\big(\tau^2+\nu^{-1}+2^{-\nu}\big)+2^{-\nu}
\Big)s\\[2pt]
&\leq C\Big(\big(\|U_{0}(\cdot)\|_{BV(\mathcal{I})}+|g'(0)|+\|g'(\cdot)\|_{BV(\mathbb{R}_{+})}\big)\big(\tau^2+\nu^{-1}+2^{-\nu}\big)+2^{-\nu}
\Big)s.
\end{align*}
}
By Proposition \ref{prop:4.3}, we have
\begin{eqnarray*}
&&\int^{g_{h}(x_{k}+s)}_{g_{k}-\eta}\big|\mathcal{P}^{(\tau)}_{h}(x_{k}+s,x_k)(U_{h,\nu}(x_{k},\cdot))
-U_{h,\nu}(x_{k}+s,\cdot)\big|\,{\rm d}y\\[2pt]
&&\leq C\Big( \big(1+|\omega_{k}|\big)\tau^2+\big(|\omega_{k}|+\tau^{2}\big)(\nu^{-1}+2^{-\nu})\Big)\Big(\sum_{k\geq 0}|\omega_{k}|\Big)s\\[2pt]
&&\leq C\Big(1+ \sum_{k\geq 0}|\omega_{k}|\Big)\big(\tau^{2}+\nu^{-1}+2^{-\nu}\big)s\\[2pt]
&&\leq C\Big(\big(|g'(0)|+\|g'(\cdot)\|_{BV(\mathbb{R}_{+})}+1\big)\big(\tau^2+\nu^{-1}+2^{-\nu}\big)+2^{-\nu}
\Big)s.
\end{eqnarray*}

Then, combining the above two estimates altogether, we finally obtain
\begin{align}
&\big\|\mathcal{P}^{(\tau)}_{h}(x_{k}+s,x_{k})(U_{h,\nu}(x_{k},\cdot))-U_{h,\nu}(x_{k}+s,\cdot)\big\|_{L^{1}((-\infty, g_{h}(x_{k}+s)))}\nonumber\\[2pt]
&\leq C\Big(\big(\|U_{0}(\cdot)\|_{BV(\mathcal{I})}+|g'(0)|+\|g'(\cdot)\|_{BV(\mathbb{R}_{+})}+1\big)\big(\tau^2+\nu^{-1}+2^{-\nu}\big)+2^{-\nu}
\Big)s,\label{eq:5.4}
\end{align}
where $C>0$ depends only on $(\underline{U},a_{\infty})$.

If $\xi>[\frac{\xi}{h}]h$, then $(\xi, g_{h}(\xi))$ is not a corner point.
For $s>0$ sufficiently small, and for $U_{h,\nu}\in \mathcal{O}_{\min\{\varepsilon_0,\tilde{\varepsilon}_0\}}(\underline{U})$ and $\tau\in(0,\min\{\tau_0,\tilde{\tau}_0\})$,
by Propositions \ref{prop:4.1}--\ref{prop:4.3} and
Proposition A.1,
we have
{\small
\begin{align}\label{eq:5.5}
& \big\|\mathcal{P}^{(\tau)}_{h}(\xi+s,\xi)(U_{h,\nu}(\xi,\cdot))-U_{h,\nu}(\xi+s,\cdot)\big\|_{L^{1}((-\infty, g_{h}(\xi+s)))}\nonumber\\[2pt]
& =\sum_{\alpha\in \mathcal{S}\cup \mathcal{R}\cup \mathcal{NP}}\int^{y_{\alpha}+\hat{\eta}}_{y_{\alpha}-\hat{\eta}}
\big|\mathcal{P}^{(\tau)}_{h}(\xi+s,\xi)(U_{h,\nu}(\xi,\cdot))-U_{h,\nu}(\xi+s,\cdot)\big|\,{\rm d}y\nonumber \\[2pt]
&\quad\, +\int^{g_{h}(\xi+s)}_{y_{1}-\hat{\eta}}\big|\mathcal{P}^{(\tau)}_{h}(\xi+s,\xi)(U_{h,\nu}(\xi,\cdot))
-U_{h,\nu}(\xi+s,\cdot)\big|\,{\rm d}y\nonumber\\[2pt]
&\leq C\big(\tau^2+\nu^{-1}+2^{-\nu}\big)\Big(\sum _{\alpha\in \mathcal{S}\cup\mathcal{C}\cup \mathcal{R}}|\sigma_{\alpha}|\Big)s+O(1)\Big(\sum _{\alpha_{\mathcal{NP}}\in \mathcal{NP}}\sigma_{\alpha_{\mathcal{NP}}}\Big)s\nonumber\\[2pt]
&\quad\, +C\big(\tau^2+\nu^{-1}+2^{-\nu}\big)\Big(\sum_{k\geq 0}|\omega_{k}|\Big)s\nonumber\\[2pt]
&\leq C\Big(\big(\|U_{h,\nu}(\xi, \cdot)\|_{ ((-\infty,g_{h}(\xi)))}+|g'(0)|+\|g'_{h}(\cdot)\|_{BV(\mathbb{R}_{+})}\big)\big(\tau^2+\nu^{-1}+2^{-\nu}\big)+2^{-\nu}
\Big)s\nonumber\\[2pt]
&\leq C\Big(\big(\|U_{0}(\cdot)\|_ {\mathcal{I}}+|g'(0)|+\|g'(\cdot)\|_{BV(\mathbb{R}_{+})}\big)
\big(\tau^2+\nu^{-1}+2^{-\nu}\big)+2^{-\nu}
\Big)s,
\end{align}
}
where $k=[\frac{\xi}{h}]$ and $\hat{\eta}=\frac{1}{2}\min_{1\leq j\leq N-1}\big\{y_j-y_{j+1}\big\}$.

\smallskip
2. \emph{Estimate on $\big\|\mathcal{P}^{(\tau)}_{h}(x,0)(U^{\nu}_{0}(y))-U_{h,\nu}(x,y)\big\|_{L^{1}((-\infty, g_{h}(x)))}$.}
From the construction of the approximate solution $U_{h,\nu}$ in Appendix A,
we know that $U_{h,\nu}$ satisfies the assumptions in Lemma \ref{lem:5.1}.
Then it follows from
Lemma \ref{lem:5.1} and estimates \eqref{eq:5.4}--\eqref{eq:5.5} that
{\small
\begin{align}
&\big\|\mathcal{P}^{(\tau)}_{h}(x,0)(U^{\nu}_{0}(y))-U_{h,\nu}(x,y)\big\|_{L^{1}((-\infty, g_{h}(x)))}\nonumber\\[2pt]
&=\big\|\mathcal{P}^{(\tau)}_{h}(x,0)(U^{\nu}_{0}(y))-U_{h,\nu}(x,y)\big\|_{L^{1}((-\infty, g_{h}(x)))}\nonumber\\[2pt]
& \leq
\overline{L}\int^{x}_{0}\overline{\lim}_{s\rightarrow0^{+}}\frac{\big\|\mathcal{P}^{(\tau)}_{h}(\xi+s,\xi)(U_{h,\nu}(\xi,y))
-U_{h,\nu}(\xi+s,y)\big\|_{L^{1}((-\infty, g_{h}(\xi+s)))}}{s}\,{\rm d}\xi\nonumber\\[2pt]
& \leq C\overline{L}\int^{x}_{0} \Big(\big(\|U_{0}(\cdot)\|_{BV(\mathcal{I})}+|g'(0)|+\|g'(\cdot)\|_{BV(\mathbb{R}_{+})}+1\big)
\big(\tau^{2}+\nu^{-1}+2^{-\nu}\big)+2^{-\nu}\Big)\,{\rm d}\xi \nonumber\\[2pt]
&\leq C\overline{L}\Big(\big(\|U_{0}(\cdot)\|_{BV(\mathcal{I})}+|g'(0)|+\|g'(\cdot)\|_{BV(\mathbb{R}_{+})}+h\big)
\big(\tau^{2}+\nu^{-1}+2^{-\nu}\big)+2^{-\nu}\Big)x.\label{eq:5.6}
\end{align}
}

\smallskip
3. \emph{Completion of the proof of estimate \eqref{eq:1.25}}.
As shown in Proposition \ref{prop:2.2} and
Proposition A.2,
for any given $\nu\in \mathbb{N}_{+}$ and $h>0$, we can construct a global approximate solution $U^{(\tau)}_{h,\nu}$
of {Problem I} and a global approximate solution $U_{h,\nu}$ of the initial boundary value
problem \eqref{eq:1.12}--\eqref{eq:1.15}, respectively,
corresponding to the initial data $U^{\nu}_{0}$ and the approximate boundary $y=g_{h}(x)$.
Let $U^{(\tau)}$ and $U$ be the entropy solutions of {Problem I} and the initial boundary value problem \eqref{eq:1.12}--\eqref{eq:1.15}, respectively,
corresponding to the initial data  $U_{0}$ for boundary $y=g(x)$.
By Theorem \ref{thm:1.1}, we have
\begin{align*}
\mathcal{P}^{(\tau)}_{h}(x,0)(U_{0}(\cdot))\rightarrow \mathcal{P}^{(\tau)}(x,0)(U_{0}(\cdot))=U^{(\tau)}(x,\cdot)
\qquad \mbox{in $L^{1}((-\infty,g(x)))$}
\end{align*}
as $h\rightarrow0$. By
Proposition A.2,
\begin{align*}
U_{h,\nu}(x,\cdot)\rightarrow U(x,\cdot)\qquad \mbox{in $L^{1}_{\rm loc}(\Omega)$ as $ h\rightarrow0$ and $\nu\rightarrow \infty$}.
\end{align*}
Then,
by the triangle inequality, we have
\begin{align}\label{eq:5.7}
\begin{split}
&\big\|U^{(\tau)}(x,\cdot)-U(x,\cdot)\big\|_{L^{1}((-\infty, b(x)))}\\[1pt]
&\leq \big\|U^{(\tau)}(x,\cdot)-\mathcal{P}^{(\tau)}_{h}(x,0)(U_{0})\big\|_{L^{1}((-\infty, g(x)))}\\[1pt]
&\quad +\big\|\mathcal{P}^{(\tau)}_{h}(x,0)(U_{0})-\mathcal{P}^{(\tau)}_{h}(x,0)(U^{\nu}_{0}(\cdot))\big\|_{L^{1}((-\infty, g(x)))}\\[1pt]
&\quad +\big\|\mathcal{P}^{(\tau)}_{h}(x,0)(U^{\nu}_{0}(\cdot))-U_{h,\nu}(x,\cdot)\big\|_{L^{1}((-\infty, g(x)))}\\[1pt]
&\quad +\big\|U_{h,\nu}(x,\cdot)-U(x,\cdot)\big\|_{L^{1}((-\infty, g(x)))}.
\end{split}
\end{align}

For the second term on the right-hand side of \eqref{eq:5.7}, by Propositions \ref{prop:2.1}--\ref{prop:2.2} and estimate \eqref{eq:2.28},
we can choose constants $\varepsilon^{*}_0$ and $\tau^{*}_0$ sufficiently small, depending only on $(\underline{U},a_{\infty})$, such that,
for $\varepsilon\in (0,\varepsilon^{*})$, if $\tau\in(0,\tau^{*}_0)$ and $\|U_{0}(\cdot)\|_{BV(\mathcal{I})}+|g'(0)|+\|g'(\cdot)\|_{BV(\mathbb{R}_{+})}<\varepsilon$, then
{\small
\begin{align*}
&\big\|\mathcal{P}^{(\tau)}_{h}(x,0)(U^{\nu}_{0}(\cdot))-\mathcal{P}^{(\tau)}_{h}(x,0)(U_{0}(\cdot))\big\|_{L^{1}((-\infty, g(x)))}\\[2pt]
&\leq \big\|\mathcal{P}^{(\tau)}_{h}(x,0)(U^{\nu}_{0}(\cdot))-\mathcal{P}^{(\tau)}_{h}(x,0)(U_{0}(\cdot))\big\|_{L^{1}((-\infty, g_h(x)))}\\[2pt]
&\quad +\big\|\mathcal{P}^{(\tau)}_{h}(x,0)(U^{\nu}_{0}(\cdot))-\mathcal{P}^{(\tau)}_{h}(x,0)(U_{0}(\cdot))\big\|_{L^{1}(( \min\{g_{h}(x), g(x)\},\,\max\{g_{h}(x), g(x)\}))}\\[2pt]
&\leq L\|U^{\nu}_0(\cdot)-U_0(\cdot)\|_{L^1(\mathcal{I})}
+\big\|\mathcal{P}^{(\tau)}_{h}(x,0)(U^{\nu}_{0}(\cdot))-\underline{U}\big\|_{L^{\infty}((-\infty,g_h(x)))}
\big\|g_{h}(\cdot)-g(\cdot)\big\|_{L^{\infty}(\mathbb{R}_{+})}\\[2pt]
&\quad +\big\|\mathcal{P}^{(\tau)}_{h}(x,0)(U_{0}(\cdot))-\underline{U}\big\|_{L^{\infty}((-\infty,g_h(x)))}
\big\|g_{h}(\cdot)-g(\cdot)\big\|_{L^{\infty}(\mathbb{R}_{+})}\\[2pt]
&\leq \big\|(\mathcal{P}^{(\tau)}_{h}(x,0)(U^{\nu}_{0}(\cdot))-\underline{U},\, \mathcal{P}^{(\tau)}_{h}(x,0)(U_{0}(\cdot))-\underline{U})\big\|_{L^{\infty}((-\infty,g_h(x)))}
  \big\|g'_{h}(\cdot)-g'(\cdot)\big\|_{L^{1}(\mathbb{R}_{+})}
\\[2pt]
&\quad +L\|U^{\nu}_0(\cdot)-U_0(\cdot)\|_{L^1(\mathcal{I})}.
\end{align*}
}
It follows from \eqref{eq:2.27} and \eqref{eq:2.28} that, as $\nu\rightarrow \infty$ and $h \rightarrow 0$,
\begin{align*}
\big\|\mathcal{P}^{(\tau)}_{h}(x,0)(U^{\nu}_{0}(\cdot))-\mathcal{P}^{(\tau)}_{h}(x,0)(U_{0}(\cdot))\big\|_{L^{1}((-\infty, g(x)))}\rightarrow 0.
\end{align*}

For the third term on the right-hand side of \eqref{eq:5.7}, it follows from \eqref{eq:5.6} that
\begin{align*}
&\big\|\mathcal{P}^{(\tau)}_{h}(x,0)(U^{\nu}_{0}(\cdot))-U_{h,\nu}(x,\cdot)\big\|_{L^{1}((-\infty, g(x)))}\\[2pt]
&\leq \big\|\mathcal{P}^{(\tau)}_{h}(x,0)(U^{\nu}_{0}(\cdot))-U_{h,\nu}(x,\cdot)\big\|_{L^{1}((-\infty, g_h(x)))}\\[2pt]
&\ \ \ +\big\|\mathcal{P}^{(\tau)}_{h}(x,0)(U^{\nu}_{0}(\cdot))-U_{h,\nu}(x,\cdot)\big\|_{L^{1}(( \min\{g_{h}(x), g(x)\},\,\max\{g_{h}(x), g(x)\}))}\\[2pt]
&\leq C\overline{L}\Big(\big(\|U_{0}(\cdot)\|_{BV(\mathcal{I})}+|g'(0)|+\|g'(\cdot)\|_{BV(\mathbb{R}_{+})}+1\big)
\big(\tau^{2}+\nu^{-1}+2^{-\nu}\big)+2^{-\nu}\Big)x\\[2pt]
&\ \ \ +\big\|(U_{h,\nu}(x,\cdot)-\underline{U},\,
  \mathcal{P}^{(\tau)}_{h}(x,0)(U^{\nu}_{0}(\cdot))-\underline{U})\big\|_{L^{\infty}((-\infty,g_{h}(x)))}
\big\|g'_{h}(\cdot)-g'(\cdot)\big\|_{L^{1}(\mathbb{R}_{+})}\\[2pt]
&\longrightarrow C\overline{L}\big(\|U_{0}(\cdot)\|_{BV(\mathcal{I})}+|g'(0)|+\|g'(\cdot)\|_{BV(\mathbb{R}_{+})}+1\big)x\tau^{2},
\end{align*}
as $\nu\rightarrow \infty$ and $h \rightarrow 0$.

Now choose $C_2=C\overline{L}\big(\|U_{0}(\cdot)\|_{BV(\mathcal{I})}+|g'(0)|+\|g'(\cdot)\|_{BV(\mathbb{R}_{+})}+1\big)\tau^{2}$,
$\varepsilon^{*}_0=\min\{\varepsilon_0,\tilde{\varepsilon}_0\}>0$, and $\tau^*_0=\min\{\tau_0,\tilde{\tau}_0\}>0$.
Then, for $\varepsilon\in(0,\varepsilon^{*}_0)$ and $\tau\in(0,\tau^*_0)$,
if $U_{0}$ and $g(x)$ satisfy \eqref{eq:1.21}, estimate \eqref{eq:1.25} follows from the estimates
on the right-hand side of \eqref{eq:5.7} altogether.

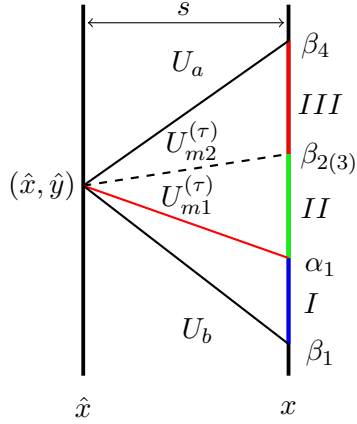
\begin{figure}[ht]
\begin{center}
\begin{tikzpicture}[scale=0.6]
\draw [line width=0.05cm](-4,4.0)--(-4,-4.2);
\draw [line width=0.05cm](0.5,4.0)--(0.5,-4.2);

\draw [line width=0.05cm][red](0.5,3.2)--(0.5,0.7);
\draw [line width=0.05cm][green](0.5,0.7)--(0.5,-1.6);
\draw [line width=0.05cm][blue](0.5,-1.6)--(0.5,-3.5);

\draw [thick](-4,0)--(0.5,3.2);
\draw [thick][dashed](-4,0)--(0.5, 0.7);
\draw [thick][red](-4,0)--(0.5, -1.6);
\draw [thick](-4,0)--(0.5, -3.5);

\draw [thin][<->](-3.9,3.6)--(0.4, 3.6);

\node at (2.6, 2) {$$};
\node at (-1.8, 3.9){$s$};
\node at (-4.9, 0) {$(\hat{x}, \hat{y})$};
\node at (-4, -4.9) {$\hat{x}$};
\node at (0.5, -4.9) {$x$};

\node at (1.1, 3.1) {$\beta_{4}$};
\node at (1.4, 0.6) {$\beta_{2(3)}$};
\node at (1.2, -1.8) {$\alpha_1$};
\node at (1.2, -3.7) {$\beta_{1}$};

\node at (-1.7, 2.7){$U_{a}$};
\node at (-1.6, 1.0){$U^{(\tau)}_{m2}$};
\node at (-1.7, -0.2){$U^{(\tau)}_{m1}$};
\node at (-1.5, -3.2){$U_{b}$};

\node at (1.2, 1.8){$III$};
\node at (1.1, -0.5){$II$};
\node at (1.0, -2.6){$I$};
\end{tikzpicture}
\end{center}
\caption{The optimal convergence rate}\label{fig22}
\end{figure}

\medskip
Second, we show that the convergence rate in \eqref{eq:1.25} is optimal.
To make this, { it suffices to} consider a special case.
As shown in Fig. \ref{fig22}, suppose that the solution of the Riemann problem \eqref{eq:3.6}
consists of only one $1^{\rm st}$ shock-front $\alpha_1$ with
strength $\sigma_{\alpha_1}$, which issues from point $(\hat{x}, \hat{y})$,
with $U_{b}=(\rho_b, u_{b},v_{b}, p_{b})^{\top}=(1, 0, \epsilon, \frac{1}{\gamma a^2_\infty})^{\top}$
and $U_{a}=(\rho_a, u_{a},v_{a}, p_{a})^{\top}=(1+a_{\infty}\epsilon, \frac{\epsilon}{a_{\infty}}, \epsilon, \frac{1}{\gamma a^2_\infty}+\frac{\epsilon}{a_{\infty}})^{\top}$
as its \emph{below} and \emph{above} states for a constant $\epsilon>0$.

Note that, when $\epsilon=0$, $U_{b}=U_{a}=\underline{U}$.
Then, by
Lemma A.1
and the implicit function theorem, we know that the equation:
\begin{eqnarray}\label{eq:5.8}
\Phi_{1}(\sigma_{\alpha_1}; U_{b})=U_{a}
\end{eqnarray}
admits a unique solution $\sigma_{\alpha_1}(\epsilon)$ for sufficiently small $\epsilon$ with $\sigma_{\alpha_1}(0)=0$
and $\sigma'_{\alpha_1}(0)=-\frac{\gamma+1}{2}$.
By the Taylor expansion theorem, we have
\begin{eqnarray}\label{eq:5.9}
\sigma_{\alpha_1}(\epsilon)=-\frac{\gamma+1}{2}\epsilon+O(1)\epsilon^2,
\end{eqnarray}
where the bound of function $O(1)$ depends only on $\underline{U}$,
so that $\alpha_1$ is a shock-front.
Let the speed of $\alpha_1$ be $\mathcal{\dot{S}}_{1}(\sigma_{\alpha_1})$. Then
\begin{eqnarray}\label{eq:5.10}
\mathcal{\dot{S}}_{1}(\sigma_{\alpha_1})=\lambda_{1}(\underline{U})+\frac{1}{2}\sigma_{\alpha_1}+O(1)\sigma^2_{\alpha_1}
=-\frac{1}{a_{\infty}}+\frac{1}{2}\sigma_{\alpha_1}+O(1)\sigma^2_{\alpha_1}.
\end{eqnarray}

We now consider the Riemann problem \eqref{eq:2.11} with \emph{below} state $U_{b}$ and \emph{above} state $U_{a}$.
Let $\beta_j$ be the elementary wave-fronts of the solution with strength $\sigma^{(\tau)}_{\beta_{j}}$
for $1\leq j\leq 4$, that is,
\begin{eqnarray}\label{eq:5.11}
\Phi^{(\tau)}(\boldsymbol{\sigma}^{(\tau)}_{\boldsymbol{\beta}}; U_{b},\tau^2)=\Phi_{1}(\sigma_{\alpha_1}; U_{b}),
\end{eqnarray}
where $\boldsymbol{\sigma}^{(\tau)}_{\boldsymbol{\beta}}=\big(\sigma^{(\tau)}_{\beta_1}, \sigma^{(\tau)}_{\beta_2}, \sigma^{(\tau)}_{\beta_3}, \sigma^{(\tau)}_{\beta_4}\big)$,
and $\Phi^{(\tau)}$ is defined by \eqref{eq:2.10}.

By Lemma \ref{lem:2.2}, for sufficiently small $\tau>0$, we have
\begin{align*}
&\det\Big(\frac{\partial \Phi^{(\tau)}(\boldsymbol{\sigma}^{(\tau)}_{\boldsymbol{\beta}}; U_{b},\tau^2)}{\partial \boldsymbol{\sigma}^{(\tau)}_{\boldsymbol{\beta}}} \Big)\bigg|_{\boldsymbol{\sigma}^{(\tau)}_{_{\boldsymbol{\beta}}}=0}\\
&=\det\Big(\mathbf{r}^{(\tau)}_{1}(\underline{U},\tau^{2}),\mathbf{r}^{(\tau)}_{2}(\underline{U},\tau^{2}),
\mathbf{r}^{(\tau)}_{3}(\underline{U},\tau^{2}), \mathbf{r}^{(\tau)}_{4}(\underline{U},\tau^{2})\Big)\\
&=-\frac{8(a^{2}_{\infty}-\tau^{2})^{\frac{7}{2}}}{(\gamma+1)^{2}a^{8}_{\infty}}<0.
\end{align*}
Then it follows from the implicit function theorem and Lemma \ref{lem:2.3} that equation \eqref{eq:5.11} admits
a unique solution  $\boldsymbol{\sigma}^{(\tau)}_{\boldsymbol{\beta}}$, which is a function of $\sigma_{\alpha_1}$ and $\tau^2$,
such that
\begin{eqnarray}
&&\sigma^{(\tau)}_{\beta_{1}}(\sigma_{\alpha_{1}},0)=\sigma_{\alpha_{1}}, \qquad\,\,
\sigma^{(\tau)}_{\beta_{j}}(\sigma_{\alpha_{1}},0)=0 \quad \mbox{for $j=2,3,4$},\qquad \label{eq:5.12}\\[1pt]
&&\sigma^{(\tau)}_{\beta_{1}}(0,0)=\sigma^{(\tau)}_{\beta_{j}}(0,\tau^2)=0 \qquad\,\, \mbox{for $1\leq j\leq 4$}.\label{eq:5.13}
\end{eqnarray}

We now compute $\frac{\partial^2 \sigma^{(\tau)}_{\beta_j}}{\partial \sigma_{\alpha_1}\partial \tau^2}$ for $1\leq j\leq 4$.
Taking the derivative on \eqref{eq:5.11} with respect to $\sigma_{\alpha_1}$, we have
\begin{eqnarray}\label{eq:5.14}
\sum^{4}_{j=1}\frac{\partial\Phi^{(\tau)}(\boldsymbol{\sigma}^{(\tau)}_{\boldsymbol{\beta}};U_{b}, \tau^2)}
{\partial \sigma^{(\tau)}_{\beta_j}}\frac{\partial \sigma^{(\tau)}_{\beta_j}}{\partial \sigma_{\alpha_1}}
=\frac{\partial \Phi(\sigma_{\alpha_1}; U_{b})}{\partial \sigma_{\alpha_1}}.
\end{eqnarray}

Letting $\alpha_{1}=\tau=0$ in \eqref{eq:5.14}, we obtain
\begin{eqnarray}\label{eq:5.15}
\sum^{4}_{j=1}\mathbf{r}_{j}(\underline{U}, 0)\frac{\partial \sigma^{(\tau)}_{\beta_j}}{\partial \sigma_{\alpha_1}}\bigg|_{\alpha_1=\tau=0}
=\mathbf{r}_{1}(\underline{U}),
\end{eqnarray}
so that
\begin{eqnarray}\label{eq:5.16}
\frac{\partial \sigma^{(\tau)}_{\beta_1}}{\partial \sigma_{\alpha_1}}\bigg|_{\alpha_1=\tau=0}=1,
\qquad\,\,  \frac{\partial \sigma^{(\tau)}_{\beta_j}}{\partial \sigma_{\alpha_1}}\bigg|_{\alpha_1=\tau=0}=0 \quad \mbox{for $j=2,3,4$}.
\end{eqnarray}

Next, we take the derivative on \eqref{eq:5.14} with respect to $\tau^2$ and let $\alpha_1=\tau=0$ to obtain
\begin{eqnarray}\label{eq:5.17}
\sum^{4}_{j=1}\mathbf{r}_{j}(\underline{U}, 0)
\frac{\partial^2 \sigma^{(\tau)}_{\beta_j}}{\partial \sigma_{\alpha_1}\partial \tau^2}\bigg|_{\alpha_1=\tau=0}
=-\sum^{4}_{j=1}\frac{\partial \mathbf{r}_{j}(\underline{U}, \tau^2)}{\partial \tau^2}\bigg|_{\tau=0}\frac{\partial \sigma^{(\tau)}_{\beta_j}}{\partial \sigma_{\alpha_1}}\bigg|_{\alpha_1=\tau=0}.
\end{eqnarray}
Substituting \eqref{eq:5.16} into \eqref{eq:5.17}, we obtain
\begin{eqnarray}
&&\frac{\partial^2 \sigma^{(\tau)}_{\beta_1}}{\partial \sigma_{\alpha_1}\partial \tau^2}\bigg|_{\alpha_1=\tau=0}=\frac{7}{4a^2_{\infty}},
\qquad \frac{\partial^2 \sigma^{(\tau)}_{\beta_4}}{\partial \sigma_{\alpha_1}\partial \tau^2}\bigg|_{\alpha_1=\tau=0}=\frac{1}{4a^2_{\infty}},\label{eq:5.18}\\[2pt]
&&\frac{\partial^2 \sigma^{(\tau)}_{\beta_2}}{\partial \sigma_{\alpha_1}\partial \tau^2}\bigg|_{\alpha_1=\tau=0}=0,
\qquad \frac{\partial^2 \sigma^{(\tau)}_{\beta_3}}{\partial \sigma_{\alpha_1}\partial \tau^2}\bigg|_{\alpha_1=\tau=0}=-\frac{4(a_{\infty}+1)}{(\gamma+1)a_{\infty}}.\qquad\label{eq:5.19}
\end{eqnarray}

Therefore, by the Taylor formula, \eqref{eq:5.12}--\eqref{eq:5.13}, and \eqref{eq:5.18}--\eqref{eq:5.19}, we have
\begin{align}\label{eq:5.20}
\sigma^{(\tau)}_{\beta_1}(\sigma_{\alpha_1},\tau^2)&=\sigma^{(\tau)}_{\beta_1}(\sigma_{\alpha_1},0)
+\sigma^{(\tau)}_{\beta_1}(0,\tau^2)-\sigma^{(\tau)}_{\beta_1}(0,0)\nonumber\\[1mm]
 &\quad\  +\sigma_{\alpha_1}\tau^2\int^{1}_{0}\int^{1}_{0}
\frac{\partial^2 \sigma^{(\tau)}_{\beta_2}}{\partial \sigma_{\alpha_1}\partial \tau^2}(\mu_1\sigma_{\alpha_1}, \mu_2\tau^2)\,{\rm d}\mu_1 {\rm d}\mu_2\nonumber\\[2pt]
&=\sigma_{\alpha_1}+\frac{7}{4a^2_{\infty}}\sigma_{\alpha_1}\tau^2+O(1)\big(|\sigma_{\alpha_1}|+\tau^2\big)|\sigma_{\alpha_1}|\tau^2,
\end{align}
where the bound of function $O(1)$ depends only on $\underline{U}$.

In the same way, we can also deduce that
\begin{align}\label{eq:5.22}
\begin{split}
&\sigma^{(\tau)}_{\beta_2}(\sigma_{\alpha_1},\tau^2)=O(1)\big(|\sigma_{\alpha_1}|+\tau^2\big)|\sigma_{\alpha_1}|\tau^2,\\[2pt]
&\sigma^{(\tau)}_{\beta_3}(\sigma_{\alpha_1},\tau^2)=-\frac{4(a_{\infty}+1)}{(\gamma+1)a_{\infty}}\sigma_{\alpha_1}\tau^2
+O(1)\big(|\sigma_{\alpha_1}|+\tau^2\big)|\sigma_{\alpha_1}|\tau^2,\\[2pt]
&\sigma^{(\tau)}_{\beta_4}(\sigma_{\alpha_1},\tau^2)=\frac{1}{4a^2_{\infty}}\sigma_{\alpha_1}\tau^2+O(1)\big(|\sigma_{\alpha_1}|+\tau^2\big)|\sigma_{\alpha_1}|\tau^2,
\end{split}
\end{align}
where the bound of functions $O(1)$ depends only on $\underline{U}$.

\medskip
Now, we consider the estimate of $\|U^{(\tau)}-U\|_{L^{1}}$.
When $\epsilon$ and $\tau$ are sufficiently small, it follows from \eqref{eq:5.20}--\eqref{eq:5.22} that
$\beta_1$ and $\beta_4$ are shock waves, and $\beta_2$ and $\beta_3$ are vortex sheets/entropy waves.
As shown in Fig. \ref{fig22}, we denote $U^{(\tau)}_{m1}$
and $U^{(\tau)}_{m2}$ as the middle states, between waves $\beta_1$ and $\beta_{2}$, and between waves $\beta_3$ and $\beta_{4}$, respectively.
Then
\begin{align}
&U^{(\tau)}_{m1}=\Phi^{(\tau)}_{1}(\sigma^{(\tau)}_{\beta_1}; U_{b},\tau^2), \,\,\,
U^{(\tau)}_{m2}=\Phi^{(\tau)}_{3}(\sigma^{(\tau)}_{\beta_3}; \Phi^{(\tau)}_{2}(\sigma^{(\tau)}_{\beta_2}; U^{(\tau)}_{m1}, \tau^2),\tau^2),\nonumber\\[2pt]
&U_{a}=\Phi^{(\tau)}_{4}(\sigma^{(\tau)}_{\beta_4}; U^{(\tau)}_{m2},\tau^2).\label{eq:5.24}
\end{align}

Let $\dot{\mathcal{S}}^{(\tau)}_{k}(\sigma^{(\tau)}_{\beta_k},\tau^2)$ be the speed of wave $\beta_k$ for $1\leq k\leq 4$.
Note that, for $k=2,3$, waves $\beta_2$ and $\beta_3$ are vortex sheets/entropy waves so that their speeds satisfy
\begin{align*}
\dot{\mathcal{S}}^{(\tau)}_{2}(\sigma^{(\tau)}_{\beta_2},\tau^2)=\dot{\mathcal{S}}^{(\tau)}_{3}(\sigma^{(\tau)}_{\beta_3},\tau^2)
=\lambda^{(\tau)}_{2}(U^{(\tau)}_{m1},\tau^2)=\lambda^{(\tau)}_{3}(U^{(\tau)}_{m2},\tau^2).
\end{align*}
In addition, it is direct to see that
\begin{align*}
\dot{\mathcal{S}}^{(\tau)}_{1}(\sigma^{(\tau)}_{\beta_1},\tau^2)<\dot{\mathcal{S}}_{1}(\sigma_{\alpha_1},\tau^2)
<\dot{\mathcal{S}}^{(\tau)}_{2}(\sigma^{(\tau)}_{\beta_2},\tau^2)=\dot{\mathcal{S}}^{(\tau)}_{3}(\sigma^{(\tau)}_{\beta_3},\tau^2)
<\dot{\mathcal{S}}^{(\tau)}_{4}(\sigma^{(\tau)}_{\beta_4},\tau^2).
\end{align*}
For any $x>\hat{x}$, let $I_x:=\{(x,y)\,:\, y\in\mathbb{R}\}$.
As shown in Fig. \ref{fig22}, let $I$, $II$ and $III$ be the intervals on $I_x$ between waves $\beta_1$ and $\alpha_1$, $\alpha_1$ and $\beta_2(3)$,
and $\beta_2(3)$ and $\beta_4$, respectively, issuing from point $(\hat{x}, \hat{y})$.
From the construction, we see that
$$
\|U^{(\tau)}-U\|_{L^{1}(I_x\backslash(I\cup II\cup III))}\equiv0.
$$
Thus, it suffices to consider the comparison of the Riemann solutions on intervals $I$, $II$, and $III$ in the following three cases:

\medskip
\emph{Case 1}:\ \emph{Estimate of $\|U^{(\tau)}-U\|_{L^1(I)}$.} By direct computation, we have
\begin{align}\label{eq:5.26}
\dot{\mathcal{S}}^{(\tau)}_{1}(\sigma^{(\tau)}_{\beta_1},\tau^2)&=\dot{\mathcal{S}}^{(\tau)}_{1}(0,\tau^2)
+\frac{1}{2}\sigma^{(\tau)}_{\beta_1}+O(1)(\sigma^{(\tau)}_{\beta_1})^2\nonumber\\
&=-\big(a^2_{\infty}-\tau^2 \big)^{-\frac{1}{2}}
+\frac{1}{2}\sigma^{(\tau)}_{\beta_1}+O(1)(\sigma^{(\tau)}_{\beta_1})^2.
\end{align}
Then, if $\tau$ and $\epsilon$ are sufficiently small, it follows from \eqref{eq:5.10} and \eqref{eq:5.26} that
\begin{align}\label{eq:5.27}
&\dot{\mathcal{S}}_{1}(\sigma_{\alpha_1})-\dot{\mathcal{S}}^{(\tau)}_{1}(\sigma^{(\tau)}_{\beta_1},\tau^2)\nonumber\\[2pt]
&=\big(a^2_{\infty}-\tau^2 \big)^{-\frac{1}{2}}-\frac{1}{a_{\infty}}+\frac{1}{2}\big(\sigma_{\alpha_1}-\sigma^{(\tau)}_{\beta_1}\big)
+O(1)\tau^2(\sigma^{(\tau)}_{\beta_1})^{2}
+O(1)\big(\sigma_{\alpha_1}-\sigma^{(\tau)}_{\beta_1}\big)^{2}\nonumber\\[2pt]
&=\frac{1}{2a^3_{\infty}}\tau^2+O(1)\big(|\sigma_{\alpha_1}|+\tau^2\big)\tau^2>0.
\end{align}
Therefore, $I=\big\{(x,y)\,:\,\hat{y}+\dot{\mathcal{S}}^{(\tau)}_{1}(\sigma^{(\tau)}_{\beta_1},\tau^2)(x-\hat{x})<y<\hat{y}+\dot{\mathcal{S}}_{1}(\sigma_{\alpha_1})(x-\hat{x})\big\}$,
so that the length of $I$ is
\begin{align}\label{eq:5.28}
|I|&=\big(\dot{\mathcal{S}}_{1}(\sigma_{\alpha_1})-\dot{\mathcal{S}}^{(\tau)}_{1}(\sigma^{(\tau)}_{\beta_1},\tau^2)\big)(x-\hat{x})\nonumber\\
&=\frac{1}{2a^3_{\infty}}\tau^2(x-\hat{x})+O(1)\big(|\sigma_{\alpha_1}|+\tau^2\big)\tau^2(x-\hat{x}).
\end{align}

Note that $U^{(\tau)}=U^{(\tau)}_{m1}$ and $U=U_{b}$ on $I$.
Then $|U^{(\tau)}-U|=|U^{(\tau)}_{m1}-U_{b}|$ on $I$.
We now estimate the term: $|U^{(\tau)}_{m1}-U_{b}|$.
By \eqref{eq:5.22}
and direct computation, we have
\begin{align}\label{eq:5.29}
|\rho^{(\tau)}_{m1}-\rho_{b}|&=\big|\Phi^{(\tau),(1)}_1(\sigma^{(\tau)}_{\beta_1}; U_{b}, \tau^2)-\rho_{b}\big|\nonumber\\[2pt]
&=\frac{2\big((a^2_{\infty}-\tau^2)^{\frac{1}{2}}-\tau^2\big)(a^2_{\infty}-\tau^2)^2}{(\gamma+1)a^4_{\infty}}|\sigma^{(\tau)}_{\beta_1}|
+O(1)(\sigma^{(\tau)}_{\beta_1})^2\nonumber\\[2pt]
&=\frac{2a_{\infty}}{\gamma+1}\Big(1-\frac{2a_{\infty}+5}{2a^2_{\infty}}\tau^2\Big)\Big(1+\frac{7}{4a^2_{\infty}}\tau^2\Big)|\sigma_{\alpha_1}|\nonumber\\
&\quad\, +O(1)\big(|\sigma_{\alpha_1}|+|\sigma_{\alpha_1}|\tau^2+\tau^4\big)|\sigma_{\alpha_1}|\nonumber\\[2pt]
&=\frac{2a_{\infty}}{\gamma+1}\Big(1-\frac{4a_{\infty}+3}{4a^2_{\infty}}\tau^2\Big)|\sigma_{\alpha_1}|
+O(1)\big(|\sigma_{\alpha_1}|+|\sigma_{\alpha_1}|\tau^2+\tau^4\big)|\sigma_{\alpha_1}|,
\end{align}
\begin{align}
|u^{(\tau)}_{m1}-u_{b}|&=\big|\Phi^{(\tau),(2)}_1(\sigma^{(\tau)}_{\beta_1}; U_{b}, \tau^2)-u_{b}\big|\nonumber\\[2pt]
&=\frac{2(a^2_{\infty}-\tau^2)^{\frac{3}{2}}}{(\gamma+1)a^4_{\infty}}|\sigma^{(\tau)}_{\beta_1}|
+O(1)(\sigma^{(\tau)}_{\beta_1})^2\nonumber\\[2pt]
&=\frac{2}{(\gamma+1)a_{\infty}}\Big(1+\frac{1}{4a^2_{\infty}}\tau^2\Big)|\sigma_{\alpha_1}|
+O(1)\big(|\sigma_{\alpha_1}|+|\sigma_{\alpha_1}|\tau^2+\tau^4\big)|\sigma_{\alpha_1}|,
\end{align}
\begin{align}
|v^{(\tau)}_{m1}-v_{b}|&=\big|\Phi^{(\tau),(3)}_1(\sigma^{(\tau)}_{\beta_1}; U_{b}, \tau^2)-v_{b}\big|\nonumber\\[2pt]
&=\frac{2(a^2_{\infty}-\tau^2)^2}{(\gamma+1)a^4_{\infty}}|\sigma^{(\tau)}_{\beta_1}|
+O(1)(\sigma^{(\tau)}_{\beta_1})^2\nonumber\\[2pt]
&=\frac{2}{\gamma+1}\Big(1-\frac{1}{4a^2_{\infty}}\tau^2\Big)|\sigma_{\alpha_1}|
+O(1)\big(|\sigma_{\alpha_1}|+|\sigma_{\alpha_1}|\tau^2+\tau^4\big)|\sigma_{\alpha_1}|,
\end{align}
\begin{align}\label{eq:5.30}
|p^{(\tau)}_{m1}-p_{b}|&=\big|\Phi^{(\tau),(4)}_1(\sigma^{(\tau)}_{\beta_1}; U_{b}, \tau^2)-p_{b}\big|\nonumber\\[2pt]
&=\frac{2(a^2_{\infty}-\tau^2)^{\frac{3}{2}}}{(\gamma+1)a^4_{\infty}}|\sigma^{(\tau)}_{\beta_1}|
+O(1)(\sigma^{(\tau)}_{\beta_1})^2\nonumber\\[2pt]
&=\frac{2}{(\gamma+1)a_{\infty}}\Big(1+\frac{1}{4a^2_{\infty}}\tau^2\Big)|\sigma_{\alpha_1}|
+O(1)\big(|\sigma_{\alpha_1}|+|\sigma_{\alpha_1}|\tau^2+\tau^4\big)|\sigma_{\alpha_1}|.
\end{align}

Therefore, it follows from \eqref{eq:5.29}--\eqref{eq:5.30} that
\begin{align*}
|U^{(\tau)}_{m1}-U_{b}|&=|\rho^{(\tau)}_{m1}-\rho_{b}|+|u^{(\tau)}_{m1}-u_{b}|+|v^{(\tau)}_{m1}-v_{b}|
+|p^{(\tau)}_{m1}-p_{b}|\\[2pt]
&=\bigg(\frac{2(a^2_{\infty}+a_{\infty}+2)}{(\gamma+1)a_{\infty}}
-\frac{4a^3_{\infty}+3a^2_{\infty}+a_{\infty}-2}{2(\gamma+1)a^3_{\infty}}\tau^2\bigg)|\sigma_{\alpha_1}|\\[3pt]
&\quad \ +O(1)\big(|\sigma_{\alpha_1}|+|\sigma_{\alpha_1}|\tau^2+\tau^4\big)|\sigma_{\alpha_1}|.
\end{align*}
Then we have
\begin{align}
&\int_{I}|U^{(\tau)}-U|\,{\rm d}y\nonumber\\[2pt]
&=|U^{(\tau)}_{m1}-U_{b}||I|\nonumber\\[2pt]
&=\frac{1}{2a^3_{\infty}}\Big(\frac{2(a^2_{\infty}+a_{\infty}+2)}{(\gamma+1)a_{\infty}}
-\frac{4a^3_{\infty}+3a^2_{\infty}+a_{\infty}-2}{2(\gamma+1)a^3_{\infty}}\tau^2\Big)|\sigma_{\alpha_1}|\tau^2(x-\hat{x})\nonumber\\[2pt]
&\quad  +O(1)\big(|\sigma_{\alpha_1}|+\tau^2\big)|\sigma_{\alpha_1}|\tau^2(x-\hat{x})\nonumber\\[2pt]
&=\frac{a^2_{\infty}+a_{\infty}+2}{(\gamma+1)a^{4}_{\infty}}|\sigma_{\alpha_1}|\tau^2(x-\hat{x})
+O(1)\big(|\sigma_{\alpha_1}|+\tau^2\big)|\sigma_{\alpha_1}|\tau^2(x-\hat{x}).\label{eq:5.31}
\end{align}

\smallskip
\emph{Case 2}:\ \emph{Estimate of $\|U^{(\tau)}-U\|_{L^1(II)}$}.
By \eqref{eq:5.20}, for sufficiently small $\tau$, we can obtain
\begin{align}
\dot{\mathcal{S}}^{(\tau)}_{2}(\sigma^{(\tau)}_{\beta_2},\tau^2)&=\lambda^{(\tau)}_{2}(\Phi^{(\tau)}_{1}(\sigma^{(\tau)}_{\beta_1}; U_b, \tau^2),\tau^2)\nonumber\\[2pt]
&=\lambda^{(\tau)}_{2}(\underline{U},\tau^2)+\frac{\partial \lambda^{(\tau)}_{2}(\Phi^{(\tau)}_{1}(\sigma^{(\tau)}_{\beta_1}; U_b, \tau^2),\tau^2)}{\partial \sigma^{(\tau)}_{\beta_1}}\bigg|_{\sigma^{(\tau)}_{\beta_1}=0}\sigma^{(\tau)}_{\beta_1}+O(1)(\sigma^{(\tau)}_{\beta_1})^2\nonumber\\[2pt]
&=\frac{2(a^2_{\infty}-\tau^2)^2}{(\gamma+1)a^4_{\infty}}\sigma^{(\tau)}_{\beta_1}+O(1)(\sigma^{(\tau)}_{\beta_1})^2\nonumber\\[2pt]
&=\frac{2}{\gamma+1}\Big(1-\frac{2}{a^2_{\infty}}\tau^2\Big)\Big(1+\frac{7}{4a^2_{\infty}}\tau^2\Big)\sigma_{\alpha_1}\nonumber\\[2pt]
&\quad\, +O(1)\big(|\sigma_{\alpha_1}|+|\sigma_{\alpha_1}|\tau^2+\tau^4\big)|\sigma_{\alpha_1}|\nonumber\\[2pt]
&=\frac{2}{\gamma+1}\Big(1-\frac{\tau^2}{4a^2_{\infty}}\Big)\sigma_{\alpha_1}
+O(1)\big(|\sigma_{\alpha_1}|+|\sigma_{\alpha_1}|\tau^2+\tau^4\big)|\sigma_{\alpha_1}|.\label{eq:5.32}
\end{align}

Then, by \eqref{eq:5.10} and \eqref{eq:5.32}, we have
\begin{align}\label{eq:5.33}
&\dot{\mathcal{S}}^{(\tau)}_{2}(\sigma^{(\tau)}_{\beta_2},\tau^2)-\dot{\mathcal{S}}_{1}(\sigma_{\alpha_1})\nonumber\\[5pt]
&=\frac{1}{a_{\infty}}+\frac{1}{2(\gamma+1)}\Big(\gamma+5-\frac{\tau^2}{a^2_{\infty}}\Big)|\sigma_{\alpha_1}|
+O(1)\big(|\sigma_{\alpha_1}|+|\sigma_{\alpha_1}|\tau^2+\tau^4\big)|\sigma_{\alpha_1}|>0,
\end{align}
so that $II=\big\{(x,y)\,:\,\hat{y}+\dot{\mathcal{S}}_{1}(\sigma_{\alpha_1}(x-\hat{x})<y<\hat{y}+\dot{\mathcal{S}}^{(\tau)}_{2}(\sigma^{(\tau)}_{\beta_2},\tau^2)(\sigma_{\alpha_1}(x-\hat{x})\big\}$,
and its length is
\begin{align}
|II|&=\big(\dot{\mathcal{S}}^{(\tau)}_{2}(\sigma^{(\tau)}_{\beta_2},\tau^2)-\dot{\mathcal{S}}_{1}(\sigma_{\alpha_1})\big)(x-\hat{x})\nonumber\\[2pt]
&=\Big(\frac{1}{a_{\infty}}+\frac{1}{2(\gamma+1)}\Big(\gamma+5-\frac{\tau^2}{a^2_{\infty}}\Big)|\sigma_{\alpha_1}|\Big)(x-\hat{x})\nonumber\\[2pt]
&\quad\,+O(1)\big(|\sigma_{\alpha_1}|+|\sigma_{\alpha_1}|\tau^2+\tau^4\big)|\sigma_{\alpha_1}|(x-\hat{x}).\label{eq:5.34}
\end{align}

We now compute $|U^{(\tau)}-U|$ on $II$. Set
\begin{align*}
\tilde{\Phi}^{(\tau)}(\tilde{\boldsymbol{\sigma}}^{(\tau)}_{\tilde{\boldsymbol{\beta}}}; U^{(\tau)}_{m1},\tau^2)
=\Phi^{(\tau)}_{4}(\sigma^{(\tau)}_{\beta_{4}};\Phi^{(\tau)}_{3}\big(\sigma^{(\tau)}_{\beta_{3}}; \Phi^{(\tau)}_{2}(\sigma^{(\tau)}_{\beta_{2}}; U^{(\tau)}_{m1},\tau^2 ),\tau^2),\tau^2\big),\tau^2),
\end{align*}
where $\tilde{\boldsymbol{\sigma}}^{(\tau)}_{\tilde{\boldsymbol{\beta}}}=(\sigma^{(\tau)}_{\beta_{2}}, \sigma^{(\tau)}_{\beta_{3}}, \sigma^{(\tau)}_{\beta_{4}})$.

Denote $\tilde{\Phi}^{(\tau),(k)}(\tilde{\boldsymbol{\sigma}}^{(\tau)}_{\tilde{\boldsymbol{\beta}}}; U^{(\tau)}_{m1},\tau^2)$ as the $k^{\rm th}$-component of $\tilde{\Phi}^{(\tau)}(\tilde{\boldsymbol{\sigma}}^{(\tau)}_{\tilde{\boldsymbol{\beta}}}; U^{(\tau)}_{m1},\tau^2)$ for $1\leq k\leq 4$.
Then it follows from
\eqref{eq:5.22} that
\begin{align}\label{eq:5.35}
|\rho_{a}-\rho^{(\tau)}_{m1}|
&=\big|\tilde{\Phi}^{(\tau),(1)}(\tilde{\boldsymbol\sigma}^{(\tau)}_{\tilde{\boldsymbol\beta}}; U^{(\tau)}_{m1}, \tau^2)-\rho^{(\tau)}_{m1}\big|\nonumber\\[2pt]
&=\Big|\sigma^{(\tau)}_{\beta_{3}}+\frac{2}{(\gamma+1)a^4_{\infty}}\big((a^2_{\infty}-\tau^2)^{\frac{1}{2}}+\tau^2\big)
\big(a^2_{\infty}-\tau^2\big)^2\sigma^{(\tau)}_{\beta_{4}}\Big|\nonumber\\[2pt]
&\quad\,+O(1)\sum^{4}_{k=2}|\sigma^{(\tau)}_{\beta_{k}}|^{2}\nonumber \\[2pt]
&=\frac{8a_{\infty}+7}{2(\gamma+1)a_{\infty}}|\sigma_{\alpha_1}|\tau^2
+O(1)\big(|\sigma_{\alpha_1}|+\tau^2\big)|\sigma_{\alpha_1}|\tau^2.
\end{align}

Similarly, we have
\begin{align}\label{eq:5.36}
|u_{a}-u^{(\tau)}_{m1}|&=\big|\tilde{\Phi}^{(\tau),(2)}(\tilde{\boldsymbol\sigma}^{(\tau)}_{\tilde{\boldsymbol\beta}}; U^{(\tau)}_{m1}, \tau^2)-u^{(\tau)}_{m1}\big|\nonumber \\[2pt]
&=\bigg|\sigma^{(\tau)}_{\beta_{2}}+\frac{2(a^2_{\infty}-\tau^2)^{\frac{3}{2}}}{(\gamma+1)a^4_{\infty}}\sigma^{(\tau)}_{\sigma_{\beta_4}}\bigg|
+O(1)\sum^{4}_{k=2}|\sigma^{(\tau)}_{\beta_{k}}|^{2}\nonumber \\[2pt]
&=\frac{1}{2(\gamma+1)a^3_{\infty}}|\sigma_{\alpha_1}|\tau^2
+O(1)\big(|\sigma_{\alpha_1}|+\tau^2\big)|\sigma_{\alpha_1}|\tau^2,
\end{align}
\begin{align}
|v_{a}-v^{(\tau)}_{m1}|&=\big|\tilde{\Phi}^{(\tau),(3)}(\tilde{\boldsymbol\sigma}^{(\tau)}_{\tilde{\boldsymbol\beta}}; U^{(\tau)}_{m1}, \tau^2)-v^{(\tau)}_{m1}\big|\nonumber \\[2pt]
&=\frac{2(a^2_{\infty}-\tau^2)^2}{(\gamma+1)a^4_{\infty}}|\sigma^{(\tau)}_{\sigma_{\beta_4}}|
+O(1)\sum^{4}_{k=2}|\sigma^{(\tau)}_{\beta_{k}}|^{2}\nonumber \\[2pt]
&=\frac{1}{2(\gamma+1)a^2_{\infty}}|\sigma_{\alpha_1}|\tau^2
+O(1)\big(|\sigma_{\alpha_1}|+\tau^2\big)|\sigma_{\alpha_1}|\tau^2,
\end{align}
\begin{align}\label{eq:5.37}
|p_{a}-p^{(\tau)}_{m1}|&=\big|\tilde{\Phi}^{(\tau),(4)}(\tilde{\boldsymbol\sigma}^{(\tau)}_{\tilde{\boldsymbol\beta}}; U^{(\tau)}_{m1}, \tau^2)-p^{(\tau)}_{m1}\big|\nonumber \\[2pt]
&=\frac{2(a^2_{\infty}-\tau^2)^{\frac{3}{2}}}{(\gamma+1)a^4_{\infty}}|\sigma^{(\tau)}_{\sigma_{\beta_4}}|
+O(1)\sum^{4}_{k=2}|\sigma^{(\tau)}_{\beta_{k}}|^{2}\nonumber \\[2pt]
&=\frac{1}{2(\gamma+1)a^3_{\infty}}|\sigma_{\alpha_1}|\tau^2
+O(1)\big(|\sigma_{\alpha_1}|+\tau^2\big)|\sigma_{\alpha_1}|\tau^2.
\end{align}

Therefore, we obtain
\begin{align}
|U_{a}-U^{(\tau)}_{m1}|&=|\rho_{a}-\rho^{(\tau)}_{m1}|+|u_{a}-u^{(\tau)}_{m1}|+|v_{a}-v^{(\tau)}_{m1}|+|p_{a}-p^{(\tau)}_{m1}|\nonumber\\[2pt]
&=\frac{8a^3_{\infty}+7a^2_{\infty}+a_{\infty}+2}{2(\gamma+1)a^3_{\infty}}|\sigma_{\alpha_1}|\tau^2
+O(1)\big(|\sigma_{\alpha_1}|+\tau^2\big)|\sigma_{\alpha_1}|\tau^2.\label{eq:5.38}
\end{align}

Then, by \eqref{eq:5.34} and \eqref{eq:5.38}, we obtain
\begin{align}\label{eq:5.39}
&\int_{II}|U^{(\tau)}-U|\,{\rm d}y\nonumber\\[2pt]
&=|U^{(\tau)}-U||II|\nonumber\\[2pt]
&=\frac{8a^3_{\infty}+7a^2_{\infty}+a_{\infty}+2}{2(\gamma+1)a^4_{\infty}}|\sigma_{\alpha_1}|\tau^2(x-\hat{x})\nonumber\\
&\quad\, +O(1)\big(|\sigma_{\alpha_1}|+\tau^2\big)|\sigma_{\alpha_1}|\tau^2(x-\hat{x}).
\end{align}

\emph{Case 3}:\ \emph{Estimate of $\|U^{(\tau)}-U\|_{L^1(III)}$}. For sufficiently small $\tau$, by \eqref{eq:5.22}, we have
\begin{align*}
\dot{\mathcal{S}}^{(\tau)}_{4}(\sigma^{(\tau)}_{\beta_4},\tau^2)&=\dot{\mathcal{S}}^{(\tau)}_{4}(0,\tau^2)
+\frac{1}{2}\sigma^{(\tau)}_{\beta_4}+O(1)(\sigma^{(\tau)}_{\beta_4})^2\\[2pt]
&=\big(a_{\infty}-\tau^2 \big)^{-\frac{1}{2}}+\frac{1}{2}\sigma^{(\tau)}_{\beta_4}+O(1)(\sigma^{(\tau)}_{\beta_4})^2\\[2pt]
&=\frac{1}{a_{\infty}}+\frac{1}{2a^{2}_{\infty}}\Big(1+\frac{a_{\infty}}{4}\sigma_{\alpha_1}\Big)\tau^2
+O(1)\big(|\sigma_{\alpha_1}|^2+|\sigma_{\alpha_1}|\tau^2+\tau^2\big)\tau^2.
\end{align*}

Then, by \eqref{eq:5.32},
\begin{align}\label{eq:5.40}
&\dot{\mathcal{S}}^{(\tau)}_{4}(\sigma^{(\tau)}_{\beta_4},\tau^2)-\dot{\mathcal{S}}^{(\tau)}_{3}(\sigma^{(\tau)}_{\beta_3},\tau^2)\nonumber\\[2pt]
&=\frac{1}{a_{\infty}}+\frac{1}{2a^2_{\infty}}\tau^2-\frac{2}{\gamma+1}\sigma_{\alpha_1}
+\frac{2a_{\infty}+\gamma+1}{(\gamma+1)a^3_{\infty}}\sigma_{\alpha_1}\tau^2\nonumber\\[2pt]
&\quad \ +O(1)\big(|\sigma_{\alpha_1}|^2+|\sigma_{\alpha_1}|^2\tau^2+\tau^4\big)>0,
\end{align}
so that $III=\{(x,y)\,:\,\hat{y}+\dot{\mathcal{S}}^{(\tau)}_{3}(\sigma^{(\tau)}_{\beta_3},\tau^2)(x-\hat{x})<y<\hat{y}+\dot{\mathcal{S}}^{(\tau)}_{4}(\sigma^{(\tau)}_{\beta_4},\tau^2)(x-\hat{x})\}$.
Then, by \eqref{eq:5.40}, its length is
\begin{align}\label{eq:5.41}
|III|&=\big(\dot{\mathcal{S}}^{(\tau)}_{4}(\sigma^{(\tau)}_{\beta_4},\tau^2)-\dot{\mathcal{S}}^{(\tau)}_{3}(\sigma^{(\tau)}_{\beta_3},\tau^2)\big)
(x-\hat{x})\nonumber\\[2pt]
&=\Big(\frac{1}{a_{\infty}}+\frac{1}{2a^2_{\infty}}\tau^2-\frac{2}{\gamma+1}\sigma_{\alpha_1}
+\frac{2a_{\infty}+\gamma+1}{(\gamma+1)a^3_{\infty}}\sigma_{\alpha_1}\tau^2\Big)(x-\hat{x})\nonumber\\[2pt]
&\quad \ +O(1)\big(|\sigma_{\alpha_1}|^2+|\sigma_{\alpha_1}|^2\tau^2+\tau^4\big)(x-\hat{x}).
\end{align}

It follows from \eqref{eq:5.40} that $|U^{(\tau)}-U|=|U_{a}-U^{(\tau)}_{m2}|$ on $III$. Hence, by \eqref{eq:5.24} and direct calculation,
\begin{align}\label{eq:5.42}
\begin{split}
|\rho_{a}-\rho^{(\tau)}_{m2}|&=\big|\Phi^{(\tau),(1)}_{4}(\sigma^{(\tau)}_{\beta_4}; U^{(\tau)}_{m2}, \tau^2)-\rho^{(\tau)}_{m2}\big|\\[1pt]
&=\frac{2}{2(\gamma+1)a_{\infty}}|\sigma_{\alpha_1}|\tau^2
+O(1)\big(|\sigma_{\alpha_1}|+\tau^2\big)|\sigma_{\alpha_1}|\tau^2,\\[2mm]
|u_{a}-u^{(\tau)}_{m1}|&=\big|\Phi^{(\tau),(2)}_{4}(\sigma^{(\tau)}_{\beta_4}; U^{(\tau)}_{m2}, \tau^2)-u^{(\tau)}_{m2}\big|\\[1pt]
&=\frac{3}{2(\gamma+1)a^3_{\infty}}|\sigma_{\alpha_1}|\tau^2
+O(1)\big(|\sigma_{\alpha_1}|+\tau^2\big)|\sigma_{\alpha_1}|\tau^2,\\[2mm]
|v_{a}-v^{(\tau)}_{m1}|&=\Big|\Phi^{(\tau),(3)}_{4}(\sigma^{(\tau)}_{\beta_4}; U^{(\tau)}_{m2}, \tau^2)-v^{(\tau)}_{m2}\Big|\\[1pt]
&=\frac{1}{2(\gamma+1)a^2_{\infty}}|\sigma_{\alpha_1}|\tau^2
+O(1)\big(|\sigma_{\alpha_1}|+\tau^2\big)|\sigma_{\alpha_1}|\tau^2,\\[2mm]
|p_{a}-p^{(\tau)}_{m2}|&=\Big|\Phi^{(\tau),(4)}_{4}(\sigma^{(\tau)}_{\beta_4}; U^{(\tau)}_{m2}, \tau^2)-p^{(\tau)}_{m2}\Big|\\[1pt]
&=\frac{3}{2(\gamma+1)a^3_{\infty}}|\sigma_{\alpha_1}|\tau^2
+O(1)\big(|\sigma_{\alpha_1}|+\tau^2\big)|\sigma_{\alpha_1}|\tau^2.
\end{split}
\end{align}

Therefore, we have
\begin{align}\label{eq:5.44}
|U_{a}-U^{(\tau)}_{m2}|&=|\rho_{a}-\rho^{(\tau)}_{m2}|+|u_{a}-u^{(\tau)}_{m2}|+|v_{a}-v^{(\tau)}_{m2}|+|p_{a}-p^{(\tau)}_{m2}|\nonumber\\[2pt]
&=\frac{6+a_{\infty}+a^2_{\infty}}{2(\gamma+1)a^3_{\infty}}|\sigma_{\alpha_1}|\tau^2
+O(1)\big(|\sigma_{\alpha_1}|+\tau^2\big)|\sigma_{\alpha_1}|\tau^2.
\end{align}

By \eqref{eq:5.41} and \eqref{eq:5.44}, we obtain
\begin{align}\label{eq:5.45}
&\int_{III}|U_{a}-U^{(\tau)}_{m2}|\,{\rm d}y\nonumber\\[2pt]
&=|U_{a}-U^{(\tau)}_{m2}||III|\nonumber\\[2pt]
&=\frac{a^2_{\infty}+a_{\infty}+6}{2(\gamma+1)a^3_{\infty}}|\sigma_{\alpha_1}|\tau^2(x-\hat{x})+O(1)\big(|\sigma_{\alpha_1}|+\tau^2\big)|\sigma_{\alpha_1}|\tau^2(x-\hat{x}).
\end{align}

Finally, combining \eqref{eq:5.31}, \eqref{eq:5.39}, and \eqref{eq:5.45} altogether, we have
\begin{align}\label{eq:5.46}
&\|U^{(\tau)}-U\|_{L^{1}(I_x)}\nonumber\\[3pt]
&=\bigg(\int_{I}+\int_{II}+\int_{III}\bigg)|U^{(\tau)}-U|\,{\rm d}x\nonumber\\[3pt]
&=\bigg(\frac{a^2_{\infty}+a_{\infty}+2}{(\gamma+1)a^{4}_{\infty}}
+\frac{8a^3_{\infty}+7a^2_{\infty}+a_{\infty}+2}{2(\gamma+1)a^4_{\infty}}
+\frac{a^2_{\infty}+a_{\infty}+6}{2(\gamma+1)a^3_{\infty}}\bigg)|\sigma_{\alpha_1}|\tau^2(x-\hat{x})\nonumber\\[2pt]
&\quad\ +O(1)\big(|\sigma_{\alpha_1}|+\tau^2\big)|\sigma_{\alpha_1}|\tau^2(x-\hat{x})\nonumber\\[2pt]
&=\frac{9a^3_{\infty}+10a^2_{\infty}+9a_{\infty}+6}{2(\gamma+1)a^{4}_{\infty}}|\sigma_{\alpha_1}|\tau^2(x-\hat{x})
+O(1)\big(|\sigma_{\alpha_1}|+\tau^2\big)|\sigma_{\alpha_1}|\tau^2(x-\hat{x}).
\end{align}

Substitute \eqref{eq:5.9} into \eqref{eq:5.46} and let $\hat{x}=0$. Then
\begin{align*}
\|U^{(\tau)}-U\|_{L^{1}(I_x)}=\frac{9a^3_{\infty}+10a^2_{\infty}+9a_{\infty}+6}{4a^{4}_{\infty}}\epsilon x\tau^2
+O(1)\big(\epsilon+\tau^2\big)\epsilon x\tau^2.
\end{align*}
Therefore, based on the special solution, we know the convergence rate in \eqref{eq:1.25} is optimal.
This completes the proof of Theorem \ref{thm:1.2}.
\end{proof}

\appendix
\section{Existence of Entropy Solutions of Problem \eqref{eq:1.12}--\eqref{eq:1.15}}
\setcounter{equation}{0}

In this appendix, we are concerned with the global existence of the entropy solutions of the initial-boundary value
problem \eqref{eq:1.12}--\eqref{eq:1.15} around the \emph{background solution} $\underline{U}\doteq(1,0,0,\frac{1}{\gamma a^{2}_{\infty}})^{\top}$
and $g\equiv0$.
That is, we consider problem \eqref{eq:1.12}--\eqref{eq:1.15}
for the initial-boundary data $U_{0}$ and $g(x)$, which are small perturbations of $U=\underline{U}$ and $g\equiv 0$.
Because the argument is similar and shorter than the one for {Problem I} in \S 2, we skip the proof.
By direct calculation, the characteristic polynomial for system \eqref{eq:1.14} is
\begin{eqnarray}\label{eq:3.1}
(\lambda-v)^2\big((\lambda-v)^{2}-c^{2}\big)=0,
\end{eqnarray}
which admits four roots, \emph{i.e.}, the eigenvalues of system \eqref{eq:1.14}:
\begin{eqnarray}\label{eq:3.2}
\lambda_{j}(U)=v+(-1)^{j}c  \ \ \mbox{for $j=1,4$}, \qquad\,\,\, \lambda_{i}(U)=v \ \ \mbox{for $i=2, 3$}.
\end{eqnarray}
The corresponding eigenvectors are
\begin{align}
&\mathbf{r}_{j}(U)=\frac{2}{\gamma+1}((-1)^{j}\frac{\rho}{c},\,(-1)^{j+1}\lambda_{j}(U),\,1,\,(-1)^{j}\rho c)^{\top} \quad\, \mbox{for $j=1,4$,}\quad \label{eq:3.3}\\[2pt]
&\mathbf{r}_{2}(U)=(0, 1, 0, 0)^{\top},\quad \ \mathbf{r}_{3}(U)=(1, 0, 0, 0)^{\top}, \label{eq:3.4}
\end{align}
which satisfy
\begin{equation}\label{eq:3.5}
\nabla_{U}\lambda_{j}(U)\cdot\mathbf{r}_{j}(U)\equiv 1  \ \, \mbox{for $j=1, 4$},\qquad\,
\nabla_{U}\lambda_{i}(U)\cdot\mathbf{r}_{i}(U)\equiv 0 \ \, \mbox{for $i=2,3$}.
\end{equation}

From \eqref{eq:3.5}, we know that the $1^{\rm st}$ and $4^{\rm th}$ characteristics fields are genuinely nonlinearity,
and the $2^{\rm nd}$ and $3^{\rm th}$ characteristics fields are linearly degenerate. Moreover, we have
\begin{eqnarray}\label{eq:3.5a}
\lambda^{(\tau)}_{j}(U,\tau^2)\big|_{\tau=0}=\lambda_{j}(U),\,\,\,\,
\mathbf{r}^{(\tau)}_{j}(U, \tau^2)\big|_{\tau=0}=\mathbf{r}_{j}(U) \quad\,\,\, \mbox{for $1 \leq j\leq 4$}.
\end{eqnarray}

\medskip
\subsection{Riemann problems and local interaction estimates for system \eqref{eq:1.14}}
In this subsection, we first consider the following Riemann problem.
\begin{equation}\label{eq:3.6}
\begin{cases}
\mbox{System}\, \eqref{eq:1.14},\\[5pt]
\left.U\right|_{x=\hat{x}_{0}}=
\begin{cases}
U_{a}, \quad  &y>\hat{y}_{0},\\[2pt]
U_{b}, \quad  &y<\hat{y}_{0}.
\end{cases}
\end{cases}
\end{equation}

Based on \eqref{eq:3.5}, we can follow the ideas in \cite{bressan,kong-yang, smoller} to show the solvability
of the Riemann problem \eqref{eq:3.6}.

\medskip
\noindent
{\bf Lemma A.1.}
{\it There exists a constant $\tilde{\epsilon}_{1}>0$ depending only on $\underline{U}$ such that,
for any given two constant states $U_{a}, U_{b}\in \mathcal{O}_{\tilde{\epsilon}_1}(\underline{U})$,
the Riemann problem \eqref{eq:3.6} admits a unique entropy solution, with five constant states denoted by $U_{k}, 0\leq k\leq 4$,
and separated by shock waves or rarefaction waves for the genuinely nonlinear characteristics fields and
by vortex sheets/entropy waves for the linearly degenerate characteristics fields.
Moreover, there exists a constant $\tilde{\delta}_{1}>0$ sufficiently small, depending only on $\underline{U}$, such that
the $j^{\rm th}$
physical admissible wave $\alpha_{j}$ in $\mathcal{O}_{\tilde{\epsilon}_1}(\underline{U})$
can be parameterized by $\sigma_{\alpha_{j}}$ as $\sigma_{\alpha_{j}}\mapsto \Phi_{j}(\sigma_{\alpha_{j}}; U_{b})$,
where $\Phi_{j}\in C^{2}\big((-\tilde{\delta}_{1},\tilde{\delta}_{1})\times O_{\tilde{\epsilon}_{1}}(\underline{U})\big)$ satisfies
\begin{align}
&U_{j}=\Phi_{j}(\sigma_{\alpha_j};U_{j-1}), \quad\,  U_{0}=U_{b},\quad\, U_{4}=U_{a},\label{eq:3.7}\\[1pt]
&\left. \frac{\partial\Phi_{j}}{\partial\sigma_{\alpha_{j}}}\right|_{\sigma_{\alpha_j}=0}=
\mathbf{r}_{j}(U_{b})\qquad \ \mbox{for $1\leq j\leq 4$}.\label{eq:3.8}
\end{align}
}

We call $\alpha_j$ the rarefaction wave if $\sigma_{\alpha_j}>0$, which is denoted by $\mathcal{R}_j(U_{b})\cap \mathcal{O}_{\tilde{\epsilon}_{1}}(\underline{U})$ for $j=1,4$.
We call $\alpha_j$ the shock wave if $\sigma_{\alpha_j}<0$, which is denoted by $\mathcal{S}_j(U_{b})\cap\mathcal{O}_{\tilde{\epsilon}_{1}}(\underline{U})$ for $j=1,4$.
We call $\alpha_k$ the vortex sheet/entropy wave if $\sigma_{\alpha_k}\neq0$, which is denoted by $\mathcal{C}_k(U_{b})\cap \mathcal{O}_{\tilde{\epsilon}_{1}}(\underline{U})$ for $k=2,3$.
Then, based on Lemma A.1
and by the standard method as done in \cite{bressan, smoller}, similar to Lemma \ref{lem:2.3},
we have the following local interaction estimates:

\medskip
\noindent
{\bf Lemma A.2.}
{\it There exists a constant $\tilde{\epsilon}_{2}\in (0,\tilde{\epsilon}_{1})$ sufficiently small, depending on $\underline{U}$, such that,
for given three constant states $U_a, U_m, U_b\in \mathcal{O}_{\tilde{\epsilon}_{2}}(\underline{U})$ with
\begin{align*}
&U_a=\Phi_{i}(\sigma_{\alpha_i}; U_m), \quad U_m=\Phi_{k}(\sigma_{\beta_j}; U_b),\\[2pt]
&U_a=\Phi(\boldsymbol{\sigma}_{\boldsymbol{\gamma}}; U_b),\quad
\boldsymbol{\sigma}_{\boldsymbol{\gamma}}=(\sigma_{\gamma_{1}},\sigma_{\gamma_{2}},
\sigma_{\gamma_{3}},\sigma_{\gamma_{4}}),
\end{align*}
then
\begin{eqnarray}\label{eq:3.9}
\sigma_{\gamma_{k}}=\delta_{ki}\sigma_{\alpha_k}+\delta_{kj}\sigma_{\beta_j}
+O(1)Q(\sigma_{\alpha_i},\sigma_{\beta_j}) \qquad\,\mbox{for $1\leq k\leq 4$};
\end{eqnarray}
if three is a non-physical wave $\gamma_{\mathcal{NP}}$ based on the construction, then
\begin{eqnarray}\label{eq:3.10}
\sigma_{\gamma_{\mathcal{NP}}}=O(1)Q(\sigma_{\alpha_i},\sigma_{\beta_j}),
\end{eqnarray}
where
\begin{equation}\label{eq:3.11}
Q(\sigma_{\alpha_i},\sigma_{\beta_j})=
\begin{cases}
0 \quad &\emph{if}\ i>j \quad \emph{or}\ i=j\ \emph{and}\
 \min\{\sigma_{\alpha_i},\sigma_{\beta_j}\}>0, \\[5pt]
|\sigma_{\alpha_i}||\sigma_{\beta_j}|\quad & \emph{otherwise},
\end{cases}
\end{equation}
and the bound of function $O(1)$ depends only on $\underline{U}$.
}

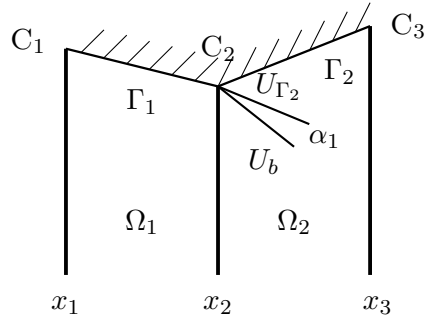
\begin{figure}[ht]
\begin{center}
\begin{tikzpicture}[scale=1.0]
\draw [line width=0.03cm](-5.0,1.5)--(-3,1)--(-1,1.8);

\draw [line width=0.05cm](-5.0,1.5)--(-5.0,-1.5);
\draw [line width=0.05cm](-3,1)--(-3,-1.5);
\draw [line width=0.05cm](-1.0,1.8)--(-1.0,-1.5);

\draw [thin](-4.8,1.45)--(-4.5, 1.75);
\draw [thin](-4.5,1.38)--(-4.2, 1.68);
\draw [thin](-4.2,1.30)--(-3.9, 1.60);
\draw [thin](-3.9, 1.23)--(-3.6,1.53);
\draw [thin](-3.6, 1.16)--(-3.3,1.46);
\draw [thin](-3.3, 1.08)--(-3.0,1.38);
\draw [thin](-3.0, 1.0)--(-2.8,1.4);
\draw [thin](-2.7,1.12)--(-2.5, 1.52);
\draw [thin](-2.4,1.23)--(-2.2, 1.63);
\draw [thin](-2.1,1.35)--(-1.9, 1.75);
\draw [thin](-1.8,1.47)--(-1.6, 1.87);
\draw [thin](-1.5, 1.59)--(-1.3,1.99);
\draw [thin](-1.2,1.71)--(-1.0, 2.11);

\draw [thick](-3,1)--(-1.8,0.5);
\draw [thick](-3,1)--(-2.0,0.2);

\node at (-5.5, 1.6) {$\textsc{C}_{1}$};
\node at (-3.0, 1.5) {$\textsc{C}_{2}$};
\node at (-0.5, 1.8) {$\textsc{C}_{3}$};
\node at (-1.6, 0.3) {$\alpha_1$};
\node at (-2.2, 1.0) {$U_{\Gamma_2}$};
\node at (-2.4, 0) {$U_{b}$};
\node at (1, 2) {$$};

\node at (-4.0, 0.8) {$\Gamma_{1}$};
\node at (-1.4, 1.2) {$\Gamma_{2}$};
\node at (-4.0, -0.8) {$\Omega_{1}$};
\node at (-2.0, -0.8) {$\Omega_{2}$};

\node at (-5.0, -1.9) {$x_{1}$};
\node at (-3.0, -1.9) {$x_{2}$};
\node at (-0.9, -1.9) {$x_{3}$};
\end{tikzpicture}
\caption{Mixed Riemann problem for system \eqref{eq:1.14}}\label{fig3.1}
\end{center}
\end{figure}

\medskip
Next, we consider the following mixed Riemann problem:
\begin{eqnarray}\label{eq:3.12}
\begin{cases}
{\rm System}~\eqref{eq:1.14} \quad & \mbox{in $\Omega_{2}$},\\[2pt]
U=U_b \quad & \mbox{on $\{x=x_2\}\cap\Omega_{2}$},\\[2pt]
v=\tan\theta_2 \quad & \mbox{on $\Gamma_{2}$},
\end{cases}
\end{eqnarray}
where $U_b$ is a constant state near $\underline{U}$ and satisfies
\begin{eqnarray}\label{eq:3.12b}
v_{b}=\tan\theta_{1},
\end{eqnarray}
and $\Omega_{k}$, $\theta_k$, and $x_k$ are given as in \eqref{eq-B1}--\eqref{eq-B2} for $k=1,2$;
see Fig. \ref{fig3.1}.
Then, similar to Lemma \ref{lem:2.5}, we have the following lemma on the solvability of the mixed Riemann problem \eqref{eq:3.12}:

\medskip
\noindent
{\bf Lemma A.3.}
{\it There exists a small constant $\tilde{\epsilon}_{3}>0$ depending only on $\underline{U}$ such that,
if $U_{b}\in \mathcal{O}_{\tilde{\epsilon}_{3}}(\underline{U})$ and $|\omega|+|\theta_{1}|<\tilde{\epsilon}_{3}$,
then problem \eqref{eq:3.12} admits a unique solution consisting of states $U_{\Gamma_{2}}$ and $U_{b}$,
which are connected by a weak $1^{\rm st}$ wave $\alpha_1$ with strength $\sigma_{\alpha_1}${\rm :}
\begin{eqnarray}\label{eq:3.13}
U_{\Gamma_{2}}=\Phi_{1}(\sigma_{\alpha_1}; U_{b}),
\end{eqnarray}
satisfying
\begin{eqnarray}\label{eq:3.14}
\sigma_{\alpha_1}=K_{b}\omega,
\end{eqnarray}
where $\omega=\theta_{2}-\theta_{1}$, and the bound of $K_{b}>0$ depends only on $\underline{U}$.
}

\begin{figure}[ht]
\begin{center}
\begin{tikzpicture}[scale=0.6]
\draw [line width=0.05cm] (-3.5,-3.8) --(-3.5,1.5);
\draw [line width=0.05cm] (2.5,-3.8) --(2.5,0.5);
\draw [line width=0.03cm](-3.5,1.5)--(2.5,0.5);

\draw [thin] (-3, 1.4) --(-2.6, 1.8);
\draw [thin] (-2.6, 1.35) --(-2.2, 1.75);
\draw [thin] (-2.2, 1.30) --(-1.8, 1.70);
\draw [thin] (-1.8, 1.23) --(-1.4, 1.63);
\draw [thin] (-1.4, 1.16) --(-1.0, 1.56);
\draw [thin] (-1.0, 1.10) --(-0.6, 1.50);
\draw [thin] (-0.6, 1.03) --(-0.2, 1.43);
\draw [thin] (-0.2, 0.97) --(0.2, 1.37);
\draw [thin] (0.2, 0.9) --(0.6, 1.30);
\draw [thin] (0.6, 0.83) --(1, 1.23);
\draw [thin] (1, 0.76) --(1.4, 1.16);
\draw [thin] (1.4, 0.67) --(1.8, 1.07);
\draw [thin] (1.8, 0.60) --(2.2, 1.0);
\draw [thin] (2.2, 0.55) --(2.6, 0.95);

\draw [thick](-2.5,-1.5)--(-0.5,1);
\draw [thick][red](-0.5,1)--(1.7,-1.0);

\node at (0.2, -2.8){$\Omega_{2}$};
\node at (3.3, 0.4){$\Gamma_{2}$};

\node at (-2.6, -1.9){$\alpha_{\ell}$};
\node at (2.1, -1.1){$\beta_{1}$};

\node at (-0.4, -1.0){$U_{b}$};
\node at (-2.2, 0.2){$U^{-}_{\Gamma_2}$};
\node at (1.7, -0.1){$U^{+}_{\Gamma_2}$};

\node at (-3.5, -4.5){$x_{2}$};
\node at (2.5, -4.5){$x_{3}$};
\end{tikzpicture}
\end{center}
\caption{Weak waves hit on the boundary and reflect}\label{fig9}
\end{figure}

\medskip
Finally, similar to Lemma \ref{lem:2.6}, we consider the weak wave reflection on the boundary (see also Fig. \ref{fig9}).

\medskip
\noindent
{\bf Lemma A.4.}
{\it There is a constant $\tilde{\epsilon}_{4}>0$ depending only on $\underline{U}$ such that,
if $U^{-}_{\Gamma_2}$, $U_b\in O_{\tilde{\epsilon}_{4}}(\underline{U})$ are two constant states and satisfy
\begin{equation*}
v^{-}_{\Gamma_2}=\tan\theta_{2}, \qquad U^{-}_{\Gamma_2}=\Phi_{\ell}(\sigma_{\alpha_\ell}; U_b) \quad \mbox{for $\ell=2,3,4$},
\end{equation*}
then, for the constant state $U^{+}_{\Gamma_2}\in O_{\tilde{\epsilon}_{3}}(\underline{U})$ with
\begin{eqnarray*}
v^{+}_{\Gamma_2}=\tan\theta_{2}, \qquad U^{+}_{\Gamma_2}=\Phi_{1}(\sigma_{\beta_1};U_b),
\end{eqnarray*}
the following estimate holds{\rm :}
\begin{equation}\label{eq:3.15}
\sigma_{\beta_1}=K_{r,\ell} \sigma_{\alpha_\ell}\qquad \mbox{for {$2\leq \ell\leq 4$}},
\end{equation}
where the reflection coefficients $K_{r,\ell}$ are $C^{2}$-functions of $(\sigma_{\alpha_\ell}, U_b)$ and satisfy
\begin{equation}\label{eq:3.16}
K_{r,4}|_{\sigma_{\alpha_4}=0, U_{b}=\underline{U}}=1,\qquad
K_{r,\ell}|_{\sigma_{\alpha_\ell}=0, U_{b}=\underline{U}}=0  \quad \mbox{for $\ell=2,3$}.
\end{equation}
}

\medskip
\subsection{Entropy solutions of the initial-boundary value problem \eqref{eq:1.12}--\eqref{eq:1.15}}
In this subsection, we construct the approximate solutions of the initial-boundary value problem \eqref{eq:1.12}--\eqref{eq:1.15}
via the wave-front tracking scheme, which is similar to the argument in \S 2.3.
Then the entropy solutions are obtained by passing the limit.
As done in \S 2.3, we first choose a parameter $\nu>0$ and then approximate the initial data $U_{0}$ by $U^{\nu}_{0}$,
with finite discontinuities, satisfying \eqref{eq:2.28}.
Then we choose a mesh length $h$ and approximate the boundary function $g(x)$ by $g_{h}(x)$, with piecewise constant slope,
satisfying \eqref{eq:2.26}.
Thus, the corresponding domain $\Omega$ is approximated by $\Omega_{h}$ with $\Gamma_{h}$ as its boundary.
The approximate solution $U_{h,\nu}$ of the initial-boundary value problem \eqref{eq:1.12}--\eqref{eq:1.15},
via the wave-front tracking scheme corresponding to $h$ and $\nu$, is constructed in the following way:

As done in \S 2.3, we introduce two types of Riemann solvers, $(ARS)$ and $(SRS)$,
for which the non-physical wave is introduced for $(SRS)$.
Moreover, a threshold parameter $\varrho>0$ given in \S 2.3 is also introduced here.
Following the procedure in \S 2.3, if the product of the strengths of the two approaching wave-fronts
is larger than $\varrho$, then the accurate Riemann solver $(ARS)$ is used.
Otherwise, the simplified Riemann solver $(SRS)$ is used.
Now we solve the Riemann problem at each discontinuity via either  $(ARS)$ or $(SRS)$
and continue this procedure for $x>0$.

Let $\mathcal{J}=\mathcal{S}\cup\mathcal{C}\cup\mathcal{R}\cup\mathcal{NP}$ represent the wave-fronts in $U_{h, \nu}$.
In order to define the approximate solution $U_{h, \nu}$ globally for all $x>0$,
one needs to show that the total number of the wave-fronts $\mathcal{J}$ is finite.
This is done once the uniform bound on the total variation of $U_{h, \nu}$ is obtained.
To achieve this, we follow the argument in \S 2.3 by introducing a weighted Glimm-type functional.

First, we make some \emph{a priori} assumptions on the approximate solution $U_{h, \nu}$.
\begin{itemize}
\item[$\mathbf{(\widetilde{P1})}$]  The approximate solution $U_{h, \nu}$ has been defined
for $x<\hat{x}$ and satisfies
$U_{h, \nu}(\hat{x}-,\cdot)\in \mathcal{O}_{\tilde{\epsilon}_*}(\underline{U})$
for
$\tilde{\epsilon}_{*}\in(0, \min\{\tilde{\epsilon}_{2},\tilde{\epsilon}_{3}, \tilde{\epsilon}_{4}\})$;
\item[$\mathbf{(\widetilde{P2})}$]  For $x<\hat{x}$, the strength $\sigma_{\alpha}$ of each front $\alpha$ satisfies
$|\sigma_{\alpha}|\leq O(1)\nu^{-1}$.
\end{itemize}

\smallskip
Then, for $x>0$, we define the modified Glimm-type functional as
\begin{eqnarray}\label{3.17}
\mathcal{G}(x)=\mathcal{V}(x)+\mathcal{K}\mathcal{Q}(x),
\end{eqnarray}
where
\begin{align}
&\mathcal{V}(x)=\mathcal{V}_{1}(x)+\sum^{4}_{ i=2}\mathcal{K}_{i}\mathcal{V}_{i}(x)
+\mathcal{V}_{\mathcal{NP}}(x)+\mathcal{K}_{c}\mathcal{V}_{c}(x),\label{3.17a}\\[2pt]
&\mathcal{V}_{i}(x)=\sum_{\alpha_i\in \mathcal{J}}|\sigma_{\alpha_i}|\,\,\,\, \mbox{for $1\leq i\leq 4$},  \qquad
\mathcal{V}_{\mathcal{NP}}(x)=\sum_{\alpha\in \mathcal{NP}}|\sigma_{\alpha}|,
    \label{eq:3.17b}\\[2pt]
&\mathcal{V}_{c}(x)=\sum_{k>[\frac{x}{h}]}|\omega_{k}|,\quad \mathcal{Q}(x)=\sum_{(\alpha_i,\beta_j)\in \mathcal{A}}|\sigma_{\alpha_i}||\sigma_{\beta_j}|.\label{eq:3.17c}
\end{align}
Here $\mathcal{A}$ stands for the set of all approaching waves; that is, for the $i^{\rm th}$ wave $\alpha_i$ and $j^{\rm th}$ wave $\beta_j$
with strengths $\sigma_{\alpha_i}$ and $\sigma_{\beta_j}$ and
located at $y_{\alpha_i}$ and $y_{\beta_j}$, respectively, so that $y_{\alpha_i}<y_{\beta_j}$ and one of the following conditions holds:

\begin{enumerate}
\item[\rm (i)] $\alpha_i\in \mathcal{NP}$ and $\beta_j\in \mathcal{J}\setminus \mathcal{NP}$,

\item[\rm (ii)] $i>j$,

\item[\rm (iii)] $i=j$ and $\min\{\sigma_{\alpha_i}, \sigma_{\beta_j}\}<0$.
\end{enumerate}
Now, following a similar argument to the proof of Lemma \ref{lem:2.7}, we obtain

\medskip
\noindent
{\bf Lemma A.5.}
{\it There exist constants $\mathcal{K}_{i}$ for $i=2,3,4$, $\mathcal{K}_{c}$, and $\mathcal{K}$ depending only
on $\underline{U}$, and a small constant $\tilde{\varepsilon}_{0}>0$ depending only on $\tilde{\epsilon}_{*}$ and $\underline{U}$
such that, if $\mathcal{G}(\hat{x}-)<\varepsilon$ for some $\varepsilon\in(0, \tilde{\varepsilon}_{0})$, then
\begin{eqnarray*}
\mathcal{G}(\hat{x}+)<\mathcal{G}(\hat{x}-) \qquad \mbox{for $\hat{x}>0$},
\end{eqnarray*}
and $U_{h, \nu}(\hat{x}+,\cdot)\in \mathcal{O}_{\tilde{\epsilon}_*}(\underline{U})$.
Moreover, at $x=\hat{x}+$, strength $\sigma_{\alpha}$ of each front $\alpha\in \mathcal{J}$ satisfies $|\sigma_{\alpha}|\leq O(1)\nu^{-1}$.
}

Based on Lemma A.5
and by an induction procedure as in \cite{bressan, kuang-zhao},
we can obtain the uniform $BV\cap L^{\infty}$ bound of the approximate solution $U_{h,\nu}$.

\medskip
\noindent
{\bf Proposition A.1.}
{\it Under assumptions $(\mathbf{U_0})$ and $(\mathbf{g})$, there exists a constant $\tilde{\varepsilon}_{1}=\tilde{\varepsilon}_{1}(\tilde{\epsilon},\tilde{\varepsilon}_{0} )>0$
depending only on $\underline{U}$ such that, for some $\varepsilon\in (0, \tilde{\varepsilon}_{1})$, if the initial data $U_{0}$ and
the boundary function $g(x)$ satisfy \eqref{eq:2.36},
then the approximate solution $U_{h,\nu}$ generated by the wave-front tracking scheme can be defined globally for all $x>0$ with
the properties that $U_{h,\nu}\in (BV_{\rm loc}\cap L^{1}_{\rm loc})(\Omega_{h})$,
\begin{align}
&\sup_{x>0}\big\|U_{h,\nu}(x,\cdot)-\underline{U}\big\|_{BV((-\infty, g_{h}(x)))}\nonumber\\
&\, \leq {C_{A,1}}\big(\|U_{0}-\underline{U}\|_{BV(\mathcal{I})}+|g'(0)|+\|g'\|_{BV(\mathbb{R}_{+})}\big),\label{eq:3.19}
\end{align}
and, for any $\tilde{x}', \tilde{x}''>0$,
\begin{align}
\big\|U_{h,\nu}(\tilde{x}',\cdot+g_{h}(\tilde{x}'))-U_{h,\nu}(\tilde{x}'',\cdot+g_{h}(\tilde{x}''))\big\|_{L^{1}((-\infty, 0))}\leq {C_{A,2}}|\tilde{x}'-\tilde{x}''|,
\label{eq:3.20}
\end{align}
where constants {$C_{A,1}>0$ and $C_{A,2}>0$} depend only on $\underline{U}$.
The strength of each rarefaction-front is small{\rm :}
\begin{eqnarray}\label{eq:3.21}
|\sigma_{\alpha}|\leq {C_{A,3}}\nu^{-1}\qquad \mbox{for $\alpha\in\mathcal{R}$}.
\end{eqnarray}
The total strength of nonphysical waves is small, \emph{i.e.}, there exists a positive threshold $\varrho=\varrho_{\nu}>0$
with $\varrho_{\nu}\rightarrow 0$ as $\nu\rightarrow \infty$
such that
\begin{eqnarray}\label{eq:3.22}
\sum_{\alpha\in\mathcal{NP}}|\sigma_{\alpha}|\leq {C_{A,4}}2^{-\nu},
\end{eqnarray}
where constants {$C_{A,3}>0$ and $C_{A,4}>0$} depend only on $\underline{U}$.
}

By Proposition A.1
and  Theorem 2.4 in \cite{bressan}
and following the arguments in \cite{chen-kuang-zhang,hu-kuang,kuang-zhao,smoller,zhang-1},
we can obtain the compactness for the convergence of the approximate solutions $\{U_{h,\nu}\}$.

\medskip
\noindent
{\bf Proposition A.1.}
{\it
Under assumptions $(\mathbf{U_0})$ and $(\mathbf{g})$,
there exists a subsequence $\{U_{h_k,\nu_k}\}$ obtained
by the wave-front tracking scheme such that $U_{h_k,\nu_k}\rightarrow U$ in $L^{1}_{\rm loc}(\Omega)$ as $h_k\rightarrow 0$ and $\nu_k\rightarrow \infty$
for $k\rightarrow \infty$ such that
$U\in (BV_{loc}\cap L^{1}_{\rm loc})(\Omega)$ is the global entropy solution of the initial-boundary value problem \eqref{eq:1.12}--\eqref{eq:1.15}
in the sense of {\rm Definition \ref{def:1.1}} for $\tau=0$ satisfying
\begin{align*}
\sup_{x>0}\big\|U(x,\cdot)-\underline{U}\big\|_{BV((-\infty, g(x)))}\leq {C_{A,5}}\big(\|U_{0}-\underline{U}\|_{BV(\mathcal{I})}+|g'(0)|+\|g'\|_{BV(\mathbb{R}_{+})}\big),
\end{align*}
{where $C_{A,5}>0$ depends only on $\underline{U}$.}
}

\bigskip
\bigskip
\medskip
\noindent
{\bf Acknowledgements.}
The research of Gui-Qiang G. Chen was supported in part by the UK Engineering and Physical Sciences Research Council Awards
EP/L015811/1, EP/V008854/1, and EP/V051121/1.
The research of Jie Kuang was supported in part by the NSFC Projects under Grant No. 11801549, No. 11971024, and No. 12271507, and the Multidisciplinary Interdisciplinary Cultivation Project No. S21S6401 from
Innovation Academy for Precision Measurement Science and Technology, Chinese Academy of Sciences.
The research of Wei Xiang was supported in part by the Research Grants Council of the HKSAR, China (Project
No. CityU 11304820, No. CityU 11300021, No. CityU 11311722 and No. CityU 11305523), and in part by the Research Center for Nonlinear
Analysis of the Hong Kong Polytechnic University.
The research of Yongqian Zhang was supported in part by the NSFC Projects under Grant No. 12271507, No. 11421061, No. 11031001, and No. 11121101,
the 111 Project B08018 (China) and the Shanghai Natural Science Foundation 15ZR1403900.

\bigskip

\end{document}